\documentclass[reqno,11pt,letterpaper]{amsart}

\usepackage{amsmath,amssymb,amsthm,mathrsfs,bbm}
\usepackage{accents}
\usepackage{arydshln}
\usepackage{upgreek}
\usepackage{slashed}
\usepackage{xifthen}
\usepackage{graphicx}
\usepackage{longtable}
\usepackage[inline]{enumitem}
\usepackage[all]{xy}

\usepackage{xcolor}
\definecolor{winered}{rgb}{0.6,0,0}
\definecolor{lessblue}{rgb}{0,0,0.7}

\usepackage[pdftex,colorlinks=true,linkcolor=winered,citecolor=lessblue,breaklinks=true,bookmarksopen=true]{hyperref}

\hyphenation{Schwarz-schild}

\setcounter{tocdepth}{3}
\setcounter{secnumdepth}{3}

\makeatletter
\newcommand{\myitem}[3]{\item[#2]\def\@currentlabel{#3}\label{#1}}
\makeatother

\addtolength{\textheight}{0.4in}
\addtolength{\oddsidemargin}{-0.5in}
\addtolength{\evensidemargin}{-0.5in}
\addtolength{\textwidth}{1.0in}
\setlength{\topmargin}{0.00in}
\setlength{\headheight}{0.18in}
\setlength{\marginparwidth}{1.0in}
\setlength{\abovedisplayskip}{0.2in}
\setlength{\belowdisplayskip}{0.2in}
\setlength{\parskip}{0.05in}


\usepackage{titletoc}

\makeatletter

\def\@tocline#1#2#3#4#5#6#7{
\begingroup
  \par
    \parindent\z@ \leftskip#3 \relax \advance\leftskip\@tempdima\relax
                  \rightskip\@pnumwidth plus 4em \parfillskip-\@pnumwidth
    \ifcase #1 
       \vskip 0.6em \hskip 0em 
       \or
       \or \hskip 0em 
       \or \hskip 1em 
    \fi%
    %
    #6
    %
    \nobreak\relax{\leavevmode\leaders\hbox{\,.}\hfill}
    \hbox to\@pnumwidth {\@tocpagenum{#7}}
  \par
\endgroup
}

 \def\l@section{\@tocline{0}{0pt}{0pc}{}{}}

\renewcommand{\tocsection}[3]{%
  \indentlabel{\@ifnotempty{#2}{ 
    \ignorespaces\bfseries{#2. #3}}}
  \indentlabel{\@ifempty{#2}{\ignorespaces\bfseries{#3}}{}} 
    \vspace{1.5pt}}

\renewcommand{\tocsubsection}[3]{%
  \indentlabel{\@ifnotempty{#2}{
    \ignorespaces#2. #3}}
  \indentlabel{\@ifempty{#2}{\ignorespaces #3}{}}
    \vspace{1.5pt}}

\renewcommand{\tocsubsubsection}[3]{%
  \indentlabel{\@ifnotempty{#2}{
    \ignorespaces#2. #3}}
  \indentlabel{\@ifempty{#2}{\ignorespaces #3}{}}
    \vspace{1.5pt}}

\makeatother

\makeatletter
\def\@nomenstarted{0}
\newlength{\@nomenoldtabcolsep}

\newcommand{\nomenstart}
  {%
    \def\@nomenstarted{1}%
    \setlength{\@nomenoldtabcolsep}{\tabcolsep}%
    \setlength{\tabcolsep}{3.5pt}%
    \begin{longtable}{p{0.11\textwidth} p{0.86\textwidth}}
  }

\newcommand{\nomenitem}[2]{%
    \ifcase\@nomenstarted%
      \or 
      \or \\ 
    \fi%
    #1\,{\leavevmode\leaders\hbox{\,.}\hfill} & #2%
    \def\@nomenstarted{2}%
  }%
\newcommand{\nomenend}
  {\\%
      \end{longtable}%
      \setlength{\tabcolsep}{\@nomenoldtabcolsep}%
      \def\@nomenstarted{0}%
  }
\makeatother

\makeatletter
\newcommand{\vast}{\bBigg@{4}}
\newcommand{\Vast}{\bBigg@{5}}
\makeatother

\allowdisplaybreaks

\numberwithin{equation}{section}
\numberwithin{figure}{section}
\newtheorem{thm}{Theorem}[section]

\newtheorem{prop}[thm]{Proposition}
\newtheorem{lemma}[thm]{Lemma}
\newtheorem{cor}[thm]{Corollary}
\newtheorem*{thm*}{Theorem}
\newtheorem*{prop*}{Proposition}
\newtheorem*{cor*}{Corollary}
\newtheorem*{conj*}{Conjecture}

\theoremstyle{definition}
\newtheorem{definition}[thm]{Definition}

\theoremstyle{remark}
\newtheorem{rmk}[thm]{Remark}


\newcommand{\mc}{\mathcal}
\newcommand{\cA}{\mc A}
\newcommand{\cB}{\mc B}
\newcommand{\cC}{\mc C}
\newcommand{\cD}{\mc D}
\newcommand{\cE}{\mc E}
\newcommand{\cF}{\mc F}
\newcommand{\cG}{\mc G}

\newcommand{\cO}{\mc O}

\newcommand{\cQ}{\mc Q}

\newcommand{\cU}{\mc U}
\newcommand{\cV}{\mc V}



\newcommand{\C}{\mathbb{C}}
\newcommand{\N}{\mathbb{N}}
\newcommand{\R}{\mathbb{R}}

\newcommand{\Sph}{\mathbb{S}}



\newcommand{\bfX}{\mathbf{X}}







\renewcommand{\Re}{\operatorname{Re}}
\renewcommand{\Im}{\operatorname{Im}}
\newcommand{\Id}{\operatorname{Id}}

\newcommand{\supp}{\operatorname{supp}}


\newcommand{\eps}{\epsilon}
\newcommand\ff{{\mathrm{ff}}}
\newcommand\fff{{\mathrm{fff}}}
\newcommand{\ftrans}{\;\!\wh{\ }\;\!}

\newcommand{\hra}{\hookrightarrow}
\newcommand{\la}{\langle}

\DeclareMathOperator{\extcup}{\ol\cup}
\newcommand{\bigextcup}{\ol{\phantom{\bigcup}}\hspace{-1.58em}\bigcup}
\newcommand{\ol}{\overline}
\newcommand{\pa}{\partial}
\newcommand{\ra}{\rangle}
\newcommand{\spec}{\operatorname{spec}}

\newcommand{\wh}{\widehat}
\newcommand{\wt}{\widetilde}
\newcommand{\xra}{\xrightarrow}

\newcommand{\dd}{{\mathrm{d}}}

\newcommand{\bop}{{\mathrm{b}}}
\newcommand{\cop}{{\mathrm{c}}}
\newcommand{\scop}{{\mathrm{sc}}}
\newcommand{\lb}{{\mathrm{lb}}}
\newcommand{\rb}{{\mathrm{rb}}}
\newcommand{\tlb}{{\mathrm{tlb}}}
\newcommand{\trb}{{\mathrm{trb}}}
\newcommand{\bface}{{\mathrm{bf}}}

\newcommand{\tbface}{{\mathrm{tbf}}}

\newcommand{\dface}{{\mathrm{df}}}
\newcommand{\dfface}{{\mathrm{dff}}}
\newcommand{\sface}{{\mathrm{sf}}}
\newcommand{\sfface}{{\mathrm{sff}}}
\newcommand{\tface}{{\mathrm{tf}}}
\newcommand{\tfface}{{\mathrm{tff}}}

\newcommand{\cp}{{\mathrm{c}}}

\newcommand{\scl}{{\mathrm{sc}}}

\newcommand{\semi}{\hbar}

\newcommand{\Diff}{\mathrm{Diff}}

\DeclareMathOperator{\Op}{Op}
\newcommand{\Oph}{\Op_h}
\newcommand{\Vb}{\cV_\bop}

\newcommand{\Diffb}{\Diff_\bop}

\newcommand{\Psib}{\Psi_\bop}

\newcommand{\Diffbh}{\Diff_{\bop,\semi}}
\newcommand{\Psibh}{\Psi_{\bop,\semi}}

\newcommand{\Psih}{\Psi_\semi}

\newcommand{\Diffh}{\Diff_\semi}

\newcommand{\WF}{\mathrm{WF}}

\newcommand{\Shom}{S_{\mathrm{hom}}}

\newcommand{\Omegab}{{}^{\bop}\Omega}

\newcommand{\Tb}{{}^{\bop}T}

\newcommand{\WFh}{\WF_{\semi}}

\newcommand{\half}{{\tfrac{1}{2}}}
\newcommand{\mfrac}[2]{\genfrac{}{}{}{3}{#1}{#2}}
\newcommand{\mhalf}{{\mfrac{1}{2}}}

\newcommand{\sigmab}{{}^\bop\sigma}

\newcommand{\specb}{\mathrm{spec}_\bop}
\newcommand{\specbfull}{\wt{\mathrm{spec}}_\bop}

\newcommand{\sigmach}{{}^{\cop\semi}\sigma}

\newcommand{\dscop}{{\cop\semi\tilde\semi}}
\newcommand{\sigmadsc}{{}^\dscop\sigma}
\newcommand{\Psidsc}{\Psi_\dscop}

\newcommand{\sigmahth}{{}^{\semi\tilde\semi}\sigma}

\newcommand{\loc}{{\mathrm{loc}}}
\newcommand{\CI}{\cC^\infty}
\newcommand{\CIdot}{\dot\cC^\infty}

\newcommand{\CIc}{\cC^\infty_\cp}

\newcommand{\Hb}{H_{\bop}}

\newcommand{\Hbh}{H_{\bop,h}}

\newcommand{\Hsc}{H_{\scop}}

\newcommand{\phg}{{\mathrm{phg}}}



\newcommand{\openbigpmatrix}[1]
  {%
    \def\@bigpmatrixsize{#1}%
    \addtolength{\arraycolsep}{-#1}%
    \begin{pmatrix}%
  }
\newcommand{\closebigpmatrix}
  {%
    \end{pmatrix}%
    \addtolength{\arraycolsep}{\@bigpmatrixsize}%
  }



\newlength{\enummargin}\setlength{\enummargin}{1.8em}

\newcommand{\usref}[1]{{\upshape\ref{#1}}}



\DeclareGraphicsExtensions{.mps}
\newcommand{\inclfig}[1]{\includegraphics{#1}}

\makeatletter
\newcounter{@enumsave}

\newcommand{\ctrset}{\setcounter{enumi}{\the@enumsave}}
\makeatother

\makeatletter
\newcommand*{\fwbw}[1]{\expandafter\@fwbw\csname c@#1\endcsname}
\newcommand*{\@fwbw}[1]{\ifcase #1 \or {\rm fw}\or {\rm bw}\fi}
\AddEnumerateCounter{\fwbw}{\@fwbw}
\makeatother

\begin{document}

\title{Resolvents and complex powers of semiclassical cone operators}

\begin{abstract}
  We give a uniform description of resolvents and complex powers of elliptic semiclassical cone differential operators as the semiclassical parameter $h$ tends to $0$. An example of such an operator is the shifted semiclassical Laplacian $h^2\Delta_g+1$ on a manifold $(X,g)$ of dimension $n\geq 3$ with conic singularities. Our approach is constructive and based on techniques from geometric microlocal analysis: we construct the Schwartz kernels of resolvents and complex powers as conormal distributions on a suitable resolution of the space $[0,1)_h\times X\times X$ of $h$-dependent integral kernels; the construction of complex powers relies on a calculus with a second semiclassical parameter. As an application, we characterize the domains of $(h^2\Delta_g+1)^{w/2}$ for $\Re w\in(-\tfrac{n}{2},\tfrac{n}{2})$ and use this to prove the propagation of semiclassical regularity through a cone point on a range of weighted semiclassical function spaces.
\end{abstract}

\date{\today}

\author{Peter Hintz}
\address{Department of Mathematics, Massachusetts Institute of Technology, Cambridge, Massachusetts 02139-4307, USA}
\email{phintz@mit.edu}

\maketitle

\section{Introduction}
\label{SI}

Consider a compact conic manifold of dimension $n\geq 3$ with a single cone point. Upon introducing polar coordinates around the cone point, this can be described as a compact manifold $X$ with connected embedded boundary $\pa X$, and a smooth Riemannian metric $g$ on $X^\circ$ which in a collar neighborhood $[0,\eps)_x\times\pa X$ of $\pa X$ takes the form
\[
  g = \dd x^2 + x^2 k(x).
\]
Here, $k(x)$ is a smooth family of metrics on $\pa X$. See Figure~\ref{FigI}.

\begin{figure}[!ht]
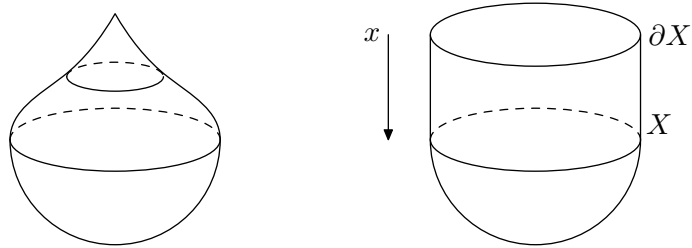

\inclfig{FigI}
\caption{\textit{Left:} a two-dimensional conic manifold. \textit{Right:} its resolution, a manifold $X$ with boundary $\pa X=x^{-1}(0)$. The boundary is metrically a point.}
\label{FigI}
\end{figure}

Let $\Delta_g\geq 0$ denote the Friedrichs extension. In this paper, we give a precise description of Schwartz kernels of resolvents and complex powers of elliptic semiclassical operators related to $\Delta_g$. Denote the diagonal in $X\times X$ by $\Delta_X=\{(p,p)\colon p\in X\}$.

\begin{thm}[Rough version of Theorem~\usref{ThmLF}.]
\label{ThmI}
  Let
  \begin{equation}
  \label{EqI}
    A_h=h^2\Delta_g+1.
  \end{equation}
  Let $w\in\C$. Then the complex power $A_h^{w/2}$, $h\in(0,1)$, defined using the functional calculus for $\Delta_g$ and restricted to the domain $\CIc(X^\circ)$, is a weighted semiclassical cone pseudodifferential operator. This means that it has a distributional Schwartz kernel of the following form: on a resolution $X^2_{\cop\semi}$ of $[0,1)_h\times X\times X$, see Figure~\usref{FigRCDouble}, it can be written as $A_h^{w/2}=B(w)+C(w)$, where $B(w)$ is conormal of order $w$ to the lift $\Delta_{\cop\semi}$ of $[0,1)_h\times\Delta_X$ to $X^2_{\cop\semi}$ and vanishes to infinite order at $\lb$, $\rb$, $\sface$, while $C(w)$ vanishes to infinite order at $\sface,\dface$ and has polyhomogeneous expansions at all other boundary hypersurfaces.
\end{thm}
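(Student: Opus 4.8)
The plan is to produce $A_h^{w/2}$ from the resolvent family $(\lambda-A_h)^{-1}$ via a Cauchy integral and to read off the structure of its Schwartz kernel from a description of the resolvent. Since $A_h=h^2\Delta_g+1$ has spectrum in $[1,\infty)$, for $\Re w<0$ the Dunford integral
\[
  A_h^{w/2}=\frac{1}{2\pi i}\int_\Gamma\lambda^{w/2}(\lambda-A_h)^{-1}\,\dd\lambda
\]
converges in operator norm and agrees with the functional-calculus definition of $A_h^{w/2}$; here $\Gamma\subset\C\setminus[1,\infty)$ surrounds the spectrum, its two branches asymptotic to the rays $\arg\lambda=\pm\theta_0$ for a fixed $\theta_0\in(0,\pi)$, and $\lambda^{w/2}$ uses the branch cut along $(-\infty,0]$. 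For general $w$ with $\Re w<\tfrac n2$ we reduce to this range via $A_h^{w/2}=(h^2\Delta_g+1)^N\circ A_h^{(w-2N)/2}$ with $N\in\N$ large: $(h^2\Delta_g+1)^N$ is manifestly a weighted semiclassical cone differential operator, and the weighted semiclassical cone pseudodifferential operators are closed under composition with such operators. (For $\Re w\geq\tfrac n2$ the conormal order of $B(w)$ exceeds the dimension and a different normalization is required; this is why the range $(-\tfrac n2,\tfrac n2)$ is the natural one, and the domain characterization mentioned in the introduction makes it precise.)

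The key input---the content of the resolvent part of the paper---is a uniform description, jointly in $h\in[0,1)$ and $\lambda\in\Gamma$, of $(\lambda-A_h)^{-1}$ as the Schwartz kernel of an operator in the semiclassical cone calculus, \emph{refined by a second semiclassical parameter} on the part of $\Gamma$ with $|\lambda|\gtrsim 1$: there one puts $\sigma=|\lambda|^{-1/2}$ and observes that $\sigma^2(A_h-\lambda)=(h\sigma)^2\Delta_g+\sigma^2-\sigma^2\lambda$ lies in a joint semiclassical cone calculus in the parameters $h\sigma$ and $\sigma$, with $\sigma^2\lambda$ of modulus $1$ and argument $\pm\theta_0$, hence bounded away from $[0,\infty)$; it is the resolvent of this rescaled operator that one constructs and ultimately integrates, and the joint geometry is what the resolution $X^2_{\cop\semi}$ encodes. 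The resolvent is built by the standard microlocal parametrix method: ellipticity of $A_h-\lambda$ (resp.\ of its rescaling) in the (doubly) semiclassical cone calculus; inversion of the model operators at each boundary hypersurface---at the cone front face the model is an exact-cone operator on $C(\pa X)$, invertible on suitably weighted $\bop$-Sobolev spaces because $\Delta_g\geq 0$ and no indicial root lies on the relevant weight line, and the semiclassical and large-$\lambda$ models are handled in the same spirit; assembly of a parametrix modulo a compactly supported residual error; and removal of that error by a Neumann series, using self-adjointness of $A_h$ to identify the result with the true resolvent.

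Inserting this kernel into the Cauchy integral and interchanging the $\lambda$-integration with the distributional pairing produces two pieces. The large-$|\lambda|$ part of $\Gamma$---in the rescaled variables, a neighborhood of the semiclassical diagonal---yields the conormal singularity: the scalar identity $\frac{1}{2\pi i}\int_\Gamma\lambda^{w/2}(\lambda-\tau)^{-1}\,\dd\lambda=\tau^{w/2}$ for $\tau\geq 1$, applied to the resolvent symbol $\tau=h^2|\xi|_g^2+1$ (so that $\tau^{w/2}$ is a symbol of order $w$ in $\xi$), together with its symbol-calculus refinements, shows that this piece is conormal of order $w$ to $\Delta_{\cop\semi}$; being microlocally supported near the diagonal and rapidly decaying off it, it vanishes to infinite order at $\lb$, $\rb$, and $\sface$. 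This is $B(w)$. All remaining contributions---those of the boundary expansions of the resolvent at $\lb$, $\rb$, the cone front face, and the other hypersurfaces---are polyhomogeneous, since $\lambda\mapsto\lambda^{w/2}$ is holomorphic along $\Gamma$ and integration against it preserves polyhomogeneity, shifting the index sets according to how $\lambda^{w/2}$ pairs, via its Mellin transform, with the indicial roots of the cone operator---precisely the mechanism that forces $\Re w\in(-\tfrac n2,\tfrac n2)$. This is $C(w)$. Since $A_h^{w/2}$ is, away from the cone point, an honest semiclassical pseudodifferential operator---whose behavior near $h=0$, a conormal singularity at the diagonal with an $O(h^\infty)$ remainder, is already captured by $B(w)$---the remainder $C(w)$ is a purely conic effect: it would vanish identically if $\pa X$ were empty, and in particular it vanishes to infinite order at $\sface$ and $\dface$; one checks that the $\lambda$-integration introduces no cancellation spoiling these orders.

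The main obstacle is the resolvent input with its second-semiclassical refinement: constructing $X^2_{\cop\semi}$ with the correct sequence of blow-ups---in particular the interaction of the cone front face with the semiclassical and large-$\lambda$ regimes---identifying and inverting all of the model operators on the appropriate scales of weighted $\bop$- and semiclassical Sobolev spaces, and, most delicately, tracking the index sets through the many corners so that the parametrix error is genuinely iterable and so that the subsequent $\lambda$-integration delivers the clean decomposition $B(w)+C(w)$ with exactly the asserted vanishing. The bookkeeping of how $\lambda^{w/2}$ interacts with the indicial roots---which simultaneously produces $C(w)$ and pins down the admissible range of $w$---is the technical heart of the argument.
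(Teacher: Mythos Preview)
Your approach is essentially the paper's: Seeley's contour integral, a doubly semiclassical resolvent construction for large $|\lambda|$ (the paper's second parameter $\tilde h=|\tilde\lambda|^{-1/2}$ is your $\sigma$), model operator inversion at each face, and push-forward of the polyhomogeneous remainder while the symbolic part of the resolvent produces the conormal singularity via the scalar identity you wrote. One packaging difference: the paper resolves the double degeneration in the parameters $(h,\tilde h)$ separately, building the dsc-double space as a further blow-up of $[0,1)_{\tilde h}\times X^2_{\cop\semi}$, rather than in $(h\sigma,\sigma)$; this matters because the normal operators at $h=0$ (a $\tilde h$-semiclassical exact-cone operator) and at $\tilde h=0$ (an $h$-parametric b-scattering operator) are genuinely different, and collapsing to $h\sigma$ would lose that distinction.

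One point to correct: the restriction $\Re w\in(-\tfrac n2,\tfrac n2)$ plays \emph{no role} in Theorem~\ref{ThmI}. The decomposition $A_h^{w/2}=B(w)+C(w)$ holds for all $w\in\C$; the index sets of $C(w)$ at $\ff$ shift with $w$, but nothing obstructs the polyhomogeneous expansion. The range $(-\tfrac n2,\tfrac n2)$ enters only in the separate domain identification (Theorem~\ref{ThmIDom}), where it reflects whether the leading exponents of $C(w)$ at $\lb,\rb$ are compatible with the weight $\Re w$ so that $A_h^{w/2}$ is an isomorphism between the spaces $H_{\cop,h}^{\Re w,\Re w,0}$. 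Your remarks that the conormal order exceeding the dimension requires a different normalization, and that the Mellin pairing with indicial roots ``forces'' this range, are misplaced here and should be dropped.
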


Restriction to various parts of the double space $X^2_{\cop\semi}$ gives standard objects: \begin{enumerate*} \item for any fixed $h=h_0>0$, $A_{h_0}^{w/2}$ is a b-pseudodifferential operator \cite[\S5]{MelroseAPS}, as observed previously by Loya \cite{LoyaConicHeat}; \item if $\phi\in\CI(X)$ vanishes near $\pa X$, then $\phi A_h^{w/2}\phi$ is a semiclassical ps.d.o.\ \cite[Part~4]{ZworskiSemiclassical}.\end{enumerate*} See Remark~\ref{RmkRCOther} for further details.

\begin{rmk}
  We in fact describe inverses and complex powers for a general class of fully elliptic semiclassical cone differential operators, see Definition~\ref{DefOpEll} as well as Theorems~\ref{ThmR} and \ref{ThmPh}; for simplicity of presentation, we shall restrict ourselves to the case~\eqref{EqI} in the introduction.
\end{rmk}

The semiclassical resolvent $(h^2\Delta_g-\tilde\lambda)^{-1}$ for $\tilde\lambda\notin[0,\infty)$ has the same structure. This is closely related to \cite{LoyaConicResolvent}, where Loya constructed $(\Delta_g-\lambda)^{-1}$ as an element of a suitable large parameter cone calculus, with uniform control as $\lambda\to\infty$ in suitable sectors in the complex plane. Large parameter calculi are slightly more precise than semiclassical ones (related via $h=|\lambda|^{-1/2}$, $\tilde\lambda=\lambda/|\lambda|$), see also \cite[\S2]{VasyMicroKerrdS}. The advantage of semiclassical calculi however is that they are directly amenable to geometric microlocal techniques, allowing us to significantly simplify and streamline the constructions in \cite{LoyaConicResolvent}, and paving the way also for the subsequent somewhat involved construction of the doubly semiclassical calculus needed for the construction of the complex powers of Theorem~\ref{ThmI} (see also the discussion following equation~\eqref{EqIContour} below).

We stress that the present paper concerns the \emph{off-spectrum} behavior of the high energy resolvent of conic Laplacians. On the other hand, in recent work, Xi \cite{XiConeParametrix} gives a detailed construction of the high energy resolvent of conic Laplacians \emph{near} the spectrum, with applications to Strichartz estimates for Schr\"odinger operators. Combining the present paper with \cite{XiConeParametrix} thus gives a rather complete description of the high energy resolvent.

\begin{figure}[!ht]
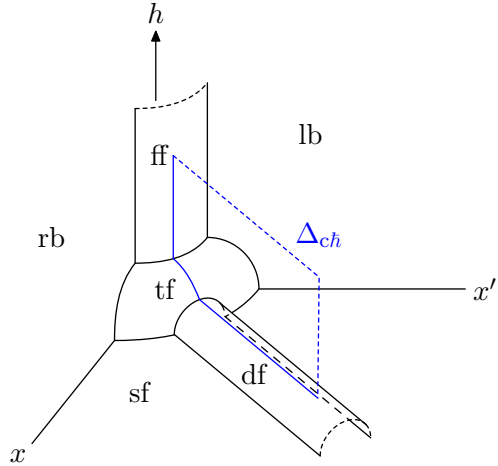

\inclfig{FigRCDouble}
\caption{The semiclassical cone double space $X^2_{\cop\semi}$, see Definition~\ref{DefRCDouble}, and its boundary hypersurfaces. Here, $x$ and $x'$ are the pullbacks of the boundary defining function to the first and second factor of $X\times X$. Also shown is the diagonal $\Delta_{\cop\semi}$. For dimensional reasons, variables in $\pa X\times\pa X$ cannot be depicted.}
\label{FigRCDouble}
\end{figure}

Our construction of complex powers follows Seeley's original approach \cite{SeeleyPowers} (see also \cite{SchrohePowers1986}). In the non-semiclassical conic setting, Loya~\cite{LoyaConicPower} employed an argument using the heat kernel constructed in \cite{LoyaConicHeat}, see also \cite{GilConicHeat}. Imaginary powers were constructed by Coriasco--Schrohe--Seiler \cite{CoriascoSchroheSeilerCone} (with quantitative operator norm bounds as $|\Im w|\to\infty$) based on Schulze's cone calculus \cite{SchulzePsdoSing,SchulzePsdoBVP94,SchulzeBVP98}; see also \cite{GilKrainerMendozaResolvents}. Constructions on general classes of manifolds can be found in \cite{SchrohePowersNoncompact,AmmannLauterNistorVasyPowers}. For manifolds with boundaries or corners, see \cite{GrubbFunctionalCalculus,RempelSchulzePowers,LoyaPowersB}. The relationship between Schulze's cone calculus and Melrose's geometric microlocal point of view (on which the present paper is based on) is explained by Lauter--Seiler in \cite{LauterSeilerComparison}.

We give an application of Theorem~\ref{ThmI} to the propagation of semiclassical regularity (amplitudes of oscillations of frequency $h^{-1}$) through cone points for solutions of the equation
\begin{equation}
\label{EqIMoti}
  (h^2\Delta_g-1)u=f;
\end{equation}
notice the minus sign, which makes this into a semiclassically non-elliptic equation. Such estimates have been obtained in a number of different settings: Cheeger--Taylor \cite{CheegerTaylorConicalI,CheegerTaylorConicalII}, see also \cite{KalkaMenikoffCone}, analyzed wave propagation on exact cones, followed by Melrose--Wunsch \cite{MelroseWunschConic}, using geometric microlocal techniques, on general conic manifolds, see also \cite{VasyPropagationCorners,MelroseVasyWunschEdge}; these papers also prove diffractive regularity improvements via the use of the edge calculus \cite{MazzeoEdge,MazzeoVertmanEdge}. (For recent results on the fine properties of the diffracted wave, see \cite{FordHassellHillairet2dCone,YangDiffraction}.) More recently, Baskin--Marzuola \cite{BaskinMarzuolaCone} proved semiclassical propagation estimates on unweighted spaces (relative to the quadratic form domain). We also mention the closely related works by Gannot--Wunsch \cite{GannotWunschPotential} on semiclassical diffraction by conormal potentials, and by Hillairet--Wunsch \cite{HillairetWunschConic} on high energy resonances generated by conic diffraction.

In order to obtain sharp propagation estimates on function spaces which admit weights $x^l$ (relative to the quadratic form domain) at the cone point, it is convenient to conjugate the PDE~\eqref{EqIMoti} by an operator which \begin{enumerate*} \item commutes with the operator $h^2\Delta_g-1$ and \item is, roughly, homogeneous of degree $l$ with respect to dilations in $x$;\end{enumerate*} for the analysis of the resulting equation, one can then appeal to results of~\cite{BaskinMarzuolaCone}. A natural choice for such an operator is the power $(h^2\Delta_g+1)^{-l/2}$. A crucial input, needed to guarantee that microlocal assumptions and conclusions on geodesics going into or coming out of the cone point are preserved under such a conjugation, is that such a power preserves semiclassical wave front sets (see~\S\ref{SsLProp})---which semiclassical ps.d.o.s do indeed; Theorem~\ref{ThmI} provides this input.

\begin{thm}
\label{ThmIProp}
  For $A_h=h^2\Delta_g+1$ as in~\eqref{EqI}, put $\cD_h^w := \cD(A_h^{w/2})$ for $w\in\C$. Let $l\in(-\tfrac{n-2}{2},\tfrac{n-2}{2})$. Suppose $u\in h^{-N}\cD_h^{1+l}$ solves $(h^2\Delta_g-1)u=f$ with $f\in h\cD_h^{-1+l}$ (i.e.\ $h^{-1}f$ has uniformly bounded norm in $\cD_h^{-1+l}$). If $u$ is uniformly bounded in $L^2$ when microlocalized to any open bounded subset of $T^*X^\circ$, then $u\in\cD_h^{1+l}$.
\end{thm}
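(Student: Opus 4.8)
The plan is to conjugate the equation by the complex power $A_h^{l/2}$, thereby removing the weight $l$ and reducing to the unweighted case $l=0$, which follows from the propagation theorem of Baskin--Marzuola \cite{BaskinMarzuolaCone} on the quadratic form domain $\cD_h^1$. What legitimizes this reduction is the structural description of $A_h^{l/2}$ in Theorem~\ref{ThmI} (in the precise form of Theorem~\ref{ThmLF}) together with the identification of the domains $\cD_h^w$ via the functional calculus.

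First I would set up the conjugation. Since $l\in(-\tfrac{n-2}{2},\tfrac{n-2}{2})$, the exponents $1+l$, $-1+l$ and $\pm l$ all lie in $(-\tfrac n2,\tfrac n2)$, so Theorem~\ref{ThmI} applies to $A_h^{\pm l/2}$. Equip $\cD_h^w$ with the norm $\|A_h^{w/2}\cdot\|_{L^2}$; since $\spec A_h\subset[1,\infty)$, complex powers compose without branch ambiguity, so $A_h^{s/2}\colon\cD_h^w\to\cD_h^{w-s}$ is an isometric isomorphism for all $s\in\C$, and $A_h^{s/2}$ commutes with $h^2\Delta_g-1=A_h-2$. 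Put $v:=A_h^{l/2}u$. Then $v\in h^{-N}\cD_h^1$, and
\[
  (h^2\Delta_g-1)v=A_h^{l/2}\bigl((h^2\Delta_g-1)u\bigr)=A_h^{l/2}f=:\tilde f\in h\cD_h^{-1};
\]
since $u=A_h^{-l/2}v$, it suffices to prove $v\in\cD_h^1$.

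The substantive step is to verify that $v$ still satisfies the microlocal hypothesis, i.e.\ is uniformly bounded in $L^2$ after microlocalization to any bounded open $\cO\Subset T^*X^\circ$. Fix $B\in\Psih^0(X^\circ)$ with $\WFh(B)\subset\cO$ and Schwartz kernel supported in $K\times K$, $K\Subset X^\circ$; the task is to bound $Bv=(BA_h^{l/2})u$ in $L^2$. Using Theorem~\ref{ThmI} I would split $A_h^{l/2}=P+R$, where $P$ carries the conormal singularity at the diagonal and is, over $X^\circ$, an ordinary semiclassical pseudodifferential operator of order $l$, while $R$---the contribution of $C(l)$ together with the off-diagonal part of $B(l)$---vanishes to infinite order at $\sface$ and $\dface$. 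With the left variable in $K$, the kernel of $R$ is $O(h^\infty)$ in the interior and $O(h^\infty)$ times a polyhomogeneous factor compatible with the weight of $\cD_h^{1+l}$ near $\pa X$, so $BR\colon h^{-N}\cD_h^{1+l}\to h^\infty L^2$ and $BRu=O(h^\infty)$. For $P$: since $\WFh(B)$ is bounded, $BP=:Q$ is a semiclassical pseudodifferential operator on $X^\circ$ with symbol of order $-\infty$, compactly supported kernel, and $\WFh(Q)$ compact in $T^*X^\circ$; choosing $\tilde B\in\Psih^0(X^\circ)$ of compact semiclassical wavefront set with $\tilde B\equiv 1$ microlocally near $\WFh(Q)$, I write $Qu=Q(\tilde Bu)+Q(I-\tilde B)u$, where $Q(\tilde Bu)$ is uniformly bounded in $L^2$ by the hypothesis on $u$ and $Q(I-\tilde B)$ is residual, hence $Q(I-\tilde B)u=O(h^\infty)$ using only $u\in h^{-N}\cD_h^{1+l}$. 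Thus $Bv$ is uniformly bounded in $L^2$.

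With this in hand, the case $l=0$---that $v\in h^{-N}\cD_h^1$ solving $(h^2\Delta_g-1)v=\tilde f\in h\cD_h^{-1}$ and uniformly $L^2$-bounded microlocally over bounded subsets of $T^*X^\circ$ implies $v\in\cD_h^1$---follows from the Baskin--Marzuola estimate \cite{BaskinMarzuolaCone}, and applying $A_h^{-l/2}$ gives $u\in\cD_h^{1+l}$. I expect the transfer of the microlocal hypothesis to be the main obstacle: one must use the fine structure of $A_h^{l/2}$ from Theorem~\ref{ThmI}---the infinite-order vanishing of its kernel at $\sface$ and $\dface$ off the diagonal, and its uniform boundedness on the weighted domains $\cD_h^w$ near $\pa X$---to ensure that conjugation by $A_h^{\pm l/2}$ creates no semiclassical wavefront set over $T^*X^\circ$ and does not degrade the polynomial-in-$h^{-1}$ a priori control, even though $A_h^{l/2}$ is genuinely nonlocal and, when $l>0$, of positive order. (Away from the characteristic set $\{|\xi|_g=1\}$ over $X^\circ$, semiclassical elliptic regularity for $(h^2\Delta_g-1)u=f$ confines the possible semiclassical wavefront set of $u$ to $\{|\xi|_g=1\}$, a bounded set on which the hypothesis applies directly, which streamlines the argument there.)
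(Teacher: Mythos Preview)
Your proposal is correct and follows essentially the same approach as the paper's proof of Theorem~\ref{ThmLProp}: conjugate by $A_h^{l/2}$ (which commutes with $h^2\Delta_g-1$), verify that the microlocal hypothesis survives conjugation using the structure of $A_h^{l/2}$ as a semiclassical cone ps.d.o., and then invoke the Baskin--Marzuola estimate for the case $l=0$. The only notable difference is packaging: where you unpack the wave front set preservation via the explicit splitting $A_h^{l/2}=P+R$ and a microlocal cutoff $\tilde B$, the paper simply cites the observation~\eqref{EqRCOtherB} from Remark~\ref{RmkRCOther} that $\chi\cdot A_h^{l/2}$ is a semiclassical b-ps.d.o.\ for any interior cutoff $\chi$, which immediately gives $\WFh^0(v)\subset\WFh^0(u)$ over $X^\circ$.
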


See Theorem~\ref{ThmLProp} for a more natural statement on a truncated cone: the assumption on $u$ is then that $u$ bounded when microlocalized away from directions which are outgoing from the cone point, and the conclusion gives uniform bounds on $u$ everywhere; that is, one can propagate uniform control through the cone point to the outgoing directions.

As a consequence of our analysis, and as briefly motivated below, there is a scale of weighted Sobolev spaces naturally associated with the algebra of semiclassical cone operators,
\[
  H_{\cop,h}^{s,\alpha,\tau}(X;|\dd g|) = \bigl(\tfrac{x}{x+h}\bigr)^\alpha(x+h)^\tau H_{\cop,h}^s(X;|\dd g|),\qquad s,\alpha,\tau\in\R.
\]
As a vector space, this is the \emph{fixed} weighted b-Sobolev space $x^\alpha H_b^s(X;|\dd g|)$, but its norm is $h$-dependent and captures precise semiclassical behavior near the cone point. For example, for $k\in\N_0$, an $h$-dependent family of distributions $u$ lies in $H_{\cop,h}^{k,\alpha,\tau}(X;|\dd g|)$ if and only if $(\tfrac{x}{x+h})^{-\alpha}(x+h)^{-\tau}V_1\dots V_j u\in L^2(X;|\dd g|)$ for all $j=0,\ldots,k$ and each $V_i$ of the form $V_i=\tfrac{h}{x+h}W_i$, where $W_i$ is a smooth vector field on $X$ tangent to $\pa X$ (i.e.\ a \emph{b-vector field}). Our final main result identifies the domains $\cD_h^w=\cD(A_h^{w/2})$ of powers $A_h^{w/2}$ uniformly as $h\to 0$ (i.e.\ with uniformly equivalent norms) in terms of these spaces:

\begin{thm}[Brief version of Theorem~\ref{ThmLF}]
\label{ThmIDom}
  Let $A_h=h^2\Delta_g+1$ and put $\cD_h^w=\cD(A_h^{w/2})$. Then for $|\Re w|<\frac{n}{2}$, we have\footnote{In the main body of the paper, we will use a b-density on $X$ rather than the metric density $|\dd g|$, which leads to a shift of the weights in Theorem~\ref{ThmLF} by $\frac{n}{2}$ relative to the present statement.}
  \[
    \cD_h^w=H_{\cop,h}^{\Re w,\Re w,0}(X;|\dd g|).
  \]
  This is false without the restriction on $\Re w$; see \cite[\S3]{MelroseWunschConic} and Remark~\usref{RmkLFDom}.
\end{thm}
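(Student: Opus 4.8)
The plan is to deduce Theorem~\ref{ThmIDom} from the structure theorem, Theorem~\ref{ThmI} (in its precise form, Theorem~\ref{ThmLF}), by a functional-calculus bootstrap. First I would reduce to real $w$: since $A_h=h^2\Delta_g+1\geq 1$ is positive and self-adjoint, $\log A_h$ is self-adjoint, and $A_h^{w/2}=A_h^{(\Re w)/2}A_h^{i(\Im w)/2}$ with $A_h^{i(\Im w)/2}$ unitary and commuting with $A_h^{(\Re w)/2}$; hence $\cD(A_h^{w/2})=\cD(A_h^{(\Re w)/2})$ with identical graph norms, and as $H_{\cop,h}^{\Re w,\Re w,0}$ depends only on $\Re w$ it is enough to prove $\cD(A_h^{s/2})=H_{\cop,h}^{s,s,0}(X;|\dd g|)$, with norms uniformly equivalent in $h\in(0,1)$, for real $s\in(-\tfrac n2,\tfrac n2)$.

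Next I would extract from Theorem~\ref{ThmI} the relevant uniform mapping properties. The operator $A_h^{s/2}$ --- the functional-calculus operator restricted to $\CIc(X^\circ)$ --- has Schwartz kernel $B(s)+C(s)$ on $X^2_{\cop\semi}$, with $B(s)$ conormal of order $s$ to $\Delta_{\cop\semi}$ and rapidly vanishing at $\lb,\rb,\sface$, and $C(s)$ rapidly vanishing at $\sface,\dface$ and polyhomogeneous at the remaining faces with leading exponents governed by the indicial roots of $A_h$ at the cone point. Fed into the boundedness of weighted semiclassical cone operators on the scale $H_{\cop,h}^{s',\alpha',\tau'}$, and using that for $|s|<\tfrac n2$ these leading exponents make the $x$-weight shift of $A_h^{s/2}$ exactly $s$ (the exceptional indicial root $0$, i.e.\ the constant function --- which lies in the Friedrichs domain --- not yet contributing a more singular term), this gives $A_h^{s/2}\colon H_{\cop,h}^{s'+s,\alpha'+s,\tau'}\to H_{\cop,h}^{s',\alpha',\tau'}$ boundedly, uniformly in $h$; in particular $A_h^{s/2}\colon H_{\cop,h}^{s,s,0}\to L^2$ and, by Theorem~\ref{ThmI} with $w$ replaced by $-w$, $A_h^{-s/2}\colon L^2\to H_{\cop,h}^{s,s,0}$, both uniformly in $h$.

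For the bootstrap, note first that $\CIc(X^\circ)$ is dense in $L^2$ and in $H_{\cop,h}^{s,s,0}$ (cut off near $\pa X$, using that b-derivatives of $\chi(x/\epsilon)$ are bounded uniformly in $\epsilon$), and lies in $\cD(A_h^{s/2})$ for all real $s$ (since $\Delta_g$ preserves $\CIc(X^\circ)$, so $\CIc(X^\circ)\subseteq\bigcap_k\cD(\Delta_g^k)$). On $\CIc(X^\circ)$ the calculus operators $A_h^{\pm s/2}$ agree with the functional-calculus ones, which satisfy $A_h^{-s/2}A_h^{s/2}=\Id=A_h^{s/2}A_h^{-s/2}$ there; by the mapping properties just established, both compositions extend to bounded operators on $H_{\cop,h}^{s,s,0}$, resp.\ on $L^2$, hence equal the identity on those spaces. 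A short chase then yields both inclusions with uniformly equivalent norms: if $u\in\cD(A_h^{s/2})$ then $f:=A_h^{s/2}u\in L^2$ and $u=A_h^{-s/2}f\in H_{\cop,h}^{s,s,0}$ with $\|u\|_{H_{\cop,h}^{s,s,0}}\lesssim\|A_h^{s/2}u\|_{L^2}$; conversely if $u\in H_{\cop,h}^{s,s,0}$ then $f:=A_h^{s/2}u\in L^2$, $u=A_h^{-s/2}f\in\cD(A_h^{s/2})$, and $A_h^{s/2}u=f$, so $\|A_h^{s/2}u\|_{L^2}\lesssim\|u\|_{H_{\cop,h}^{s,s,0}}$. (For $\Re w<0$ one reads $\cD_h^w$ in the usual extrapolation sense, as the completion of $\CIc(X^\circ)$ under $u\mapsto\|A_h^{w/2}u\|_{L^2}$.)

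The hard part is not this deduction but its input: pinning down the leading boundary exponents of $B(s)+C(s)$ at the faces of $X^2_{\cop\semi}$, equivalently the mapping behaviour of the normal operator of $A_h^{s/2}$ at the cone point, which is a complex power of the model (exact) cone Laplacian. The restriction $|s|<\tfrac n2$ is precisely what makes this normal operator map between the pure-weight b-Sobolev spaces of weights $s$ and $0$ without an extra term; once $|s|$ reaches $\tfrac n2$, the exceptional indicial root forces a correction and $\cD_h^w\neq H_{\cop,h}^{\Re w,\Re w,0}$, the domain then involving a partial asymptotic expansion at $\pa X$ (cf.\ \cite[\S3]{MelroseWunschConic} and Remark~\ref{RmkLFDom}). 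This is exactly what the precise version, Theorem~\ref{ThmLF}, supplies; alternatively the upper inclusion $\cD_h^w\subseteq H_{\cop,h}^{\Re w,\Re w,0}$ can be obtained microlocally by using full ellipticity of $A_h$ (Definition~\ref{DefOpEll}) to build a parametrix $Q$ in the calculus with $QA_h^{s/2}=\Id$ modulo a residual operator which, for $|s|<\tfrac n2$, maps into $H_{\cop,h}^{s,s,0}$, so that $u=Q(A_h^{s/2}u)-(\text{residual})u\in H_{\cop,h}^{s,s,0}$.
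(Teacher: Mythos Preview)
Your approach is correct in spirit and genuinely different from the paper's, but there is a circularity in your references that should be fixed. You cite ``Theorem~\ref{ThmI} (in its precise form, Theorem~\ref{ThmLF})'' as supplying the Schwartz kernel structure of $A_h^{s/2}$; but Theorem~\ref{ThmI} is the rough version of Theorem~\ref{ThmLF}, whose part~(2) is precisely the domain identification you are trying to prove, and whose part~(3)---the kernel statement for the \emph{functional-calculus} power---is deduced in the paper \emph{from} part~(2). The non-circular input you actually want is Theorem~\ref{ThmL} (and Corollary~\ref{CorLCon}), which gives the kernel structure and precise index sets of the \emph{Seeley} power $(A_h)_w$ defined by the contour integral~\eqref{EqPh}; this is proved independently via Theorem~\ref{ThmPh}. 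You then need the (standard) observation that the Seeley power agrees with the functional-calculus power on $\CIc(X^\circ)$. Once you have that, your bootstrap goes through as written: the mapping properties you need follow from Proposition~\ref{PropRMSob}(1),(3) applied to the two summands in~\eqref{EqL}, and the strict condition $\beta<\inf\Re\cE_\lb=0$ in part~(3) is exactly $s-\tfrac n2<0$, which is your restriction $|s|<\tfrac n2$.

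By contrast, the paper's proof of Theorem~\ref{ThmLF}(2) is more elementary and never invokes the structure of general complex powers. It first proves the case $w=1$ directly via Hardy's inequality (this is where $n\geq 3$ enters concretely), dualises to $w=-1$, interpolates for $w\in[-1,1]$, and then inducts in steps of~$2$ using only the mapping properties of the \emph{resolvent} $A_h^{-1}$ from Theorem~\ref{ThmR} (equivalently Theorem~\ref{ThmL} with $w=-1$) together with the characterisation $\cD_h^w=\{u\in\cD_h^1:A_h u\in\cD_h^{w-2}\}$. Your route is a clean one-shot once the heavy machinery of \S\ref{SC} is in place, and it makes the threshold $|\Re w|<\tfrac n2$ emerge transparently from a single index-set inequality; the paper's route avoids that machinery for this particular statement and instead isolates Hardy's inequality as the analytic core.
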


Our proof of Theorem~\ref{ThmI} uses techniques from geometric microlocal analysis; see Appendix~\ref{SB} for a brief review. Thus, in~\S\ref{SR} we define the space $\Psi_{\cop\semi}(X)$ of \emph{semiclassical cone pseudodifferential operators} on $X$ as distributions on $X^2_{\cop\semi}$, conormal to $\Delta_{\cop\semi}$ smoothly down to $\ff,\tface,\dface$, which vanish to infinite order at $\lb,\rb,\sface$; we also consider a \emph{large calculus} which allows for general polyhomogeneous asymptotics at $\ff,\tface,\lb,\rb$. Typical vector fields $V\in\Psi_{\cop\semi}^1(X)$ are of the form $V=\tfrac{h}{x+h}W$ with $W$ a b-vector field on $X$. Concretely, working in local coordinates $[0,\eps)_x\times\R^{n-1}_y$ near a point in $\pa X$, we have
\begin{equation}
\label{EqIVf}
  \tfrac{h}{x+h}x D_x,\ 
  \tfrac{h}{x+h}D_{y^1},\ \ldots,\ \tfrac{h}{x+h}D_{y^{n-1}}\in\Psi_{\cop\semi}^1(X).
\end{equation}
For fixed $h>0$, these vector fields are smooth multiples of $x D_x$ and $D_{y^j}$, thus typical b-vector fields, while for $x$ bounded away from $0$, they are smooth multiples of $h D_x$ and $h D_{y^j}$, thus typical semiclassical vector fields. Finally, dividing them by $h$ and formally setting $h=0$, they are equal to $D_x$ and $x^{-1}D_y$, thus typical vector fields near a cone point. The span of the vector fields~\eqref{EqIVf} over a suitable space of smooth functions forms a Lie algebra, see Remark~\ref{RmkRCLie}; thus, the present paper can be regarded as a contribution to the investigation of classes of (pseudo)differential operators associated with Lie algebras of vector fields with controlled degenerations; see e.g.\ \cite{MazzeoMelroseHyp,MelroseDiffOnMwc,MazzeoMelroseFibred,AlbinGellRedmanDirac}.

More generally, one can work with weighted operators; we then have
\begin{equation}
\label{EqIElement}
  A_h = h^2\Delta_g + 1 \in \bigl(\tfrac{x}{x+h}\bigr)^{-2}\Psi_{\cop\semi}^2(X).
\end{equation}
Indeed, in the case of a product cone $[0,\infty)_x\times\pa X$ with metric $g_0=\dd x^2+x^2 k$, $k$ a fixed metric on $\pa X$, we have
\begin{equation}
\label{EqIOp}
  h^2\Delta_{g_0} + 1 = (h D_x)^2 - i(n-1)h x^{-1} h D_x + h^2 x^{-2}\Delta_k + 1,\quad D=\frac{1}{i}\partial;
\end{equation}
note then that $h^2 x^{-2}\Delta_k=(\tfrac{x}{x+h})^{-2}\cdot\bigl(\frac{h}{x+h}\bigr)^2\Delta_k$, with $\bigl(\frac{h}{x+h}\bigr)^2\Delta_k$ being an operator created out of the vector fields~\eqref{EqIVf} (with suitably smooth coefficients), similarly for the other terms of~\eqref{EqIOp}. The term $B(w)$ in Theorem~\ref{ThmI} then lies in $(\tfrac{x}{x+h})^{-w}\Psi_{\cop\semi}^w(X)$, while $C(w)$ has differential order $-\infty$ but nontrivial expansions at $\ff,\lb,\rb$, thus lies in the large calculus.

The construction of the inverse of $A_h$ in Theorem~\ref{ThmI} proceeds in three steps: \begin{enumerate*} \item a symbolic parametrix construction, analogous to the elliptic parametrix construction on closed manifolds; \item an improvement of the parametrix at $\ff$ (requiring inversion of a model operator $N_\ff(A)$) which is very similar to the construction of precise parametrices for elliptic b-operators in \cite[\S5]{MelroseAPS}; \item an improvement of the parametrix at $\tface$, which requires inversion of a model operator $N_\tface(A)$ which captures the transition between the elliptic b-behavior at $x=0$, $h>0$ and the conic behavior at $h=0$, $x\to 0+$. Formally, $N_\tface(A)$ is constructed by passing to the coordinates $(h,\hat x)$ with $\hat x=\frac{x}{h}$ and restricting to $h=0$.\end{enumerate*} In the setting of Theorem~\ref{ThmI}, this produces a shifted Laplacian on an \emph{exact} cone,
\[
  N_\tface(A) = \Delta_{g_0}+1 = D_{\hat x}^2 - i(n-1)\hat x^{-1}D_{\hat x} + \hat x^{-2}\Delta_{k(0)} + 1,\quad g_0=\dd\hat x^2+\hat x^2 k(0),
\]
on $[0,\infty)_{\hat x}\times\pa X$. This is a weighted elliptic b-differential operator at the cone point $\hat x=0$, and an elliptic scattering operator \cite{MelroseEuclideanSpectralTheory} at the large end of the cone, and its analysis is a standard application of the b- and scattering  calculi.

Next, following Seeley's approach, we define $A_h^{w/2}$ as the integral
\begin{equation}
\label{EqIContour}
  A_h^{w/2} = \frac{i}{2\pi}\int_\gamma \tilde\lambda^{w/2}(A_h-\tilde\lambda)^{-1}\,\dd\tilde\lambda
\end{equation}
along a suitable (unbounded) contour $\gamma$. Thus, one needs to understand the behavior of $(A_h-\tilde\lambda)^{-1}$ uniformly as $|\tilde\lambda|\to\infty$. Upon factoring out $|\tilde\lambda|^{-1}$ and defining $(\tilde h,\tilde\omega)=(|\tilde\lambda|^{-1/2},\tilde\lambda/|\tilde\lambda|)$, we must therefore control $(\tilde h^2 A_h-\tilde\omega)^{-1}$, the resolvent of a conic operator with \emph{two} semiclassical parameters. The development of a suitable pseudodifferential calculus $\Psi_{\cop\semi\tilde\semi}(X)$ for such operators is the technical heart of the paper. Roughly speaking, the inversion of elliptic elements of $\Psi_{\cop\semi\tilde\semi}(X)$ again requires the inversion of certain model operators, the most complicated one of which is now itself an elliptic semiclassical cone operator; in this sense, this analysis has an iterative character. We give details in~\S\ref{SC}. Theorem~\ref{ThmI} then follows from standard push-forward theorems for (polyhomogeneous) conormal distributions \cite{MelrosePushFwd,MelroseDiffOnMwc}. Obtaining the sharp structure and order of $A_h^{w/2}$ relies on a simple trick relating semiclassical (in $\tilde h$) and large parameter (in $\tilde\lambda$) symbol expansions; as this appears not to have been explicitly described in the literature before, we explain this in~\S\ref{SsPP} for powers of differential operators on closed manifolds.

The structure of the paper is as follows. In \S\ref{SOp}, we define the class of fully elliptic (semiclassical) cone differential operators, following \cite{LoyaConicResolvent,GilConicHeat}. In~\S\ref{SR}, we develop the semiclassical cone ps.d.o.\ calculus; in particular, we give a streamlined proof of (a generalization of) the main result of \cite{LoyaConicResolvent} in~\S\ref{SsRR}. The doubly semiclassical cone calculus is developed in~\S\ref{SC} and subsequently used in~\S\ref{SP} in the construction of complex powers of semiclassical cone operators. The application to complex powers of semiclassical conic Laplacians and propagation estimates is given in~\S\ref{SL}.

The results regarding inversions and complex powers presented here admit straightforward extensions to the case of semiclassical cone operators acting on sections of a smooth vector bundle on $X$. Moreover, one can allow $X$ to have any finite number of cone points. We leave the (notational) details to the reader.

\subsection*{Acknowledgments}

I am grateful to Jared Wunsch, Andr\'as Vasy, and Boris Vertman for helpful conversations, and to Chen Xi for sharing his manuscript \cite{XiConeParametrix} with me. I gratefully acknowledge support from the NSF under Grant No.\ DMS-1955614 and from a Sloan Research Fellowship. Part of this research was conducted during the time I served as a Clay Research Fellow.

\section{Semiclassical cone differential operators}
\label{SOp}

Denote by $X$ a connected compact $n$-dimensional manifold with connected and embedded boundary. Fix a boundary defining function $x\in\CI(X)$, so $\pa X=x^{-1}(0)$, $\dd x\neq 0$ at $\pa X$.

\begin{definition}
\label{DefOp}
  Let $m\in\N_0$. Then
  \[
    \Diff_{\cop,\semi}^m(X) := \sum_{j\leq m} \Bigl(\frac{h}{x}\Bigr)^j\CI\bigl([0,1)_h;\Diffb^j(X)\bigr).
  \]
  For $A\in\Diff_{\cop,\semi}^m(X)$ and $h_0\in(0,1)$, $A_{h_0}\in x^{-m}\Diffb^m(X)$ is the restriction of $A$ to $h=h_0$.
\end{definition}

This space of operators is independent of the choice of $x$. An interesting class of examples is given by operators of the form
\begin{equation}
\label{EqOp}
  A_{h,\omega} := h^m x^{-m} A_\bop - \omega,\quad
  A_\bop\in\Diffb^m(X),\ 
  h\in(0,1],\ \omega\in\C;
\end{equation}
for $m=2$, this class includes the semiclassical conic Laplace operators in Theorem~\ref{ThmI} in view of the expressions~\eqref{EqIOp} (for exact cones) and~\eqref{EqLOp} (for general cones).

Denote local coordinates on $\pa X$ by $y\in\R^{n-1}$. Then in a collar neighborhood $[0,\eps)_x\times\pa X$ of $\pa X$, a general element $A\in\Diff_{\cop,\semi}^m(X)$ takes the form
\begin{equation}
\label{EqOpLocal}
  A = \sum_{k+|\alpha|\leq j\leq m} \Bigl(\frac{h}{x}\Bigr)^j a_{j k \alpha}(h,x,y)(x D_x)^k D_y^\alpha,\quad a_{j k \alpha}(h,x,y)\in\CI([0,1)_h\times[0,\eps)_x\times\R^{n-1}_y).
\end{equation}

There is a standard notion of ellipticity for the b-differential operators $x^m A_h$, $h>0$, namely the nonvanishing of $\sum_{k+|\alpha|=m}a_{m k\alpha}(h,x,y)\xi_\bop^k\eta_\bop^\alpha$ for $(0,0)\neq(\xi_\bop,\eta_\bop)\in\R\times\R^{n-1}$. Similarly, in $x>x_0>0$, i.e.\ away from $\pa X$, the $h$-dependent operator $A$ is a semiclassical operator, $A\in\Diff_h^m(X^\circ)$, for which there is again a standard notion of ellipticity which requires that the semiclassical principal symbol $\sum_{k+|\alpha|=j\leq m} x^{-j}a_{j k\alpha}(0,x,y)\xi^k\eta^\alpha$ be bounded from below in absolute value by $c_0(1+|\xi|+|\eta|)^m$, $c_0>0$, for all $(\xi,\eta)\in\R\times\R^{n-1}$.

Fredholm or invertibility properties of elliptic b-operators depend on a choice of weight for which the b-normal operator is invertible. Note that
\[
  x^m A_h = \sum_{k+|\alpha|\leq m} h^m a_{m k \alpha}(h,x,y)(x D_x)^k D_y^\alpha + \sum_{k+|\alpha|\leq j\leq m-1} x^{m-j}h^j a_{j k\alpha}(h,x,y)(x D_x)^k D_y^\alpha.
\]
hence the normal operator of $x^m A_h$, given by freezing coefficients at $x=0$, only depends on the coefficients $a_{j k\alpha}(h,0,y)$ for $j=m$.

\begin{definition}
\label{DefOpNorm}
  Fix a collar neighborhood of $\pa X$. The b-normal operator of $A\in\Diff_{\cop,\semi}^m(X)$ is then the $h$-dependent operator
  \[
    N_{h,\pa X}(A):=x^{-m}N_{\pa X}(h^{-m}x^m A_h)\in\CI\bigl([0,1);x^{-m}\Diffb^m([0,\infty)_x\times\pa X)\bigr).
  \]
  If $N_{h,\pa X}(A)$ is independent of $h$, we put, for any $h_0\in(0,1)$,
  \[
    N_{\pa X}(A) := N_{h_0,\pa X}(A) \in x^{-m}\Diffb^m([0,\infty)_x\times\pa X).
  \]
\end{definition}

In terms of~\eqref{EqOpLocal}, we have $N_{h,\pa X}(A)=x^{-m}\sum_{k+|\alpha|\leq m} a_{m k\alpha}(h,0,y)(x D_x)^k D_y^\alpha$, which is $h$-independent if and only if the coefficients $a_{m k\alpha}(h,0,y)$ are $h$-independent; all semiclassical cone operators $A$ in this paper will have $h$-independent b-normal operators. For such $A$, and in the notation of Appendix~\ref{SB}, we recall the definition of its \emph{boundary spectrum}:
\[
  \specb(A) := \bigl\{ \sigma \in \C \colon \bigl(x^m N_{\pa X}(A)\bigr)\ftrans(\sigma)\in\Diff^m(\pa X)\ \text{is not invertible on $\CI(\pa X)$} \bigr\}.
\]

Lastly, when both $h$ and $x$ are small, let us introduce the rescaled variable $\hat x=x/h$, then
\[
  A = \sum_{k+|\alpha|\leq j\leq m} \hat x^{-j} a_{j k\alpha}(h,h\hat x,y)(\hat x D_{\hat x})^k D_y^\alpha.
\]
This can be restricted to $h=0$, giving rise to the model operator
\begin{equation}
\label{EqOpNtf}
  N_\tface(A) := \sum_{k+|\alpha|\leq j\leq m} \hat x^{-j} a_{j k\alpha}(0,0,y)(\hat x D_{\hat x})^k D_y^\alpha\quad\text{on}\ \ol{{}^+N\pa X} := [0,\infty]_{\hat x}\times\pa X.
  \end{equation}
Here we compactified $[0,\infty)_{\hat x}$ radially at infinity, with $\hat x^{-1}$ a boundary defining function of $\{\infty\}$. Note that in $\hat x<3$, we have
\[
  \hat x^m N_\tface(A) = \sum_{k+|\alpha|\leq j\leq m} \hat x^{m-j} a_{j k\alpha}(0,0,y)(\hat x D_{\hat x})^k D_y^\alpha \in \Diffb^m([0,3)_{\hat x}\times\pa X),
\]
which is thus a b-differential operator with smooth coefficients (and has normal operator equal to the restriction to $h=0$ of the pullback of $N_{h,\pa X}(x^m A)$ along $(h,\hat x)\mapsto(h,h\hat x)$). On the other hand, for $\hat x>1$, we have
\begin{equation}
\label{EqOpNtf2}
  N_\tface(A) = \sum_{k+|\alpha|\leq j\leq m}\hat x^{k+|\alpha|-j}a_{j k\alpha}(0,0,y)\bigl(\hat x^{-k}(\hat x D_{\hat x})^k\bigr) (\hat x^{-1}D_y)^\alpha,
\end{equation}
which is thus an unweighted $m$-th order scattering operator near $\hat x=\infty$, that is, a finite sum of up to $m$-fold compositions of scattering vector fields $D_{\hat x}$, $\hat x^{-1}D_y$ with smooth (down to $\hat x^{-1}=0$) coefficients. A natural function space for $N_\tface(A)$ to act on is thus:

\begin{definition}
  Fix a cutoff $\chi\in\CIc([0,3)_{\hat x})$, $\chi\equiv 1$ on $[0,2]$, and a smooth volume density $\mu_{\pa X}$ on $\pa X$. Then the space\footnote{This space is called a \emph{cone Sobolev space} in \cite[\S6]{LoyaConicResolvent}.} $H_{\bop,\scop}^{s,\alpha}([0,\infty]_{\hat x}\times\pa X)$ consist of all distributions $u$ such that
  \[
    \chi u \in \hat x^\alpha\Hb^s\bigl([0,3)_{\hat x} \times \pa X; \left|\tfrac{\dd\hat x}{\hat x}\right|\mu_{\pa X}\bigr),\quad
    (1-\chi)u \in \Hsc^s\bigl((1,\infty]_{\hat x}\times \pa X, \hat x^{n-1}|\dd\hat x|\mu_{\pa X}\bigr).
  \]
  Here, for $s\in\N_0$, $\Hb^s$ consists of distributions which remain in $L^2$ upon application of up to $s$ smooth vector fields tangent to $\hat x^{-1}(0)$ (that is, $\hat x D_{\hat x}$ and vector fields on $\pa X$), while $\Hsc^s$ consists of distributions which remain in $L^2$ upon application of up to $s$ smooth scattering vector fields, i.e.\ vector fields of the form $\hat x^{-1}V$ with $V$ tangent to $\hat x=\infty$ (that is, $D_{\hat x}$ and $\hat x^{-1}$ times b-vector fields on $\pa X$).
\end{definition}

The notion of full ellipticity is then:
\begin{definition}
\label{DefOpEll}
  Let $\alpha\in\R$. We say that $A\in\Diff_{\cop,\semi}^m(X)$ with $h$-independent b-normal operator $N_{\pa X}(A)$ is \emph{fully elliptic at weight $\alpha$} if the following conditions hold:
  \begin{enumerate}
  \item\label{ItOpEll1} For all $h>0$, $x^m A_h\in\Diffb^m(X)$ is elliptic as a b-differential operator on $X$.
  \item\label{ItOpEll2} Restricted to $X^\circ$, $A\in\Diffh(X^\circ)$ is elliptic as a semiclassical differential operator on $[0,1)_h\times X^\circ$.
  \item\label{ItOpEll3} $\alpha\notin-\Im\specb(A)$, and $N_\tface(A)\colon H_{\bop,\scop}^{s,\alpha}(\ol{{}^+N\pa X})\to H_{\bop,\scop}^{s-m,\alpha-m}(\ol{{}^+N\pa X})$ is invertible for some $s\in\R$, and elliptic as a b-scattering operator. (Restricted to ${}^+N\pa X=[0,\infty)_{\hat x}\times\pa X$, $\hat x^m N_\tface(A)$ is an elliptic b-differential operator, and the scattering principal symbol of $N_\tface(A)$ at $\hat x^{-1}(\infty)$ is elliptic, meaning in terms of~\eqref{EqOpNtf2} that $|\sum_{k+|\alpha|=j\leq m} a_{j k\alpha}(0,0,y)\xi_\scop^k\eta_\scop^\alpha|\geq c_0(1+|\xi_\scop|+|\eta_\scop|)^m$, $c_0>0$, for all $(\xi_\scop,\eta_\scop)\in\R\times\R^{n-1}$.)
  \end{enumerate}
\end{definition}

\begin{rmk}
\label{RmkOpEll}
  Condition~\eqref{ItOpEll3} is equivalent to just the invertibility of $N_\tface(A)$: by standard elliptic b-theory, the invertibility of $N_\tface(A)$ in part~\eqref{ItOpEll3} in fact \emph{implies} $\alpha\notin-\Im\specb(A)$ (see \cite[\S6.2]{MelroseAPS}), and the symbolic ellipticity follows by oscillatory testing.
\end{rmk}

See~\S\ref{SsRC} for an cleaner formulation of conditions~\eqref{ItOpEll1}--\eqref{ItOpEll3} and the definition~\eqref{EqOpNtf} of the normal operator, using in particular a notion of principal symbol which captures the ellipticity of $A$ in the b-, semiclassical, and b-scattering senses. See~\S\ref{SL} for the detailed analysis of shifted conic Laplacians $h^2\Delta_g+1$ (with $m=2$) and the verification of the conditions of Definition~\ref{DefOpEll} in this context.

\begin{rmk}
  We are \emph{not} studying operators which are elliptic in the sense that $x^m A$ is an elliptic element of $\Diffbh^m(X)$, such as $x^{-2}(h^2\Delta_\bop+1)$, $m=2$. For such an operator, the boundary spectrum depends nontrivially on $h$, and the Schwartz kernel of its inverse fails to be polyhomogeneous at $[0,1)_h\times(\lb\cup\rb)$. See also \cite{LoyaBResolvent}.
\end{rmk}

\section{Semiclassical cone calculus and semiclassical resolvent}
\label{SR}

Fix an operator $A\in\Diff_{\cop,\semi}^m(X)$ which is fully elliptic at weight $\alpha\in\R$ in the sense of Definition~\ref{DefOpEll}. In this section, we shall give a precise description the semiclassical resolvent $A_h^{-1}$. The main result is Theorem~\ref{ThmR} below, describing $A_h^{-1}$ as an element of the (large) semiclassical cone calculus $\Psi_{\cop\semi}(X)$. Complex powers of $A_h$ will lie in the same calculus, as we will show in~\S\ref{SsPh}. The associated scale of Sobolev spaces is defined in~\S\ref{SsRM}.

We remark that in the special case $A=h^m A_\cop-\omega$, $A_\cop=x^{-m}A_\bop$, $A_\bop\in\Diffb^m(X)$ as in~\eqref{EqOp}, we have $(h^{-m}A_h)^{-1} = (A_\cop-\lambda)^{-1}$ for $\lambda=h^{-m}\omega$, which is the resolvent of an operator in a calculus of cone operators with large parameter $\lambda$. Now, large parameter ps.d.o.s are slightly more precise than semiclassical ps.d.o.s in that the latter do not encode the joint symbolic behavior in $(\zeta,\lambda)$ (with $\zeta$ denoting a suitable `cone momentum') and rather only keep track of symbolic behavior in $\zeta$ and powers of $h\sim|\lambda|^{-1/m}$ separately. (See also \cite[\S2.1]{VasyMicroKerrdS} for more on the relationship.) Therefore, Loya's work \cite{LoyaConicResolvent} gives a slightly more precise description of $A^{-1}$, though the precision lost here is easily recovered, cf.\ \S\ref{SsPP}. The benefit of working with semiclassical ps.d.o.s is the geometric simplicity of their Schwartz kernels: they are conormal distributions on a suitable double space, here a resolution of $[0,1)_h\times X^2$. By contrast, the Fourier transform in $\zeta$ does \emph{not} identify the space of joint symbols in $(\zeta,\lambda)$ with a parameterized (by $\lambda$) space of conormal distributions. By working semiclassically, we can thus streamline the arguments of~\cite{LoyaConicResolvent}, yet maintaining the precision of~\cite{LoyaConicPower} in the description of complex powers in~\S\ref{SP}. We will also simplify the treatment of the normal operator $N_\tface(A)$ arising in~\eqref{EqOpNtf} compared to \cite{LoyaConicResolvent} by analyzing it within the b-scattering calculus, see in particular~\eqref{EqRCNtf}.

\subsection{Definition of the semiclassical cone calculus}
\label{SsRC}

We denote by $X$ a connected compact $n$-dimensional manifold with connected embedded boundary $\pa X$.

\subsubsection{Double space; Schwartz kernels}

Recall the definition of the \emph{b-double space}
\begin{equation}
\label{EqRCX2b}
  X^2_\bop := [X^2;(\pa X)]^2,
\end{equation}
whose boundary hypersurfaces are denoted $\lb_\bop$ (the lift of $\pa X\times X$), $\rb_\bop$ (the lift of $X\times\pa X$), and $\ff_\bop$ (the front face). The b-diagonal $\Delta_\bop\subset X^2_\bop$ is the lift of the diagonal $\Delta_X\subset X^2$.

\begin{definition}
\label{DefRCDouble}
  The \emph{semiclassical cone double space} is
  \[
    X^2_{\cop\semi} := \left[ [0,1)_h\times X^2_\bop; \{0\}\times\ff_\bop; \{0\}\times\Delta_\bop \right].
  \]
  Its boundary hypersurfaces are denoted $\lb$ (lift of $[0,1)_h\times\lb_\bop$), $\ff$ (lift of $\{0\}\times\ff_\bop$), $\rb$ (lift of $[0,1)_h\times\rb_\bop$), $\tface$ (`transition face', front face of the first blow-up), $\dface$ (`diagonal face', front face of the second blow-up), and $\sface$ (`semiclassical face', lift of $h=0$). By $\Delta_{\cop\semi}$ we denote the lift of $[0,1)_h\times\Delta_\bop$. See Figure~\ref{FigRCDouble}.
\end{definition}

We denote by $\rho_\lb$, $\rho_\ff$, etc.\ defining functions of $\lb$, $\ff$, etc. Denote by $\pi_\bop\colon X^2_{\cop\semi}\to X^2_\bop$ the lift of the projection $[0,1)_h\times X^2_\bop\to X^2_\bop$. The space of semiclassical cone ps.d.o.s is then:

\begin{definition}
\label{DefRCPsdo}
  Let $m\in\R$. Then\footnote{Locally, such elements are Fourier transforms of symbols of $m$; the shift of the order by $\tfrac14$ is due to the standard normalization of conormal distributions, see \cite[\S2.4]{HormanderFIO1}.}
  \[
    \Psi_{\cop\semi}^m(X) := \left\{ \kappa\in I^{m-\mfrac14}\left(X^2_{\cop\semi};\Delta_{\cop\semi};\rho_\dface^{-n}\pi_\bop^*\bigl(\Omegab^\mhalf(X^2_\bop)\bigr)\right) \colon \kappa\equiv 0\ \text{at}\ \lb\cup\rb\cup\sface \right\},
  \]
  where `$\equiv$' means equality of Taylor series; that is, $\kappa$ vanishes to infinite order at $\lb$, $\rb$, and $\sface$.
  If $\cE_\lb,\cE_\ff,\cE_\rb,\cE_\tface\subset\C\times\N_0$ are index sets and $\cE=(\cE_\lb,\cE_\ff,\cE_\rb,\cE_\tface)$, we set
  \[
    \Psi_{\cop\semi}^{-\infty,\cE}(X) := \cA_\phg^\cE(X^2_{\cop\semi}),
  \]
  where $\cA_\phg^\cE(X^2_{\cop\semi})$ consists of polyhomogeneous distributions with specified index sets at $\lb,\ff,\rb,\tface$, and with index set $\emptyset$ (not made explicit in the notation) at $\dface$ and $\sface$.
\end{definition}

Schwartz kernels of elements of $\Psi_{\cop\semi}^{-\infty,\cE}(X)$ are pullbacks to $X^2_{\cop\semi}$ of polyhomogeneous distributions on the simpler space
\begin{equation}
\label{EqRCPsdoRes}
  X^2_{\cop\semi,\infty} = \left[ [0,1)_h\times X^2_\bop; \{0\}\times\ff_\bop \right]
\end{equation}
vanishing to infinite order at the lift of $h=0$. The lift of $[0,1)_h\times\Delta_\bop$ to $X^2_{\cop\semi,\infty}$ is denoted
\begin{equation}
\label{EqRCPsdoResDiag}
  \Delta_{\cop\semi,\infty} \subset X^2_{\cop\semi,\infty}.
\end{equation}

To make Definition~\ref{DefRCPsdo} concrete for elements of $\Diff_{\cop,\semi}(X)$, let us work in local coordinates $(x,y)$ on $X$ lifted to the left factor of $X^2$, and with $(x',y')$ denoting the lifts to the right factor. We can use $(h,x',s=x/x',y,y')$ as local coordinates on $[0,1)_h\times(X_\bop^2\setminus\rb)$ in which the Schwartz kernel of, say, $A_h=h^m x^{-m}(x D_x)^j P_{m-j}(y,D_y)$, $P_{m-j}\in\Diff^{m-j}(\pa X)$, acting on b-half-densities, is given by
\begin{subequations}
\begin{equation}
\label{EqRCEx1}
  A_h = h^m (x')^{-m}s^{-m}(s D_s)^j P_{m-j}(y,D_y)\bigl(\delta(s-1)\delta(y-y')\bigr)\left|\frac{\dd s}{s}\frac{\dd x'}{x'}\dd y\,\dd y'\right|^{\mfrac12}.
\end{equation}
Near $\tface\cap\ff$ and in coordinates $(h,\hat x'=\frac{x'}{h},s,y,y')$, this is equal to
\begin{equation}
\label{EqRCEx2}
  A_h = (\hat x')^{-m}s^{-m}(s D_s)^j P_{m-j}(y,D_y)\bigl(\delta(\log s)\delta(y-y')\bigr)\left|\frac{\dd s}{s}\frac{\dd\hat x'}{\hat x'}\dd y\,\dd y'\right|^{\mfrac12},
\end{equation}
with $\hat x'$ a local defining function of $\ff$. Near $\tface\cap\dface$ and in coordinates $(\hat h=\tfrac{h}{x'},x',s_h=\hat h^{-1}\log s,y,Y_h=\frac{y-y'}{\hat h})$, this is
\begin{equation}
\label{EqRCEx3}
  A_h = e^{-m\hat h s_h}D_{s_h}^j \hat h^{m-j}P_{m-j}(y,\hat h^{-1}D_{Y_h})\bigl(\delta(s_h)\delta(Y_h)\bigr)\cdot \hat h^{-n}\left|\frac{\dd s}{s}\frac{\dd\hat x'}{\hat x'}\dd y\,\dd y'\right|^{\mfrac12}.
\end{equation}
\end{subequations}
The operator $A_h$ is thus an element of $\rho_\ff^{-m}\Psi_{\cop\semi}^m(X)$. This implies the relationship
\begin{equation}
\label{EqRCDiffPsdo}
  \Diff_{\cop,\semi}^m(X) \subset \rho_\ff^{-m}\Psi_{\cop\semi}^m(X) = \bigl(\tfrac{x}{x+h}\bigr)^{-m}\Psi_{\cop\semi}^m(X).
\end{equation}
One could use different normalizations in the definition of $\Psi_{\cop\semi}^m(X)$ (e.g.\ by multiplying the Schwartz kernels by $\rho_\ff^{-m}$); we choose the present normalization as it directly matches the familiar b-calculus for $h$ bounded away from $0$ (see Remark~\ref{RmkRCOther} below), and as it leads to simpler definitions of symbol and normal operator maps later on.

From the calculations~\eqref{EqRCEx1}--\eqref{EqRCEx3}, one can also deduce that the vector fields $\frac{h}{h+x}x D_x$, $\frac{h}{h+x}D_{y^j}$ from~\eqref{EqIVf} are nondegenerate elements of $\Psi_{\cop\semi}^1(X)$, that is, their principal symbols are nonzero linear functions in every fiber of $N^*\Delta_{\cop\semi}$. Thus, a choice of boundary defining function $x\in\CI(X)$ gives a bundle isomorphism $N^*\Delta_{\cop\semi}\cong[[0,1)_h\times\Tb^*X;\{0\}\times\Tb_{\pa X}^*X]$, given by identifying the dual bundles via mapping $h$-independent b-vector fields $V\in\Vb(X)$ to the lifts to the left factor in $X^2_{\cop\semi}$ of $\frac{h}{h+x}V$. See Remark~\ref{RmkRCLie} below for a better, invariant, point of view.

We stress that $\Psi_{\cop\semi}(X)$ is \emph{not} a resolution of the algebra $\Psibh(X)$ of semiclassical b-ps.d.o.s: the semiclassical parameter of the latter is $h$, compared to $\frac{h}{h+x}$ for semiclassical cusp ps.d.o.s. The fact that $\frac{h}{h+x}$ does not vanish at $\tface$ causes the failure of the $\tface$-normal operator homomorphism to map into a commutative algebra, as discussed below. (By contrast, the vanishing of $\frac{h}{h+x}$ at $\dface$ is the origin of the leading order commutativity at $\dface$ captured by~\eqref{EqRCSy}.) One may thus think of $\Psi_{\cop\semi}^m(X)$ as consisting of operators of the form
\[
  a\bigl(h,x,y,\tfrac{h}{h+x}x D_x,\tfrac{h}{h+x}D_y\bigr),
\]
where $a(h,x,y,\xi,\eta)$ is a symbol of order $m$ in $(\xi,\eta)$ with smooth dependence on the point $z=(h,x,y)\in X_{\cop\semi}$, where
\begin{equation}
\label{EqRCSingle}
  X_{\cop\semi}:=[[0,1)_h\times X;\{0\}\times\pa X]
\end{equation}
is the \emph{semiclassical cone single space}, discussed in detail in~\S\ref{SsRM}.

\begin{rmk}
\label{RmkRCLie}
  A simple calculation shows that the span of $\frac{h}{h+x}\Vb(X)$ over $\CI(X_{\cop\semi})$ is a Lie algebra (which is independent of the choice of $x$) which one could reasonably call $\cV_{\cop\semi}(X_{\cop\semi})$. (The corresponding algebra of differential operators contains $(\frac{x}{x+h})^m\Diff_{\cop,\semi}^m(X)$ for all $m$, cf.\ \eqref{EqRCDiffPsdo}.) This is then the space of smooth sections of a natural vector bundle ${}^{\cop\semi}T X_{\cop\semi}\to X_{\cop\semi}$; the corresponding cotangent bundle ${}^{\cop\semi}T^*X_{\cop\semi}$ is then naturally isomorphic to $N^*\Delta_{\cop\semi}$. However, operators occurring in practice are typically expressed as in Definition~\ref{DefOp} rather than in terms of the vector fields~\eqref{EqIVf}, hence we do not develop this invariant point of view further here.
\end{rmk}

\begin{rmk}
\label{RmkRCOther}
  The relationship of the semiclassical cone calculus with other ps.d.o.\ calculi is the following. Let $A\in\Psi_{\cop\semi}^m(X)+\Psi_{\cop\semi}^{-\infty,\cE}(X)$, identified with its Schwartz kernel, and let $\phi\in\CI([0,\infty))$ be identically $0$ near $0$.
  \begin{enumerate}
  \item In $h>0$, $A$ is a smooth family of b-pseudodifferential operators,
    \[
      \phi(h)A \in \CI\left((0,1)_h;\Psib^m(X)+\Psib^{-\infty,\cE}(X)\right).
    \]
  \item\label{ItRCOtherB} Localizing away from the preimage of $x=0$ or $x'=0$, $A$ is a semiclassical b-pseudodifferential operator,
    \begin{equation}
    \label{EqRCOtherB}
    \begin{split}
      \phi(x)A &\in x^\infty\Psibh^m(X) + h^\infty\Psibh^{-\infty,(\emptyset,\emptyset,\cE_\rb)}(X), \\
      \phi(x')A &\in x^\infty\Psibh^m(X) + h^\infty\Psibh^{-\infty,(\cE_\lb,\emptyset,\emptyset)}(X).
    \end{split}
    \end{equation}
    Here, $\Psibh$ consists of distributions conormal to the lift of $[0,1)_h\times\Delta_\bop$ to the semiclassical b-double space $[[0,1)_h\times X^2_\bop;\{0\}\times\Delta_\bop]$, see \cite[Appendix~A]{HintzVasyKdSStability}.
  \item As a consequence of~\eqref{ItRCOtherB}, $\phi(x)\phi(x')A\in\Psih^m(X^\circ)$ is a semiclassical ps.d.o.
  \end{enumerate}
\end{rmk}

\subsubsection{Symbol and normal operator maps}
\label{SssRCSymbol}

There are a number of symbol and normal operator maps for semiclassical cone ps.d.o.s. The high frequency principal symbol is valued in $(S^m/S^{m-1})(N^*\Delta_{\cop\semi})$. It captures $A\in\Psi_{\cop\semi}^m(X)$ modulo $\Psi_{\cop\semi}^{m-1}(X)$. As usual for semiclassical operators, there is an additional (commutative) symbol which captures $A$ modulo $\rho_\dface\Psi_{\cop\semi}^m(X)$. For a fixed choice of $\rho_\dface$, it is invariantly obtained by restricting $\kappa\in\Psi_{\cop\semi}^m(X)$ to $\dface$ which is naturally identified with the radial compactification $\ol{\Tb}X$, and Fourier transforming in the fibers of $\ol{\Tb}X$ (using the fiber density induced by the half-density of the Schwartz kernel to integrate). Since this computes the restriction of the full symbol of $\kappa$ to $\dface$, we combine this symbol map with the high frequency principal symbol to the principal symbol\footnote{Over $x>0$, this is the usual semiclassical principal symbol ${}^\semi\sigma_m\colon\Psih^m(X^\circ)\to(S^m/h S^{m-1})(N^*\Delta_{X^\circ})$.}
\[
  \sigmach_m \colon \Psi_{\cop\semi}^m(X) \to (S^m/\rho_\dface S^{m-1})(N^*\Delta_{\cop\semi}).
\]
This fits into the short exact sequence
\begin{equation}
\label{EqRCSy}
  0 \to \rho_\dface\Psi_{\cop\semi}^{m-1}(X) \to \Psi_{\cop\semi}^m(X) \xra{\sigmach_m} (S^m/\rho_\dface S^{m-1})(N^*\Delta_{\cop\semi}) \to 0.
\end{equation}

Next, restriction to $\ff\cong[0,1)_h\times\ff_\bop$ gives the usual b-normal operator, with smooth dependence on the parameter $h$,
\begin{equation}
\label{EqRCNff}
  N_\ff \colon \Psi_{\cop\semi}^m(X) \to \CI\bigl([0,1)_h; \Psi_{\bop,I}^m({}^+N\pa X) \bigr),
\end{equation}
where ${}^+N\pa X$ is the inward pointing normal bundle of $X$ (including the zero section), and $\Psi_{\bop,I}$ denotes the space of b-ps.d.o.s whose Schwartz kernel is invariant under the lift of the diagonal action of $\R_+$ on ${}^+N\pa X\times{}^+N\pa X$ to $({}^+N\pa X)^2_\bop$. (The definition~\eqref{EqRCNff} uses that there is a natural diffeomorphism of $\ff_\bop$ and the front face of $({}^+N\pa X)^2_\bop$, and that there is a natural restriction map $\Omegab^\mhalf(X^2_\bop)|_{\ff_\bop}\cong\Omegab^\mhalf(\ff_\bop)$, see~\cite[Equation~(4.90)]{MelroseAPS}.) The corresponding short exact sequence is
\[
  0 \to \rho_\ff\Psi_{\cop\semi}^m(X) \to \Psi_{\cop\semi}^m(X) \xra{N_\ff} \CI\bigl([0,1)_h;\Psi_{\bop,I}^m({}^+N\pa X)\bigr) \to 0.
\]

\begin{rmk}
\label{RmkRCEll}
  For $A\in\Diff_{\cop,\semi}^m(X)$, the symbolic ellipticity conditions in all three parts of Definition~\ref{DefOpEll} taken together are equivalent to the ellipticity of the principal symbol $\sigmach_m(\rho_\ff^m A)$, as follows from an inspection of~\eqref{EqRCEx1}--\eqref{EqRCEx3}. Moreover, the first part of condition~\eqref{ItOpEll3} of Definition~\ref{DefOpEll} implies the invertibility of $N_\ff(\rho_\ff^m A)$ on weighted b-Sobolev spaces on $\ol{{}^+N\pa X}$ with weight $-\alpha$. (Note that in a trivialization of ${}^+N\pa X$ via the choice of a boundary defining function, $N_\ff(\rho_\ff^m A)$ is a smooth positive multiple of a conjugation of $N_{\pa X}(A)$ by a smooth nonzero function on $[0,1)\times\pa X$.) Together, this gives a symbolic characterization of full ellipticity.
\end{rmk}

The third and final model operator arises by restriction to $\tface$ as in~\eqref{EqOpNtf}; to obtain a convenient description, we shall first show that $\tface$ is, in a natural fashion, the double space for a ps.d.o.\ algebra consisting of operators which are b-operators at $\hat x=0$ and scattering operators at $\hat x=\infty$ in the notation of~\eqref{EqOpNtf}. The underlying manifold for this algebra will be the compactification $\ol{{}^+N\pa X}$, defined in a trivialization ${}^+N\pa X\cong[0,\infty)_{\hat x}\times\pa X$ as $[0,\infty]\times\pa X$, where $[0,\infty]=([0,\infty)_{\hat x}\sqcup[0,\infty)_{\hat h})/\sim$ with equivalence relation $0<\hat x\sim\hat h=\hat x^{-1}$. Its two boundary hypersurfaces are $\pa_0\ol{{}^+N\pa X} =\hat x^{-1}(0)$, $\pa_\infty\ol{{}^+N\pa X} = \hat h^{-1}(0)$.

We first observe then that
\[
  \tface' := \text{front face of}\ \left[[0,1)\times X^2_\bop;\{0\}\times\ff_\bop\right] \cong \text{front face of}\ \left[[0,\infty)\times({}^+N\pa X)^2_\bop;\{0\}\times\ff_\bop\right]
\]
is canonically diffeomorphic to the radial compactification $\ol{({}^+N\pa X)^2_\bop}$.\footnote{One can take this as the definition of the radial compactification, similarly to~\cite[Equation~(4.92)]{MelroseAPS}. Alternatively, if ${}^+N\pa X\cong[0,\infty)_x\times\pa X$ is a local trivialization, giving $({}^+N\pa X)^2\cong[0,\infty)_x\times[0,\infty)_{x'}\times(\pa X)^2$, we have $({}^+N\pa X)^2_\bop\cong[0,\infty)_r\times(\Sph^1_{+ +})_\theta\times(\pa X)^2$ where $r=|(x,x')|$, $\theta=(x,x')/|(x,x')|$ (lying in the first quadrant), and then $\ol{({}^+N\pa X)^2_\bop}=[0,\infty]_r\times(\Sph^1_{+ +})_\theta\times(\pa X)^2$.} The latter has 4 boundary hypersurfaces which we denote $\lb',\ff',\rb',\ff'_\infty$: the first three are the closures of the left boundary, front face, and right boundary of $({}^+N\pa X)^2_\bop$ in the radial compactification, and $\ff'_\infty$ is the boundary `at infinity' (i.e.\ the complement of $({}^+N\pa X)^2_\bop$ in the radial compactification), see the left panel of Figure~\ref{FigRCtfM1}. Denote by $\Delta_\bop'$ the closure of the lift of the diagonal in $({}^+N\pa X)^2$ to $\ol{({}^+N\pa X)^2_\bop}$.

\begin{figure}[!ht]
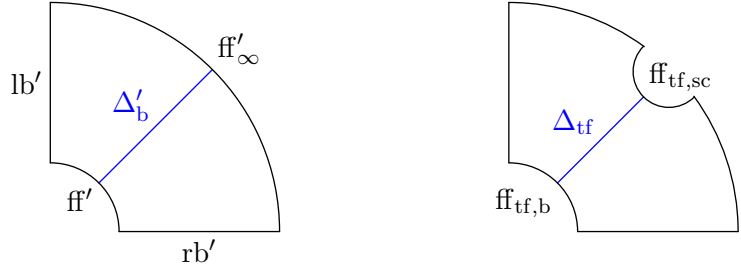

\inclfig{FigRCtfM1}
\caption{\textit{Left:} the radial compactification $\ol{({}^+N\pa X)^2_\bop}\cong\tface'$. \textit{Right:} its blow-up along the b-diagonal at infinity is naturally diffeomorphic to the transition face $\tface$.}
\label{FigRCtfM1}
\end{figure}

Now, $\tface\subset X^2_{\cop\semi}$ arises by blowing up the intersection of $\tface'$ with the lift of $\{0\}\times\Delta_\bop$. We have thus established a natural diffeomorphism
\begin{equation}
\label{EqRCtfModel}
  \tface \cong (\ol{{}^+N\pa X})^2_{\bop,\scop} := \left[ \ol{({}^+N\pa X)^2_\bop}; \Delta_\bop'\cap\ff'_\infty \right]
\end{equation}
of $\tface$ with the double space for operators which are b-operators at $\pa_0\ol{{}^+N\pa X}$ and scattering operators at $\pa_\infty\ol{{}^+N\pa X}$.\footnote{The definition~\eqref{EqRCtfModel} is more economical than the standard definition of the b-scattering-double space, which is the blow-up of the b-double space of $\ol{{}^+N\pa X}$, defined as $(\ol{{}^+N\pa X})^2_\bop=[(\ol{{}^+N\pa X})^2;(\pa_0\ol{{}^+N\pa X})^2,(\pa_\infty\ol{{}^+N\pa X})^2]$, at the intersection of the lifted diagonal with $(\pa_\infty\ol{{}^+N\pa X})^2$; let us call the resulting space the \emph{extended b-scattering-double space} ${}'(\ol{{}^+N\pa X})^2_{\bop,\scop}$. This is the blow-up of $(\ol{{}^+N\pa X})^2_{\bop,\scop}$ at (the lifts of) $\lb'\cap\ff'_\infty$ and $\rb'\cap\ff'_\infty$, i.e.\ the blow-up of the top left and bottom right corners in the right panel of Figure~\ref{FigRCtfM1}. But Schwartz kernels of our large b-scattering calculus vanish to infinite order at the lift of $\ff'_\infty$, hence passage to the extended double space ${}'\ol{({}^+N\pa X)}^2_{\bop,\scop}$, and demanding infinite order vanishing of Schwartz kernels at the two new front faces, does not enlarge the space of b-scattering operators. We remark, with foresight, that if we consider the \emph{extended} semiclassical cone double space ${}'X^2_{\cop\semi}$, defined in Definition~\ref{DefRCExt} below, its transition face, which arises from $\tface$ by blowing up $\tface\cap(\sface\cap\lb)$ and $\tface\cap(\sface\cap\rb)$, is naturally diffeomorphic to ${}'(\ol{{}^+N\pa X})^2_{\bop,\scop}$; and it is this latter double space which is a more convenient double space for the study of compositions of b-scattering ps.d.o.s; in the semiclassical cone context, this foreshadows Lemma~\ref{LemmaRC3Proj} and Proposition~\ref{PropRCComp} below.} Denote by $\Delta_\tface$ the b-scattering-diagonal, i.e.\ the lift of $\Delta_\bop'$ to $\tface$; denote furthermore the lift of $\ff'$ by $\ff_{\tface,\bop}$ (b-front face), and the lift of $\Delta_\bop'\cap\ff'_\infty$ by $\ff_{\tface,\scop}$ (sc-front face), see the right panel of Figure~\ref{FigRCtfM1}. We remark that the transition from b- to scattering behavior on $\tface$ is similar to the analysis at the transition face in \cite{GuillarmouHassellResI} (denoted $\bface_0$ there).

Let $\rho_{\tface,\scop}\in\CI(\tface)$ denote a defining function of $\ff_{\tface,\scop}$. Schwartz kernels of operators in $\Psi_{\bop,\scop}^m(\ol{{}^+N\pa X})$ are characterized as those distributions which lift to be distributional sections of $\rho_{\tface,\scop}^{-n}\cdot\Omegab^\mhalf\bigl(\ol{({}^+N\pa X)^2_\bop}\bigr)$, conormal of order $m$ to $\Delta_\tface$, which vanish to infinite order at all boundary hypersurfaces other than $\ff_{\tface,\bop}$ and $\ff_{\tface,\scop}$. Schwartz kernels of elements of the \emph{large} b-scattering calculus $\Psi_{\bop,\scop}^{m,(\cE_\lb,\cE_\ff,\cE_\rb)}(\ol{{}^+N\pa X})$ are sums of such distributions and elements of
\[
  \Psi_{\bop,\scop}^{-\infty,(\cE_\lb,\cE_\ff,\cE_\rb)}(\ol{{}^+N\pa X}) = \cA_\phg^{(\cE_\lb,\cE_\ff,\cE_\rb)}\left((\ol{{}^+N\pa X})^2_{\bop,\scop}\right),
\]
where the index sets refer to the lifts of the boundary hypersurfaces $\lb'$, $\ff'$, $\rb'$ of $\ol{({}^+N\pa X)^2_\bop}$, while the index set at each of the other boundary hypersurfaces ($\ff_{\tface,\scop}$ and the lift of $\ff'_\infty$) is the trivial index set $\emptyset$.

Using that there is a natural restriction map $\rho_\dface^{-n}\pi_\bop^*(\Omegab^\mhalf(X^2_\bop))|_{\tface'}\cong\rho_{\tface,\scop}^{-n}\Omegab^\mhalf(\tface')$, we thus get a normal operator map
\begin{equation}
\label{EqRCNtf}
  0 \to \rho_\tface\Psi_{\cop\semi}^m(X) \to \Psi_{\cop\semi}^m(X) \xra{N_\tface} \Psi_{\bop,\scop}^m(\ol{{}^+N\pa X}) \to 0.
\end{equation}

\begin{rmk}
  The structural reason for the appearance of the b-scattering calculus in~\eqref{EqRCNtf} is the fact that the vector fields in~\eqref{EqIVf} restrict to b-scattering vector fields at the front face of the space $X_{\cop\semi}$ defined in~\eqref{EqRCSingle}; indeed, in terms of $\hat x=x/h$, they are equal to $(1+\hat x)^{-1}\hat x D_{\hat x}$ and $(1+\hat x)^{-1}D_{y^j}$, which are indeed b-vector fields near $\hat x=0$ and scattering vector fields near $\hat x=\infty$.
\end{rmk}

\subsection{Composition}

To describe compositions, we will define a suitable triple space. First, recall the definition of the \emph{b-triple space},
\begin{equation}
\label{EqRCX3b}
  X^3_\bop := [X^3;(\pa X)^3;X\times(\pa X)^2;\pa X\times X\times\pa X;(\pa X)^2\times X].
\end{equation}
The lift of $(\pa X)^3$ is denoted $\fff_\bop$. Denote the stretched projections by
\[
  \pi_{\bop,F},\ \pi_{\bop,S},\ \pi_{\bop,C} \colon X^3_\bop\to X^2_\bop,
\]
defined by continuous extension from $(X^\circ)^3\to(X^\circ)^2$, projecting onto the first two factors, last two factors, and first and third factor, respectively; they are b-fibrations. We denote the preimages of $\Delta_\bop$ under these maps by $\Delta_{\bop,\bullet}$, $\bullet=F,S,C$; their intersection is the triple diagonal $\Delta_{\bop,3}$. Note furthermore that $\pi_{\bop,\bullet}^{-1}(\ff_\bop)$ is the union of two boundary p-submanifolds,
\[
  \pi_{\bop,\bullet}^{-1}(\ff_\bop)=\fff_\bop\cup\ff_{\bop,\bullet}, \quad \bullet=F,S,C,
\]
where $\ff_{\bop,\bullet}$ is the lift of the appropriate one of the three submanifolds blown up in~\eqref{EqRCX3b}. We denote the lift of $X^2\times\pa X$, $X\times\pa X\times X$, $\pa X\times X^2$ by $\bface_{\bop,F}$, $\bface_{\bop,C}$, $\bface_{\bop,S}$, respectively. See Figure~\ref{FigRCb3}.

\begin{figure}[!ht]
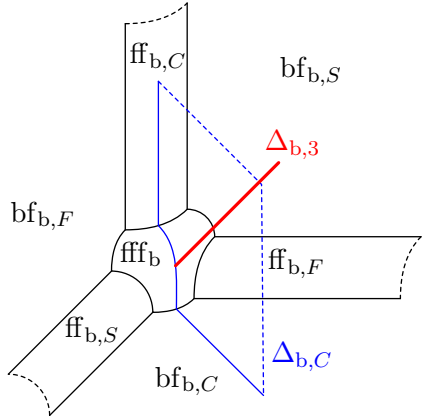

\inclfig{FigRCb3}
\caption{The b-triple space $X^3_\bop$, see~\eqref{EqRCX3b}, and its boundary hypersurfaces, including one of the three lifted diagonals $\Delta_{\bop,\bullet}$, and the triple diagonal $\Delta_{\bop,3}$.}
\label{FigRCb3}
\end{figure}

\begin{definition}
\label{DefRCExt}
  Define the collections of submanifolds of $[0,1)_h\times X^3_\bop$
  \[
    \cF_\bop=\{\{0\}\times\ff_{\bop,\bullet}\},\quad
    \cB_\bop=\{\{0\}\times\bface_{\bop,\bullet}\},\quad
    \cD_\bop=\{\{0\}\times\Delta_{\bop,\bullet}\},\qquad
    \bullet=F,S,C.
  \]
  Recall $\lb_\bop$, $\rb_\bop\subset X^2_\bop$, defined after equation~\eqref{EqRCX2b}. The \emph{extended semiclassical cone double} and \emph{triple spaces} are then
  \begin{align}
  \label{EqRC2Ext}
    {}'X^2_{\cop\semi} &:= \left[X_{\cop\semi}^2; \{0\}\times\lb_\bop; \{0\}\times\rb_\bop\right], \\
  \label{EqRC3Ext}
    {}'X^3_{\cop\semi} &:= \left[ [0,1)_h\times X^3_\bop; \{0\}\times\fff_\bop; \cF_\bop; \cB_\bop; \{0\}\times\Delta_{\bop,3}; \cD_\bop \right],
  \end{align}
  The lifts of $\{0\}\times\lb_\bop$, $\{0\}\times\rb_\bop$ to ${}'X^2_{\cop\semi}$ are denoted $\tlb,\trb$. We moreover label the boundary hypersurfaces of ${}'X^3_{\cop\semi}$ as follows:
  \begin{itemize}
  \item $\fff$, $\ff_\bullet$, $\bface_\bullet$ are the lifts of $[0,1)_h\times\fff_\bop$, $[0,1)_h\times\ff_{\bop,\bullet}$, $[0,1)_h\times\bface_{\bop,\bullet}$;
  \item $\tfface$, $\tface_\bullet$ are the lifts of $\{0\}\times\fff_\bop$, $\{0\}\times\ff_{\bop,\bullet}$;
  \item $\tbface_\bullet$ is the lift of $\{0\}\times\bface_{\bop,\bullet}$;
  \item $\dfface$, $\dface_\bullet$ are the lifts of $\{0\}\times\Delta_{\bop,3}$, $\{0\}\times\Delta_{\bop,\bullet}$;
  \item $\sfface$ is the lift of $\{0\}\times X^3_\bop$.
  \end{itemize}
  Lastly, $\Delta_{\cop\semi,3}$, $\Delta_{\cop\semi,\bullet}$ are the lifts of $[0,1)_h\times\Delta_{\bop,3}$, $[0,1)_h\times\Delta_{\bop,\bullet}$.
\end{definition}

The idea behind the definition~\eqref{EqRC3Ext} of ${}'X^3_{\cop\semi}$ is that we need to blow up each of the three preimages under $\Id_h\times\pi_{\bop,\bullet}$ of those submanifolds of $[0,1)_h\times X^2_\bop$ which were blown up in the Definition~\ref{DefRCDouble} of $X^2_{\cop\semi}$; whenever the preimages of the same submanifold under two different maps intersect non-trivially, we first blow up the intersection to make the lifts of the submanifold disjoint, whence they can be blown up in either order.

The reason for passing to the extended double space is that we need to blow up $\{0\}\times\ff_{\bop,F}$ in the definition of the triple space in order for $\Id_h\times\pi_{\bop,F}$ to lift to a b-fibration down to $X^2_{\cop\semi}$; the image of the lift of $\{0\}\times\ff_{\bop,F}$ under (the lift of) $\Id_h\times\pi_{\bop,C}$ however is then the codimension $2$ submanifold $\{0\}\times\lb_\bop$. To make the latter projection lift to a b-fibration, we need to blow-up $\{0\}\times\lb_\bop$ in the target. A symmetric argument motivates the blow-up of $\{0\}\times\rb_\bop$ in~\eqref{EqRC2Ext}. The preimages of $\{0\}\times\lb_\bop$, $\{0\}\times\rb_\bop$ under $[0,1)_h\times\pi_{\bop,\bullet}$ are the (lifts of) the elements of $\cB_\bop$, hence we also blow these up in~\eqref{EqRC3Ext}.

The following statement shows that Definition~\ref{DefRCExt} is the `right' one:

\begin{lemma}
\label{LemmaRC3Proj}
  The projections $\Id_h\times\pi_{\bop,\bullet}\colon [0,1)_h\times X^3_\bop\to[0,1)_h\times X^2_\bop$ lift to b-fibrations
  \begin{equation}
  \label{EqRC3Proj}
    \pi_{\cop\semi,\bullet} \colon {}'X^3_{\cop\semi} \to {}'X^2_{\cop\semi},\quad
    \bullet=F,S,C.
  \end{equation}
\end{lemma}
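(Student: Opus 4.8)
The plan is to verify the b-fibration property of each $\pi_{\cop\semi,\bullet}$ by working locally and tracking how the iterated blow-ups in the definitions of ${}'X^3_{\cop\semi}$ and ${}'X^2_{\cop\semi}$ interact with the maps $\Id_h\times\pi_{\bop,\bullet}$. The basic tool is Melrose's commutation criterion: a composition of blow-downs followed by a b-map is again a b-map provided each center being blown up in the source either maps diffeomorphically (transversally) onto a submanifold that is itself blown up, or maps into a boundary face, or has a preimage that decomposes appropriately. Concretely, I would first recall that $\Id_h\times\pi_{\bop,\bullet}\colon[0,1)_h\times X^3_\bop\to[0,1)_h\times X^2_\bop$ is a b-fibration (this is the classical b-triple-space statement from \cite{MelroseDiffOnMwc}, with the harmless extra factor $[0,1)_h$). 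Then I would feed in the blow-ups of ${}'X^3_{\cop\semi}$ one at a time, in the order dictated by \eqref{EqRC3Ext}: $\{0\}\times\fff_\bop$, then $\cF_\bop$, then $\cB_\bop$, then $\{0\}\times\Delta_{\bop,3}$, then $\cD_\bop$; and at each stage check that the center in the source maps to a center that has already been (or is about to be) blown up in ${}'X^2_{\cop\semi}$, so that the lifted map stays a b-fibration.

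The key computations, to be organized by which of the three projections $\bullet=F,S,C$ one is looking at:

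\begin{itemize}
\item For $\bullet=F$ (first two factors): $\Id_h\times\pi_{\bop,F}$ sends $\{0\}\times\fff_\bop$ onto $\{0\}\times\ff_\bop$, it sends $\{0\}\times\ff_{\bop,F}$ onto $\{0\}\times\ff_\bop$ as well, and it sends $\{0\}\times\Delta_{\bop,3}$ and $\{0\}\times\Delta_{\bop,F}$ onto $\{0\}\times\Delta_\bop$; thus all the relevant centers in the source project onto centers blown up in $X^2_{\cop\semi}\subset{}'X^2_{\cop\semi}$. The remaining centers $\{0\}\times\ff_{\bop,S}$, $\{0\}\times\ff_{\bop,C}$, the $\bface$'s, and $\{0\}\times\Delta_{\bop,S}$, $\{0\}\times\Delta_{\bop,C}$ map into boundary hypersurfaces or submanifolds thereof of the already-blown-up target, which is exactly what lets the lift stay a b-fibration.
\item For $\bullet=C$ (first and third factors): here the subtlety the paper already flagged appears—the image of $\{0\}\times\ff_{\bop,F}$ under $\Id_h\times\pi_{\bop,C}$ is the codimension-two submanifold $\{0\}\times\lb_\bop$, and likewise the image of $\{0\}\times\ff_{\bop,S}$ is $\{0\}\times\rb_\bop$. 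This is precisely why ${}'X^2_{\cop\semi}$ in \eqref{EqRC2Ext} is obtained from $X_{\cop\semi}^2$ by the extra blow-ups of $\{0\}\times\lb_\bop$ and $\{0\}\times\rb_\bop$: after those blow-ups, the lift of $\{0\}\times\ff_{\bop,F}$ maps onto the new front face $\tlb$, restoring the b-fibration property. One also has to check that the $\cB_\bop$ centers (preimages of $\{0\}\times\lb_\bop$, $\{0\}\times\rb_\bop$ under $\Id_h\times\pi_{\bop,\bullet}$) map onto $\{0\}\times\lb_\bop$ or $\{0\}\times\rb_\bop$, hence onto $\tlb$, $\trb$ after blow-up—again exactly what \eqref{EqRC2Ext} provides.
\item For $\bullet=S$ (last two factors): symmetric to $\bullet=F$ after interchanging the roles of left and right.
\end{itemize}

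The main obstacle, and the one step that is not purely formal bookkeeping, is getting the \emph{order} of blow-ups right so that the commutation lemma actually applies at each stage: one must argue that at the moment a center $Z$ in the source is blown up, its image has already been blown up in the target (or will be, with the two blow-ups commuting because the relevant preimages are disjoint). This is where the "whenever the preimages of the same submanifold under two different maps intersect, we first blow up the intersection" principle stated after Definition~\ref{DefRCExt} is used: the blow-up of $\{0\}\times\fff_\bop$ first, and of $\{0\}\times\Delta_{\bop,3}$ before the individual $\cD_\bop$, are precisely what make the lifts of $\cF_\bop$ (resp.\ $\cD_\bop$) pairwise disjoint, so that they may be blown up in any order and each, under $\pi_{\cop\semi,\bullet}$, hits a single already-resolved center. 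I would therefore present the proof as: (1) state the commutation criterion for b-fibrations under blow-up (citing \cite{MelroseDiffOnMwc}); (2) in local (projective) coordinates near each corner of $X^3_\bop$—there are finitely many model configurations—write down $\Id_h\times\pi_{\bop,\bullet}$ explicitly and read off the images of all blown-up centers; (3) assemble these into the claim that the sequence of blow-ups in \eqref{EqRC3Ext} maps, face by face, to the sequence in \eqref{EqRC2Ext} (together with the interior blow-ups of $X^2_{\cop\semi}$), invoking the criterion at each step. The verification is lengthy but each individual check is a short coordinate computation of the type already carried out in \eqref{EqRCEx1}--\eqref{EqRCEx3}.
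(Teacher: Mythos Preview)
Your overall strategy matches the paper's: start from the b-fibration $\Id_h\times\pi_{\bop,\bullet}$ and feed in the blow-ups of Definition~\ref{DefRCExt} one at a time, checking at each stage that the lifted map remains a b-fibration. The paper does exactly this (for $\bullet=F$, with the others by symmetry), invoking the abstract lifting results \cite[Prop.~5.12.1, Cor.~5.10.1, Prop.~5.11.2]{MelroseDiffOnMwc} rather than explicit coordinate checks, but this is a stylistic difference only.

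There is, however, a concrete error in your case analysis which would derail the $\bullet=F$ argument as you have written it. You claim that for $\bullet=F$ the ``remaining centers $\{0\}\times\ff_{\bop,S}$, $\{0\}\times\ff_{\bop,C}$, \dots\ map into boundary hypersurfaces or submanifolds thereof of the already-blown-up target''. This is false: $\ff_{\bop,C}$ is the lift of $\pa X\times X\times\pa X$, and under $\pi_{\bop,F}$ (first two factors) it maps onto $\lb_\bop$; likewise $\ff_{\bop,S}$ maps onto $\rb_\bop$. Thus $\{0\}\times\ff_{\bop,C}$ and $\{0\}\times\ff_{\bop,S}$ map into the codimension-$2$ corners $\{0\}\times\lb_\bop$ and $\{0\}\times\rb_\bop$ of $[0,1)_h\times X^2_\bop$, and blowing them up in the source without resolving these corners in the target would destroy the b-fibration property. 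The extra blow-ups producing ${}'X^2_{\cop\semi}$ are therefore needed for \emph{all three} projections, not just for $\bullet=C$; the motivating paragraph before the lemma merely gave one instance of the phenomenon. The paper's proof of the $\bullet=F$ case makes this explicit: after the first round (blowing up $\{0\}\times\ff_\bop$ in the target and $\{0\}\times\fff_\bop$, $\{0\}\times\ff_{\bop,F}$ in the domain), one next blows up $\{0\}\times\lb_\bop$ in the target and, in order, $\{0\}\times\ff_{\bop,C}$, $\{0\}\times\bface_{\bop,S}$ in the domain; then $\{0\}\times\rb_\bop$ in the target and $\{0\}\times\ff_{\bop,S}$, $\{0\}\times\bface_{\bop,C}$ in the domain. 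Only then does one proceed to the diagonal blow-ups. Once you correct this ordering your plan goes through.
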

\begin{proof}
  For definiteness, consider the case $\bullet=F$. Note that $\Id_h\times\pi_{\bop,F}\colon[0,1)_h\times X^3_\bop\to [0,1)_h\times X^2_\bop$ is a b-fibration. Blowing up $\{0\}\times\ff_\bop$ in the target, and its preimages $\{0\}\times\fff_\bop$ and $\{0\}\times\ff_{\bop,F}$ in the domain, this map lifts to a b-fibration
  \[
    \left[[0,1)_h\times X_\bop^3;\{0\}\times\fff_\bop;\{0\}\times\ff_{\bop,F}\right] \to \left[[0,1)_h\times X^2_\bop;\{0\}\times\ff_\bop\right].
  \]
  by \cite[Proposition~5.12.1]{MelroseDiffOnMwc}. The lift of this map upon blowing up $\{0\}\times\lb_\bop$ in the target and, in this order, $\{0\}\times\ff_{\bop,C}$, $\{0\}\times\bface_{\bop,S}$ in the domain is again a b-fibration; likewise, the lift to the subsequent blow-up of $\{0\}\times\rb_\bop$ in the target and $\{0\}\times\ff_{\bop,S}$, $\{0\}\times\bface_{\bop,C}$ in the domain is a b-fibration. Since the lift of $\{0\}\times\bface_{\bop,F}$ then gets mapped by the stretched projection diffeomorphically to the lift of $\{0\}\times X^2_\bop$, the lift of the stretched projection to the blow-up of $\{0\}\times\bface_{\bop,F}$ is still a b-fibration
  \[
    \left[ [0,1)_h\times X^3_\bop;\{0\}\times\fff_\bop;\cF_\bop;\cB_\bop \right] \to \left[ [0,1)_h\times X^2_\bop;\{0\}\times\ff_\bop;\{0\}\times\lb_\bop;\{0\}\times\rb_\bop \right]
  \]
  by \cite[Corollary~5.10.1]{MelroseDiffOnMwc}.
  
  Blowing up the lift of $\{0\}\times\Delta_\bop$ in the target and its preimage, the lift of $\{0\}\times\Delta_{\bop,F}$, in the domain produces a b-fibration
  \[
    \left[[0,1)_h\times X_\bop^3;\{0\}\times\fff_\bop;\cF_\bop;\cB_\bop;\{0\}\times\Delta_{\bop,F}\right] \to {}'X^2_{\cop\semi}.
  \]
  This is b-transversal to the lift of $\{0\}\times\Delta_{\bop,3}$ (since the latter gets mapped diffeomorphically to the codimension $1$ boundary face $\dface$), hence blowing this up, the projection map lifts again to a b-fibration. Since $\{0\}\times\Delta_{\bop,3}\subset\{0\}\times\Delta_{\bop,F}$, we can in fact blow the lifts of these two manifolds up in any order by \cite[Proposition~5.11.2]{MelroseDiffOnMwc}; thus the stretched projection
  \begin{equation}
  \label{EqRC3Proj0}
    \left[[0,1)_h\times X_\bop^3;\{0\}\times\fff_\bop;\cF_\bop;\cB_\bop;\{0\}\times\Delta_{\bop,3};\{0\}\times\Delta_{\bop,F}\right] \to X^2_{\cop\semi}
  \end{equation}
  is a b-fibration. This is b-transversal to the lifts of $\{0\}\times\Delta_{\bop,\bullet}$, $\bullet=S,C$, both of which get mapped diffeomorphically to the lift of $\{0\}\times\bface_{\bop,F}$; blowing these up, the map~\eqref{EqRC3Proj0} lifts to the map $\pi_{\cop\semi,F}$ in~\eqref{EqRC3Proj}, which is thus a b-fibration, as claimed.
\end{proof}

Composition properties of large parameter operators were proved by explicit calculations in \cite[\S\S4--5]{LoyaConicResolvent}. Compositions in our semiclassical calculus are consequences of standard pullback and pushforward theorems, see \cite[\S6]{MelroseDiffOnMwc}, \cite[Appendix~B]{EpsteinMelroseMendozaPseudoconvex}.

\begin{prop}
\label{PropRCComp}
  Let $A_j\in\Psi_{\cop\semi}^{m_j}(X)$ and $A'_j\in\Psi_{\cop\semi}^{-\infty,\cE_j}(X)$ for $j=1,2$. Then:
  \begin{enumerate}
  \item $A_1\circ A_2\in\Psi_{\cop\semi}^{m_1+m_2}(X)$ and $A_1\circ A'_2\in\Psi_{\cop\semi}^{-\infty,\cE_2}(X)$, $A_1'\circ A_2\in\Psi_{\cop\semi}^{-\infty,\cE_1}(X)$.
  \item Write $\cE_j=(\cE_{j,\lb},\cE_{j,\ff},\cE_{j,\rb},\cE_{j,\tface})$, and suppose that $\inf\Re(\cE_{1,\rb}+\cE_{2,\lb})>0$. Then $A'_1\circ A'_2\in\Psi_{\cop\semi}^{-\infty,\cF}(X)$, where $\cF=(\cF_\lb,\cF_\ff,\cF_\rb,\cF_\tface)=\cE_1\circ\cE_2$ is defined by
    \begin{equation}
    \label{EqRCCompIndex}
    \begin{aligned}
      \cF_\lb ={}& \cE_{1,\lb}\extcup{}(\cE_{1,\ff}+\cE_{2,\lb}), & \cF_\rb ={}& (\cE_{1,\rb}+\cE_{2,\ff})\extcup \cE_{2,\rb}, \\
      \cF_\ff ={}& (\cE_{1,\ff}+\cE_{2,\ff})\extcup{}(\cE_{1,\lb}+\cE_{2,\rb}), & \quad \cF_\tface ={}& \cE_{1,\tface}+\cE_{2,\tface}.
    \end{aligned}
    \end{equation}
    Here, $\extcup$ denotes the extended union of index sets, defined in~\eqref{EqBExtCup}.
  \end{enumerate}
\end{prop}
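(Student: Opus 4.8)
The plan is to deduce Proposition~\ref{PropRCComp} from the mapping properties of the stretched projections $\pi_{\cop\semi,\bullet}$ established in Lemma~\ref{LemmaRC3Proj}, combined with the pullback and pushforward theorems for (polyhomogeneous) conormal distributions under b-fibrations \cite[\S6]{MelroseDiffOnMwc}, \cite[Appendix~B]{EpsteinMelroseMendozaPseudoconvex}. First I would recall the standard setup: if $\kappa_1,\kappa_2$ are the Schwartz kernels of $A_1,A_2$, viewed as half-density-valued distributions on ${}'X^2_{\cop\semi}$ (which we may do since elements of $\Psi_{\cop\semi}^m(X)$ and $\Psi_{\cop\semi}^{-\infty,\cE}(X)$ lift from $X^2_{\cop\semi}$, and the blow-ups in~\eqref{EqRC2Ext} producing $\tlb,\trb$ do not affect the spaces because the kernels vanish to infinite order at $\lb,\rb$), then the kernel of $A_1\circ A_2$ is
\[
  \kappa_{1\circ 2} = (\pi_{\cop\semi,C})_*\bigl( \pi_{\cop\semi,F}^*\kappa_1 \cdot \pi_{\cop\semi,S}^*\kappa_2 \cdot \nu \bigr),
\]
where $\nu$ is an appropriate nonvanishing b-density factor on ${}'X^3_{\cop\semi}$ built from the three projections so that the product of half-density bundles pushes forward correctly (this is the usual computation underlying composition of b-ps.d.o.s, cf.\ \cite[\S5]{MelroseAPS}, adapted to the resolved space; the key structural inputs are that $\pi_{\cop\semi,F},\pi_{\cop\semi,S}$ are b-fibrations so the pullbacks of conormal/polyhomogeneous distributions are again of that type, and that $\pi_{\cop\semi,C}$ is a b-fibration so the pushforward theorem applies). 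The density bundle bookkeeping is routine but must be checked once: one verifies that the $\rho_\dface^{-n}$ weights in Definition~\ref{DefRCPsdo}, together with the Jacobian factors of the three b-fibrations at the various front faces, combine so that the pushforward lands in exactly $\rho_\dface^{-n}\pi_\bop^*\Omegab^\mhalf$ on ${}'X^2_{\cop\semi}$, hence (after restriction) on $X^2_{\cop\semi}$.

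For part~(1), the claim $A_1\circ A_2\in\Psi_{\cop\semi}^{m_1+m_2}(X)$ then follows because: (i) the two lifted diagonals $\Delta_{\cop\semi,F}$ and $\Delta_{\cop\semi,S}$ meet transversally inside their intersection with the third, so conormality of $\pi_{\cop\semi,F}^*\kappa_1$ of order $m_1$ and of $\pi_{\cop\semi,S}^*\kappa_2$ of order $m_2$ to these respective diagonals gives a product conormal to $\Delta_{\cop\semi,3}$, which $\pi_{\cop\semi,C}$ maps to $\Delta_{\cop\semi}$ as a b-fibration restricting to a fibration of the diagonals, yielding conormality of order $m_1+m_2$ of the pushforward (the $-\tfrac14$ order shifts from the half-density normalization are tracked as usual); (ii) the infinite-order vanishing at $\lb,\rb,\sface$ is preserved: one checks that $\pi_{\cop\semi,C}$ maps $\lb$ of ${}'X^2_{\cop\semi}$ only to boundary hypersurfaces of ${}'X^3_{\cop\semi}$ at which either $\pi_{\cop\semi,F}^*\kappa_1$ or $\pi_{\cop\semi,S}^*\kappa_2$ vanishes to infinite order (namely $\bface_C$/$\tbface_C$, the lift of $\pa X$ in the first factor, forces $\kappa_1$ to vanish; symmetrically for $\rb$; and $\sface$, the lift of $h=0$, forces both to vanish since $\sface$ of ${}'X^2_{\cop\semi}$ pulls back under $\pi_{\cop\semi,F}$ and $\pi_{\cop\semi,S}$ into $\sfface$); hence the pushforward vanishes to infinite order there. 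The mixed statements $A_1\circ A_2'$ and $A_1'\circ A_2$ are the same computation with one factor polyhomogeneous (order $-\infty$) rather than conormal; the pushforward theorem then produces a polyhomogeneous result whose index set at each face is computed from the index sets of the two factors, and one checks (as in part~(2)) that the finite-order face contributions coming from the $\Psi^{m_1}$-factor are killed because that factor is actually smooth (index set $\N_0$, or $\emptyset$ at $\lb,\rb,\sface$) down to $\ff,\tface,\dface$, so only $\cE_2$ (resp.\ $\cE_1$) survives.

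For part~(2) the mechanism is identical — pull back, multiply, push forward — but now both factors are order $-\infty$ polyhomogeneous, so the pushforward theorem delivers a polyhomogeneous kernel whose index set at each boundary hypersurface $F$ of ${}'X^2_{\cop\semi}$ is the extended union, over all boundary hypersurfaces $G$ of ${}'X^3_{\cop\semi}$ mapping into $F$ under $\pi_{\cop\semi,C}$, of $\cE^{(1)}_G + \cE^{(2)}_G$ (with a shift by the order of vanishing of the lifted b-density $\nu$ at $G$), where $\cE^{(i)}_G$ is the index set of $\pi_{\cop\semi,F}^*\kappa_i$ (resp.\ $\pi_{\cop\semi,S}^*\kappa_i$) at $G$ read off from the behavior of $\kappa_i$ at the image face. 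Carrying this out face by face — using the combinatorics of which faces of ${}'X^3_{\cop\semi}$ (namely $\fff,\ff_\bullet,\bface_\bullet,\tfface,\tface_\bullet,\tbface_\bullet$, etc.) project onto $\lb,\ff,\rb,\tface$ of ${}'X^2_{\cop\semi}$, exactly as encoded in Definition~\ref{DefRCExt} — reproduces~\eqref{EqRCCompIndex}: $\cF_\lb$ picks up $\cE_{1,\lb}$ from the lift of $\lb$ in the first slot and $\cE_{1,\ff}+\cE_{2,\lb}$ from the "through the front face" face, and similarly for $\cF_\rb,\cF_\ff$; the transition-face index sets simply add because $\tface$ has a single relevant preimage. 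The convergence of the extended unions (i.e.\ that the pushforward integral converges, so the relevant b-density is integrable along the fibers) is exactly the hypothesis $\inf\Re(\cE_{1,\rb}+\cE_{2,\lb})>0$, which ensures decay at the face $\bface_C$ (the composition face where $\rb$ of the first factor meets $\lb$ of the second) where the fiber integration takes place; away from that face integrability is automatic since the kernels vanish to infinite order at $\lb,\rb,\sface$.

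The main obstacle — and the only place genuine work is needed beyond invoking Lemma~\ref{LemmaRC3Proj} and the pullback/pushforward machinery — is the bookkeeping at the resolved front faces: one must verify that the three stretched diagonals $\Delta_{\cop\semi,F},\Delta_{\cop\semi,S},\Delta_{\cop\semi,C}$ and the faces $\fff,\tfface,\dfface,\dface_\bullet,\sfface$ meet in the expected normal-crossings / clean-intersection configuration inside ${}'X^3_{\cop\semi}$ (so that products of conormal distributions behave well and the pushforward of a conormal distribution is conormal), and that the half-density weights $\rho_\dface^{-n}\pi_\bop^*\Omegab^\mhalf$ are exactly reproduced — i.e.\ the $n$-dependent exponents match up, which is forced by the fiber dimension of $\pi_{\cop\semi,C}$ over $\dface$. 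This is precisely analogous to the corresponding verification in the b-calculus \cite[\S5]{MelroseAPS}; the extra faces $\tface,\dface,\sface$ and their triple-space analogues are handled by the same local-coordinate computations as in~\eqref{EqRCEx1}--\eqref{EqRCEx3}, and I would present the argument by reducing, via the b-fibration structure, to these known cases rather than redoing the conormal-distribution analysis from scratch.
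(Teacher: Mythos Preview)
Your proposal is correct and follows essentially the same approach as the paper: pull back along $\pi_{\cop\semi,F}$ and $\pi_{\cop\semi,S}$, multiply (using transversality of the lifted diagonals at $\Delta_{\cop\semi,3}$), and push forward along $\pi_{\cop\semi,C}$, invoking the pullback/pushforward theorems of \cite{MelroseDiffOnMwc,EpsteinMelroseMendozaPseudoconvex} together with Lemma~\ref{LemmaRC3Proj}. The paper carries out exactly this scheme, making explicit the list of boundary hypersurfaces at which each pullback is nontrivial and identifying $\bface_C$ (with index set $\cE_{1,\rb}+\cE_{2,\lb}$) as the face where the integrability hypothesis is used; your identification of this face and of the face-by-face combinatorics producing~\eqref{EqRCCompIndex} matches the paper's argument.
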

\begin{proof}
  Denote the Schwartz kernel of $A_j$ by $K_j$. Note that $K_j$ lifts to ${}'X^2_{\cop\semi}$, with index set $\emptyset$ at $\tlb,\trb$. The Schwartz kernel $K$ of $A_1 A_2$ equals
  \[
    K = (\pi_{\cop\semi,C})_*\bigl(\pi_{\cop\semi,F}^*K_1\cdot\pi_{\cop\semi,S}^*K_2\bigr).
  \]
  Now $\pi_{c\semi,F}^*K_1$ is conormal to $\Delta_{\cop\semi,\bullet}$ and vanishes to infinite order at the preimage of $\lb\cup\rb\cup\sface$ under $\pi_{\cop\semi,F}$, thus at all boundary hypersurfaces of $X^3_{\cop\semi}$ \emph{except for} $\fff$, $\tfface$, $\dfface$, $\ff_F$, $\tface_F$, $\dface_F$, $\bface_F$. The product with $\pi_{c,\semi,S}^*K_2$ is well-defined since $\Delta_{\cop\semi,F}$ and $\Delta_{\cop\semi,S}$ intersect, transversally, at $\Delta_{\cop\semi,3}$, and vanishes to infinite order at all boundary hypersurfaces except for $\fff,\tfface,\dfface$. Under pushforward by $\pi_{\cop\semi,C}$---which is transversal to these and embeds them as the submanifolds $\ff$, $\tface$, $\dface$ of ${}'X^2_{\cop\semi}$---we obtain, by \cite[Proposition~B7.20]{EpsteinMelroseMendozaPseudoconvex}, a conormal distribution on ${}'X^2_{\cop\semi}$ vanishing to infinite order at all boundary hypersurfaces except for $\ff$, $\tface$, $\dface$; in particular, we can blow down $\tlb$, $\trb$ and thus obtain an element of $\Psi_{\cop\semi}^{m_1+m_2}(X)$.

  The rest of the proposition follows by similar arguments; we give details for $A_1'\circ A_2'$. Denote the Schwartz kernel of $A_j'$ by $K_j'$. Since $K_1'$ lifts to be polyhomogeneous on ${}'X^2_{\cop\semi}$ with index set $\emptyset$ at $\tlb$, $\trb$, one finds that $\pi_{c,\semi,F}^*K_1'$ is polyhomogeneous on ${}'X^3_{\cop\semi}$, with index set $E_{1,\lb}$ at $\bface_S,\ff_C$; $E_{1,\ff}$ at $\fff$, $\ff_F$; $E_{1,\rb}$ at $\bface_C,\ff_S$; $E_{1,\tface}$ at $\tfface,\tface_F$; $\N_0$ at $\bface_F,\tbface_F$; and $\emptyset$ at the remaining boundary hypersurfaces. Similarly, $\pi_{\cop\semi,S}^*K_2'$ is polyhomogeneous with index set $E_{2,\lb}$ at $\bface_C,\ff_F$; $E_{2,\ff}$ at $\fff,\ff_S$; $E_{2,\rb}$ at $\bface_F,\ff_C$; $E_{2,\tface}$ at $\tfface,\tface_S$; $\N_0$ at $\bface_S,\tbface_S$; and $\emptyset$ otherwise. The preimage under $\pi_{\cop\semi,C}$ of $\lb$ is $\bface_S\cup\ff_F$, of $\ff$ is $\fff\cup\ff_C$, of $\rb$ is $\bface_F,\ff_S$, of $\tface$ is $\tfface\cup\tface_C$, and the preimages of $\tlb,\trb,\sface,\dface$ are boundary hypersurfaces of ${}'X^3_{\cop\semi}$ at which $\pi_{\cop\semi,F}^*K_1\cdot\pi_{\cop\semi,S}K_2$ has trivial index sets. Moreover, $(\pi_{\cop\semi,C})_*$ integrates transversally to $\bface_C$, at which the index set of $\pi_{\cop\semi,F}^*K_1'\cdot\pi_{\cop\semi,S}^*K_2'$ is $E_{1,\rb}+E_{2,\lb}$. The pushforward theorem, see \cite[Proposition~B5.6]{EpsteinMelroseMendozaPseudoconvex}, then gives the result. (For example, the index sets at $\bface_S$ and $\ff_F$ are $E_{1,\lb}$ and $E_{1,\ff}+E_{2,\lb}$; the map $\pi_{\cop\semi,C}$ maps these to $\lb$, where the pushforward of the polyhomogeneous distribution then has index set $E_{1,\lb}\extcup(E_{1,\ff}+E_{2,\lb})=\cF_\lb$.)
\end{proof}

The principal symbol map $\sigmach$ is multiplicative, $\sigmach_{m_1+m_2}(A_1 A_2)=\sigmach_{m_1}(A_1)\sigmach_{m_2}(A_2)$; this follows by continuity from $X^\circ$, $h\geq 0$, resp.\ $X$, $h>0$ from the corresponding statement for the semiclassical, resp.\ b-pseudodifferential calculus.

The normal operator maps $N_\ff$ and $N_\tface$ are homomorphisms, so $N_\ff(A_1\circ A_2)=N_\ff(A_1)\circ N_\ff(A_2)$ etc. We only need this in the case that at least one of $A_1,A_2$ is a \emph{differential} operator, in which case this is easily verified in local coordinates; the calculations are similar to \cite[\S4.15]{MelroseAPS}.

\subsection{The inverse of a fully elliptic semiclassical cone operator}
\label{SsRR}

Fix an operator
\[
  A \in \Diff_{\cop,\semi}^m(X)
\]
which is fully elliptic at weight $\alpha\in\R$. Denote the Mellin transformed normal operator by $\hat A(\sigma):=\wh{N_{\pa X}(A)}(\sigma)$, $\sigma\in\C$. We then define its boundary spectrum by
\begin{equation}
\label{EqRRSpecb}
  \specbfull(A) := \left\{ (\sigma,k) \in \C\times\N_0 \colon \hat A(\zeta)^{-1}\ \text{has a pole at}\ \zeta=\sigma\ \text{of order}\geq k+1 \right\}.
\end{equation}
Define the following index sets:
\begin{equation}
\label{EqRRElbrb}
\begin{split}
  E_\lb(\alpha) &:= \left\{ (z,k) \in \C\times\N_0 \colon (-i z,k)\in\specbfull(A),\ \Re z>\alpha \right\}, \\
  E_\rb(\alpha) &:= \left\{ (z,k) \in \C\times\N_0 \colon (i z,k)\in\specbfull(A),\ \Re z>-\alpha \right\}.
\end{split}
\end{equation}
We then set
\[
  \hat E_\lb(\alpha) := \bigextcup_{j\in\N_0} (E_\lb(\alpha)+j),\quad
  \hat E_\rb(\alpha) := \bigextcup_{j\in\N_0} (E_\rb(\alpha)+j),
\]
and finally
\begin{equation}
\label{EqRREff}
\begin{split}
  \check E_\bullet(\alpha) &:= \hat E_\bullet(\alpha)\extcup\hat E_\bullet(\alpha), \quad \bullet=\lb,\rb, \\
  \check E_\ff(\alpha) &:= \N_0 + \left((\check E_\lb(\alpha)+\check E_\rb(\alpha)) \extcup \N\right).
\end{split}
\end{equation}

\begin{thm}
\label{ThmR}
  Let $A\in\Diff_{\cop,\semi}^m(X)$ be fully elliptic at weight $\alpha$. Then there exists $h_0>0$ such that
  \[
    A_h\colon\Hb^{s,\alpha}(X)\to\Hb^{s-m,\alpha-m}(X),\quad s\in\R,
  \]
  is invertible for $0<h<h_0$. The inverse lies in the large semiclassical cone calculus,
  \begin{align}
    &A^{-1} \in \bigl(\tfrac{x}{x+h}\bigr)^m\Psi_{\cop\semi}^{-m}(X) + \Psi_{\cop\semi}^{-\infty,\cE}(X), \nonumber\\
  \label{EqRRcE}
    &\qquad \cE=(\cE_\lb,\cE_\ff,\cE_\rb,\cE_\tface)=(\check E_\lb(\alpha),\check E_\ff(\alpha)+m,\check E_\rb(\alpha)+m,\N_0).
  \end{align}
\end{thm}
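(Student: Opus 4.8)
The plan is to construct a two-sided parametrix $G$ for $A$ in the large semiclassical cone calculus by the standard three-step procedure---inverting successively the principal symbol $\sigmach$, then the b-normal operator $N_\ff$, then the transition-face normal operator $N_\tface$ of \S\usref{SssRCSymbol}---so that the error becomes residual, i.e.\ vanishes to infinite order at every boundary hypersurface of $X^2_{\cop\semi}$; the three conditions of full ellipticity are exactly what make the three inversions possible, and the index sets in \eqref{EqRRcE} record the cumulative effect of the corrections. Invertibility of $A_h$ for small $h$ and the precise form of $A^{-1}$ then follow from a Neumann series together with the composition result Proposition~\usref{PropRCComp}.

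First I would carry out the \emph{symbolic parametrix}: by Remark~\usref{RmkRCEll}, full ellipticity makes $\sigmach_m(\rho_\ff^m A)$ elliptic on $N^*\Delta_{\cop\semi}$ (jointly in the b-, semiclassical, and b-scattering senses), so inverting it and using the exactness of \eqref{EqRCSy} to asymptotically sum the resulting series in decreasing differential order yields $G_1\in\bigl(\tfrac{x}{x+h}\bigr)^m\Psi_{\cop\semi}^{-m}(X)$ with $A G_1-\Id\in\Psi_{\cop\semi}^{-\infty}(X)$, a smoothing error vanishing to infinite order at $\lb,\rb,\sface$; a left parametrix is obtained symmetrically.

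Next I would improve at $\ff$ and then at $\tface$. Condition~\usref{ItOpEll3} (first part) and Remark~\usref{RmkRCEll} give invertibility of $N_\ff(\rho_\ff^m A)$ on weighted b-Sobolev spaces with weight $-\alpha$; exactly as in the precise b-parametrix of \cite[\S5]{MelroseAPS}, its inverse comes from the inverse Mellin transform of $\hat A(\sigma)^{-1}$, whose poles lie in $\specbfull(A)$, so that correcting the $\ff$-error order by order and summing produces a large-calculus correction with index sets $\hat E_\lb(\alpha)$, $\hat E_\rb(\alpha)$ at $\lb,\rb$ (the integer shifts built into these sets coming from the iteration) and reduces the error to vanish to infinite order also at $\ff$, still smoothing and with index set $\N_0$ at $\tface$. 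For the final step I use condition~\usref{ItOpEll3} again: $N_\tface(A)$ is invertible in the large b-scattering calculus on $\ol{{}^+N\pa X}$, and since $\hat x^m N_\tface(A)$ is an elliptic b-operator with Mellin-transformed normal operator $\hat A(\sigma)$, the b-end of $N_\tface(A)^{-1}$ contributes the same $\specbfull(A)$-governed expansions as before, while transversally to $\tface$ it is smoothing with integer index set---whence $\cE_\tface=\N_0$. Summing this correction in, the $\lb$- and $\rb$-contributions of the two normal-operator steps combine by the extended union into $\check E_\bullet(\alpha)=\hat E_\bullet(\alpha)\extcup\hat E_\bullet(\alpha)$, the $\ff$-index set picks up in addition the corner term $\check E_\lb(\alpha)+\check E_\rb(\alpha)$ (and the trivial front-face contribution), giving $\check E_\ff(\alpha)$, and the $m$-shifts at $\rb,\ff$ in \eqref{EqRRcE} are forced by the $x^{-m}$ weight in $A$.

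This produces $G\in\bigl(\tfrac{x}{x+h}\bigr)^m\Psi_{\cop\semi}^{-m}(X)+\Psi_{\cop\semi}^{-\infty,\cE}(X)$ with both $A G-\Id$ and $G A-\Id$ residual, hence $O(h^\infty)$ as operators on $\Hb^{s,\alpha}(X)$; so for $h<h_0$ small the Neumann series for $\bigl(\Id+(A G-\Id)\bigr)^{-1}$ converges, $A_h\colon\Hb^{s,\alpha}(X)\to\Hb^{s-m,\alpha-m}(X)$ is invertible, and $A^{-1}=G\bigl(\Id+(A G-\Id)\bigr)^{-1}$ remains in the large calculus with the index sets $\cE$ by Proposition~\usref{PropRCComp}, the residual correction factor having empty index sets at $\lb,\rb,\tface$ and so not enlarging $\cE$. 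The main obstacle is the mutual compatibility of the three inversions---above all the fact that $N_\ff(A)$ and $N_\tface(A)$ share the common sub-normal operator $\hat A(\sigma)$ at the corner $\ff\cap\tface$, so that the two normal-operator corrections agree there and can be summed into a single polyhomogeneous parametrix---together with the careful tracking of the resulting expansions needed to land on the precise index sets \eqref{EqRREff}--\eqref{EqRRcE}, including the logarithmic terms encoded by the extended unions; the individual inversions themselves are routine applications of the elliptic semiclassical, b-, and b-scattering calculi.
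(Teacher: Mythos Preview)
Your approach is the same as the paper's---symbolic parametrix, then $N_\ff$-correction, then $N_\tface$-correction, then Neumann series---but there is a genuine overclaim in the endgame. You assert that after all corrections the error $AG-\Id$ is \emph{residual}, i.e.\ vanishes to infinite order at every boundary hypersurface of $X^2_{\cop\semi}$. This is not what the construction gives: a \emph{right} parametrix built this way still carries a nontrivial polyhomogeneous expansion at $\rb$ (with index set $\check E_\rb(\alpha)+m$), exactly as in elliptic b-theory where solving away the front face and then the left boundary leaves a right-boundary tail that one cannot remove without already knowing invertibility. What the $N_\tface$-step (plus its own Neumann series in powers of $\rho_\tface$) actually achieves is an error with empty index set at $\tface$ as well as at $\lb,\ff$, hence lying in $h^\infty\cA_\phg^{(\emptyset,\check E_\rb(\alpha)+m)}([0,1)_h\times X^2)$. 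It is this $h^\infty$ smallness---coming from the vanishing at $\tface\cup\sface\cup\dface$, which together cover $h=0$---that makes $\Id+(AG-\Id)$ invertible by a convergent Neumann series for small $h$, not residuality.

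Consequently your final sentence is also off: the correction factor $(\Id+(AG-\Id))^{-1}-\Id$ does \emph{not} have empty index set at $\rb$; it inherits $\check E_\rb(\alpha)+m$ there, and one must check via Proposition~\usref{PropRCComp} that composing $G$ with it does not enlarge $\cE$. This works because $\check E_\lb(\alpha)+\check E_\rb(\alpha)+m\subset\cE_\ff$ and $(\check E_\rb(\alpha)+m)\extcup\cE_\rb=\cE_\rb$, but it needs to be said. The paper also breaks the $\ff$-step into sub-steps---solve $N_\ff$ to leading order, then solve away the resulting $\lb$-expansion to infinite order, then a Neumann series at $\ff$---which is where $E_\lb$, $\hat E_\lb$, $\check E_\lb$ appear in sequence; your compressed account of this step is loose on precisely which index sets arise at which stage.
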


\begin{rmk}
\label{RmkRGeneral}
  The conclusion holds more generally, with exactly the same proof, when $A\in\rho_\ff^{-m}\Psi_{\cop\semi}^m(X)$ is a semiclassical cusp ps.d.o.\ which is fully elliptic at weight $\alpha$ in the sense that $\sigmach_m(\rho_\ff^m A)$ is elliptic, $f N_\ff(\rho_\ff^m A)$ is $h$-independent for some $0<f\in\CI([0,1)_h\times\pa X)$, and $N_\tface(A)\colon H_{\bop,\scop}^{s,\alpha}(\ol{{}^+N\pa X})\to H_{\bop,\scop}^{s-m,\alpha-m}(\ol{{}^+N\pa X})$ is invertible for some $s\in\R$ (thus all $s\in\R$ by ellipticity).
\end{rmk}

The part of the proof which employs elliptic b-theory is very similar to that of \cite[Theorem~6.1]{LoyaConicResolvent}. The efficient analysis of the normal operator at $\tface$ is new.

\begin{proof}[Proof of Theorem~\usref{ThmR}]
  The symbolic elliptic parametrix construction, using~\eqref{EqRCSy}, produces
  \begin{equation}
  \label{EqRRB0}
    B_0\in\rho_\ff^m\Psi_{\cop\semi}^{-m}(X),\quad A B_0=I-R_0,\ R_0\in\rho_\dface^\infty\Psi_{\cop\semi}^{-\infty}(X)=\Psi_{\cop\semi}^{-\infty,(\emptyset,\N_0,\emptyset,\N_0)}(X).
  \end{equation}
  
  We next solve away the error to leading order at $\ff$ by passing to $\ff$-normal operators, see~\eqref{EqRCNff}. Write $((\tfrac{x}{x+h})^m A)(B_0 (\tfrac{x}{x+h})^{-m})=I-R_0'$, $R_0'=(\tfrac{x}{x+h})^m R_0(\tfrac{x}{x+h})^{-m}\in\rho_\dface^\infty\Psi_{\cop\semi}^{-\infty}(X)$, the point being that near $\ff$, $(\tfrac{x}{x+h})^m A\in\Psi_{\cop\semi}^m(X)$ and $B_0 (\tfrac{x}{x+h})^{-m}$ are unweighted elliptic operators. We then seek $B_1$ with $N_\ff((\tfrac{x}{x+h})^m A)N_\ff(B_1(\tfrac{x}{x+h})^{-m})=N_\ff(R_0')$, which can be accomplished using the Mellin transform along the fibers of $\ff\cap h^{-1}(h_0)$, $h_0\geq 0$ (and working on the line $\Im\sigma=-\alpha$ on the Mellin transformed side): this is standard elliptic b-theory \cite[\S5.13]{MelroseAPS}, with smooth parametric dependence on $h\in[0,1)$; recall here the assumption that the normal operator of $A$ is $h$-independent. The upshot is that, localizing near $\ff$ by means of a cutoff which vanishes identically near $\sface\cup\dface$, we can find (using $(\tfrac{x}{x+h})^m=f''\rho_\lb^m\rho_\ff^m$ and $(\tfrac{x}{x+h})^{-m}(\tfrac{x'}{x'+h})^m=f'''\rho_\lb^{-m}\rho_\rb^m$ for some smooth $f'',f'''>0$)
  \begin{align*}
    &B_1\in \Psi_{\cop\semi}^{-\infty,(E_\lb(\alpha),\N_0+m,E_\rb(\alpha)+m,\N_0)}(X), \\
    &\qquad A(B_0+B_1)=I-R_1,\quad R_1\in\Psi_{\cop\semi}^{-\infty,(E_\lb(\alpha)-m,\N,E_\rb(\alpha)+m,\N_0)}(X).
  \end{align*}
  Note that the index set of $R_1$ at $\ff$ is $\N=\{1,2,\ldots\}$. This vanishing at $\ff$ comes at the expense of polyhomogeneous error terms at $\lb$. Now, letting $A$ act on the left factor of the double space (fiberwise in $h$), its normal operator at $\lb$ is identified with $N_\ff(A)$. Thus, we can solve away the error term at $\lb$ to infinite order as in~\cite[\S5.20]{MelroseAPS} with an operator
  \begin{equation}
  \label{EqRRB2}
  \begin{split}
    &B_2 \in \Psi_{\cop\semi}^{-\infty,(E_\lb(\alpha)\extcup\hat E_\lb(\alpha),\N+m,\emptyset,\N_0)}(X), \\
    &\qquad A(B_0+B_1+B_2)=I-R_2,\quad R_2\in\Psi_{\cop\semi}^{-\infty,(\emptyset,\N,E_\rb(\alpha)+m,\N_0)}(X).
  \end{split}
  \end{equation}
  We can improve this using a Neumann series: we have $R_2^j\in\Psi_{\cop\semi}^{-\infty,(\emptyset,j+\N_0,E_\rb^j(\alpha)+m,\N_0)}(X)$ for $E_\rb^1(\alpha)=E_\rb(\alpha)$ and $E_\rb^{j+1}(\alpha)=(j+E_\rb(\alpha))\extcup E_\rb^j(\alpha)\nearrow\hat E_\rb(\alpha)$. Thus, let
  \begin{equation}
  \label{EqRRB3}
    B'_3\sim\sum_{j\geq 0}R_2^j,\quad
    B'_3-I \in \Psi_{\cop\semi}^{-\infty,(\emptyset,\N,\hat E_\rb(\alpha)+m,\N_0)}(X).
  \end{equation}
  We then obtain the analogue of \cite[Lemma~6.7]{LoyaConicResolvent} using Proposition~\ref{PropRCComp} (and slightly enlarging index sets for brevity), namely
  \begin{equation}
  \label{EqRRG3}
  \begin{split}
    &G := (B_0+B_1+B_2)B'_3 \in \rho_\ff^m\Psi_{\cop\semi}^{-m}(X) + \Psi_{\cop\semi}^{-\infty,\cE}(X) \\
    &\qquad \Rightarrow A G=I-R,\quad R\in\Psi_{\cop\semi}^{-\infty,(\emptyset,\emptyset,\check E_\rb(\alpha)+m,\N_0)}(X).
  \end{split}
  \end{equation}
  
  We need to improve the decay of the error term at $\tface$; this requires inverting the model operator $N_\tface(A)$, which is an elliptic b-scattering operator of class $\rho_{\tface,\bop}^{-m}\Diff_{\bop,\scop}^m(\ol{{}^+N\pa X})$. But by the full ellipticity of $A$ at weight $\alpha$,
  \begin{equation}
  \label{EqRRNtf}
    N_\tface(A) \colon H_{\bop,\scop}^{s,\alpha}(\ol{{}^+N\pa X}) \xra{\cong} H_{\bop,\scop}^{s-m,\alpha-m}(\ol{{}^+N\pa X}),\quad s\in\R,
  \end{equation}
  is invertible; hence we can construct the Schwartz kernel of its inverse explicitly by combining elliptic b- and scattering theory. To wit, we first construct a parametrix $B_{\tface,0}\in\rho_{\tface,\bop}^m\Psi_{\bop,\scop}^{-m}(\ol{{}^+N\pa X})$ with $N_\tface(A)B_{\tface,0}=I-R_{\tface,0}$, $R_{\tface,0}\in\rho_{\tface,\scop}^\infty\Psi_{\bop,\scop}^{-\infty}(\ol{{}^+N\pa X})$ using the symbol calculus. By elliptic b-theory as above, one then obtains a right parametrix, and by similar means a left parametrix,
  \begin{align}
  \label{EqRRtfSp}
    &B_{\tface,1},B_{\tface,1}' \in \rho_{\tface,\bop}^m\Psi_{\bop,\scop}^{-m}(\ol{{}^+N\pa X}) + \Psi_{\bop,\scop}^{-\infty,(\cE_\lb,\cE_\ff,\cE_\rb)}(\ol{{}^+N\pa X}), \\
    &\qquad N_\tface(A)B_{\tface,1} = I-R_{\tface,1},\quad R_{\tface,1}\in\Psi_{\bop,\scop}^{-\infty,(\emptyset,\emptyset,\check E_\rb(\alpha)+m)}(\ol{{}^+N\pa X}), \nonumber\\
    &\qquad B'_{\tface,1}N_\tface(A) = I-R'_{\tface,1},\quad R'_{\tface,1}\in\Psi_{\bop,\scop}^{-\infty,(\check E_\lb(\alpha),\emptyset,\emptyset)}(\ol{{}^+N\pa X}); \nonumber
  \end{align}
  the index sets arising here are the same as above since the normal operator of $N_\tface(\rho_\ff^m A)$ at $\pa_0\ol{{}^+N\pa X}$ is equal to $N_\ff(\rho_\ff^m A)$ restricted to $h=0$, hence their boundary spectra are the same. We then have
  \[
    N_\tface(A)^{-1} = B_{\tface,1} + N_\tface(A)^{-1}R_{\tface,1} = B_{\tface,1} + B'_{\tface,1}R_{\tface,1} + R'_{\tface,1}N_\tface(A)^{-1}R_{\tface,1},
  \]
  with the first two terms lying in the space~\eqref{EqRRtfSp}, while the last term, in view of~\eqref{EqRRNtf}, has Schwartz kernel in $\cA_\phg^{(\check E_\lb(\alpha),\check E_\rb(\alpha)+m)}((\ol{{}^+N\pa X})^2)$ (with trivial index sets at $\pa_\infty\ol{{}^+N\pa X}\times\ol{{}^+N\pa X}$ and $\ol{{}^+N\pa X}\times\pa_\infty\ol{{}^+N\pa X}$), which is a subspace of $\Psi_{\bop,\scop}^{-\infty,(\cE_\lb,\cE_\ff,\cE_\rb)}(X)$. Therefore,
  \begin{equation}
  \label{EqRRNtfAInv}
    N_\tface(A)^{-1} \in \rho_{\tface,\bop}^m\Psi_{\bop,\scop}^{-m}(\ol{{}^+N\pa X}) + \Psi_{\bop,\scop}^{-\infty,(\cE_\lb,\cE_\ff,\cE_\rb)}(\ol{{}^+N\pa X}).
  \end{equation}

  Equipped with this structure of $N_\tface(A)^{-1}$, we now return to~\eqref{EqRRG3}. Choose an operator $B'_4\in\rho_\ff^m\Psi_{\cop\semi}^{-m}(X)+\Psi_{\cop\semi}^{-\infty,\cE}(X)$ with $N_\tface(B'_4)=N_\tface(A)^{-1}$. For
  \[
    B_4 := B'_4\circ R \in \Psi_{\cop\semi}^{-\infty,\cE}(X),
  \]
  we then have
  \[
    A(G+B_4)=I-R_4,\quad R_4 \in \Psi_{\cop\semi}^{-\infty,(\emptyset,\emptyset,\check E_\rb(\alpha)+m,\N)}(X),
  \]
  which is an improvement at $\tface$. We can now solve away the error by a Neumann series: taking $B_5'\in\Psi_{\cop\semi}^{-\infty,(\emptyset,\emptyset,\check E_\rb(\alpha)+m,\N)}(X)$ with $B_5'\sim\sum_{j\geq 0}R_4^j$, we put
  \[
    G_5=(G+B_4)B_5' \in \rho_\ff^m\Psi_{\cop\semi}^{-m}(X) + \Psi_{\cop\semi}^{-\infty,\cE}(X),
  \]
  which satisfies
  \[
    A G_5=I-R_5,\quad R_5\in\Psi_{\cop\semi}^{-\infty,(\emptyset,\emptyset,\check E_\rb(\alpha)+m,\emptyset)}(X)=h^\infty\cA_\phg^{(\emptyset,\check E_\rb(\alpha)+m)}([0,1)_h\times X^2),
  \]
  with the index sets in the last space referring to the left and right boundary, $[0,1)\times\pa X\times X$ and $[0,1)\times X\times\pa X$, respectively. For sufficiently small $h>0$, $I-R_5$ is an invertible map on $\Hb^{s-m,\alpha-m}(X)$, with inverse given by a convergent Neumann series, and thus lying in $I+h^\infty\cA_\phg^{(\emptyset,\check E_\rb(\alpha)+m)}([0,1)\times X^2)$. Therefore,
  \[
    A B = I,\quad B:=G_5(I-R_5)^{-1} \in \rho_\ff^m\Psi_{\cop\semi}^{-m}(X)+\Psi_{\cop\semi}^{-\infty,\cE}(X),
  \]
  completing the construction of a right inverse. The construction of a left inverse is analogous. A standard argument shows that the right and left inverses agree, finishing the proof of the theorem.
\end{proof}

\subsection{Mapping properties}
\label{SsRM}

Recall from~\eqref{EqRCSingle} the space $X_{\cop\semi}=[[0,1)_h\times X;\{0\}\times\pa X]$, which is a natural manifold for the definition of function spaces compatible with the semiclassical cone algebra:

\begin{definition}
\label{DefRMX1}
  The boundary hypersurfaces of $X_{\cop\semi}$ are denoted $\pa_\cop X$ (`cone face', lift of $[0,1)_h\times\pa X$), $\pa_{\rm t}X$ (`transition face', lift of $\{0\}\times\pa X$), and $\pa_\semi X$ (`semiclassical face', lift of $\{0\}\times X$). See Figure~\ref{FigRMX1}. Given index sets $\cF_\cop,\cF_{\rm t},\cF_\semi\subset\C\times\N_0$, we define $\cA_\phg^{(\cF_\cop,\cF_{\rm t},\cF_\semi)}(X_{\cop\semi})$ to consist of polyhomogeneous distributions with specified index sets at the respective boundary hypersurfaces.
\end{definition}

\begin{figure}[!ht]
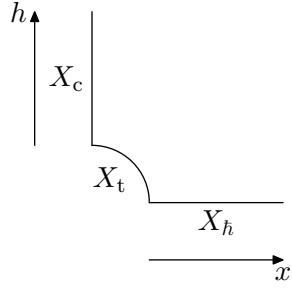

\inclfig{FigRMX1}
\caption{The semiclassical cone single space $X_{\cop\semi}$. Also indicated are axes for the semiclassical parameter $h$ and a defining function $x$ of $\pa X\subset X$; the single space arises by blowing up the corner $h=x=0$ in the product space $[0,1)_h\times X$.}
\label{FigRMX1}
\end{figure}

For notational simplicity, let us fix a trivialization of ${}^\bop\Omega^\mhalf X$.

\begin{lemma}
\label{LemmaRML2}
  Let $A\in\Psi_{\cop\semi}^0(X)$. Then $A$ is uniformly bounded on $L^2_\bop(X)$.
\end{lemma}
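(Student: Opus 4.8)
The plan is to run H\"ormander's square-root argument, reducing the claim to the $L^2_\bop(X)$-boundedness of a \emph{residual} operator, which is then handled by Schur's test. For $h$ in a compact subset of $(0,1)$, $A_h$ is an order-$0$ b-pseudodifferential operator on the compact manifold $X$, hence uniformly bounded on $L^2_\bop(X)$ there by classical b-calculus boundedness; so the content is the behavior as $h\to 0$, and the argument below produces an estimate uniform over $(0,1)$.

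First I would collect the structural inputs. (i) $\Psi_{\cop\semi}^0(X)$ is closed under formal adjoints with respect to the $L^2_\bop(X)$-inner product induced by the fixed trivialization of ${}^\bop\Omega^\mhalf X$, with $\sigmach_0(A^*)=\overline{\sigmach_0(A)}$: the involution swapping the two factors of $X$ lifts to a diffeomorphism of $X^2_{\cop\semi}$ interchanging $\lb$ and $\rb$ and fixing the other boundary hypersurfaces, all blow-up loci in Definition~\ref{DefRCDouble} being invariant under it. (ii) Products remain in the calculus with multiplicative principal symbol by Proposition~\ref{PropRCComp}; the identity lies in $\Psi_{\cop\semi}^0(X)$; and by~\eqref{EqRCSy}, $\sigmach_0$ is surjective with kernel $\rho_\dface\Psi_{\cop\semi}^{-1}(X)$. (iii) $\Psi_{\cop\semi}(X)$ admits asymptotic summation in the joint filtration by differential order and order of vanishing at $\dface$: given $B_j\in\rho_\dface^{j}\Psi_{\cop\semi}^{-j}(X)$ there is $B\in\Psi_{\cop\semi}^0(X)$ with $B-\sum_{j<N}B_j\in\rho_\dface^{N}\Psi_{\cop\semi}^{-N}(X)$ for all $N$, by a standard Borel-type construction on $X^2_{\cop\semi}$.

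Next, set $M:=1+\sup|\sigmach_0(A)|^2$, so that $M-|\sigmach_0(A)|^2\ge 1$ is an order-$0$ symbol, and so is its positive square root. Picking $B_0\in\Psi_{\cop\semi}^0(X)$ with $\sigmach_0(B_0)=(M-|\sigmach_0(A)|^2)^{1/2}$, exactness of~\eqref{EqRCSy} gives $M-A^*A-B_0^*B_0\in\rho_\dface\Psi_{\cop\semi}^{-1}(X)$. Now one keeps correcting: using $\sigmach_0(B_0)\ge 1$ and the leading-order commutativity at $\dface$ built into~\eqref{EqRCSy}---this is precisely where the semiclassical nature of the face $\dface$ enters---one inductively finds $B_j\in\rho_\dface^{j}\Psi_{\cop\semi}^{-j}(X)$ so that, with $B^{(N)}:=\sum_{j<N}B_j$, one has $M-A^*A-(B^{(N)})^*B^{(N)}\in\rho_\dface^{N}\Psi_{\cop\semi}^{-N}(X)$. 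Asymptotically summing the $B_j$ to $B\in\Psi_{\cop\semi}^0(X)$ yields
\[
  M-A^*A-B^*B=E\in\bigcap_{N}\rho_\dface^{N}\Psi_{\cop\semi}^{-N}(X)=\rho_\dface^\infty\Psi_{\cop\semi}^{-\infty}(X)=\Psi_{\cop\semi}^{-\infty,(\emptyset,\N_0,\emptyset,\N_0)}(X),
\]
the last two identifications as in~\eqref{EqRRB0} and Definition~\ref{DefRCPsdo}. Testing on $u\in\CIc(X^\circ)$, which is dense in $L^2_\bop(X)$ and on which $A_h,B_h,E_h$ have image in $L^2_\bop(X)$ (indeed in $\CIdot(X)$, their kernels vanishing to infinite order at $\lb$), this yields, for every $h\in(0,1)$,
\[
  \|A_h u\|_{L^2_\bop}^2 = M\|u\|_{L^2_\bop}^2-\|B_h u\|_{L^2_\bop}^2-\langle E_h u,u\rangle_{L^2_\bop}\le\bigl(M+\|E_h\|_{L^2_\bop\to L^2_\bop}\bigr)\|u\|_{L^2_\bop}^2,
\]
so the proof reduces to a bound on $\|E_h\|_{L^2_\bop\to L^2_\bop}$ uniform in $h$.

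For this last step I would use Schur's test. The Schwartz kernel of $E\in\Psi_{\cop\semi}^{-\infty,(\emptyset,\N_0,\emptyset,\N_0)}(X)$ is a smooth section of $\rho_\dface^{-n}\pi_\bop^*\bigl(\Omegab^\mhalf(X^2_\bop)\bigr)$ on $X^2_{\cop\semi}$ vanishing to infinite order at $\lb,\rb,\dface,\sface$; absorbing $\rho_\dface^{-n}$ into the infinite-order vanishing at $\dface$, it descends along the blow-down $X^2_{\cop\semi}\to[0,1)_h\times X^2_\bop$ to a smooth, smoothly $h$-dependent section of $\Omegab^\mhalf(X^2_\bop)$ vanishing to infinite order at $\{h=0\}$ and at $\lb_\bop\cup\rb_\bop$. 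In the fixed trivialization, $E_h$ therefore has kernel $e(h,\cdot)\in\CI(X^2_\bop)$ whose Schur bounds $\sup_{z}\int_{X^2_\bop}|e(h,z,z')|\,\mu_\bop(z')$ and $\sup_{z'}\int_{X^2_\bop}|e(h,z,z')|\,\mu_\bop(z)$ (with $\mu_\bop$ the b-density fixed above; the integrals converging by the rapid vanishing at $\lb_\bop,\rb_\bop$) are finite, continuous in $h$, and tend to $0$ as $h\to 0$; Schur's test gives $\sup_{h}\|E_h\|_{L^2_\bop\to L^2_\bop}<\infty$, which finishes the argument. I expect the main obstacle to be the bookkeeping behind this scheme: on the one hand ensuring that the iterative correction genuinely uses the commutativity at $\dface$ so that it terminates in $\rho_\dface^\infty\Psi_{\cop\semi}^{-\infty}(X)$ rather than stalling, and on the other hand tracking the half-density weights through the blow-down so that the residual kernel really does descend to the harmless object on $[0,1)_h\times X^2_\bop$ described above. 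The remaining details are routine.
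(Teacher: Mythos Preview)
Your overall strategy—H\"ormander's square root trick followed by Schur's test on the residual operator—is exactly the paper's approach, and your reduction to $E\in\rho_\dface^\infty\Psi_{\cop\semi}^{-\infty}(X)=\Psi_{\cop\semi}^{-\infty,(\emptyset,\N_0,\emptyset,\N_0)}(X)$ is correct. The gap is in your Schur step.

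You claim that the kernel of $E$ ``descends along the blow-down $X^2_{\cop\semi}\to[0,1)_h\times X^2_\bop$ to a smooth, smoothly $h$-dependent section \ldots\ vanishing to infinite order at $\{h=0\}$.'' This is false. The kernel has index set $\N_0$ at $\tface$, meaning it is smooth and in general \emph{nonzero} there; but the blow-down collapses $\tface$ onto $\{0\}\times\ff_\bop\subset\{h=0\}$. A function that is nontrivial on $\tface$ and vanishes to infinite order on the adjacent face $\sface$ cannot be the pullback of a smooth function on the base, and certainly does not vanish at $h=0$. Concretely: approaching a point of $\{0\}\times\ff_\bop^\circ$ from $\sface$ gives value $0$, while approaching from $\ff$ at positive $h$ gives the (generically nonzero) restriction of the kernel to $\ff\cap\tface$. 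Consequently your assertion that the Schur integrals ``tend to $0$ as $h\to 0$'' is unjustified, and with it the uniform bound.

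The fix—and this is what the paper does—is to push forward not to $[0,1)_h\times X$ but to the \emph{resolved} single space $X_{\cop\semi}=[[0,1)_h\times X;\{0\}\times\pa X]$. The left projection $(h,z,z')\mapsto(h,z)$ lifts to a b-fibration ${}'X^2_{\cop\semi}\to X_{\cop\semi}$, and the pushforward of the polyhomogeneous kernel lands in $\cA_\phg^{(\N_0,\N_0,\emptyset)}(X_{\cop\semi})$, with the index sets referring to $\pa_\cop X$, $\pa_{\rm t}X$, $\pa_\semi X$. Such a function is bounded (it is smooth down to the cone and transition faces and vanishes at the semiclassical face), which gives the Schur bound uniformly in $h$. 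The point is precisely that the transition face of the single space absorbs the nontrivial $\tface$-behavior that you tried to blow down.
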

\begin{proof}
  By H\"ormander's square root trick, it suffices to prove this for operators which are residual in the symbolic sense, i.e.\ $A\in\rho_\dface^\infty\Psi_{\cop\semi}^{-\infty}(X)=\Psi_{\cop\semi}^{-\infty,(\emptyset,\N_0,\emptyset,\N_0)}(X)$. Writing the Schwartz kernel of such $A$ as $K(h,z,z')$, $z,z'\in X$, it then suffices, by Schur's lemma, to prove the uniform boundedness of $\int_X K(h,z,z')\,\dd\mu_\bop(z)$ and $\int_X K(h,z,z')\,\dd\mu_\bop(z')$, where $\mu_\bop$ is a b-density on $X$. Note then that the projection $(h,z,z')\mapsto(h,z)$ lifts to a b-fibration ${}'X^2_{\cop\semi}\to X_{\cop\semi}$ under which $K$ pushes forward to an element of $\cA_\phg^{(\N_0,\N_0,\emptyset)}(X_{\cop\semi})$. This is thus a bounded function; similarly for $\int_X K(h,z,z')\,\dd\mu_\bop(z')$. This concludes the proof.
\end{proof}

Note that $\frac{x}{x+h}$, $x+h$, and $\frac{h}{x+h}$ are defining functions of $\pa_\cop X$, $\pa_{\rm t}X$, and $\pa_\semi X$, respectively. Since $h$ is merely a commutative parameter, this suggests the introduction of the following weighted function spaces:

\begin{definition}
\label{DefRMSob}
  Let $s,\alpha,\tau\in\R$. Then we define the weighted semiclassical cone Sobolev space by
  \[
    H_{\cop,h}^{s,\alpha,\tau}(X)=\bigl(\tfrac{x}{x+h}\bigr)^\alpha(x+h)^\tau H_{\cop,h}^s(X) = \{ x^\alpha(x+h)^\tau u \colon u\in H_{\cop,h}^s(X) \},\quad h > 0,
  \]
  where $H_{\cop,h}^s(X)$ is the completion of $\CIc(X^\circ)$ with respect to the norm
  \begin{equation}
  \label{EqRMSobNorm}
    \| u \|_{H_{\cop,h}^s(X)}^2 := \|u\|_{L^2_\bop(X)}^2 + \|A u\|_{L^2_\bop(X)}^2,\quad s\geq 0,
  \end{equation}
  where $A\in\Psi_{\cop,h}^s(X)$ is a fixed elliptic operator (i.e.\ has elliptic principal symbol). For $s<0$, we define $H_{\cop,h}^s(X)$ as the $L^2_\bop(X)$-dual of $H_{\cop,h}^{-s}(X)$.\footnote{Equivalently, $H_{\cop,h}^s(X)$ for $s<0$ consists of all extendible distributions \cite[Appendix~B]{HormanderAnalysisPDE3} $u$ on $X^\circ$ which are of the form $u=u_0+A u_1$ with $u_0,u_1\in L^2_\bop(X)$ and $A\in\Psi_{\cop\semi}^{-s}(X)$ a fixed elliptic operator. As the norm of $u$, one can then take $\inf\|u_0\|+\|u_1\|$, the infimum taken over all such decompositions.}
\end{definition}

For $s\in\N_0$, an equivalent definition is that $u\in H_{\cop,h}^{s,\alpha,\tau}(X)$ if and only if
\[
  \bigl(\tfrac{h}{x+h}V_1\bigr)\cdots\bigl(\tfrac{h}{x+h}V_j\bigr)\bigl(\bigl(\tfrac{x}{x+h}\bigr)^{-\alpha}(x+h)^{-\tau}u\bigr)\in L^2_\bop(X)
\]
for all $j=0,\ldots,k$ and all $V_j\in\Vb(X)$.

\begin{rmk}
\label{RmkRMHbh}
  For $h$ bounded away from $0$, $H_{\cop,h}^s(X)$ is the standard b-Sobolev space $\Hb^s(X)$, while for $x$ bounded away from $0$, this is the standard semiclassical Sobolev space $H_h^s(X^\circ)$. More generally, we have $H_{\cop,h}^{s,\alpha,\tau}(X)=\Hb^{s,\alpha}(X)$ as sets, but with inequivalent norms as $h\to 0$; we describe the relationship more precisely in Proposition~\ref{PropRMHbh} below.
\end{rmk}

We record the boundedness of semiclassical cone ps.d.o.s on weighted Sobolev spaces:

\begin{prop}
\label{PropRMSob}
  Let $s,\alpha,\tau\in\R$, and let $\cE=(\cE_\lb,\cE_\ff,\cE_\rb,\cE_\tface)$ be a collection of index sets. Then:
  \begin{enumerate}
  \item\label{ItRMSob1} Any $A\in\Psi_{\cop\semi}^m(X)$ defines a bounded map $A\colon H_{\cop,h}^{s,\alpha,\tau}(X)\to H_{\cop,h}^{s-m,\alpha,\tau}(X)$.
  \item\label{ItRMSob2} Let $A\in\Psi_{\cop\semi}^{-\infty,\cE}(X)$. If $\alpha+\inf\Re\cE_\rb>0$, then $A$ is a bounded map
    \begin{equation}
    \label{EqRMSob}
      A\colon H_{\cop,h}^{s,\alpha,\tau}(X)\to H_{\cop,h}^{\infty,\beta,\mu}(X)
    \end{equation}
    for $\beta<\min(\inf\Re\cE_\lb,\alpha+\inf\Re\cE_\ff)$, $\mu<\tau+\inf\Re\cE_\tface$.
  \item\label{ItRMSob3} If the leading order terms of $\cE_\ff$ and $\cE_\tface$ do not have logarithmic factors (i.e.\ $(z,k)\in\cE_\ff$, $\Re z=\inf\Re\cE_\ff$ implies $k=0$; similarly for $\cE_\tface$), \eqref{EqRMSob} holds provided that, still, $\alpha+\inf\Re\cE_\rb>0$, $\beta<\inf\Re\cE_\lb$, but only $\beta\leq\alpha+\inf\Re\cE_\ff$, $\mu\leq\tau+\inf\Re\cE_\tface$.
  \end{enumerate}
\end{prop}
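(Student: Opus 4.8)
The plan is to reduce all three parts to the uniform $L^2_\bop(X)$-boundedness of $\Psi^0_{\cop\semi}(X)$ (Lemma~\ref{LemmaRML2}): first by conjugating the weights out of the estimate, and then, for residual operators, by combining Schur's lemma with the push-forward calculus for polyhomogeneous densities, exactly as in the proof of that lemma. The mechanical input is that the weight functions lift to $X^2_{\cop\semi}$ as products of boundary defining functions: in the notation of Definition~\ref{DefRCDouble}, $\tfrac{x}{x+h}$ vanishes to first order precisely at $\lb$ and $\ff$, while $x+h$ vanishes to first order precisely at $\tface$ (and at $\sface\cap\lb$, which is immaterial since all our kernels vanish to infinite order at $\sface$). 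Consequently, conjugating $A\in\Psi^{-\infty,\cE}_{\cop\semi}(X)$ by $(\tfrac{x}{x+h})^\beta(x+h)^\mu$ on the left and $(\tfrac{x}{x+h})^\alpha(x+h)^\tau$ on the right produces an element of $\Psi^{-\infty,\cE'}_{\cop\semi}(X)$ with $\cE'=(\cE_\lb-\beta,\ \cE_\ff+\alpha-\beta,\ \cE_\rb+\alpha,\ \cE_\tface+\tau-\mu)$; and conjugating an element of $\Psi^m_{\cop\semi}(X)$ by $(\tfrac{x}{x+h})^\alpha(x+h)^\tau$ on both sides again lands in $\Psi^m_{\cop\semi}(X)$, since the conjugating factor is smooth and positive on $X^2_{\cop\semi}$ away from $\lb\cup\rb\cup\sface$, in particular near $\Delta_{\cop\semi}$, while the kernels vanish to infinite order at $\lb\cup\rb\cup\sface$.

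For part~\ref{ItRMSob1} I would conjugate by the common weight $(\tfrac{x}{x+h})^\alpha(x+h)^\tau$ to reduce to $\alpha=\tau=0$, and then argue as usual that elliptic pseudodifferential operators map between Sobolev spaces of the expected orders: composing $A\in\Psi^m_{\cop\semi}(X)$ with fixed elliptic operators of the relevant orders (which exist by surjectivity of $\sigmach_\bullet$ in~\eqref{EqRCSy}) together with their parametrices, whose errors lie in $\rho_\dface^\infty\Psi^{-\infty}_{\cop\semi}(X)\subseteq\Psi^0_{\cop\semi}(X)$, reduces the bound $A\colon H_{\cop,h}^s(X)\to H_{\cop,h}^{s-m}(X)$ to the uniform $L^2_\bop(X)$-boundedness of operators in $\Psi^0_{\cop\semi}(X)$, i.e.\ to Lemma~\ref{LemmaRML2}; Proposition~\ref{PropRCComp}(1) is used to keep track of the residual terms, and the case $s<0$ follows by $L^2_\bop$-duality, using that $\Psi^m_{\cop\semi}(X)$ is stable under adjoints by the symmetry $z\leftrightarrow z'$ of $X^2_{\cop\semi}$.

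For parts~\ref{ItRMSob2} and~\ref{ItRMSob3} I would first use $\Psi^k_{\cop\semi}(X)\circ\Psi^{-\infty,\cE}_{\cop\semi}(X)\subseteq\Psi^{-\infty,\cE}_{\cop\semi}(X)$ (Proposition~\ref{PropRCComp}(1)) to reduce control of $\|Au\|_{H_{\cop,h}^{k,\beta,\mu}}$, by composing on the left with a fixed elliptic operator of order $k$ and its parametrix as above, to control of $\|Au\|_{H_{\cop,h}^{0,\beta,\mu}}$; and, writing $u\in H_{\cop,h}^{s,\alpha,\tau}(X)$ with $s<0$ in the form $u_0+\Lambda^{-}u_1$ with $u_j\in H_{\cop,h}^{0,\alpha,\tau}(X)$ and $\Lambda^{-}$ a fixed elliptic operator of order $-s$, reduce the source to $H_{\cop,h}^{0,\alpha,\tau}(X)$. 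Thus it suffices to show that $A\colon H_{\cop,h}^{0,\alpha,\tau}(X)\to H_{\cop,h}^{0,\beta,\mu}(X)$ is bounded uniformly in $h$, which after conjugation becomes the uniform $L^2_\bop(X)$-boundedness of an operator with kernel $\wt K\in\cA_\phg^{\cE'}(X^2_{\cop\semi})$, $\cE'$ as above (fixing a trivialization of ${}^\bop\Omega^\mhalf X$, as in the proof of Lemma~\ref{LemmaRML2}). By Schur's lemma this reduces to uniform boundedness of $\int_X|\wt K(h,z,z')|\,\dd\mu_\bop(z')$ and $\int_X|\wt K(h,z,z')|\,\dd\mu_\bop(z)$, both of which are push-forwards of $|\wt K|$ along the two b-fibrations ${}'X^2_{\cop\semi}\to X_{\cop\semi}$, so that the push-forward theorem \cite[Proposition~B5.6]{EpsteinMelroseMendozaPseudoconvex} applies. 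These push-forwards converge precisely when $\inf\Re\cE'_\rb=\alpha+\inf\Re\cE_\rb>0$ and $\inf\Re\cE'_\lb=\inf\Re\cE_\lb-\beta>0$ (convergence near the right, resp.\ left, boundary, using also the infinite-order vanishing of $\wt K$ at $\sface$ and the scattering decay built into $\tface$), and the first one is then polyhomogeneous on $X_{\cop\semi}$ with index set $(\cE_\lb-\beta)\extcup(\cE_\ff+\alpha-\beta)$ at $\pa_\cop X$—onto which the projection maps both $\lb$ and $\ff$, whence the extended union—and index set $\cE_\tface+\tau-\mu$ at $\pa_{\rm t}X$ (from $\tface$). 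Uniform boundedness of this function amounts to these index sets having non-negative real parts with no logarithm at an exponent of vanishing real part, i.e.\ $\beta\le\min(\inf\Re\cE_\lb,\ \alpha+\inf\Re\cE_\ff)$ and $\mu\le\tau+\inf\Re\cE_\tface$: under the strict inequalities of part~\ref{ItRMSob2} the relevant exponents are positive and there is nothing further to check, while in part~\ref{ItRMSob3} the strict bound $\beta<\inf\Re\cE_\lb$ excludes both a contribution of $\cE_\lb$ at the critical exponent and the logarithm that an exact coincidence of exponents would force in the extended union, so that only the no-logarithm hypothesis on the leading term of $\cE_\ff$ (resp.\ $\cE_\tface$) is needed. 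The second push-forward is handled symmetrically and imposes no new conditions.

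The step I expect to be the main obstacle is the index-set bookkeeping in the last paragraph: correctly identifying, for each of the b-fibrations ${}'X^2_{\cop\semi}\to X_{\cop\semi}$, which boundary hypersurfaces of the total space lie over which boundary hypersurface of $X_{\cop\semi}$ (with $\pa_\cop X$ covered by both $\lb$ and $\ff$, the source of the $\min$) versus being fibre hypersurfaces along which one integrates, reading off the exponents fed into the push-forward theorem, and tracking exactly when a logarithmic term is forced—this last point being what pins down the strict-versus-non-strict dichotomy between parts~\ref{ItRMSob2} and~\ref{ItRMSob3}. The remaining ingredients—the lifts of the weights to products of boundary defining functions, the conjugation-invariance of $\Psi^m_{\cop\semi}(X)$, and the reductions of differential order and of the weight—are routine within the framework of~\S\ref{SsRC}.
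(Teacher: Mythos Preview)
Your proposal is correct and follows essentially the same approach as the paper: conjugate by the weights to shift the index sets to $\cE'=(\cE_\lb-\beta,\,\cE_\ff+\alpha-\beta,\,\cE_\rb+\alpha,\,\cE_\tface+\tau-\mu)$, reduce part~\eqref{ItRMSob1} to Lemma~\ref{LemmaRML2} via a symbolic parametrix, and for parts~\eqref{ItRMSob2}--\eqref{ItRMSob3} invoke Schur's lemma once the shifted index sets have positive (resp.\ nonnegative, no-log) infima. The paper is terser---it states the conjugated $\cE'$ and then says ``Schur's lemma applies'' without spelling out the push-forward bookkeeping you describe---and it handles the weight conjugation slightly differently, computing $(x'+h)/(x+h)$ directly on $X^2_{\cop\semi}$ (noting it is only conormal at $\sface\cap\lb$, which is harmless) rather than lifting $x+h$ as a defining function; your remark about $\sface\cap\lb$ anticipates exactly this subtlety.
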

\begin{proof}
  For $\alpha=\tau=0$, statement~\eqref{ItRMSob1} is Lemma~\ref{LemmaRML2} for $m=0$, and for general $m$ follows by a standard argument (using a symbolic elliptic parametrix for the operator $A$ in~\eqref{EqRMSobNorm}) from Lemma~\ref{LemmaRML2}. For general weights, we only need to observe that $\Psi_{\cop\semi}^m(X)$ is invariant under conjugation by $x^\alpha(x+h)^{\tau-\alpha}$. For $\tau-\alpha=0$, this follows from $(x'/x)^\alpha=\rho_\lb^{-\alpha}\rho_\rb^\alpha$ and the infinite order vanishing of Schwartz kernels of elements of $\Psi_{\cop\semi}(X)$ at $\lb\cup\rb$. For $\alpha=0$ on the other hand, we have
  \[
    \frac{x'+h}{x+h} = \frac{\rho_\rb\rho_\ff\rho_\tface+\rho_\tface\rho_\sface\rho_\dface}{\rho_\lb\rho_\ff\rho_\tface+\rho_\tface\rho_\sface\rho_\dface} = \frac{\rho_\rb\rho_\ff+\rho_\sface\rho_\dface}{\rho_\lb\rho_\ff+\rho_\sface\rho_\dface},
  \]
  which is a smooth function on $X^2_{\cop\semi}$ except at $\sface\cap\lb$, where it is equivalent to $1/(\rho_\lb+\rho_\sface)$ and thus merely conormal. The infinite order vanishing of Schwartz kernels at $\lb$ means that multiplication by $(\frac{x'+h}{x+h})^{\tau-\alpha}$, $\tau-\alpha>0$, preserves $\Psi_{\cop\semi}(X)$. The argument for $\tau-\alpha<0$ is analogous.

  To prove~\eqref{ItRMSob2}, we define
  \[
    A' = (x+h)^{-\mu}\bigl(\tfrac{x}{x+h}\bigr)^{-\beta} A \bigl(\tfrac{x'}{x'+h}\bigr)^\alpha(x'+h)^\tau \in \Psi_{\cop\semi}^{-\infty,\cE'}(X),
  \]
  where $\cE'=(\cE'_\lb,\cE'_\ff,\cE'_\rb,\cE'_\tface)$ with
  \[
    \cE'_\lb = \cE_\lb-\beta,\quad
    \cE'_\ff = \cE_\ff+\alpha-\beta,\quad
    \cE'_\rb = \cE_\rb+\alpha,\quad
    \cE'_\tface = \cE_\tface + (\tau-\mu).
  \]
  Since $A$ has differential order $-\infty$, it suffices to prove that $A'$ is uniformly bounded on $L^2_\bop(X)$. Note then that $\inf\Re\cE'_\bullet>0$ for $\bullet=\lb,\ff,\rb,\tface$, which implies the desired boundedness by Schur's lemma.
  
  Part~\eqref{ItRMSob3} follows from these considerations as well, since now polyhomogeneous distributions with index sets $\cE'_\ff$ and $\cE'_\tface$ at $\ff$ and $\tface$ are \emph{bounded} there, hence Schur's lemma applies again.
\end{proof}

We further record mapping properties on polyhomogeneous distributions:

\begin{prop}
\label{PropRMPhg}
  Let $\cE=(\cE_\lb,\cE_\ff,\cE_\rb,\cE_\tface)$ and $\cF=(\cF_\cop,\cF_{\rm t},\cF_\semi)$ denote two collections of index sets. Then:
  \begin{enumerate}
  \item Any $A\in\Psi_{\cop\semi}^m(X)$ is a bounded map on $\cA_\phg^\cF(X_{\cop\semi})$.
  \item Let $A\in\Psi_{\cop\semi}^{-\infty,\cE}(X)$, and suppose $\inf\Re(\cF_\cop+\cE_\rb)>0$. Then $A$ is a bounded map $A \colon \cA_\phg^\cF(X_{\cop\semi}) \to \cA_\phg^\cG(X_{\cop\semi})$, where
  \[
    \cG_\cop = (\cF_\cop+\cE_\ff) \extcup \cE_\lb, \quad
    \cG_{\rm t} = \cF_t+\cE_\tface, \quad
    \cG_\semi = \emptyset.
  \]
  \end{enumerate}
\end{prop}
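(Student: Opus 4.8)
The plan is to realize the action of $A$ on $u\in\cA_\phg^\cF(X_{\cop\semi})$ as a pushforward--pullback composition on the extended double space ${}'X^2_{\cop\semi}$, in exact parallel with the treatment of compositions in Proposition~\ref{PropRCComp}, but with the b-triple space replaced by the b-double space and the ``third factor'' suppressed. Write $\pi_L,\pi_R\colon{}'X^2_{\cop\semi}\to X_{\cop\semi}$ for the lifts of the left and right projections $(h,z,z')\mapsto(h,z)$ and $(h,z,z')\mapsto(h,z')$. First I would verify that these are b-fibrations and that $\pi_L$ is transversal to the lifted diagonal $\Delta_{\cop\semi}$ and restricts to a diffeomorphism of $\Delta_{\cop\semi}$ onto $X_{\cop\semi}$: this is the one-factor-fewer analogue of Lemma~\ref{LemmaRC3Proj} (the b-fibration property of $\pi_L$ is already invoked in the proof of Lemma~\ref{LemmaRML2}), proved by the same iterated-blow-up bookkeeping. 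With the fixed half-density trivialization one has $Au=(\pi_L)_*(K\cdot\pi_R^*u)$, where $K$ is the Schwartz kernel of $A$; since Schwartz kernels of elements of $\Psi_{\cop\semi}^m(X)$ and $\Psi_{\cop\semi}^{-\infty,\cE}(X)$ lift to ${}'X^2_{\cop\semi}$ with trivial index set at $\tlb,\trb$, one may work on the extended space throughout.

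For part~(2), the pullback theorem gives that $\pi_R^*u$ is polyhomogeneous on ${}'X^2_{\cop\semi}$, with index set $\cF_\cop$ at $\rb$ and at $\ff$, $\cF_{\rm t}$ at $\tface$ and at $\dface$, $\cF_\semi$ at $\sface$, and $\N_0$ at $\lb$. Multiplying by $K$ --- which carries index sets $\cE_\lb,\cE_\ff,\cE_\rb,\cE_\tface$ at $\lb,\ff,\rb,\tface$ and trivial index sets at $\dface,\sface,\tlb,\trb$ --- the product $K\cdot\pi_R^*u$ has index set $\cE_\rb+\cF_\cop$ at $\rb$, and since $(\pi_L)_*$ integrates transversally to $\rb$, it is precisely the hypothesis $\inf\Re(\cF_\cop+\cE_\rb)>0$ that makes the pushforward well-defined. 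Applying the pushforward theorem \cite[Proposition~B5.6]{EpsteinMelroseMendozaPseudoconvex} to the b-fibration $\pi_L$ then reads off $Au$: the hypersurfaces $\lb$ and $\ff$ map onto $\pa_\cop X$, contributing $\cE_\lb$ and $\cE_\ff+\cF_\cop$, so $\cG_\cop=(\cF_\cop+\cE_\ff)\extcup\cE_\lb$; the hypersurface $\tface$ maps onto $\pa_{\rm t}X$, contributing $\cE_\tface+\cF_{\rm t}$, while $\dface$ maps onto $\pa_\semi X$ (its corner with $\tface$ mapping to the corner $\pa_\semi X\cap\pa_{\rm t}X$) and $\sface$ maps onto $\pa_\semi X$, at both of which $K$ is trivial; hence $\cG_{\rm t}=\cF_{\rm t}+\cE_\tface$ and $\cG_\semi=\emptyset$.

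For part~(1), I would split $A=A''+R$, where $R\in\Psi_{\cop\semi}^{-\infty,(\emptyset,\N_0,\emptyset,\N_0)}(X)$ has Schwartz kernel supported away from $\Delta_{\cop\semi}$ (hence of differential order $-\infty$) while $A''$ has Schwartz kernel supported in a neighborhood of $\Delta_{\cop\semi}$ disjoint from $\lb\cup\rb\cup\sface$ (which is possible since $\Delta_{\cop\semi}$ meets only $\ff,\tface,\dface$). The residual term $R$ is covered by part~(2) with $\cE_\lb=\cE_\rb=\emptyset$ --- so the integrability hypothesis is automatic --- and $\cE_\ff=\cE_\tface=\N_0$, yielding output index sets contained in $\cF$. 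For $A''$, the product $K''\cdot\pi_R^*u$ is conormal of order $m$ to $\Delta_{\cop\semi}$ and polyhomogeneous at the boundary, with index set $\cF_\cop$ at $\ff$, $\cF_{\rm t}$ at $\tface$, $\cF_\semi$ at $\dface$ (where $K''$ is smooth), and trivial at $\lb,\rb,\sface,\tlb,\trb$; since $\pi_L$ is transversal to $\Delta_{\cop\semi}$ and maps it diffeomorphically onto $X_{\cop\semi}$, the pushforward theorem for conormal distributions \cite[Proposition~B7.20]{EpsteinMelroseMendozaPseudoconvex} shows that $A''u=(\pi_L)_*(K''\cdot\pi_R^*u)$ is polyhomogeneous on $X_{\cop\semi}$ --- the conormal singularity along $\Delta_{\cop\semi}$ being integrated out --- with index sets $\cF_\cop$ at $\pa_\cop X$, $\cF_{\rm t}$ at $\pa_{\rm t}X$, $\cF_\semi$ at $\pa_\semi X$. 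Hence $A=A''+R$ is bounded on $\cA_\phg^\cF(X_{\cop\semi})$.

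The only genuinely non-formal ingredient is the geometric claim that $\pi_L$ and $\pi_R$ lift to b-fibrations ${}'X^2_{\cop\semi}\to X_{\cop\semi}$ with $\pi_L$ transversal to $\Delta_{\cop\semi}$; this is a strict simplification of Lemma~\ref{LemmaRC3Proj}, so I do not anticipate a real obstacle. Everything else is the same application of the pullback and pushforward theorems \cite[\S6]{MelroseDiffOnMwc} as in Proposition~\ref{PropRCComp}, together with the elementary observation that $\pi_R^*u$ carries trivial index set along $\tlb$ and $\trb$.
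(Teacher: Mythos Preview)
Your approach is exactly the paper's: write $Au=(\pi_L)_*(K\cdot\pi_R^*u)$ on the extended double space and invoke the pullback/pushforward theorems. The paper compresses this into one sentence; you have filled in the bookkeeping correctly and the splitting $A=A''+R$ for part~(1) is the natural way to handle the conormal singularity.

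Two small slips, neither of which affects the argument. First, in your part~(2) paragraph you assign $\pi_R^*u$ the index set $\cF_{\rm t}$ at $\dface$, but $\dface$ maps under $\pi_R$ to $\pa_\semi X$ (interior points of $\dface$ have $z'\in X^\circ$, $h\to 0$), so the correct index set there is $\cF_\semi$; you use the right value later in part~(1). Second, your closing ``elementary observation'' that $\pi_R^*u$ has trivial index set at $\tlb$ and $\trb$ is false: $\tlb$ maps to $\pa_\semi X$ and $\trb$ to $\pa_{\rm t}X$ under $\pi_R$, so $\pi_R^*u$ picks up $\cF_\semi$ and $\cF_{\rm t}$ there respectively. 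In both cases the product $K\cdot\pi_R^*u$ is nonetheless trivial at those faces because $K$ itself has empty index set at $\dface$, $\tlb$, $\trb$, so the pushforward computation goes through unchanged.
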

\begin{proof}
  This follows easily from pullback and pushforward theorems by writing $A u=\pi_*(K_A\cdot(\pi')^*u)$, where $K_A$ is the Schwartz kernel of $A$, and $\pi$, resp.\ $\pi'\colon{}'X^2_{\cop\semi}\to X_{\cop\semi}$ are the lifts of the projection maps $[0,1)_h\times X^2\to[0,1)_h\times X$ to the left, resp.\ right factor of $X^2$.
\end{proof}

Finally, we relate the spaces $H_{\cop,h}(X)$ to the more standard semiclassical b-Sobolev spaces $\Hbh^{s,\alpha}(X)=x^\alpha\Hbh^s(X)$, see Appendix~\ref{SB}.

\begin{prop}
\label{PropRMHbh}
  For $s,\alpha,\tau\in\R$, we have
  \begin{alignat*}{3}
    H_{\cop,h}^{s,\alpha,\tau}(X)&\ &\subset&\ h^{-(\tau-\alpha)_- -s_-}\Hbh^{s,\alpha}(X), \\
    \Hbh^{s,\alpha}(X)&\ &\subset&\ h^{-(\tau-\alpha)_+ - s_+}H_{\cop,h}^{s,\alpha,\tau}(X),
  \end{alignat*}
  in the sense that the inclusion maps are uniformly bounded. That is, there exists $C>0$ such that $\|h^{(\tau-\alpha)_- +s_-}u\|_{\Hbh^{s,\alpha}(X)}\leq C\|u\|_{H_{\cop,h}^{s,\alpha,\tau}(X)}$; similarly for the second inclusion.
\end{prop}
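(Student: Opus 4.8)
The plan is to reduce to the unweighted case $\alpha=\tau=0$ and then account for the weight mismatch by a multiplier argument. In the unweighted case the two claimed inclusions read $H_{\cop,h}^s(X)\subset h^{-s_-}\Hbh^s(X)$ and $\Hbh^s(X)\subset h^{-s_+}H_{\cop,h}^s(X)$, and it suffices to prove these for $s=k\in\N_0$: the case $s<0$ then follows by $L^2_\bop$-duality (the negative-order spaces are by definition the $L^2_\bop$-duals of the positive-order ones, and $s_\mp=(-s)_\pm$), and the general real case by complex interpolation with constants uniform in $h$. For $s=k$, recall that $\|u\|_{H_{\cop,h}^k}$ is equivalent to a sum of $L^2_\bop$-norms of $\bigl(\tfrac{h}{x+h}V_1\bigr)\cdots\bigl(\tfrac{h}{x+h}V_j\bigr)u$ with $j\le k$, $V_i\in\Vb(X)$, while $\|u\|_{\Hbh^k}$ is the analogous sum with each $\tfrac{h}{x+h}V_i$ replaced by $h V_i$. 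Since $h V_i=(x+h)\cdot\tfrac{h}{x+h}V_i$ and $\tfrac{h}{x+h}V_i=(x+h)^{-1}\cdot h V_i$, one passes between the two descriptions by commuting the functions $(x+h)^{\pm1}$ through the vector field factors; here the crucial structural input is that $V x\in x\,\CI(X)$ for $V\in\Vb(X)$, whence
\[
  [(x+h),\tfrac{h}{x+h}V]=-\tfrac{h}{x+h}(V x),\qquad
  [(x+h)^{-1},h V]=h(x+h)^{-2}(V x)
\]
are multiplication by functions which, using $\tfrac{h x}{(x+h)^2}\le\tfrac{h}{x+h}\le1$, are bounded uniformly in $h$ (in fact with a gain of a factor $h$). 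Expanding iteratively, $(h V_1)\cdots(h V_j)u$ is a finite sum of terms $b\,\bigl(\tfrac{h}{x+h}V_{i_1}\bigr)\cdots\bigl(\tfrac{h}{x+h}V_{i_l}\bigr)u$ with $l\le j$ and $b\in\CI(X)$ uniformly bounded, giving $\|u\|_{\Hbh^k}\lesssim\|u\|_{H_{\cop,h}^k}$; conversely $\bigl(\tfrac{h}{x+h}V_1\bigr)\cdots\bigl(\tfrac{h}{x+h}V_j\bigr)u$ is a finite sum of terms $(x+h)^{-l}\,b\,(h V_{i_1})\cdots(h V_{i_{l'}})u$ with $l,l'\le j$ and $b$ uniformly bounded, and since $\|(x+h)^{-l}\|_{L^\infty}=h^{-l}\le h^{-k}$ this gives $\|u\|_{H_{\cop,h}^k}\lesssim h^{-k}\|u\|_{\Hbh^k}$.

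To pass to general weights $\alpha,\tau$, observe that $\bigl(\tfrac{x}{x+h}\bigr)^{-\alpha}(x+h)^{-\tau}=x^{-\alpha}(x+h)^{\alpha-\tau}$, so that $u\in H_{\cop,h}^{s,\alpha,\tau}(X)$ iff $v:=x^{-\alpha}(x+h)^{\alpha-\tau}u\in H_{\cop,h}^s(X)$ (with equal norms), and $u\in\Hbh^{s,\alpha}(X)$ iff $w:=x^{-\alpha}u\in\Hbh^s(X)$ (with equal norms); moreover $x^{-\alpha}u=(x+h)^{\tau-\alpha}v$ and $x^{-\alpha}(x+h)^{\alpha-\tau}u=(x+h)^{\alpha-\tau}w$. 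The statement thus reduces to the mapping properties of multiplication by $(x+h)^\beta$, $\beta\in\R$, on the unweighted spaces. For $\beta\ge0$, $(x+h)^\beta$ is a uniformly bounded multiplier on both $H_{\cop,h}^s(X)$ and $\Hbh^s(X)$, since its $\Vb(X)$- and $\tfrac{h}{x+h}\Vb(X)$-derivatives are uniformly bounded (e.g.\ $x\pa_x(x+h)^\beta=\beta\tfrac{x}{x+h}(x+h)^\beta$). For $\beta<0$, write $(x+h)^\beta=h^\beta(1+\hat x)^\beta$ with $\hat x=x/h$; then $(1+\hat x)^\beta$ is a uniformly bounded multiplier on both scales (now $x\pa_x(1+\hat x)^\beta=\beta\tfrac{\hat x}{1+\hat x}(1+\hat x)^\beta$ is bounded and $\tfrac{h}{x+h}=(1+\hat x)^{-1}$), so multiplication by $(x+h)^\beta$ has norm $\lesssim h^\beta=h^{-\beta_-}$ on each. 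Composing the unweighted inclusion $H_{\cop,h}^s\subset h^{-s_-}\Hbh^s$ with multiplication by $(x+h)^{\tau-\alpha}$ yields $\|u\|_{\Hbh^{s,\alpha}}=\|(x+h)^{\tau-\alpha}v\|_{\Hbh^s}\lesssim h^{-s_-}h^{-(\tau-\alpha)_-}\|v\|_{H_{\cop,h}^s}=h^{-s_--(\tau-\alpha)_-}\|u\|_{H_{\cop,h}^{s,\alpha,\tau}}$, the first inclusion of the proposition; composing $\Hbh^s\subset h^{-s_+}H_{\cop,h}^s$ with multiplication by $(x+h)^{\alpha-\tau}$ yields $\|u\|_{H_{\cop,h}^{s,\alpha,\tau}}=\|(x+h)^{\alpha-\tau}w\|_{H_{\cop,h}^s}\lesssim h^{-s_+}h^{-(\alpha-\tau)_-}\|w\|_{\Hbh^s}$, and since $(\alpha-\tau)_-=(\tau-\alpha)_+$ this is the second.

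The only genuinely delicate point is the commutator bookkeeping in the unweighted $s=k$ case: one must check that iterated commutation of $(x+h)^{\pm1}$ past the vector field factors never produces a front coefficient more singular than $(x+h)^{-k}$, so that the $h$-loss is exactly $h^{-k}$ in one direction and none in the other. This is precisely where one uses that $V x\in x\,\CI(X)$ makes $[(x+h)^{-1},h V]$ a \emph{bounded}---rather than $(x+h)^{-1}$-singular---multiplier; the remaining ingredients (Schur-type $L^2_\bop$-boundedness of residual operators, $L^2_\bop$-duality for $s<0$, and uniform-in-$h$ interpolation for general real $s$) are routine.
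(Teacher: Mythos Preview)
Your proof is correct and follows the same strategy as the paper: establish the unweighted inclusions for $s\in\N_0$ from the identity $hW=(x+h)\cdot\tfrac{h}{x+h}W$, handle the weights by the multiplier bound $\|(x+h)^\beta\|\lesssim h^{-\beta_-}$, and extend to real $s$ by $L^2_\bop$-duality and complex interpolation. The one step you label ``routine'' but which deserves a word is the identification of the interpolation spaces $[H_{\cop,h}^k,H_{\cop,h}^{k+1}]_\theta$ with $H_{\cop,h}^{k+\theta}$ uniformly in $h$; this is not immediate from the pseudodifferential definition, and the paper justifies it by invoking its complex powers construction (Theorem~\ref{ThmPC}) for a shifted conic Laplacian to produce holomorphic isomorphisms along the scale.
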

\begin{proof}
  We have $H_{\cop,h}^0(X)=L^2_\bop(X)=\Hbh^0(X)$. Moreover, given $V=\tfrac{h}{x+h}W$, $W\in\Vb(X)$, we have $h W u=(x+h)V u$, which implies that for $s\in\N_0$,
  \[
    H_{\cop,h}^s(X)\subset H_{\bop,h}^s(X),\quad
    H_{b,h}^s(X)\subset(x+h)^{-s}H_{\cop,h}^s(X).
  \]
  Multiplication by weights gives
  \begin{alignat*}{2}
    H_{\cop,h}^{s,\alpha,\tau}(X) &\subset x^\alpha(x+h)^{\tau-\alpha}H_{\bop,h}^s(X) &&\subset x^\alpha h^{-(\tau-\alpha)_-}H_{\bop,h}^s(X), \\
    x^\alpha H_{b,h}^s(X) &\subset (x+h)^{-s+\alpha-\tau}\bigl(\tfrac{x}{x+h}\bigr)^\alpha (x+h)^\tau H_{\cop,h}^s(X) &&\subset h^{-s}h^{-(\tau-\alpha)_+}H_{\cop,h}^{s,\alpha,\tau}(X).
  \end{alignat*}
  These inclusions in fact hold for real $s\geq 0$, as can be shown using complex interpolation. (Indeed, using Theorem~\ref{ThmPC} below---whose proof is independent of this proposition---applied to $A_{\digamma,z}:=(x^{-2}h^2 A_{\bop,\digamma}+1)_z$, where $A_{\bop,\digamma}=x^2\Delta_g+\digamma^2$ for any fixed smooth conic metric $g$ and $\digamma\gg 1$, one finds that the complex interpolation space $[x^k H_{\cop,h}^k(X),x^{k'}H_{\cop,h}^{k'}(X)]_\theta$ is $x^s H_{\cop,h}^s(X)$ for $s=\theta k'+(1-\theta)k$; here, taking $\digamma$ large ensures that the Schwartz kernel of $A_{\digamma,z}$ for $\Re z$ in any fixed interval vanishes to any pre-specified order at $\lb$ and $\rb$. Similarly, interpolation spaces of weighted (semiclassical) b-Sobolev spaces are weighted (semiclassical) b-Sobolev spaces as well.) For general real orders $s\in\R$, the claim follows by duality.
\end{proof}

\section{Doubly semiclassical cone calculus and doubly semiclassical resolvent}
\label{SC}

For a fully elliptic $m$-th order semiclassical cone operator $A\in\Diff_{\cop,\semi}^m(X)$, the ultimate goal is to describe the structure of complex powers $A^z$, which using~\eqref{EqIContour} we will accomplish by analyzing the resolvent $(A_h-\tilde\lambda)^{-1}$ for $\tilde\lambda$ on a suitable unbounded contour. Denoting $\tilde h=|\tilde\lambda|^{-1/m}$, the resolvent is a scalar multiple of $(\tilde h^m A_h-\tilde\omega)^{-1}$, $\tilde\omega=\tilde\lambda/|\tilde\lambda|$. The operator we are inverting here is thus of the form
\begin{equation}
\label{EqCIntroOp}
  \tilde A_{h,\tilde h,\tilde\omega} := \tilde h^m A_h-\tilde\omega, \qquad A\in\Diff_{\cop,\semi}^m(X),\ h,\tilde h>0,\ \tilde\omega\in\Omega\subset\C.
\end{equation}
It thus has two semiclassical parameters $h,\tilde h$. For our purposes, we think of $\tilde h$ as the smaller one: for any fixed $h>0$, a suitable integral over $(\tilde h,\tilde\omega)$ (using a description of the resolvent for $\tilde h$ near $0$) will produce complex powers, whose uniform behavior as $h\to 0$ we wish to describe.

We construct the \emph{d}oubly \emph{s}emiclassical \emph{c}one (or `dsc') calculus $\Psidsc(X)$ and associated normal operators in~\S\ref{SsCC}, after a warm-up on manifolds without boundary in~\S\ref{SsCP}. The composition properties of the dsc-calculus are proved in~\S\ref{SsCCo}. The subsequent construction of the resolvent of $\tilde A_{h,\tilde h,\tilde\omega}$ in~\S\ref{SsCR} will then be a simple application of the calculus.

\subsection{Prologue: doubly semiclassical calculus on closed manifolds}
\label{SsCP}

Suppose, only in this section, that $X$ is a closed manifold (compact without boundary). The semiclassical double space is $X^2_\semi=[[0,1)_h\times X^2;\{0\}\times\Delta_X]$, where $\Delta_X\subset X^2$ is the diagonal, and elements of $\Psi_\semi(X)$ are characterized via their Schwartz kernels: they vanish to infinite order on the lift, $\sface_0$, of $h=0$, and are conormal to the lift $\Delta_\semi$ of $[0,1)_h\times\Delta_X$. The principal symbol takes values in $(S^m/h S^{m-1})(T^*X)$. We denote the front face of $X^2_\semi$ by $\dface_0$. 

We define the doubly semiclassical double space as
\begin{equation}
\label{EqCPDouble}
  X^2_{\semi\tilde\semi} = [[0,1)_{\tilde h}\times X^2_\semi;\{0\}\times\Delta_\semi],
\end{equation}
see Figure~\ref{FigCP2}. The space $X^2_{\semi\tilde\semi}$ has four boundary hypersurfaces: the lifts $\sface$, $\dface$ of $[0,1)_{\tilde h}\times\sface_0$, $[0,1)_{\tilde h}\times\dface_0$, as well as the lift of $\tilde h=0$, denoted $\wt\sface$, and the front face of the blow-up in~\eqref{EqCPDouble}, denoted $\wt\dface$. The lift of $[0,1)_{\tilde h}\times\Delta_\semi$ to $X^2_{\semi\tilde\semi}$ is denoted $\Delta_{\semi\tilde\semi}$. Then $\Psi_{\semi\tilde\semi}^m(X)$ consists of distributions conormal to $\Delta_{\semi\tilde\semi}$, of order $m-\half$ (cf.\ Definition~\ref{DefRCPsdo}), which vanish to infinite order at $\sface\cup\wt\sface$.

\begin{figure}[!ht]
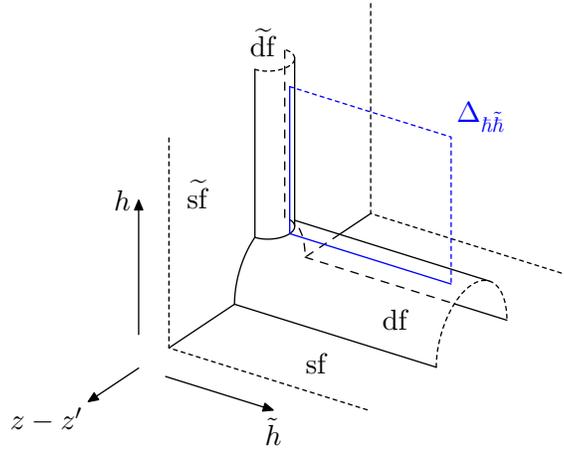

\inclfig{FigCP2}
\caption{The doubly semiclassical double space $X^2_{\semi\tilde\semi}$, its boundary hypersurfaces, and the `diagonal' $\Delta_{\semi\tilde\semi}$. We denote by $z$, resp.\ $z'$ the pullback of local coordinates on $X$ along the left, resp.\ right projection $X\times X\to X$.}
\label{FigCP2}
\end{figure}

This definition is \emph{not symmetric} in $h$ and $\tilde h$. However, if one switches the roles of $h$ and $\tilde h$, a neighborhood of the lift of $[0,1)_h\times[0,1)_{\tilde h}\times\Delta_X$ to the resulting space is naturally diffeomorphic to a neighborhood of $\Delta_{\semi\tilde\semi}$ as defined above, and moreover the spaces of $\CI$ functions vanishing to infinite order at the lifts of $h=0$ and $\tilde h=0$ are naturally identified via the pointwise identity map on the interior $(0,1)_h\times(0,1)_{\tilde h}\times X^2$. Therefore, the order of the blow-ups in~\eqref{EqCPDouble} is irrelevant in practice; our present choice of ordering will however be more convenient for the discussion of complex powers in~\S\ref{SP}. (Any definition which is symmetric in $h,\tilde h$ is necessarily more complicated and thus less convenient here.)

Let us make this concrete in local coordinates, and denote by $z\in\R^n$ local coordinates on $X$ pulled back to $X^2$ along the left projection, and by $z'$ their pullback along the right projection. Local coordinates on $[0,1)_{\tilde h}\times X^2_\semi$ near $[0,1)_{\tilde h}\times(\dface\setminus\wt\sface)$ are then $(\tilde h,h,z,Z)$, $Z=(z-z')/h$. Upon blowing up $\tilde h=Z=0$, local coordinates near $\wt\dface\setminus\wt\sface$ are $(\tilde h,h,z,Z')$, $\tilde Z=Z/\tilde h=(z-z')/(h\tilde h)$, with $\Delta_{\semi\tilde\semi}=\{\tilde Z=0\}$. Semiclassical quantizations
\begin{equation}
\label{EqCPQu}
  (2\pi h\tilde h)^{-n}\int e^{i(z-z')\zeta/(h\tilde h)}a(h,\tilde h,z,\zeta)\,\dd\zeta
\end{equation}
of symbols $a$ thus lift to conormal distributions $\Psi_{\semi\tilde\semi}(X)$.

The principal symbol map is
\begin{equation}
\label{EqCPSy}
  0 \to h\tilde h\Psi^{m-1}(X) \to \Psi^m(X) \xra{\sigmahth_m} (S^m/h\tilde h S^{m-1})(T^*X) \to 0.
\end{equation}
It is defined by observing that $\wt\dface\cong[0,1)_h\times\ol{T}X$ via continuous extension of $(0,1)_h\times T X\ni(h,z,v)\mapsto\lim_{\tilde h\to 0}(h,z,z+h\tilde h v)\in\wt\dface$, so the restriction of $\kappa\in\Psi_{\semi\tilde\semi}^m(X)$ to $\wt\dface$ can be Fourier-transformed along the fibers of $T X$, giving an element of $\CI([0,1)_h;S^m(T^*X))$ capturing $\kappa$ modulo $\tilde h\Psi_{\semi\tilde\semi}^m(X)$, which is `half' of~\eqref{EqCPSy}. The other `half' arises by restricting to $\dface$. Now $\dface^\circ$ is naturally identified with $(0,1)_{\tilde h}\times T X$ via $(0,1)_{\tilde h}\times T X\ni(\tilde h,z,v)\mapsto\lim_{h\to 0}(\tilde h,z,z+h v)\in\dface^\circ$; the semiclassical quantization~\eqref{EqCPQu} restricts to $(2\pi\tilde h)^{-n}\int e^{i Z\zeta/\tilde h}a(0,\tilde h,z,\zeta)\,\dd\zeta$, hence its \emph{semiclassical} (in $\tilde h$) Fourier transform (which resolves the degeneration as $\tilde h\to 0$) gives the desired principal symbol $a(0,\tilde h,z,\zeta)$ at $\dface$, capturing $\kappa$ modulo $h\Psi_{\semi\tilde\semi}^m(X)$.

Thus, we can construct parametrices elliptic operators $A\in\Psi_{\semi\tilde\semi}^m(X)$ by means of the usual symbolic parametrix construction, producing $B\in\Psi_{\semi\tilde\semi}^{-m}(X)$ such that $R=A B-I$, $R'=B A-I\in h^\infty\tilde h^\infty\Psi_{\semi\tilde\semi}^{-\infty}(X)=\CIdot([0,1)^2\times X^2)$. For $h,\tilde h$ small, $I+R$ and $I+R'$ can then be inverted by a Neumann series, and one obtains $A^{-1}\in\Psi_{\semi\tilde\semi}^{-m}(X)$.

\subsection{Definition of the dsc-calculus}
\label{SsCC}

From now on, $X$ is again a connected compact manifold with connected embedded boundary $\pa X$. Let us denote the boundary hypersurfaces of $X^2_{\cop\semi}$ by adding a subscript `$0$', so $\ff_0$, $\lb_0$, $\rb_0$, $\tface_0$, $\dface_0$, $\sface_0$. Recall that $\Delta_{\cop\semi}\subset X^2_{\cop\semi}$ denotes the lifted diagonal.

\begin{definition}
\label{DefCC2}
  The \emph{dsc-double space} is
  \begin{equation}
  \label{EqCC2}
    X^2_\dscop := \left[ [0,1)_{\tilde h}\times X^2_{\cop\semi}; \{0\}\times\ff_0; \{0\}\times\Delta_{\cop\semi} \right].
  \end{equation}
  The boundary hypersurfaces are:
  \begin{itemize}
  \item $\lb,\ff,\rb,\tface,\dface,\sface$ are the lifts of $[0,1)_{\tilde h}\times\lb_0$, $[0,1)_{\tilde h}\times\ff_0$, etc;
  \item $\wt\tface$, $\wt\dface$, $\wt\sface$ are the lifts of $\{0\}\times\ff_0$, $\{0\}\times\Delta_{\cop\semi}$, $\tilde h^{-1}(0)$.
  \end{itemize}
  The \emph{dsc-diagonal} $\Delta_\dscop$ is the lift of $[0,1)_{\tilde h}\times\Delta_{\cop\semi}$.
\end{definition}

Note the similarity with how $X^2_{\cop\semi}$ is constructed from $X^2_\bop$ in Definition~\ref{DefRCDouble}. Denote by $\rho_H$ a defining function of the boundary hypersurface $H=\ff,\lb$, etc. Let $\pi_\bop\colon X^2_{\cop\semi\tilde\semi}\to X^2_\bop$ denote the lift of the projection $[0,1)_{\tilde h}\times[0,1)_h\times X_\bop^2\to X_\bop^2$.
\begin{definition}
\label{DefCCPsdo}
  Let $m\in\R$. Then
  \[
    \Psidsc^m(X) = \left\{ \kappa\in I^{m-\mhalf}\left(X^2_\dscop;\Delta_\dscop;\rho_\dface^{-n}\rho_{\wt\dface}^{-n}\pi_\bop^*\bigl(\Omegab^\mhalf(X^2_\bop)\bigr)\right) \colon \kappa\equiv 0\ \text{at}\ \lb\cup\rb\cup\sface\cup\wt\sface \right\},
  \]
  with `$\equiv$' denoting equality of Taylor series. If $\cE=(\cE_\lb,\cE_\ff,\cE_\rb,\cE_\tface,\cE_{\wt\tface})$ is a collection of index sets, we set
  \[
    \Psidsc^{-\infty,\cE}(X) := \cA_\phg^\cE(X^2_\dscop),
  \]
  with index sets $\emptyset$ at $\dface$, $\sface$, $\wt\dface$, $\wt\sface$ (not made explicit in the notation).
\end{definition}

Let us verify that this is a sensible definition by considering an operator
\begin{subequations}
\begin{equation}
\label{EqCEx1}
  \tilde A_{h,\tilde h}=\tilde h^m h^m x^{-m}(x D_x)^j P_{m-j}(h,x,y,D_y).
\end{equation}
{Note that a constant term $\tilde\omega$ is of this type, too, for $m=0$, so the calculations here cover all operators of the form~\eqref{EqCIntroOp}.} Identified with its Schwartz kernel, \eqref{EqCEx1} is, in $(\tilde h,h,s=\frac{x}{x'},x',y,y')$ coordinates and dropping b-half density factors for brevity,
\begin{equation}
\label{EqCEx2}
  \tilde A_{h,\tilde h} = \tilde h^m h^m(x')^{-m}s^{-m}(s D_s)^j P_{m-j}(h,s x',y,D_y)\bigl(\delta(s-1)\delta(y-y')\bigr),
\end{equation}
cf.\ \eqref{EqRCEx1}. We lift this to $[0,1)_{\tilde h}\times X^2_{\cop\semi}$ and compute its form in various regimes:

\begin{enumerate}[leftmargin=\enummargin]
  \item In a neighborhood of $(0,1)_{\tilde h}\times(\ff_0\setminus\rb_0)$, we can use the coordinates $(\tilde h,h,s,\hat x'=\frac{x'}{h},y,y')$, in which
    \begin{equation}
    \label{EqCEx3}
      \tilde A_{h,\tilde h} = \tilde h^m(\hat x')^{-m} s^{-m}(s D_s)^j P_{m-j}(h,h s\hat x',y,D_y)\bigl(\delta(s-1)\delta(y-y')\bigr).
    \end{equation}
    Thus, in these coordinates, $\tilde A_{h,\tilde h}$ is a semiclassical cone operator in $\tilde h$, with parametric dependence on $h$. (This is the uniform, down to $h=0$, version of the obvious fact that $\tilde A_{h,\tilde h}$ for fixed $h=h_0>0$ is a semiclassical cone operator in $\tilde h$.) In these coordinates, $\tface$ is defined by $h=0$, so from~\eqref{EqCEx3} one expects the restriction $N_\tface(\tilde A)$ of $\tilde A$ to $\tface$, defined in detail in~\eqref{EqCCNtf} below, to be an $\tilde h$-semiclassical cone operator. The first blow-up in~\eqref{EqCC2} thus resolves its structure at $\tilde h=\hat x'=0$. Near $\wt\tface$, we consider two projective coordinate systems.

  \begin{enumerate}[leftmargin=\enummargin]
  \item The first is a neighborhood of the interior of $\wt\tface\cap\ff$ where we can use coordinates $(\tilde h,h,s,\tilde x'=\frac{\hat x'}{\tilde h},y,y')$; in these,
    \begin{equation}
    \label{EqCEx4}
      \tilde A_{h,\tilde h} = (\tilde x')^{-m} s^{-m}(s D_s)^j P_{m-j}(h,h\tilde h s\tilde x',y,D_y)\bigl(\delta(s-1)\delta(y-y')\bigr)
    \end{equation}
    is a weighted operator of the form~\eqref{EqRCEx2}. Restriction to $\wt\tface$, with defining function $\tilde h$, gives a family of operators with smooth dependence on $h$,
    \begin{equation}
    \label{EqCEx5Ntf2}
      N_{\wt\tface}(\tilde A) = (\tilde x')^{-m}s^{-m}(s D_s)^j P_{m-j}(h,0,y,D_y)\bigl(\delta(s-1)\delta(y-y')\bigr),
    \end{equation}
    cf.\ \eqref{EqCCNtf2} below.

  \item The second projective coordinate system near $\wt\tface$ we consider is a neighborhood of the interior of $\wt\tface\cap\wt\dface$, where we can use $(\hat{\tilde h}=\frac{\tilde h}{\hat x'},h,s_{\tilde h}=(\log s)/\hat{\tilde h},\hat x',y,Y_{\tilde h}=(y-y')/\hat{\tilde h})$, so
    \begin{equation}
    \label{EqCEx6sf}
      \tilde A_{h,\tilde h} = e^{-m\hat{\tilde h} s_{\tilde h}} D_{s_{\tilde h}}^j \hat{\tilde h}^{m-j} P_{m-j}(h,h e^{\hat{\tilde h}s_{\tilde h}}\hat x',y,\hat{\tilde h}{}^{-1} D_{Y_{\tilde h}})\bigl(\delta(s_{\tilde h})\delta(Y_{\tilde h})\bigr)\cdot\hat{\tilde h}{}^{-n}.
    \end{equation}
    Since this is smooth and nondegenerate down to the boundary hypersurfaces ($\hat{\tilde h}=0$ and $\hat x'=0$) in these coordinates, this justifies the blow-up of $\{0\}\times\Delta_{\cop\semi}$ in~\eqref{EqCC2} (reflected in the definition of $s_{\tilde h}$ and $Y_{\tilde h}$ here) as well as the factor $\rho_{\wt\dface}^{-n}$ in Definition~\ref{DefCCPsdo}.
  \end{enumerate}

  \item Near $(0,1)_{\tilde h}\times(\tface_0\cap(\dface_0\setminus\sface_0))$, we use $(\tilde h,\hat h=\frac{h}{x'},s_h=(\log s)/\hat h,x',y,Y_h=(y-y')/\hat h)$, so
  \begin{equation}
  \label{EqCEx7tf}
    \tilde A_{h,\tilde h} = \tilde h^m e^{-m\hat h s_h}D_{s_h}^j \hat h^{m-j}P_{m-j}(\hat h x',e^{\hat h s_h}x',y,\hat h^{-1}D_{Y_h})\bigl(\delta(s_h)\delta(Y_h)\bigr)\cdot \hat h^{-n}.
  \end{equation}
  The blow-up of $\{0\}\times\Delta_{\cop\semi}$ (defined locally by $\tilde h=s_h=Y_h=0$) resolves the $\tilde h$-semiclassical degeneration: near $\tface\cap(\dface\setminus\sface)\cap(\wt\dface\setminus\wt\sface)$ (where $\tilde h\gtrsim|s_h|,|Y_h|$), we can use the local coordinates $(\tilde h,\hat h,s_{\tilde h}=\frac{s_h}{\tilde h},x',y,Y_{\tilde h}=\frac{Y_h}{\tilde h})$, so
  \begin{equation}
  \label{EqCEx8ttf}
    \tilde A_{h,\tilde h} = e^{-m\hat h\tilde h s_{\tilde h}}D_{s_{\tilde h}}^j (\hat h\tilde h)^{m-j}P_{m-j}(\hat h x',e^{\hat h\tilde h s_{\tilde h}}x',y,(\hat h\tilde h)^{-1}D_{Y_{\tilde h}})\bigl(\delta(s_{\tilde h})\delta(Y_{\tilde h})\bigr)\cdot (\hat h\tilde h)^{-n}.
  \end{equation}
  In these coordinates, defining functions of $\tface$, $\dface$, $\wt\dface$ are $x'$, $\hat h$, $\tilde h$, respectively; this fully justifies the density factor in Definition~\ref{DefCCPsdo}.
\end{enumerate}
\end{subequations}

\begin{rmk}
\label{RmkCCNondeg}
  A further consequence of these calculations is that $\tilde A_{h,\tilde h}\in\rho_\ff^{-m}\Psidsc^m(X)$, and it is non-degenerate as such, in the sense that the principal symbol of the diagonal singularity is a nonzero multiple of the b-principal symbol of $(x D_x)^j P_{m-j}(h,x,y,D_y)$.
\end{rmk}

The principal symbol map is denoted $\sigmadsc_m$ and fits into the short exact sequence
\begin{equation}
\label{EqCCSy}
  0 \to \rho_\dface\rho_{\wt\dface}\Psidsc^{m-1}(X) \to \Psidsc^m(X) \xra{\sigmadsc_m} (S^m/\rho_\dface\rho_{\wt\dface}S^{m-1})(N^*\Delta_\dscop) \to 0,
\end{equation}
cf.\ \eqref{EqRCSy}. At $\ff\cong[0,1)_h\times[0,1)_{\tilde h}\times\ff_\bop$ (where we recall that $\ff_\bop$ is the front face of $X_\bop^2$, as discussed after~\eqref{EqRCX2b}), we have a 2-parameter family of b-normal operators, giving a map $N_\ff$ with
\begin{equation}
\label{EqCCNff}
  0 \to \rho_\ff\Psidsc^m(X) \to \Psidsc^m(X) \xra{N_\ff} \CI\left([0,1)_h\times[0,1)_{\tilde h};\Psi_{\bop,I}^m({}^+N\pa X)\right) \to 0,
\end{equation}
cf.\ \eqref{EqRCNff}. Similarly, restriction to the transition face for the second semiclassical parameter, $\wt\tface$, which is fibered over $[0,1)_h$ gives a normal operator map
\begin{equation}
\label{EqCCNtf2}
  0 \to \rho_{\wt\tface}\Psidsc^m(X) \to \Psidsc^m(X) \xra{N_{\wt\tface}} \CI\left([0,1)_h;\Psi_{\bop,\scop}^m(\ol{{}^+N\pa X})\right) \to 0,
\end{equation}

The transition face $\tface$ is \emph{not} a smooth fibration over $[0,1)_{\tilde h}$, see Figure~\ref{FigCCtf}; rather, by restricting the blow-up procedure~\eqref{EqCC2} to $[0,1)_{\tilde h}\times\tface_0$, we find that
\[
  \tface = \left[ [0,1)_{\tilde h}\times\tface_0; \{0\}\times(\tface_0\cap\ff_0); \{0\}\times(\tface_0\cap\Delta_{\cop\semi}) \right].
\]
Recalling from~\eqref{EqRCtfModel} that $\tface_0\cong(\ol{{}^+N\pa X})^2_{\bop,\scop}$, we thus conclude that $\tface$ is the double space for operators which are semiclassical (with respect to $\tilde h$) cone operators near $\pa_0\ol{{}^+N\pa X}$ and semiclassical scattering operators near $\pa_\infty\ol{{}^+N\pa X}$. Near the lift of $[0,1)_{\tilde h}\times(\tface_0\setminus(\sface_0\cap\dface_0))$, the boundary hypersurface $\tface$ is naturally identified with $({}^+N\pa X)^2_{\cop\tilde\semi}$ (with the subscript `$\tilde\semi$' indicating that the semiclassical parameter is $\tilde h$); the following definition (with $\bfX=\ol{{}^+N\pa X}$) captures the global structure of $\tface$:

\begin{definition}
\label{DefCCNtf2}
  Let $\bfX$ be a manifold whose boundary is the disjoint union $\pa\bfX=\pa_0\bfX\sqcup\pa_\infty\bfX$ of two connected embedded hypersurfaces. Denote the b-left boundary, b-front face, b-right boundary, scattering front face, and lifted diagonal in $\bfX^2_{\bop,\scop}$ by $\mathbf{lb}_\bop$, $\mathbf{ff}_\bop$, $\mathbf{rb}_\bop$, $\mathbf{ff}_{0,\scop}$, and $\mathbf{\Delta}_{\bop,\scop}$. (See Figure~\ref{FigCCNtf2}.) Then the \emph{semiclassical cone-scattering double space} is
  \[
    \bfX^2_{\cop,\scop,\tilde\semi} := \left[ [0,1)_{\tilde h} \times \bfX^2_{\bop,\scop}; \{0\}\times\mathbf{ff}_\bop; \{0\}\times\mathbf{\Delta}_{\bop,\scop} \right].
  \]
  Denote by $\mathbf{ff}_\scop$ the lift of $[0,1)_{\tilde h}\times\mathbf{ff}_{0,\scop}$, and by $\mathbf{lb}$, $\mathbf{ff}$, $\mathbf{rb}$, $\wt{\mathbf{tf}}$, $\wt{\mathbf{df}}$, $\wt{\mathbf{sf}}$ the closures of the corresponding non-bold-face hypersurfaces of $(\bfX\setminus\pa_\infty\bfX)^2_{\cop\tilde\semi}$ inside of $\bfX^2_{\cop,\scop,\tilde\semi}$. The lift of $[0,1)_{\tilde h}\times\mathbf{\Delta}_{\bop,\scop}$ is denoted $\mathbf{\Delta}_{\cop,\scop,\tilde\semi}$.
\end{definition}

\begin{figure}[!ht]
\centering
\includegraphics{FigCCNtf2}
\caption{Illustration of the double space $\bfX_{\bop,\scop}^2$.}
\label{FigCCNtf2}
\end{figure}

Let $\rho_H$ denote a boundary defining function of $H$ for $H=\mathbf{lb},\mathbf{ff}$, etc.
\begin{definition}
\label{DefCCNtfOp}
  For $\bfX$ as in Definition~\ref{DefCCNtf2}, we define
  \[
    \Psi_{\cop,\scop,\tilde\semi}^m(\bfX) := \left\{ \kappa\in I^{m-\mfrac14}\left(\bfX^2_{\cop,\scop,\tilde\semi};\mathbf{\Delta}_{\cop,\scop,\tilde\semi}; \rho_{\wt{\mathbf{df}}}^{-n}\rho_{\mathbf{ff}_\scop}^{-n}\cdot\pi_\bop^*\left(\Omegab^\mhalf(\bfX^2_\bop)\right)\right) \right\},
  \]
  where $\pi_\bop$ is the lift of the projection $[0,1)_{\tilde h}\times\bfX^2_\bop\to\bfX^2_\bop$, so $\pi_\bop\colon\bfX^2_{\cop,\scop,\tilde\semi}\to\bfX^2_\bop$. Given a collection $\cE=(\cE_{\mathbf{lb}},\cE_{\mathbf{ff}},\cE_{\mathbf{rb}},\cE_{\wt{\mathbf{tf}}})$ of index sets, we denote by
  \[
    \Psi_{\cop,\scop,\tilde\semi}^{-\infty,\cE}(\bfX) = \cA_\phg^\cE(\bfX^2_{\cop,\scop,\tilde\semi}),
  \]
  with index set $\emptyset$ at all boundary hypersurfaces except $\mathbf{lb}$, $\mathbf{ff}$, $\mathbf{rb}$, $\wt{\mathbf{tf}}$.
\end{definition}

For $\bfX=\ol{{}^+N\pa X}$, we have
\begin{equation}
\label{EqCCtftf}
  \tface = (\ol{{}^+N\pa X})^2_{\cop,\scop,\tilde\semi}
\end{equation}
and $\mathbf{lb}=\tface\cap\lb$, $\mathbf{ff}=\tface\cap\ff$, $\mathbf{rb}=\tface\cap\rb$, $\wt{\mathbf{tf}}=\tface\cap\wt\tface$, $\mathbf{ff}_\scop=\tface\cap\dface$, $\wt{\mathbf{sf}}=\tface\cap\wt\sface$, $\wt{\mathbf{df}}=\tface\cap\wt\dface$. See Figure~\ref{FigCCtf}, where we also indicate $\mathbf{sf}=\tface\cap\sface$.

\begin{figure}[!ht]
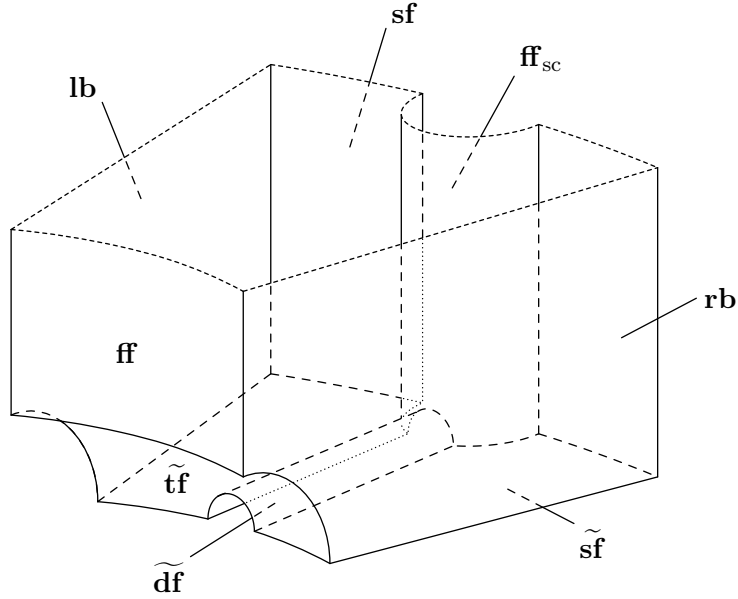

\inclfig{FigCCtf}
\caption{Structure of $\tface$ and its boundary hypersurfaces as stated after~\eqref{EqCCtftf}. The local coordinates used in~\eqref{EqCEx4} and~\eqref{EqCEx6sf} (without $h$, which defines $\tface$) are valid near the interiors of $\mathbf{ff}\cap\wt{\mathbf{tf}}$ and $\wt{\mathbf{tf}}\cap\wt{\mathbf{df}}$, respectively; \eqref{EqCEx7tf} and \eqref{EqCEx8ttf} (without $\hat h$, which defines $\tface$) are valid near the interiors of $\mathbf{ff}_\scop\setminus\wt{\mathbf{df}}$ and $\mathbf{ff}_\scop\cap\wt{\mathbf{df}}$, respectively.}
\label{FigCCtf}
\end{figure}

With these preparations, the normal operator at $\tface$ is given by restriction to $\tface$, giving a short exact sequence
\begin{equation}
\label{EqCCNtf}
  0 \to \rho_\tface\Psidsc^m(X) \to \Psidsc^m(X) \xra{N_\tface} \Psi_{\cop,\scop,\tilde\semi}^m(\ol{{}^+N\pa X}) \to 0.
\end{equation}
Since $\tface\cap\wt\tface$ is equal to the transition face of $({}^+N\pa X)^2_{\cop\tilde\semi}$, the $\tface$- and $\wt\tface$-normal operators are related by
\begin{equation}
\label{EqCCNtfRel}
  N_\tface(N_\tface(A))=N_{\wt\tface}(A)|_{h=0}.
\end{equation}
Here, the leftmost $N_\tface$ is the $\tface$-normal operator map for semiclassical cone operators with the single semiclassical parameter $\tilde h$.

\subsection{Composition}
\label{SsCCo}

As in~\S\ref{SsRC}, it is convenient to introduce an extended dsc-double space, based on the extended semiclassical cone double space ${}'X^2_{\cop\semi}$ from Definition~\ref{DefRCExt}. Let us adorn the names of the boundary hypersurfaces of ${}'X^2_{\cop\semi}$ with a subscript `$0$' as before Definition~\ref{DefCC2}.

\begin{definition}
\label{DefCCoExt2}
  The \emph{extended dsc-double space} is
  \begin{equation}
  \label{EqCCoExt2}
    {}'X^2_\dscop := \left[ [0,1)_{\tilde h}\times{}'X^2_{\cop\semi}; \{0\}\times\ff_0; \{0\}\times\Delta_{\cop\semi}; \{0\}\times\lb_0; \{0\}\times\rb_0 \right].
  \end{equation}
  We label its boundary hypersurfaces as follows:
  \begin{itemize}
  \item $\lb$, $\ff$, $\rb$, $\tface$, $\dface$, $\sface$, $\tlb$, $\trb$ are the lifts of $[0,1)_{\tilde h}\times\lb_0$, $[0,1)_{\tilde h}\times\ff_0$ etc;
  \item $\wt\tface$, $\wt\dface$, $\wt\sface$ are the lifts of $\{0\}\times\ff_0$, $\{0\}\times\Delta_{\cop\semi}$, $\tilde h^{-1}(0)$;
  \item $\wt\tlb$, $\wt\trb$ are the lifts of $\{0\}\times\lb_0$, $\{0\}\times\rb_0$.
  \end{itemize}
\end{definition}

The lift of $\Delta_\dscop$ to ${}'X^2_\dscop$ will be denoted by the same letter. Schwartz kernels of operators in $\Psidsc^m(X)$ and $\Psidsc^{-\infty,\cE}(X)$ lift to ${}'X^2_\dscop$ and vanish to infinite order at $\tlb$, $\trb$, $\wt\tlb$, $\wt\trb$.

The extended dsc-triple space is constructed on the basis of the extended semiclassical cone triple space ${}'X^3_{\cop\semi}$ from Definition~\ref{DefRCExt}. Again, we adorn the names of hypersurfaces of ${}'X^3_{\cop\semi}$ with a subscript `$0$', prior to any subscripts they may already have. The triple and lifted diagonals in ${}'X^3_{\cop\semi}$ are still denoted $\Delta_{\cop\semi,3}$, $\Delta_{\cop\semi,\bullet}$, $\bullet=F,S,C$.
\begin{definition}
\label{DefCCoExt3}
  Define the collections of submanifolds of $[0,1)_{\tilde h}\times{}'X^3_{\cop\semi}$
  \[
    \cF_{\cop\semi} = \{ \{0\}\times\ff_{0,\bullet}\}, \quad
    \cB_{\cop\semi} = \{ \{0\}\times\bface_{0,\bullet}\}, \quad
    \cD_{\cop\semi} = \{ \{0\}\times\Delta_{\cop\semi,\bullet}\}, \qquad
    \bullet=F,S,C.
  \]
  The \emph{extended dsc-triple space} is then
  \begin{align*}
    {}'X^3_\dscop := \left[ [0,1)_{\tilde h}\times{}'X^3_{\cop\semi}; \{0\}\times\fff_0; \cF_{\cop\semi}; \cB_{\cop\semi}; \{0\}\times\Delta_{\cop\semi,3}; \cD_{\cop\semi} \right].
  \end{align*}
  We label its boundary hypersurfaces as follows:
  \begin{itemize}
  \item $\fff,\ff_\bullet,\bface_\bullet,\tfface,\tface_\bullet,\tbface_\bullet$, $\dfface,\dface_\bullet,\sfface$ are the lifts of $[0,1)_{\tilde h}\times\fff_0$, $[0,1)_{\tilde h}\times\ff_{0,\bullet}$, etc;
  \item $\wt\tfface$, $\wt\tface_\bullet$ are the lifts of $\{0\}\times\fff_0$, $\{0\}\times\ff_{0,\bullet}$;
  \item $\wt\tbface_\bullet$ is the lift of $\{0\}\times\bface_{0,\bullet}$;
  \item $\wt\dfface$, $\wt\dface_\bullet$ are the lifts of $\{0\}\times\Delta_{\cop\semi,3}$, $\{0\}\times\Delta_{\cop\semi,\bullet}$;
  \item $\wt\sfface$ is the lift of $\{0\}\times{}'X^3_{\cop\semi}$.
  \end{itemize}
  Lastly, the p-submanifolds $\Delta_{\dscop,3}$, $\Delta_{\dscop,\bullet}$ are the lifts of $[0,1)_{\tilde h}\times\Delta_{\cop\semi,3}$, $[0,1)_{\tilde h}\times\Delta_{\cop\semi,\bullet}$.
\end{definition}

Recalling the stretched projections $\pi_{\cop\semi,\bullet}\colon{}'X^3_{\cop\semi}\to{}'X^2_{\cop\semi}$ from~\eqref{EqRC3Proj}, we have:
\begin{lemma}
\label{LemmaCCoProj}
  The stretched projections $\Id\times\pi_{\cop\semi,\bullet}\colon[0,1)_{\tilde h}\times{}'X^3_{\cop\semi}\to[0,1)_{\tilde h}\times{}'X^2_{\cop\semi}$ lift to b-fibrations
  \begin{equation}
  \label{EqCCoProj}
    \pi_{\dscop,\bullet} \colon {}'X^3_\dscop \to {}'X^2_\dscop,\quad \bullet=F,S,C.
  \end{equation}
\end{lemma}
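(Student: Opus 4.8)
The plan is to run the proof of Lemma~\ref{LemmaRC3Proj} one level higher, now using the b-fibration $\Id\times\pi_{\cop\semi,\bullet}\colon[0,1)_{\tilde h}\times{}'X^3_{\cop\semi}\to[0,1)_{\tilde h}\times{}'X^2_{\cop\semi}$ --- which is a b-fibration precisely because $\pi_{\cop\semi,\bullet}$ is one, by Lemma~\ref{LemmaRC3Proj} --- as the starting point, and then performing the blow-ups entering Definitions~\ref{DefCCoExt2} and~\ref{DefCCoExt3} (namely $\{0\}\times\ff_0$, $\{0\}\times\Delta_{\cop\semi}$, $\{0\}\times\lb_0$, $\{0\}\times\rb_0$ in the target and $\{0\}\times\fff_0$, $\cF_{\cop\semi}$, $\cB_{\cop\semi}$, $\{0\}\times\Delta_{\cop\semi,3}$, $\cD_{\cop\semi}$ in the domain) one at a time, invoking at each step the very same lemmas from \cite[\S5]{MelroseDiffOnMwc} that were used in the proof of Lemma~\ref{LemmaRC3Proj}. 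The crucial point making this go through is that $\pi_{\cop\semi,\bullet}$ is the lift of $\Id_h\times\pi_{\bop,\bullet}$, so that the preimages under $\pi_{\cop\semi,\bullet}$ of the faces $\ff_0,\lb_0,\rb_0$ and of the lifted diagonal $\Delta_{\cop\semi}$ of ${}'X^2_{\cop\semi}$ are the lifts to ${}'X^3_{\cop\semi}$ of the preimages under $\Id_h\times\pi_{\bop,\bullet}$ of $\ff_\bop,\lb_\bop,\rb_\bop,\Delta_\bop$; hence all the intersection, transversality and ``maps diffeomorphically'' relations needed are inherited, essentially verbatim, from the ones established (partly implicitly) in the proof of Lemma~\ref{LemmaRC3Proj}.

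Concretely, for $\bullet=F$ (the cases $\bullet=S,C$ being symmetric): first blow up $\{0\}\times\ff_0$ in the target and, in order, $\{0\}\times\fff_0$ and $\{0\}\times\ff_{0,F}$ in the domain, which lifts the map to a b-fibration by \cite[Proposition~5.12.1]{MelroseDiffOnMwc}; then blow up $\{0\}\times\lb_0$ in the target together with $\{0\}\times\ff_{0,C}$, $\{0\}\times\bface_{0,S}$ (in that order) in the domain, and subsequently $\{0\}\times\rb_0$ in the target together with $\{0\}\times\ff_{0,S}$, $\{0\}\times\bface_{0,C}$ in the domain, preserving the b-fibration property at each stage; then blow up $\{0\}\times\bface_{0,F}$ in the domain --- its lift maps diffeomorphically onto the lift of $\{0\}\times{}'X^2_{\cop\semi}$ --- so that \cite[Corollary~5.10.1]{MelroseDiffOnMwc} keeps the map a b-fibration (this accounts for the blow-ups along $\cF_{\cop\semi}$ and $\cB_{\cop\semi}$). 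Finally, blow up $\{0\}\times\Delta_{\cop\semi}$ in the target and its preimage $\{0\}\times\Delta_{\cop\semi,F}$ in the domain; the resulting map is b-transversal to the lift of $\{0\}\times\Delta_{\cop\semi,3}$ (which maps diffeomorphically onto $\dface$), and since $\Delta_{\cop\semi,3}\subset\Delta_{\cop\semi,F}$ one may, by \cite[Proposition~5.11.2]{MelroseDiffOnMwc}, reorder these two blow-ups to match Definition~\ref{DefCCoExt3}; blowing up lastly $\{0\}\times\Delta_{\cop\semi,S}$ and $\{0\}\times\Delta_{\cop\semi,C}$ (both mapping diffeomorphically onto the lift of $\{0\}\times\bface_{0,F}$) yields the b-fibration $\pi_{\dscop,F}\colon{}'X^3_\dscop\to{}'X^2_\dscop$.

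The main obstacle I anticipate is purely combinatorial: at each step one must verify that the preimage of the submanifold blown up in the target is exactly the stated union of p-submanifolds of the (already partially blown-up) domain, that the claimed b-transversality and diffeomorphism properties hold, and that the blow-ups performed in a convenient order can indeed be commuted into the order prescribed by Definitions~\ref{DefCCoExt2} and~\ref{DefCCoExt3}. Since all of these assertions are lifts through blow-down maps of the corresponding statements for $\Id_h\times\pi_{\bop,\bullet}\colon[0,1)_h\times X^3_\bop\to[0,1)_h\times X^2_\bop$ --- the statements already used in the proof of Lemma~\ref{LemmaRC3Proj} --- and since intersecting with $\{\tilde h=0\}$ commutes with taking lifts, no genuinely new geometric phenomenon arises; the work lies in carefully tracking the faces $\ff_0,\lb_0,\rb_0,\tface_0,\dface_0,\sface_0$ of ${}'X^2_{\cop\semi}$ and their triple-space analogues through the successive blow-ups, exactly as in the proof of Lemma~\ref{LemmaRC3Proj}.
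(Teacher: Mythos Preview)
Your proposal is correct and follows essentially the same route as the paper's proof: both start from the b-fibration $\Id\times\pi_{\cop\semi,F}$ and perform the same sequence of blow-ups (first $\{0\}\times\ff_0$ and its preimages $\{0\}\times\fff_0$, $\{0\}\times\ff_{0,F}$; then $\{0\}\times\lb_0$, $\{0\}\times\rb_0$ and their preimages in $\cF_{\cop\semi},\cB_{\cop\semi}$; then the diagonals), invoking the same lemmas from \cite{MelroseDiffOnMwc}. One small slip: the face onto which the lift of $\{0\}\times\Delta_{\cop\semi,3}$ maps diffeomorphically is $\wt\dface$ (the front face of the blow-up of $\{0\}\times\Delta_{\cop\semi}$), not $\dface$.
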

\begin{proof}
  We consider the case of $\pi_{\dscop,F}$; the proof is very similar to that of Lemma~\ref{LemmaRC3Proj}, so we shall be brief. The starting point is that $\Id\times\pi_{\cop\semi,F}$ is a b-fibration. The lift of this to the blow-up $\{0\}\times\ff_0$ in the target and its preimages $\{0\}\times\fff_0$, $\{0\}\times\ff_{0,F}$ in the domain is the b-fibration
  \[
    \left[ [0,1)_{\tilde h}\times{}'X^3_{\cop\semi}; \{0\}\times\fff_0; \{0\}\times\ff_{0,F}\right] \to \left[ [0,1)_{\tilde h}\times{}'X^2_{\cop\semi}; \{0\}\times\ff_0 \right].
  \]
  We next blow up the lift of $\{0\}\times\lb_0$, resp.\ $\{0\}\times\rb_0$ in the range and its preimages---the lifts of $\{0\}\times\ff_{0,C}$, $\{0\}\times\bface_{0,S}$, resp.\ $\{0\}\times\ff_{0,S}$, $\{0\}\times\bface_{0,C}$---in the domain, with the projection map lifting to a b-fibration. The lift of $\{0\}\times\bface_{0,F}$ gets mapped diffeomorphically to the lift of $\{0\}\times{}'X^2_{\cop\semi}$; thus, upon blowing up $\{0\}\times\bface_{0,F}$, we get a b-fibration
  \[
    \left[ [0,1)_{\tilde h}\times{}'X^3_{\cop\semi}; \{0\}\times\fff_0; \cF_{\cop\semi}; \cB_{\cop\semi} \right] \to \left[ [0,1)_{\tilde h}\times{}'X^2_{\cop\semi}; \{0\}\times\ff_0; \{0\}\times\lb_0; \{0\}\times\rb_0 \right].
  \]
  Blowing up the lift of the diagonal $\{0\}\times\Delta_{\cop\semi}$ in the range and the lift of $\{0\}\times\Delta_{\cop\semi,F}$ in the domain, this map lifts to a b-fibration, and remains such upon lifting to the blow-up of the triple diagonal at $\tilde h=0$, $\{0\}\times\Delta_{\cop\semi,3}$. The lift of this map to subsequent blow-ups of the other diagonals at $\tilde h=0$, $\{0\}\times\Delta_{\cop\semi,\bullet}$, $\bullet=S,C$---which lift to be disjoint from the lift of $\{0\}\times\Delta_{\cop\semi,F}$---is still a b-fibration, as the latter submanifolds map diffeomorphically onto the lift of $\{0\}\times\bface_{0,F}$.
\end{proof}

This allows for a simple proof of composition properties of the dsc-calculus:

\begin{prop}
\label{PropCCo}
  Let $A_j\in\Psidsc^{m_j}(X)$, $A_j'\in\Psidsc^{-\infty,\cE_j}$ for $j=1,2$. Then:
  \begin{enumerate}
  \item $A_1\circ A_2\in\Psidsc^{m_1+m_2}(X)$, $A_1\circ A'_2\in\Psidsc^{-\infty,\cE_2}(X)$, $A'_1\circ A_2\in\Psidsc^{-\infty,\cE_1}(X)$.
  \item Write $\cE_j=(\cE_{j,\lb},\cE_{j,\ff},\cE_{j,\rb},\cE_{j,\tface},\cE_{j,\wt\tface})$, and suppose that $\inf\Re(\cE_{1,\rb}+\cE_{2,\lb})>0$. Then $A'_1\circ A'_2\in\Psidsc^{-\infty,\cF}(X)$, where $\cF=(\cF_\lb,\cF_\ff,\cF_\rb,\cF_\tface,\cF_{\wt\tface})$ is defined by~\eqref{EqRCCompIndex} and $\cF_{\wt\tface}=\cE_{1,\wt\tface}+\cE_{2,\wt\tface}$.
  \end{enumerate}
\end{prop}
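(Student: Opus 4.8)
The plan is to transcribe the proof of Proposition~\ref{PropRCComp} to the extended dsc-triple space ${}'X^3_\dscop$ of Definition~\ref{DefCCoExt3}, using the b-fibrations $\pi_{\dscop,\bullet}$ of Lemma~\ref{LemmaCCoProj} in place of the $\pi_{\cop\semi,\bullet}$. Write $K_j$, resp.\ $K_j'$, for the Schwartz kernel of $A_j$, resp.\ $A_j'$; by the remark following Definition~\ref{DefCCoExt2} these lift to ${}'X^2_\dscop$ with trivial index sets at $\tlb,\trb,\wt\tlb,\wt\trb$, so the kernel of a composition $A_1\circ A_2$ is
\[
  (\pi_{\dscop,C})_*\bigl(\pi_{\dscop,F}^*K_1\cdot\pi_{\dscop,S}^*K_2\bigr),
\]
and similarly for the three other compositions. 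The density normalization $\rho_\dface^{-n}\rho_{\wt\dface}^{-n}\pi_\bop^*\Omegab^\mhalf(X^2_\bop)$ built into Definition~\ref{DefCCPsdo} is precisely what makes this pushforward of half-densities well-defined, exactly as in the semiclassical cone case.

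For part~(1), only infinite-order vanishing needs to be tracked. As in Proposition~\ref{PropRCComp}, $\pi_{\dscop,F}^*K_1$ is conormal to $\Delta_{\dscop,F}$ and vanishes to infinite order at every boundary hypersurface of ${}'X^3_\dscop$ except the ``triple'' faces $\fff,\tfface,\wt\tfface,\dfface,\wt\dfface$ and the $F$-type faces $\ff_F,\tface_F,\wt\tface_F,\dface_F,\wt\dface_F,\bface_F$; symmetrically for $\pi_{\dscop,S}^*K_2$ with $S$ in place of $F$. Since $\Delta_{\dscop,F}$ and $\Delta_{\dscop,S}$ meet transversally in the triple diagonal $\Delta_{\dscop,3}$, the product is conormal to $\Delta_{\dscop,3}$ and vanishes to infinite order away from $\fff,\tfface,\wt\tfface,\dfface,\wt\dfface$. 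The map $\pi_{\dscop,C}$ is, by Lemma~\ref{LemmaCCoProj}, a b-fibration which is transversal to $\Delta_{\dscop,3}$ and to these five faces, embedding them as $\Delta_\dscop$ and as $\ff,\tface,\wt\tface,\dface,\wt\dface$ respectively; so the pushforward theorem \cite[Proposition~B7.20]{EpsteinMelroseMendozaPseudoconvex} produces a conormal distribution on ${}'X^2_\dscop$ of the appropriate order, vanishing to infinite order everywhere except at $\ff,\tface,\wt\tface,\dface,\wt\dface$. Blowing down $\tlb,\trb,\wt\tlb,\wt\trb$ then gives the asserted element of $\Psidsc^{m_1+m_2}(X)$. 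The mixed compositions $A_1\circ A_2'$ and $A_1'\circ A_2$ are handled in exactly the same way, with the polyhomogeneous index set of the residual factor carried over unchanged to $\lb,\ff,\rb,\tface,\wt\tface$.

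For part~(2) I would repeat the index-set computation from the proof of Proposition~\ref{PropRCComp} with the single additional boundary hypersurface $\wt\tface$ to keep track of. Since $K_1'$ lifts to ${}'X^2_\dscop$ with index sets $\cE_{1,\lb},\cE_{1,\ff},\cE_{1,\rb},\cE_{1,\tface},\cE_{1,\wt\tface}$ at $\lb,\ff,\rb,\tface,\wt\tface$ and trivial index sets at the remaining hypersurfaces, $\pi_{\dscop,F}^*K_1'$ is polyhomogeneous on ${}'X^3_\dscop$ with, in the notation of the proof of Proposition~\ref{PropRCComp}, index set $\cE_{1,\lb}$ at $\bface_S,\ff_C$; $\cE_{1,\ff}$ at $\fff,\ff_F$; $\cE_{1,\rb}$ at $\bface_C,\ff_S$; $\cE_{1,\tface}$ at $\tfface,\tface_F$; $\cE_{1,\wt\tface}$ at $\wt\tfface,\wt\tface_F$; and $\N_0$ or $\emptyset$ at the remaining hypersurfaces; analogously for $\pi_{\dscop,S}^*K_2'$ with $(F,\lb,1)$ replaced by $(S,\rb,2)$. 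The hypothesis $\inf\Re(\cE_{1,\rb}+\cE_{2,\lb})>0$ ensures integrability at $\bface_C$, along which $(\pi_{\dscop,C})_*$ integrates transversally and where the product has index set $\cE_{1,\rb}+\cE_{2,\lb}$, so that \cite[Proposition~B5.6]{EpsteinMelroseMendozaPseudoconvex} applies and gives $A_1'\circ A_2'\in\Psidsc^{-\infty,\cF}(X)$. The entries $\cF_\lb,\cF_\ff,\cF_\rb,\cF_\tface$ come out verbatim as in~\eqref{EqRCCompIndex}, the computation being insensitive to the extra parameter $\tilde h$; for the last entry one uses $\pi_{\dscop,C}^{-1}(\wt\tface)=\wt\tfface\cup\wt\tface_C$, that the product has index set $\cE_{1,\wt\tface}+\cE_{2,\wt\tface}$ at $\wt\tfface$ and trivial index set at $\wt\tface_C$, and hence $\cF_{\wt\tface}=(\cE_{1,\wt\tface}+\cE_{2,\wt\tface})\extcup\emptyset=\cE_{1,\wt\tface}+\cE_{2,\wt\tface}$.

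The only genuinely new input over Proposition~\ref{PropRCComp} is Lemma~\ref{LemmaCCoProj}, which is already established; granting it, the argument is pure bookkeeping. I expect the point requiring the most care to be the verification of the image and preimage relations under $\pi_{\dscop,\bullet}$ for the new $\tilde h=0$ boundary hypersurfaces of ${}'X^3_\dscop$, in particular that $\pi_{\dscop,C}^{-1}(\wt\tface)=\wt\tfface\cup\wt\tface_C$ and that $\pi_{\dscop,C}$ is transversal to, and compatible in the sense needed for \cite[Propositions~B5.6, B7.20]{EpsteinMelroseMendozaPseudoconvex} with, each of $\fff,\tfface,\wt\tfface,\dfface,\wt\dfface$, all of which follow by inspection from the nested blow-up structure in Definition~\ref{DefCCoExt3} together with the analogous facts for the $\pi_{\cop\semi,\bullet}$ established in the proof of Lemma~\ref{LemmaRC3Proj}.
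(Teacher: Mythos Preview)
Your proposal is correct and follows essentially the same approach as the paper: reduce to the pushforward/pullback formalism on ${}'X^3_\dscop$ via Lemma~\ref{LemmaCCoProj}, then track index sets face by face, with the $\wt\tface$-computation being the only new bookkeeping beyond Proposition~\ref{PropRCComp}. One minor omission in your list of faces where $\pi_{\dscop,F}^*K_1$ is nontrivial: it should also include $\tbface_F$ and $\wt\tbface_F$ (with index set $\N_0$), though this does not affect the conclusion since the $S$-pullback vanishes there; the paper also spells out in its proof why kernels on $X^2_\dscop$ lift to ${}'X^2_\dscop$ (the relevant blow-ups commute), which you take for granted by citing the remark after Definition~\ref{DefCCoExt2}.
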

\begin{proof}
  The proof is similar to that of Proposition~\ref{PropRCComp}, but notationally more complex. We shall only discuss the composition of remainder terms with polyhomogeneous expansions. Denoting by $K'_j$ the Schwartz kernel of $A'_j$, we write the Schwartz kernel $K$ of $A'_1\circ A'_2$ as
  \[
    K = (\pi_{\dscop,C})_*\left(\pi_{\dscop,F}^*K'_1 \cdot \pi_{\dscop,S}^*K'_2\right).
  \]
  $K'_1$ is polyhomogeneous on $X^2_\dscop$. We first need to lift it to ${}'X^2_\dscop$; this is not quite automatic since ${}'X^2_\dscop$ is defined as a blow-up not of $X^2_\dscop$ but of
  \begin{equation}
  \label{EqCCoIssue}
    [0,1)_{\tilde h}\times{}'X^2_{\cop\semi} = \left[ [0,1)_{\tilde h}\times X^2_{\cop\semi}; [0,1)_{\tilde h}\times(\lb\cap\sface); [0,1)_{\tilde h}\times(\rb\cap\sface) \right].
  \end{equation}
  Note however that the blow-up of $\{0\}\times\ff_0$ in the definition of $X^2_\dscop$ in~\eqref{DefCC2} commutes with the first blow-up in~\eqref{EqCCoIssue} since $[0,1)_{\tilde h}\times(\lb\cap\sface)$ and $\{0\}\times\ff_0$ are transversal; similarly for the second blow-up. Thus, we can lift $K'_1$ from $X^2_\dscop=[[0,1)_{\tilde h}\times X^2_{\cop\semi};\{0\}\times\ff_0;\{0\}\times\Delta_{\cop\semi}]$ to the blow-up of~\eqref{EqCCoIssue} along $\{0\}\times\ff_0$ and $\{0\}\times\Delta_{\cop\semi}$ (the latter being disjoint from the submanifolds involving $\lb$ and $\rb$), and thus to ${}'X^2_\dscop$ by lifting to the final two blow-ups in~\eqref{EqCCoExt2}. This lift (which we denote by $K'_1$ still) has non-trivial index sets only at $\lb,\ff,\rb,\tface,\wt\tface$, while at the remaining hypersurfaces, in particular at those only present in the \emph{extended} double space, $\tlb,\trb,\wt\tlb,\wt\trb$, it vanishes to infinite order.

  We now observe that under the map $\pi_{\cop\semi\tilde\semi,F}$, preimages of boundary hypersurfaces of ${}'X^2_\dscop$ are unions of boundary hypersurfaces of ${}'X^3_\dscop$ as follows:\footnote{We write `$A$: $B_1$, $B_2$, \ldots' for $\pi_{\dscop,F}^{-1}(A)=B_1\cup B_2\cup\cdots$.}
  \begin{alignat*}{8}
    &\lb:&\ & \ff_C,\bface_S; &\qquad& \ff:&\ & \fff,\ff_F; &\qquad& \rb:&\ & \ff_S,\bface_C; &\qquad& \tface:&\ & \tfface,\tface_F; \\
    &\wt\tface:&& \wt\tfface,\wt\tface_F; &\qquad& \sface:&& \sfface,\dface_C,\dface_S; &\qquad& \wt\sface:&& \wt\sfface,\wt\dface_C,\wt\dface_S; &\qquad& \dface:&& \dfface,\dface_F; \\
    &\wt\dface:&& \wt\dfface,\wt\dface_F; &\qquad& \tlb:&& \tbface_S,\tface_C; &\qquad& \trb:&& \tbface_C,\tface_S; &\qquad& \wt\tlb:&& \wt\tbface_S,\wt\tface_C; \\
    &\wt\trb:&& \wt\tbface_C,\wt\tface_S.
  \end{alignat*}
  The only boundary hypersurfaces of ${}'X^3_\dscop$ not contained in this list are $\bface_F$, $\tbface_F$, $\wt\tbface_F$; the pullback $\pi_{\dscop,F}^*K'_1$ has index set $\N_0$ at (i.e.\ is smooth down to) these. The corresponding statement for $\pi_{\dscop,C}$ is obtained from this by cyclically permuting indices according to $F\to S\to C\to F$, while the statement for $\pi_{\dscop,S}$ is obtained by interchanging $C$ and $F$, but leaving $S$ fixed.
  
  Therefore, the index sets of $K'_{1 2}:=\pi_{\dscop,F}^*K'_1\cdot\pi_{\dscop,S}^*K'_2$ at $\bface_C$, $\tbface_C$, $\wt\tbface_C$ are $\cE_{1,\rb}+\cE_{2,\lb}$, $\emptyset$, $\emptyset$, so the pushforward $K=(\pi_{\dscop,C})_*K'_{1 2}$ is well-defined when $\inf\Re(\cE_{1,\rb}+\cE_{2,\lb})>0$. The index set of $K$ at $\lb$ is then the extended union of the index sets of $K'_{1 2}$ at $\ff_F$ and $\bface_S$, which are $\cE_{1,\ff}+\cE_{2,\lb}$ and $\cE_{1,\lb}$. The arguments for $\ff,\rb,\tface$, and $\wt\tface$ are similar; in the latter case (which did not arise in~\S\ref{SR}), we note that the index sets of $K'_{1 2}$ at $\wt\tfface$ and $\wt\tface_C$ are $\cE_{1,\wt\tface}+\cE_{2,\wt\tface}$ and $\emptyset$, whose extended union is indeed $\cF_{\wt\tface}$ as stated.

  Finally, the index sets of $K$ at $\tlb,\trb,\wt\tlb,\wt\trb,\sface$ are trivial. Following the arguments after~\eqref{EqCCoIssue}, this implies that $K$ is the lift of a polyhomogeneous distribution on $X^2_\dscop$ to ${}'X^2_\dscop$, finishing the proof.
\end{proof}

\subsection{Construction of resolvents of semiclassical cone operators}
\label{SsCR}

Since the doubly semiclassical calculus is exclusively used as a tool (rather than as an interesting class of operators in its own right), we restrict attention to the following class of operators (cf.\ \eqref{EqCIntroOp}): fix $A\in\Diff_{\cop,\semi}^m(X)$, $m\geq 1$, and pick an $h$-dependent operator $A_{\bop,h}\in\CI([0,1)_h;\Diffb^m(X))$ such that
\[
  A - h^m x^{-m}A_\bop \in \Diff_{\cop,\semi}^{m-1}(X).
\]
Furthermore, let $\tilde\omega\in\C\setminus\{0\}$. Define $\tilde A\in\rho_\ff^{-m}\Psidsc^m(X)$ by
\begin{equation}
\label{EqCROp}
  \tilde A_{h,\tilde h} = \tilde h^m A_h-\tilde\omega
\end{equation}

To motivate the correct definition of full ellipticity for such operators, let us write
\begin{equation}
\label{EqCROpLocal}
  A_h = \sum_{k+|\alpha|\leq j\leq m} \Bigl(\frac{h}{x}\Bigr)^j a_{j k \alpha}(h,x,y)(x D_x)^k D_y^\alpha,\quad a_{j k \alpha}(h,x,y)\in\CI([0,1)_h\times[0,\eps)_x\times\R^{n-1}_y),
\end{equation}
in local coordinates on $[0,1)_h\times X$; we can then take
\[
  A_{\bop,h}=\sum_{k+|\alpha|\leq m} a_{m k\alpha}(h,x,y)(x D_x)^k D_y^\alpha.
\]
Observe that in the definition of $\tilde A_{h,\tilde h}$ in~\eqref{EqCROp}, all terms of $A_h$ with $j\leq m-1$ get multiplied by $\tilde h^{m-j}\cdot\tilde h^j$ with $m-j\geq 1$; the calculations starting with equation~\eqref{EqCEx1} (which include the constant term in the special case $m=0$) thus imply, as noted in Remark~\ref{RmkCCNondeg}, that the ellipticity of the dsc-principal symbol of $\rho_\ff^m\tilde A$ is equivalent to the semiclassical ellipticity of $\sum_{k+|\alpha|=m}a_{m k\alpha}(h,x,y)\xi^k\eta^\alpha-\tilde\omega$ for $(\xi,\eta)\in\R\times\R^{n-1}$, meaning the absolute value of this expression is bounded from below by $c_0(1+|\xi|+|\eta|)^m$, $c_0>0$, for all $(\xi,\eta)$. Since this is an open condition in the parameter $h$, it holds automatically for all sufficiently small $h>0$ provided it holds at $h=0$, where it can be expressed as the semiclassical ellipticity of $\sigmab_m(A_{\bop,0})-\tilde\omega$ on $\Tb^*X$.

Next, consider the normal operator $N_\ff(\rho_\ff^m\tilde A)$; in $h,\tilde h>0$, this is a smooth nonzero multiple of $N_{\pa X}(x^m\tilde A_{h,\tilde h})=\tilde h^m h^m N_{\pa X}(A_{\bop,h})$. \emph{Assuming} that the b-normal operator of $A$ is $h$-independent---equivalently, $N_{\pa X}(A_{\bop,h})$ is $h$-independent---this implies that, for some $0<f\in\CI([0,1)_h\times[0,1)_{\tilde h}\times\pa X)$ depending on the choice of $\rho_\ff$, the $\ff$-normal operator $f^{-1}N_\ff(\rho_\ff^m\tilde A)=N_{\pa X}(A_{\bop,h})\in\Diff_\bop({}^+N\pa X)$ is $h$-independent.

The normal operator $N_{\wt\tface}(\tilde A)$ can be computed by passing to the coordinates $(\tilde h,h,\tilde x=\frac{x}{h\tilde h},y)$ and restricting to $\tilde h=0$; since $\tilde h^m h^j x^{-j}=\tilde h^{m-j}\tilde x^{-j}$, only the leading order part $h^m x^{-m}A_\bop$ of $A$ and the constant term $\tilde\omega$ contribute:
\[
  N_{\wt\tface}(\tilde A) = \sum_{k+|\alpha|\leq m} \tilde x^{-m}a_{m k\alpha}(h,0,y)(\tilde x D_{\tilde x})^k D_y^\alpha - \tilde\omega.
\]
But we are assuming that $a_{m k\alpha}(h,0,y)$ is $h$-independent; hence
\begin{equation}
\label{EqCRNwttf}
  N_{\wt\tface}(\tilde A) = N_\tface(h^m x^{-m}A_{\bop,0}-\tilde\omega)
\end{equation}
is an $h$-independent (weighted) b-scattering operator on $\ol{{}^+N\pa X}$. We shall need to assume its invertibility as a map $H_{\bop,\scop}^{s,\alpha}(\ol{{}^+N\pa X})\to H_{\bop,\scop}^{s-m,\alpha-m}(\ol{{}^+N\pa X})$.

Finally, the invertibility of the $\tface$-normal operator of $\tilde A$---as in Theorem~\ref{ThmR} but also taking the semiclassical scattering behavior at $\pa_\infty\ol{{}^+N\pa X}$ into account (see the proof of Theorem~\ref{ThmCR} below for details)---requires a full ellipticity assumption. But the $\tface$-normal operator of $N_\tface(\tilde A)$ is equal to~\eqref{EqCRNwttf} in view of the relationship with $N_{\wt\tface}(\tilde A)$, see~\eqref{EqCCNtfRel}; and the semiclassical cone-scattering principal symbol of $N_\tface(\tilde A)$ is the restriction to $N^*(\Delta_{\cop\semi\tilde\semi}\cap\tface)$ of the dsc-principal symbol of $\tilde A$, the ellipticity condition for which was already discussed above.

In summary, the correct notion of ellipticity is the following.

\begin{definition}
\label{DefCREll}
  Let $m\geq 1$, $A\in\Diff_{\cop,\semi}^m(X)$, $\tilde\omega\in\C\setminus\{0\}$. Let $A_\bop\in\CI([0,1)_h;\Diffb^m(X))$ be such that $A=h^m x^{-m}A_\bop+A'$ with $A'\in\Diff_{\cop,\semi}^{m-1}(X)$. Define the operator
  \[
    \tilde A_{h,\tilde h} = \tilde h^m A_h-\tilde\omega.
  \]
  Then $\tilde A$ is \emph{fully elliptic at weight $\alpha\in\R$} if the following conditions hold.
  \begin{enumerate}
  \item\label{ItCREllSymb} We have $|\sigmab_m(A_{\bop,0})(z,\zeta)-\tilde\omega|\geq c_0(1+|\zeta|)^m$, $c_0>0$, for all $(z,\zeta)\in\Tb^*X$.
  \item\label{ItCREllFull} The operator $h^m x^{-m}A_{\bop,h}-\tilde\omega$ is fully elliptic at weight $\alpha$ in the sense of Definition~\ref{DefOpEll}.
  \end{enumerate}
\end{definition}

The second condition here entails the $h$-independence of the normal operator of $A_{\bop,h}$ as well as the invertibility of the model problem~\eqref{EqCRNwttf}.

\begin{thm}
\label{ThmCR}
  Suppose $\tilde A$ as in Definition~\usref{DefCREll} is fully elliptic at weight $\alpha\in\R$. Then there exists $c_0>0$ such that
  \[
    \tilde A_{h,\tilde h} \colon \Hb^{s,\alpha}(X) \to \Hb^{s-m,\alpha-m}(X),\quad s\in\R,
  \]
  is invertible for $0<h,\tilde h<c_0$. The inverse lies in the large dsc-calculus,
  \[
    \tilde A^{-1} \in \rho_\ff^m\Psidsc^{-m}(X) + \Psi_\dscop^{-\infty,\cE}(X),\quad
    \cE=(\check E_\lb(\alpha),\check E_\ff(\alpha)+m,\check E_\rb(\alpha)+m,\N_0,\N_0),
  \]
  where the index sets are defined by~\eqref{EqRRElbrb}--\eqref{EqRREff}.
\end{thm}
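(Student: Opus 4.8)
The plan is to follow the three-step scheme of the proof of Theorem~\ref{ThmR} essentially verbatim, now carrying the extra parameter $\tilde h$ and using the dsc normal-operator exact sequences \eqref{EqCCSy}, \eqref{EqCCNff}, \eqref{EqCCNtf2}, \eqref{EqCCNtf}; the one genuinely new ingredient is the inversion of the $\tface$-normal operator, which is itself a semiclassical cone operator and forces an inner recursion that, by the compatibility \eqref{EqCCNtfRel}, terminates at the already-assumed-invertible model \eqref{EqCRNwttf}.

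First I would build a symbolic elliptic parametrix: by Remark~\ref{RmkCCNondeg} and condition~\eqref{ItCREllSymb} of Definition~\ref{DefCREll} (which holds for all small $h>0$ since it holds at $h=0$, being the semiclassical ellipticity of $\sigmab_m(A_{\bop,0})-\tilde\omega$), the principal symbol $\sigmadsc_m(\rho_\ff^m\tilde A)$ is elliptic, so \eqref{EqCCSy} produces $B_0\in\rho_\ff^m\Psidsc^{-m}(X)$ with $\tilde A B_0=I-R_0$, $R_0\in\rho_\dface^\infty\rho_{\wt\dface}^\infty\Psidsc^{-\infty}(X)=\Psidsc^{-\infty,(\emptyset,\N_0,\emptyset,\N_0,\N_0)}(X)$. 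Next I would improve the error at $\ff$: the $\ff$-normal operator of $\rho_\ff^m\tilde A$ is a smooth positive multiple of the $h$-independent b-normal operator $N_{\pa X}(A_{\bop,h})$ (the $h$-independence being part of Definition~\ref{DefCREll}\eqref{ItCREllFull}), so standard elliptic b-theory applied fiberwise over $[0,1)_h\times[0,1)_{\tilde h}$---Mellin transforming on the line $\Im\sigma=-\alpha$, legitimate since $\alpha\notin-\Im\specb(h^m x^{-m}A_{\bop,0}-\tilde\omega)$---solves away the $\ff$ error at the cost of polyhomogeneous terms at $\lb$. I would then solve away $\lb$ (using that $\tilde A$ acting on the left factor has normal operator $N_\ff(\tilde A)$) and sum a Neumann series in the residual $\rb$ error, exactly as in the proof of Theorem~\ref{ThmR}; since the relevant normal operators and hence their boundary spectra are unchanged from the single-parameter case, this introduces precisely the index sets $\check E_\lb(\alpha)$, $\check E_\ff(\alpha)+m$, $\check E_\rb(\alpha)+m$ of \eqref{EqRREff}. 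The outcome is $G\in\rho_\ff^m\Psidsc^{-m}(X)+\Psidsc^{-\infty,\cE}(X)$ with $\tilde A G=I-R$, where $R$ has trivial index sets at $\lb$ and $\ff$ and vanishes to infinite order at $\dface$ and $\sface$.

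The heart of the argument is then the improvement of $R$ at $\wt\tface$ and $\tface$. The $\wt\tface$-normal operator $N_{\wt\tface}(\tilde A)$ equals the $h$-independent b-scattering operator $N_\tface(h^m x^{-m}A_{\bop,0}-\tilde\omega)$ by \eqref{EqCRNwttf}, and its invertibility $H_{\bop,\scop}^{s,\alpha}(\ol{{}^+N\pa X})\to H_{\bop,\scop}^{s-m,\alpha-m}(\ol{{}^+N\pa X})$ is built into Definition~\ref{DefCREll}\eqref{ItCREllFull}; as in the construction \eqref{EqRRNtfAInv} inside the proof of Theorem~\ref{ThmR}, its inverse lies in the large b-scattering calculus $\rho_{\tface,\bop}^m\Psi_{\bop,\scop}^{-m}(\ol{{}^+N\pa X})+\Psi_{\bop,\scop}^{-\infty,(\check E_\lb(\alpha),\check E_\ff(\alpha)+m,\check E_\rb(\alpha)+m)}(\ol{{}^+N\pa X})$. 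The $\tface$-normal operator $N_\tface(\tilde A)\in\Psi_{\cop,\scop,\tilde\semi}^m(\ol{{}^+N\pa X})$ (Definitions~\ref{DefCCNtf2}--\ref{DefCCNtfOp}) is a semiclassical-in-$\tilde h$ cone operator on the model space $\ol{{}^+N\pa X}$ with scattering behavior at $\pa_\infty\ol{{}^+N\pa X}$; to invert it I would re-run the proof of Theorem~\ref{ThmR} (in the spirit of Remark~\ref{RmkRGeneral}, with the scattering end handled by combining with the scattering calculus, just as the non-compact end is handled in \eqref{EqRRNtfAInv}): its dsc-principal symbol is elliptic by restriction of the ellipticity already verified; its $\ff$-normal operator is again $N_{\pa X}(A_{\bop,0})$, giving the same boundary-spectrum index sets; and---crucially---its own $\tface$-normal operator is, by \eqref{EqCCNtfRel}, equal to $N_{\wt\tface}(\tilde A)|_{h=0}=N_\tface(h^m x^{-m}A_{\bop,0}-\tilde\omega)$, whose invertibility was just used. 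The inner recursion therefore closes, yielding $N_\tface(\tilde A)^{-1}\in\rho_{\mathbf{ff}}^m\Psi_{\cop,\scop,\tilde\semi}^{-m}(\ol{{}^+N\pa X})+\Psi_{\cop,\scop,\tilde\semi}^{-\infty,(\check E_\lb(\alpha),\check E_\ff(\alpha)+m,\check E_\rb(\alpha)+m,\N_0)}(\ol{{}^+N\pa X})$. Choosing $B_4'\in\rho_\ff^m\Psidsc^{-m}(X)+\Psidsc^{-\infty,\cE}(X)$ with $N_\tface(B_4')=N_\tface(\tilde A)^{-1}$, and similarly a correction with prescribed $N_{\wt\tface}$, and composing with $R$ (controlling index sets via Proposition~\ref{PropCCo}), I obtain $G_5\in\rho_\ff^m\Psidsc^{-m}(X)+\Psidsc^{-\infty,\cE}(X)$ with $\tilde A G_5=I-R_5$ where $R_5$ vanishes to infinite order at $\ff$, $\tface$, $\wt\tface$, $\dface$, $\wt\dface$, hence $R_5\in h^\infty\tilde h^\infty\cA_\phg^{(\emptyset,\check E_\rb(\alpha)+m)}([0,1)_h\times[0,1)_{\tilde h}\times X^2)$. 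For $0<h,\tilde h<c_0$ small, $I-R_5$ is invertible on $\Hb^{s-m,\alpha-m}(X)$ by a convergent Neumann series, with inverse $I$ plus a term of the same type; then $B:=G_5(I-R_5)^{-1}\in\rho_\ff^m\Psidsc^{-m}(X)+\Psidsc^{-\infty,\cE}(X)$ is a right inverse, a left inverse is constructed symmetrically, the two agree by the usual argument, and the asserted isomorphism $\tilde A_{h,\tilde h}\colon\Hb^{s,\alpha}(X)\to\Hb^{s-m,\alpha-m}(X)$ follows.

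The main obstacle I expect is the inversion of $N_\tface(\tilde A)$: one must be confident that the entire machinery of Theorem~\ref{ThmR}---symbolic parametrix, the elliptic b-theory improvements at the b-front face and the left/right boundaries, and the final Neumann series---transfers without change to the semiclassical cone-scattering calculus $\Psi_{\cop,\scop,\tilde\semi}(\ol{{}^+N\pa X})$, the only new feature being an extra elliptic scattering model at fiber infinity over $\pa_\infty\ol{{}^+N\pa X}$ that is dispatched by the scattering calculus; and one must invoke the compatibility \eqref{EqCCNtfRel} at exactly the right moment so that the recursion bottoms out at the hypothesis rather than descending further. Beyond this conceptual point, the remaining work is the lengthy but routine bookkeeping of index sets through the iterated normal-operator corrections and through Proposition~\ref{PropCCo}.
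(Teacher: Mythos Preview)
Your proposal is correct and follows essentially the same approach as the paper's proof: symbolic parametrix, parametric b-theory at $\ff$ with the $\lb$ cleanup and Neumann series, then successive normal-operator corrections at $\wt\tface$ and $\tface$ (the latter via the semiclassical cone-scattering calculus on $\ol{{}^+N\pa X}$, with the inner recursion closed by \eqref{EqCCNtfRel}), followed by a final Neumann series on the $h^\infty\tilde h^\infty$ residual. The paper organizes the two transition-face steps sequentially---first $\wt\tface$ with its own Neumann series, then $\tface$ with its own---whereas your sentence ``choosing $B_4'$ with $N_\tface(B_4')=N_\tface(\tilde A)^{-1}$, and similarly a correction with prescribed $N_{\wt\tface}$'' compresses this; but the content is the same, and you have correctly identified the only genuinely new difficulty (inverting $N_\tface(\tilde A)$ as an element of $\Psi_{\cop,\scop,\tilde\semi}$, with the scattering end handled symbolically and the recursion terminating at the already-invertible $N_{\wt\tface}(\tilde A)|_{h=0}$).
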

\begin{proof}
  The proof is similar to that of Theorem~\ref{ThmR}, except we now also need to invert the $\tface$-normal operator. The first step of the inversion of $\tilde A$ is the usual symbolic elliptic parametrix construction. Restricting henceforth to $h,\tilde h<c_0$ for some small $c_0>0$, where $\tilde A$ is dsc-elliptic as discussed above, we obtain
  \[
    B_0\in\rho_\ff^m\Psidsc^{-m}(X),\quad
    A B_0=I-R_0,\ R_0\in\rho_\dface^\infty\rho_{\wt\dface}^\infty\Psidsc^{-\infty}(X) = \Psi_\dscop^{-\infty,(\emptyset,\N_0,\emptyset,\N_0,\N_0)}(X).
  \]
  The next step solves away the error at the front face $\ff\cong[0,1)_{\tilde h}\times[0,1)_h\times\ff_\bop$ by employing b-normal operator analysis with smooth parametric dependence on $(\tilde h,h)$ and using the independence of $N_\ff(\tilde A)$ on $(h,\tilde h)$; this gives
  \begin{align*}
    &B_1\in\Psidsc^{-\infty,(E_\lb(\alpha),\N_0+m,E_\rb(\alpha)+m,\N_0,\N_0)}(X), \\
    &\qquad A(B_0+B_1)=I-R_1,\ R_1\in\Psidsc^{-\infty(E_\lb(\alpha)-m,\N,E_\rb(\alpha)+m,\N_0,\N_0)}(X).
  \end{align*}
  As in~\eqref{EqRRB2}, we solve away the error at $\lb$ to infinite order with an operator
  \[
    B_2\in\Psidsc^{-\infty,(E_\lb(\alpha)\extcup\hat E_\lb(\alpha),\N+m,\emptyset,\N_0,\N_0)}(X);
  \]
  subsequently, the error at $\ff$ (where the index set of the error term is $\N$) is removed using a Neumann series argument as in~\eqref{EqRRB3}. The upshot is that we can construct a right parametrix $G$ as in~\eqref{EqRRG3}, and similarly a left parametrix $G'$, with
  \begin{align}
    &G,G' \in \rho_\ff^m\Psidsc^{-m}(X) + \Psidsc^{-\infty,\cE}(X), \nonumber\\
  \label{EqCRG}
    &\qquad A G=I-R,\ R\in\Psidsc^{-\infty,(\emptyset,\emptyset,\hat E_\rb(\alpha)+m,\N_0,\N_0)}(X), \\
  \label{EqCRGLeft}
    &\qquad G'A=I-R',\ R'\in\Psidsc^{-\infty,(\hat E_\lb(\alpha),\emptyset,\emptyset,\N_0,\N_0)}(X).
  \end{align}

  We next improve the error term at $\wt\tface$ using the normal operator $N_{\wt\tface}$ from~\eqref{EqCCNtf2}; see also~\eqref{EqCEx5Ntf2}. By~\eqref{EqRRNtfAInv}, and recalling from~\eqref{EqCRNwttf} that the $h$-dependence is trivial, we have
  \[
    N_{\wt\tface}(A)^{-1} \in \rho_{\tface,\bop}^m\Psi_{\bop,\scop}^{-m}(\ol{{}^+N\pa X})+\Psi_{\bop,\scop}^{-\infty,(\cE_\lb,\cE_\ff,\cE_\rb)}(\ol{{}^+N\pa X})\quad \text{(independent of $h$)}.
  \]
  Take thus $B_{\tilde\tface}\in\rho_\ff^m\Psidsc^{-m}(X)+\Psidsc^{-\infty,\cE}(X)$ to be an operator with $N_{\wt\tface}(B_{\tilde\tface})=N_{\wt\tface}(A)^{-1}$; at $\tface$, whose intersection with $\wt\tface$ is defined by $h=0$ within $\wt\tface$, we can, indeed, take as the index set for $B_{\tilde\tface}$ the set $\cE_\tface=\N_0$ in view of the smooth $h$-dependence in~\eqref{EqCRNwttf}. Then
  \begin{align*}
    &G_4 := G+B_{\tilde\tface} R\in\rho_\ff^m\Psidsc^{-m}(X)+\Psidsc^{-\infty,\cE}(X) \\
    &\qquad \Rightarrow A G_4 = I - R_4,\ R_4\in\Psidsc^{-\infty,(\emptyset,\emptyset,\check E_\rb(\alpha)+m,\N_0,\N)}(X).
  \end{align*}
  We solve away the error at $\wt\tface$ to infinite order by a Neumann series argument; thus, taking $B'_5\sim\sum_{j=0}^\infty R_4^j$, we obtain
  \[
    \tilde G:=G_4 B'_5,\quad
    A\tilde G=I-\tilde R,\ \tilde R\in\Psidsc^{-\infty,(\emptyset,\emptyset,\check E_\rb(\alpha)+m,\N_0,\emptyset)}(X).
  \]
  Analogous arguments starting with~\eqref{EqCRGLeft} produce
  \[
    \tilde G'\in\rho_\ff^m\Psidsc^{-m}(X)+\Psidsc^{-\infty,\cE}(X), \quad
    \tilde G' A=I-\tilde R',\ \tilde R'\in\Psidsc^{-\infty,(\check E_\lb(\alpha),\emptyset,\emptyset,\N_0,\emptyset)}(X).
  \]

  Finally, we remove the error at $\tface$. We claim that there exists $\tilde h_0>0$ such that $N_\tface(A)$---which in $\tilde h>0$ is an $\tilde h$-dependent family of b-scattering operators on $\ol{{}^+N\pa X}$---is invertible for $0<\tilde h<\tilde h_0$, with
  \begin{equation}
  \label{EqCRNtfInv}
    N_\tface(A)^{-1} \in \rho_{\mathbf{ff}}^m\Psi_{\cop,\scop,\tilde\semi}^{-m}(\ol{{}^+N\pa X}) + \Psi_{\cop,\scop,\tilde\semi}^{-\infty,(\cE_\lb,\cE_\ff,\cE_\rb,\cE_\tface)}(\ol{{}^+N\pa X}).
  \end{equation}
  This \emph{almost} follows from Theorem~\ref{ThmR} upon restricting $N_\tface(A)$ to a (weighted) element of $\Psi_{\cop\tilde\semi}({}^+N\pa X)$; however, ${}^+N\pa X$ is not compact, hence the theorem does not apply. However, $N_\tface(A)$ has a very simple structure at the scattering end, i.e.\ near $\mathbf{ff}_\scop$: it is a semiclassically elliptic scattering operator. Thus, symbolic arguments produce a parametrix of $N_\tface(A)$ of the form
  \[
    B_{\tface,0}\in\rho_{\mathbf{ff}}^m\Psi_{\cop,\scop,\tilde\semi}^{-m}(\ol{{}^+N\pa X}), \quad
    N_\tface(A)B_{\tface,0}=I-R_{\tface,0},\ R_{\tface,0}\in\rho_{\wt{\mathbf{df}}}^\infty\rho_{\mathbf{ff}_\scop}^\infty\Psi_{\cop,\scop,\tilde\semi}^{-\infty}(\ol{{}^+N\pa X}),
  \]
  i.e.\ the error $R_\tface$ is trivial at the scattering end. Thus,~\eqref{EqCRNtfInv} follows by subsequently improving the parametrix $B_{\tface,0}$ using the parameterized b-arguments and the $\wt{\mathbf{tf}}$-normal operator analysis analogous to the $\tface$-normal operator analysis in the proof of Theorem~\ref{ThmR}; note that the infinite order vanishing of error terms at the scattering end is preserved by these steps, as they are local near $\mathbf{ff}$, $\mathbf{lb}$, $\mathbf{rb}$, and $\wt{\mathbf{tf}}$.

  Thus, if $B_\tface\in\rho_\ff^m\Psidsc^{-m}(X)+\Psidsc^{-\infty,\cE}(X)$ has $\tface$-normal operator $N_\tface(B_\tface)=N_\tface(A)^{-1}$ (in $\tilde h<\tilde h_0$), then
  \begin{align*}
    &G_5 := \tilde G+B_\tface\tilde R \in \rho_\ff^m\Psidsc^{-m}(X) + \Psidsc^{-\infty,\cE}(X) \\
    &\qquad \Rightarrow A G_5=I-R_5,\ R_5\in\Psidsc^{-\infty,(\emptyset,\emptyset,\check E_\rb(\alpha)+m,\N,\infty)}(X).
  \end{align*}
  Multiplying $G_5$ from the right by an operator given as an asymptotic sum $\sum_{j=0}^\infty R_5^j$ gives $G_6\in\rho_\ff^m\Psidsc^{-m}(X) + \Psidsc^{-\infty,\cE}(X)$ with
  \[
    A G_6 = I-R_6,\quad R_6\in\Psidsc^{-\infty,(\emptyset,\emptyset,\check E_\rb(\alpha)+m,\emptyset,\emptyset)}(X).
  \]
  Now $R_6\in\cA_\phg^{(\emptyset,\check E_\rb(\alpha)+m,\emptyset,\emptyset)}([0,1)_{\tilde h}\times[0,1)_h\times X^2)$, with the index sets referring to $[0,1)^2\times\pa X\times X$, $[0,1)^2\times X\times\pa X$, $\tilde h^{-1}(0)$, $h^{-1}(0)$, in this order. Thus, for small $h,\tilde h$, $I-R_6$ is invertible on $\Hb^{s-m,\alpha-m}(X)$ by a convergent Neumann series, with inverse of the same form.
  
  Setting $G=G_6(I-R_6)^{-1}$ proves the existence of $G\in\rho_\ff^m\Psidsc^{-m}(X)+\Psidsc^{-\infty,\cE}(X)$ with $A G=I$. Similar arguments produce a left inverse $G'$ lying in the same operator class, and standard arguments give $G=G'$, finishing the proof of the theorem.
\end{proof}

\section{Structure of complex powers}
\label{SP}

Using the resolvent constructed in~\S\ref{SsCR}, we shall now deduce the precise structure of complex powers of semiclassical cone operators. Seeley's original approach to the definition of complex powers of elliptic operators on closed manifolds uses a large parameter calculus; we explain in~\S\ref{SsPP} in this simple setting how one can use semiclassical operators instead. In~\S\ref{SsPC}, we prove a slight extension of Loya's result \cite{LoyaConicPower} about the structure of complex powers of fully elliptic cone operators using the resolvent rather than the heat kernel \cite{LoyaConicHeat}; this uses a special case of the results of~\S\ref{SsRR}. In~\S\ref{SsPh} finally, we generalize this to the semiclassical setting.

\subsection{Seeley's approach revisited}
\label{SsPP}

Let $X$ be a compact manifold without boundary. Let $A\in\Diff^m(X)$, $m>0$, be elliptic, and $\sigma_m(A)+1\neq 0$ on $T^*X$ (thus $\sigma_m(A)$ misses $(-\infty,0)$). Assume that $A$ is invertible. Then by \cite{SeeleyPowers}, see also \cite[\S\S10--11]{ShubinSpectralTheory}, we can define complex powers of $A$ by
\begin{equation}
\label{EqPP}
  A_w = \frac{i}{2\pi}\int_{\gamma_\eps} \lambda^w(A-\lambda)^{-1}\,\dd\lambda \in \Psi^w(X),
\end{equation}
initially for $\Re w<0$ and then for all $w\in\C$ by analytic continuation. Here, we integrate over the contour
\begin{equation}
\label{EqPPContour}
\begin{split}
  &\gamma_\eps=\gamma_3\circ\gamma_2\circ\gamma_1, \\
  &\gamma_1(s)=s+i\eps,\ s\in(-\infty,0],
  \quad \gamma_2(\theta)=\eps e^{-i\theta},\ \theta\in\left[-\tfrac{\pi}{2},\tfrac{\pi}{2}\right],
  \quad \gamma_3(s)=-s-i\eps,\ s\in[0,\infty),
\end{split}
\end{equation}
where $\eps>0$ is so small that $(A-\lambda)^{-1}$ exists when $\lambda$ is $\eps$-close to $(-\infty,0]$; and we define $\lambda^w=\exp(w\log\lambda)$ using the logarithm with branch cut along $(-\infty,0]$. See Figure~\ref{FigPPContour}.

\begin{figure}[!ht]
\centering
\includegraphics{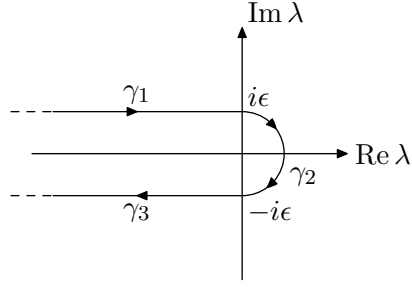}
\caption{The integration contour $\gamma_\eps$ defined in~\eqref{EqPPContour}.}
\label{FigPPContour}
\end{figure}

We rewrite the integrand for large $|\lambda|$ in terms of the semiclassical resolvent
\[
  R(h,\omega)=(h^m A-\omega)^{-1}\in\Psih^{-m}(X).
\]
This exists, and depends smoothly on $\omega$ near $-1$, when $h>0$ is small enough. Moreover, $R(h,\omega)$ can be computed modulo $h^\infty\Psi^{-\infty}(X)=h^\infty\CI(X^2)$ in local coordinates by a symbolic computation. More precisely, given $k\in\N$, there exists $N\in\N$ such that we can write $R(h,\omega)$ in a local coordinate chart as
\[
  R(h,\omega)=\Oph(r_N(h,\omega))+E^{(N)}(h,\omega),
\]
where $E^{(N)}\in h^k\cC^k(X^2)$ and $r^{(N)}(h,\omega)=\sum_{j=0}^{N-1}b_j(h,\omega)$, $b_j\in h^j\Shom^{-m-j}(T^*X\setminus o)$ (with $\CI$ dependence on $h,\omega$ omitted from the notation); this uses that Schwartz kernels of elements of $\Oph(h^j S^{-m-j})$ become arbitrarily smooth on $[0,1)_h\times X^2$ as $j\to\infty$. Rewriting the integrand in~\eqref{EqPP} as
\[
  (A-\lambda)^{-1} = |\lambda|^{-1}R(h,\omega),\quad h=|\lambda|^{-1/m},\ \omega=\lambda/|\lambda|,
\]
we can replace $R^{(N)}$ in local coordinates on $X^2$ by $\Op_h(r^{(N)})$, modulo an error
\begin{equation}
\label{EqPPErr}
  \frac{i}{2\pi}\int_{\gamma_\eps}\lambda^w|\lambda|^{-1} E^{(N)}(|\lambda|^{-1/m},\omega,x,x')\,\dd\lambda \in \cC^k(X^2).
\end{equation}

On the other hand, since the symbol of an elliptic parametrix of an elliptic semiclassical operator in local coordinates is unique modulo $h^\infty S^{-\infty}$, the expansion of $r^{(N)}$ \emph{must agree} with the expansion into large parameter symbols upon making the appropriate identifications; that is, we have
\begin{equation}
\label{EqPPSymb}
  |\lambda|^{-1}\Op_{|\lambda|^{-1/m}}\bigl(b_j(|\lambda|^{-1/m},\lambda/|\lambda|)\bigr) = \Op(b'_j),\quad b'_j(\lambda)\in\Shom^{-m-j,m}((T^*X\times\Lambda)\setminus o),
\end{equation}
where $\Lambda=\{\theta\omega\colon\theta\in[0,\infty),\ \omega\in\Omega\}$, $\Omega$ a small neighborhood of $-1$, and the $b'_j$ are the terms of the symbolic parametrix construction of $(A-\lambda)^{-1}$ in the large parameter calculus, see~\cite[\S11.1]{ShubinSpectralTheory}. (Note that we use the \emph{standard} quantization on the right!) Here, we recall that $\Shom^{-m-j,m}((T^*X\times\Lambda)\setminus o)$ (with $o=\{(z,\zeta,\lambda)\colon\zeta\in T^*_z X,\ \zeta=\lambda=0\}$ the zero section) consists of all $a\in\CI((T^*X\times\Lambda)\setminus o)$ such that $a(z,\theta\zeta,\theta^m\lambda)=\theta^{-m-j} a(z,\zeta,\lambda)$ for all $\theta>0$. In particular, $\int\lambda^w b'_j(\lambda)\,\dd\lambda\in\Shom^{m w-j}(T^*X\setminus o)$ and so
\[
  \frac{i}{2\pi}\int_{\gamma_\eps}\lambda^w \Op(b'_j)\,\dd\lambda \in \Psi^{m w-j}(X).
\]
Together with~\eqref{EqPPErr}, and recalling that $k$ is arbitrary, this proves~\eqref{EqPP}.

\subsection{Complex powers of cone operators}
\label{SsPC}

From now on, $X$ again denotes a compact $n$-dimensional manifold with connected embedded boundary $\pa X$, and $x\in\CI(X)$ denotes a defining function of $\pa X$. For a (non-semiclassical) cone operator $A=x^{-m}A_\bop\in x^{-m}\Diffb^m(X)$ satisfying a suitable ellipticity condition, we wish to describe the complex powers $A_w$; this is a warm-up for~\S\ref{SsPh}.

\begin{thm}
\label{ThmPC}
  Suppose that $h^m A+1=h^m x^{-m}A_\bop+1$ is fully elliptic with weight $\alpha\in\R$, and that $A-\lambda\colon\Hb^{s,\alpha}(X)\to\Hb^{s-m,\alpha-m}(X)$ is invertible for $\lambda\in(-\infty,0]$.\footnote{The full ellipticity assumption implies invertibility for sufficiently large negative $\lambda$ in view of Theorem~\ref{ThmR} with $\omega=-\lambda/|\lambda|$ and $h=|\lambda|^{-1/m}$, hence this merely serves to exclude the possibility of spectrum in a large but finite interval $[-C,0]$.} Let
  \begin{equation}
  \label{EqPC}
    A_w := \frac{i}{2\pi}\int_\gamma\lambda^w(A-\lambda)^{-1}\,\dd\lambda,\quad w\in\C,
  \end{equation}
  where $\gamma_\eps$ is as in~\eqref{EqPPContour}, with $\eps>0$ chosen so that $(A-\lambda)^{-1}$ exists when $\lambda$ is $\eps$-close to $(-\infty,0]$. Then $A_w$, $\Re w<0$, is well-defined as an operator on $\Hb^{s,\alpha-m}(X)$. Its Schwartz kernel admits a holomorphic extension to $w\in\C$, and we have
  \[
    A_w \in x^{-m w}\Psib^{m w}(X) + \Psib^{-\infty,\cE_\bop(w)}(X), \quad
    \cE_\bop(w) = \bigl(\cE_\lb,\cE_\ff \extcup{}(\N_0-m w),\cE_\rb \bigr),
  \]
  where $\cE_\lb,\cE_\ff,\cE_\rb$ are defined by~\eqref{EqRRElbrb}, \eqref{EqRREff}, \eqref{EqRRcE}.
\end{thm}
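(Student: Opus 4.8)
The plan is to run Seeley's contour-integral argument, now in the semiclassical cone setting, exactly parallel to~\S\ref{SsPP} but using the semiclassical cone calculus of~\S\ref{SsRC} and the resolvent description of Theorem~\ref{ThmR} in place of the closed-manifold semiclassical calculus. First I would fix $\tilde\omega$ ranging in a small neighborhood $\Omega$ of $-1$ and observe that, by Theorem~\ref{ThmR} applied to $h^m A - \tilde\omega$ (which is fully elliptic at weight $\alpha$ by hypothesis, since $h^m A + 1$ is and the ellipticity conditions are open in $\tilde\omega$), the resolvent $R(h,\tilde\omega) := (h^m A_h - \tilde\omega)^{-1}$ exists for $0<h<h_0$ and lies in $(\tfrac{x}{x+h})^m\Psi_{\cop\semi}^{-m}(X) + \Psi_{\cop\semi}^{-\infty,\cE}(X)$ with $\cE=(\cE_\lb,\cE_\ff,\cE_\rb,\N_0)$ as in~\eqref{EqRRcE}, uniformly in $\tilde\omega\in\Omega$ (the uniformity being a routine consequence of the parameterized nature of the construction). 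The invertibility of $A-\lambda$ on $(-\infty,0]$ plus the large-$|\lambda|$ invertibility coming from the resolvent bound then justifies choosing $\eps>0$ as in~\eqref{EqPPContour} so that the contour $\gamma_\eps$ avoids the spectrum, so~\eqref{EqPC} makes sense on $\Hb^{s,\alpha-m}(X)$ for $\Re w<0$.

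\textbf{Splitting the contour.} Next I would split $\gamma_\eps = \gamma_{\mathrm{cpt}} \cup \gamma_{\mathrm{far}}$ into a bounded piece (where $|\lambda|\le\Lambda_0$) and an unbounded piece $|\lambda|\ge\Lambda_0$. On $\gamma_{\mathrm{cpt}}$, $(A-\lambda)^{-1}$ is a fixed non-semiclassical b-pseudodifferential operator of class $x^m\Psib^{-m}(X)+\Psib^{-\infty,\cE_\bop}(X)$ depending holomorphically on $\lambda$ in a neighborhood of the compact contour (this is the $h=h_0$ specialization of the calculus, as noted in the introduction, cf.\ Loya \cite{LoyaConicResolvent}); integrating $\lambda^w$ against it gives a contribution in $\Psib^{-\infty,(\cE_\lb,\N_0,\cE_\rb)}(X)$, entire in $w$, which is absorbed into the residual term. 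For $\gamma_{\mathrm{far}}$, I would substitute $h=|\lambda|^{-1/m}$, $\tilde\omega=\lambda/|\lambda|$, so that $(A-\lambda)^{-1}=|\lambda|^{-1}R(|\lambda|^{-1/m},\lambda/|\lambda|)$, and plug in the structural description of $R$. The resulting $\lambda$-integral of $\lambda^w |\lambda|^{-1}$ times a family of semiclassical cone operators is handled by the pushforward theorem: it is the pushforward along the map $([\Lambda_0,\infty)\times\Omega)\times X^2_{\cop\semi}\to X^2_{\cop\semi}$, and one reads off that integrating the $(\tfrac{x}{x+h})^m\Psi_{\cop\semi}^{-m}$ part against $\lambda^w$ produces, after setting $h\to 0$ (the formal variable collapses since $h=|\lambda|^{-1/m}\to 0$ is integrated out), a b-pseudodifferential operator of class $x^{-m w}\Psib^{m w}(X)$ — here the $\tface$ and $\dface$ of $X^2_{\cop\semi}$ merge with $\ff_\bop$ to rebuild the b-front face weighted by $x^{-mw}$, and the diagonal order shifts from $-m$ to $mw$ because $\int_1^\infty s^{w}s^{-1}\cdot s^{-m}\,\dd(s^{-m})$-type scaling gives the correct homogeneity. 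Similarly the $\Psi_{\cop\semi}^{-\infty,\cE}$ part pushes forward to $\Psib^{-\infty,(\cE_\lb,\cE_\ff\extcup(\N_0-mw),\cE_\rb)}(X)$, the $\N_0-mw$ index set at $\ff_\bop$ arising precisely from integrating $\lambda^w$ over the far part of the contour (this is the mechanism by which the order of the power appears in the front-face expansion, cf.\ the heat-kernel computation in \cite{LoyaConicPower}). Holomorphic continuation in $w$ to all of $\C$ is then the standard argument: differentiate the defining integral, or use $A_{w} = A^{k} A_{w-k}$, noting that the right-hand side of the claimed membership is stable under such shifts and under composition with $A\in x^{-m}\Diffb^m(X)$.

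\textbf{The precise symbol-matching point.} The one ingredient that needs care, exactly as in~\S\ref{SsPP}, is obtaining the \emph{sharp} order $mw$ of the diagonal singularity (rather than merely $\le mw+$anything) and the sharp index set $\N_0-mw$ at $\ff_\bop$. This requires the observation that, in local coordinates, the semiclassical-in-$h$ symbolic parametrix of $h^m A-\tilde\omega$ must coincide term by term with the large-parameter-in-$\lambda$ symbolic parametrix of $A-\lambda$ under the identifications $h=|\lambda|^{-1/m}$, $\tilde\omega=\lambda/|\lambda|$ — because elliptic parametrices are unique modulo the residual ideal — so that the integrated symbols $\int_\gamma \lambda^w b'_j(\lambda)\,\dd\lambda$ have the clean homogeneity $\Shom^{mw-j}$. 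Near $\ff_\bop$, where the operator is a weighted elliptic b-operator, the analogous uniqueness of the Mellin-side parametrix together with the $\tface$-normal operator analysis of Theorem~\ref{ThmR} pins down the front-face index set. \textbf{The main obstacle} is precisely bookkeeping this pushforward-and-matching on $X^2_{\cop\semi}$: one must verify that the pushforward theorem of \cite{MelrosePushFwd} applies with the stated boundary behaviour (in particular that the contour integral converges at the endpoints of $\gamma_{\mathrm{far}}$, which for $\Re w<0$ follows from $|\lambda^w||\lambda|^{-1}=|\lambda|^{\Re w-1}$ being integrable, and for general $w$ from the holomorphic continuation), and that the weights $(\tfrac{x}{x+h})^m$, $\rho_\sface$, etc., transform under $h=|\lambda|^{-1/m}$ and integration in $|\lambda|$ into the asserted b-calculus weights $x^{-mw}$ and the $\ff_\bop$-index set $\N_0-mw$. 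Once the model computation of~\S\ref{SsPP} is in hand, this is a direct adaptation, the only genuinely new feature being the simultaneous presence of the conic front face $\ff$ and transition face $\tface$, both of which collapse onto $\ff_\bop$ in the limit $h\to 0$ enforced by $h=|\lambda|^{-1/m}$.
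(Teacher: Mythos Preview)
Your overall approach matches the paper's: reduce to $\Re w<0$, rewrite the large-$|\lambda|$ resolvent semiclassically via $h=|\lambda|^{-1/m}$, invoke Theorem~\ref{ThmR} for the structure of $(h^m A-\omega)^{-1}$, handle the polyhomogeneous remainder $E_{h,\omega}$ by a pushforward, and recover the sharp order $mw$ by matching the semiclassical symbolic parametrix term-by-term with the large-parameter one, then extend holomorphically via $A_w=A^k A_{w-k}$.

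There are, however, two concrete slips. First, your claim that the compact-contour contribution lands in $\Psib^{-\infty}$ is false: integrating a smooth family in $x^m\Psib^{-m}(X)+\Psib^{-\infty,\cE_\bop}(X)$ over a bounded piece of $\gamma_\eps$ gives an element of the \emph{same} class, with the order $-m$ diagonal singularity intact. The paper (and Seeley) avoid this by not splitting the contour for the symbolic part: the large-parameter symbolic terms $b_{-m-j}(z,\zeta,\lambda)$ are integrated over the \emph{full} contour $\gamma_\eps$ to produce the homogeneous symbols $b^{(w)}_{mw-j}\in x^{-mw}\Shom^{mw-j}$; only the remainder $(A-\lambda)^{-1}-\sum_j\Op(b_{-m-j})$, which \emph{is} smoothing for each $\lambda$, gets the compact/far treatment. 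You effectively say this in your ``precise symbol-matching'' paragraph, but it contradicts your earlier handling of $\gamma_{\mathrm{cpt}}$. Second, the pushforward for the residual piece is along the lift of $[0,1)_h\times X^2_\bop\to X^2_\bop$, i.e.\ a b-fibration $X^2_{\cop\semi,\infty}\to X^2_\bop$ (the $h$-variable is integrated out), not a map to $X^2_{\cop\semi}$; under this map $\ff$ and $\tface$ both hit $\ff_\bop$, which is precisely why the resulting index set there is $\cE_\ff\extcup(\N_0-mw)$ (the factor $h^{-mw}$ shifting the $\N_0$ index set at $\tface$). The face $\dface$ maps to the b-diagonal $\Delta_\bop$, not to $\ff_\bop$.
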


Loya \cite{LoyaConicPower} proves this when $A$ has no spectrum inside a conic sector whose interior includes the closed left half plane.

\begin{proof}[Proof of Theorem~\usref{ThmPC}]
  It suffices to consider the case $\Re w<0$; the extension to the complex plane is accomplished by setting $A_w:=A^k A_{w-k}$ when $w\in\C$, $k\in\N_0$, and $\Re(w-k)<0$. By assumption, $(A-\lambda)^{-1}$ exists for all $\lambda\in\gamma_\eps$, and we have $(A-\lambda)^{-1}\in x^m\Psib^{-m}(X)+\Psib^{-\infty,\cE_\bop}(X)$ by \cite{MelroseAPS} or as the special case $h=1$ of Theorem~\ref{ThmR} (using the invertibility assumption on $A-\lambda$). For large $|\lambda|$, we rewrite~\eqref{EqPC} semiclassically in terms of $A_{h,\omega}=h^m A-\omega$:
  \begin{equation}
  \label{EqPC2}
    A_w = \frac{i}{2\pi} \int_{\gamma_\eps} h^{-m w}\omega^w A_{h,\omega}^{-1}\cdot |\lambda|^{-1}\,\dd\lambda,\quad h=|\lambda|^{-1/m},\ \omega=\frac{\lambda}{|\lambda|}.
  \end{equation}
  Confining $\omega$ to a small neighborhood $\Omega$ of $-1$, we have
  \[
    A_{h,\omega}^{-1} = R_{h,\omega} + E_{h,\omega},\quad
    R_{h,\omega}\in\rho_\ff^m\Psi_{\cop\semi}^{-m}(X),\ E_{h,\omega}\in\Psi_{\cop\semi}^{-\infty,\cE}(X)
  \]
  with $\cE=\left(\check E_\lb(\alpha),\check E_\ff(\alpha)+m,\check E_\rb(\alpha)+m,\N_0\right)$ by Theorem~\ref{ThmR}, with smooth dependence on $\omega\in\Omega$.
  
  For convenience, we may deform the contour $\gamma_\eps$ to a new contour $\gamma_\eps'$ which is exactly radial, at a small angle $\pm\eps$ with $(-\infty,0]$, for $|\lambda|\geq 1$. Upon inserting a smooth cutoff $\chi\in\CIc((1,\infty))$, $\chi(|\lambda|)\equiv 1$ for $|\lambda|\geq 2$, the integral~\eqref{EqPC2} is a sum of two integrals for $\omega=-e^{\pm i\eps}$, each of which is a push-forward along the map $X^2_{\cop\semi}\to X^2_\bop$ which is the lift of the projection $[0,1)_h\times X^2_\bop\to X^2_\bop$. Recall that $E_{h,\omega}$ vanishes to infinite order at $\sface\cup\dface$, hence is polyhomogeneous on the simpler space $X^2_{\cop\semi,\infty}$, see~\eqref{EqRCPsdoRes}; the stretched projection $X^2_{\cop\semi,\infty}\to X^2_\bop$ is a b-fibration. Since $\ff$ and $\tface$ both map to $\ff_\bop\subset X^2_\bop$, we obtain
  \begin{equation}
  \label{EqPCErr}
    \int_{\gamma_\eps} \chi(|\lambda|) h^{-m w}\omega^w E_{h,\omega}\cdot \omega\frac{\dd h}{h} \in \Psib^{-\infty,\cE_\bop(w)}(X)
  \end{equation}
  since the index set of the integrand is $\cE_\ff$ at $\ff$ and $\N_0-m w$ (from $h^{-m w}$) at $\tface$.

  It thus remains to analyze the contribution of $R_{h,\omega}$, which, in view of~\eqref{EqPCErr}, we may in addition localize arbitrarily closely to the preimage $\Delta_{\cop\semi}\cup\dface$ of the diagonal under the blow-down of $\dface$. But then $R_{h,\omega}$ can be expressed in local coordinates near the diagonal $\Delta_{\cop\semi,\infty}$ in $X^2_{\cop\semi,\infty}$, see~\eqref{EqRCPsdoResDiag}. Its full symbol, modulo $\rho_\dface^\infty S^{-\infty}(N^*\Delta_{\cop\semi})$, is uniquely determined by the requirement that $R_{h,\omega}$ be an elliptic parametrix of $A_{h,\omega}$ in $X^\circ$. As in the discussion leading to~\eqref{EqPPSymb}, we can thus compute $|\lambda|^{-1}R_{h,\omega}$ as a quantization of a sum of $N$ large parameter symbols, modulo error terms
  \begin{equation}
  \label{EqPCErr2}
    E^{(N)}_{h,\omega}\in|\lambda|^{-1}\rho_\ff^m\rho_\dface^N\Psi_{\cop\semi}^{-m-N}(X)
  \end{equation}
  with Schwartz kernels localized near $\Delta_{\cop\semi}$. Any fixed continuous $|\lambda|^{-1}\rho_\ff^m\rho_\dface^\infty\Psi_{\cop\semi}^{-\infty}(X)$-semi\-norm is continuous on the space in~\eqref{EqPCErr2} when $N$ is large enough; hence, by the arguments leading to~\eqref{EqPCErr}, $E^{(N)}_{h,\omega}$ contributes to~\eqref{EqPC2} by a Schwartz kernel $E^{(N,w)}_{h,\omega}$ in such a way that any fixed $\Psi_\bop^{-\infty,(\emptyset,(m+\N_0)\extcup{}(\N_0-m w),\emptyset)}(X)$-seminorm becomes finite for $N$ large enough.

  It therefore remains to describe the contribution from the quantization of a \emph{single} large parameter b-symbol arising in the symbolic parametrix construction for $(A-\lambda)^{-1}$. Writing $a_m(z,\zeta)=\sigmab_m(A_\bop)(z,\zeta)$, $(z,\zeta)\in\Tb^*X$, the first term is $b_{-m}(z,\zeta,\lambda)=x^m(a_m-x^m\lambda)^{-1}$; thus, $b'_{-m-j}(z,\zeta,\lambda'):=b_{-m-j}(z,\zeta,x^{-m}\lambda')$ (with $j=0$ for now) has the property that $x^{-m}b'_{-m-j}(z,\zeta,\lambda')$ is homogeneous of degree $(-m-j,m)$ in $(\zeta,\lambda')$ (that is, $b'_{-m-j}(\theta z,\theta^m\zeta,\lambda')=\theta^{-m-j}b'_{-m}(z,\zeta,\lambda')$ for $\theta>0$) and smooth down to $x=0$. Subsequent terms in the symbol expansion of $(A-\lambda)^{-1}$, with $j=1,2,\ldots$, have the same property. Similarly to~\cite[\S11.12]{ShubinSpectralTheory}, it follows that
  \begin{align*}
    b^{(w)}_{m w-j}(z,\zeta)&:=\frac{i}{2\pi}\int_{\gamma_\eps}\lambda^w b_{-m-j}(z,\zeta,\lambda)\,\dd\lambda \\
      &=\frac{i}{2\pi} \int x^{-m w}(\lambda')^w\cdot x^{-m}b'_{-m-j}(z,\zeta,\lambda')\,\dd\lambda'
  \end{align*}
  is a homogeneous symbol of degree $m w-j$, and smooth as such down to $x=0$ when multiplied by $x^{m w}$. If $\psi\in\CIc((-1,1))$ is identically $1$ near $0$, serving to localize to a small neighborhood of the b-diagonal, the quantization of $b^{(w)}_{m w-j}$ is therefore
  \begin{align*}
    \Op_\bop(b_{m w-j}^{(w)}) &= (2\pi)^{-n}\psi\Bigl(\frac{x-x'}{x'}\Bigr)\psi(|y-y'|) \\ 
     &\quad\qquad \times \iint \left(\frac{x}{x'}\right)^{i\sigma}e^{i(y-y')\eta}b_{m w-j}^{(w)}(x,y,\sigma,\eta)\,\dd\sigma\,\dd\eta \in x^{m w}\Psib^{m w-j}(X).
  \end{align*}
  This completes the proof.
\end{proof}

\subsection{Complex powers of semiclassical cone operators}
\label{SsPh}

Let $X$ be as in~\S\ref{SsPC}.
\[
  A_h=h^m x^{-m}A_\bop-\omega_0,\quad A_\bop\in\Diffb^m(X),
\]
with $0\neq\omega_0\in\C$ fixed. The following is the main result of this paper:

\begin{thm}
\label{ThmPh}
  Let $m\geq 1$, $A\in\Diff_{\cop,\semi}^m(X)$. Suppose that $A_h-\tilde\lambda$ is fully elliptic at weight $\alpha$ for all $\tilde\lambda\in(-\infty,0]$ in the sense of Definition~\usref{DefOpEll}, and suppose that $\tilde h^m A_h+1$ is fully elliptic at weight $\alpha$ in the sense of Definition~\usref{DefCREll}. Let
  \begin{equation}
  \label{EqPh}
    (A_h)_w := \frac{i}{2\pi}\int_{\gamma_\eps} \tilde\lambda^w (A_h-\tilde\lambda)^{-1}\,\dd\tilde\lambda,\quad 0<h<h_0,
  \end{equation}
  with $\gamma_\eps$ as in~\eqref{EqPPContour}, where $\eps>0$ is so small that $(A_h-\tilde\lambda)^{-1}$ exists for all $\tilde\lambda\in\gamma_\eps$ and all $0<h<h_0$ for some small $h_0>0$.\footnote{Since full ellipticity of $A_h-\tilde\lambda$ is an open condition in $\tilde\lambda\in\C$, there exists $\eps>0$ small guaranteeing the invertibility of $A_h-\tilde\lambda$ for $\tilde\lambda\in\gamma_\eps$ with $|\tilde\lambda|$ bounded by any fixed constant. For large (independent of $\eps\in(0,1]$) $\tilde\lambda\in\gamma_\eps$ on the other hand, invertibility follows from Theorem~\ref{ThmCR}.} Then $(A_h)_w$, $\Re w<0$, is well-defined as an operator on $\Hb^{s,\alpha-m}(X)$. Its Schwartz kernel admits a holomorphic extension to $w\in\C$, and we have
  \[
    (A_h)_w \in \bigl(\tfrac{x}{x+h}\bigr)^{-m w}\Psi_{\cop\semi}^{m w}(X) + \Psi_{\cop\semi}^{-\infty,\cE(w)}(X),\quad
    \cE(w)=(\cE_\lb,\cE_\ff\extcup{}(\N_0-m w),\cE_\rb,\N_0),
  \]
  where the index sets are defined by~\eqref{EqRRElbrb}, \eqref{EqRREff}, \eqref{EqRRcE}.
\end{thm}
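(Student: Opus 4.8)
The proof closely parallels that of Theorem~\ref{ThmPC}, carrying the semiclassical parameter $h$ along as an inert parameter and using the doubly semiclassical resolvent of Theorem~\ref{ThmCR} in place of the b-resolvent. First I would reduce to the case $\Re w<0$: for general $w\in\C$ pick $k\in\N_0$ with $\Re(w-k)<0$ and set $(A_h)_w:=A_h^k(A_h)_{w-k}$; since $A\in\rho_\ff^{-m}\Psi_{\cop\semi}^m(X)$ by~\eqref{EqRCDiffPsdo} with $\tface$-index set $\N_0$, Proposition~\ref{PropRCComp} shows this shifts the leading term to $(\tfrac{x}{x+h})^{-mw}\Psi_{\cop\semi}^{mw}(X)$ and the residual index sets to $\cE(w)$. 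For $\Re w<0$ the integral~\eqref{EqPh} converges, uniformly in $h$, as a bounded operator on $\Hb^{s,\alpha-m}(X)$ by Theorem~\ref{ThmCR} together with the mapping properties recorded in Remark~\ref{RmkRMHbh}, and defines a holomorphic family there; the extension is holomorphic by construction.

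Next I would split the contour. Deform $\gamma_\eps$ to a contour that is exactly radial for $|\tilde\lambda|\geq 1$ and insert a cutoff $\chi\in\CIc((1,\infty))$, $\chi\equiv 1$ on $[2,\infty)$. The bounded piece $\frac{i}{2\pi}\int(1-\chi)\tilde\lambda^w(A_h-\tilde\lambda)^{-1}\,\dd\tilde\lambda$ is a finite integral of a family, smooth in $\tilde\lambda$ along the contour, of operators in $(\tfrac{x}{x+h})^m\Psi_{\cop\semi}^{-m}(X)+\Psi_{\cop\semi}^{-\infty,\cE}(X)$ by Theorem~\ref{ThmR} (whose hypotheses hold on $\gamma_\eps$ by the assumption on $A_h-\tilde\lambda$ and its openness); hence it lies in that space, which for $\Re w<0$ is contained in $(\tfrac{x}{x+h})^{-mw}\Psi_{\cop\semi}^{mw}(X)+\Psi_{\cop\semi}^{-\infty,\cE(w)}(X)$. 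On the radial part, write $\tilde h=|\tilde\lambda|^{-1/m}$ and $\tilde\omega=\tilde\lambda/|\tilde\lambda|$, which is a fixed value near $-1$ on each ray, so that $(A_h-\tilde\lambda)^{-1}=|\tilde\lambda|^{-1}\tilde A_{h,\tilde h,\tilde\omega}^{-1}$ with $\tilde A$ as in~\eqref{EqCROp}, and $\tilde\lambda^w|\tilde\lambda|^{-1}\,\dd\tilde\lambda$ is a nonzero constant multiple of $\tilde h^{-mw}\,\tfrac{\dd\tilde h}{\tilde h}$ on each ray. By the full ellipticity hypothesis on $\tilde h^m A_h+1$ in the sense of Definition~\ref{DefCREll} and Theorem~\ref{ThmCR}, we have $\tilde A^{-1}=R+E$ with $R\in\rho_\ff^m\Psidsc^{-m}(X)$ and $E\in\Psidsc^{-\infty,(\check E_\lb(\alpha),\check E_\ff(\alpha)+m,\check E_\rb(\alpha)+m,\N_0,\N_0)}(X)$.

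For the residual term, $E$ vanishes to infinite order at $\sface,\dface,\wt\dface,\wt\sface$, hence is the pullback of a polyhomogeneous distribution on $[[0,1)_{\tilde h}\times X^2_{\cop\semi};\{0\}\times\ff_0]$ vanishing to infinite order at $\tilde h=0$; the projection of this space to $X^2_{\cop\semi}$ forgetting $\tilde h$ is a b-fibration (by the argument used for the map $X^2_{\cop\semi,\infty}\to X^2_\bop$ in the proof of Theorem~\ref{ThmPC}) under which $\ff$ and $\wt\tface$ both map to $\ff\subset X^2_{\cop\semi}$, $\tface$ maps to $\tface$, and $\lb,\rb$ to $\lb,\rb$. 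The factor $\tilde h^{-mw}$ shifts the $\wt\tface$-index set to $\N_0-mw$ while leaving $\tface$ at $\N_0$ and the infinite-order vanishing at $\wt\dface,\wt\sface$ intact, so the pushforward theorem, as used in Propositions~\ref{PropRCComp} and~\ref{PropCCo}, contributes an element of $\Psi_{\cop\semi}^{-\infty,\cE(w)}(X)$ with $\cE(w)=(\check E_\lb(\alpha),(\check E_\ff(\alpha)+m)\extcup(\N_0-mw),\check E_\rb(\alpha)+m,\N_0)$, as asserted. For the main term, as in~\S\ref{SsPP} and the proof of Theorem~\ref{ThmPC} one localizes $R$ near $\Delta_\dscop$, where modulo errors vanishing to infinite order at $\dface\cup\wt\dface$ it is the quantization of a symbol determined uniquely (mod $\rho_\dface^\infty\rho_{\wt\dface}^\infty S^{-\infty}$) by being an elliptic parametrix of $\tilde A$ in $X^\circ$; by uniqueness of interior elliptic parametrices this dsc-symbol expansion must coincide with the large-parameter-in-$\tilde\lambda$ symbol expansion of the parametrix of $(A_h-\tilde\lambda)^{-1}$, now computed in the semiclassical (in $h$) cone symbol calculus. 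Term by term, $\int\tilde\lambda^w b_j\,\dd\tilde\lambda$ is then a semiclassical cone symbol of order $mw-j$ that is smooth down to $\ff$ after multiplication by $\rho_\ff^{mw}$, whose quantization lies in $(\tfrac{x}{x+h})^{-mw}\Psi_{\cop\semi}^{mw-j}(X)$; truncating after $N$ terms leaves an error whose contribution to~\eqref{EqPh} has arbitrarily many $\Psi_{\cop\semi}^{-\infty,(\emptyset,(m+\N_0)\extcup(\N_0-mw),\emptyset,\N_0)}(X)$-seminorms finite for $N$ large, and summing the asymptotic series in $j$ yields the leading term $(\tfrac{x}{x+h})^{-mw}\Psi_{\cop\semi}^{mw}(X)$.

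The main obstacle is this last step: one must verify that the dsc-symbol of the resolvent parametrix matches the large-$\tilde\lambda$-parameter expansion of a semiclassical cone operator in $h$, i.e., that the ``semiclassical versus large parameter'' device of~\S\ref{SsPP} operates uniformly in $h$ and simultaneously with the conic rescaling of the proof of Theorem~\ref{ThmPC} (with $\rho_\ff$-weights replacing powers of $x$). This amounts to combining the two model computations of~\S\ref{SsPP} and~\S\ref{SsPC}; once that identification is in place, the index-set bookkeeping in the last two steps is routine given Propositions~\ref{PropRCComp}, \ref{PropCCo} and the pushforward theorem.
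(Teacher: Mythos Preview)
Your proposal is correct and follows essentially the same approach as the paper's proof: reduce to $\Re w<0$, split the contour into bounded and radial pieces, apply Theorem~\ref{ThmR} on the bounded piece and Theorem~\ref{ThmCR} on the radial piece, then push forward the residual term $E$ along the b-fibration $X^2_{\dscop,\infty}\to X^2_{\cop\semi,\infty}$ and analyze the symbolic piece $R$ via the large-parameter/semiclassical identification of~\S\ref{SsPP}. The one point you flag as ``the main obstacle'' is exactly what the paper resolves explicitly by a three-region analysis near $\Delta_{\cop\semi}$: in $\ff^\circ$ (i.e.\ $h$ bounded away from $0$) one recovers the computation of Theorem~\ref{ThmPC} with smooth parametric $h$-dependence; near $\ff\cap\tface\cap\Delta_{\cop\semi}$ one rescales $\hat x'=x'/h$ as in~\eqref{EqRCEx2} to reduce again to the large-parameter cone setting of Theorem~\ref{ThmPC}, now non-degenerately down to $h=0$; and near $\tface\cap\dface\cap\Delta_{\cop\semi}$ (cf.~\eqref{EqRCEx3}) the expansion is into joint large-parameter (in $\tilde\lambda$) semiclassical (in $\hat h=h/x'$) symbols, so the $j$-th term contributes to $\rho_\dface^j\Psi_{\cop\semi}^{m w-j}(X)$.
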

\begin{proof}
  The proof is parallel to that of Theorem~\ref{ThmPC}. Thus, for finite $\tilde\lambda\in\gamma_\eps$, Theorem~\ref{ThmR} implies $(A_h-\tilde\lambda)^{-1}\in\rho_\ff^m\Psi_{\cop\semi}^{-m}(X)+\Psi_{\cop\semi}^{-\infty,\cE}(X)$, while for large $|\tilde\lambda|$, we write the resolvent semiclassically in terms of the operator
  \[
    A_{h,\tilde h,\tilde\omega} = \tilde h^m A_h-\tilde\omega,\quad \tilde h=|\tilde\lambda|^{-1/m},\ \tilde\omega=\frac{\tilde\lambda}{|\tilde\lambda|},
  \]
  with $\tilde\omega$ lying in a small neighborhood $\tilde\Omega$ of $-1$. Note that this operator is fully elliptic at weight $\alpha$ in the sense of Definition~\ref{DefCREll} for $\tilde\omega$ sufficiently close to $-1$, since full ellipticity is an open condition. Hence, by Theorem~\ref{ThmCR},
  \[
    A_{h,\tilde h,\tilde\omega}^{-1} = R_{h,\tilde h,\tilde\omega}+E_{h,\tilde h,\tilde\omega},\quad R_{h,\tilde h,\tilde\omega}\in\rho_\ff^m\Psidsc^{-m}(X),\ E_{h,\tilde h,\tilde\omega}\in\Psidsc^{-\infty,\cE}(X),
  \]
  with smooth dependence on $\tilde\omega$.
  
  Since the Schwartz kernel of $E_{h,\tilde h,\tilde\omega}$ is the lift from $X^2_{\dscop,\infty}:=[[0,1)_{\tilde h}\times X^2_{\cop\semi,\infty};\{0\}\times\ff_0]$ (see~\eqref{EqRCPsdoRes}) of a polyhomogeneous distribution with trivial index set at the lifts of $h=0$ and $\tilde h=0$, the contribution of $E_{h,\tilde h,\tilde\omega}$ to $(A_h)_w$ can be described by the push-forward theorem similarly to~\eqref{EqPCErr}; note that the map $X^2_{\dscop,\infty}\to X^2_{\cop\semi,\infty}$ is a b-fibration. Thus, with notation as in~\eqref{EqPCErr},
  \[
    \int_{\gamma_\eps} \chi(|\tilde\lambda|)\tilde h^{-m w}\tilde\omega^w E_{h,\tilde h,\tilde\omega}\cdot \tilde\omega\frac{\dd\tilde h}{\tilde h} \in \Psi_{\cop\semi}^{-\infty,\cE(w)}(X),
  \]
  recalling that Schwartz kernels of elements of this space can be described equivalently as living on $X^2_{\cop\semi}$ or $X^2_{\cop\semi,\infty}$, as discussed around~\eqref{EqRCPsdoRes}.

  $R_{h,\tilde h,\tilde\omega}$ is given, modulo error terms in $\rho_\ff^m\rho_\dface^N\rho_{\wt\dface}^N\Psidsc^{-m-N}(X)$, as the quantization of a symbolic expansion to order $N$. As $N\to\infty$, such error terms contribute elements of $\Psi_{\cop\semi}^{-\infty,(\emptyset,(m+\N_0)\extcup{}(\N_0-m w),\emptyset,\N_0)}(X)$ to $(A_h)_w$, in the sense explained after~\eqref{EqPCErr2}.
  
  It thus suffices to consider the contribution to $(A_h)_w$ of the quantization of a single term in the large parameter ($\tilde\lambda$) semiclassical cone symbol expansion of $(A_h-\tilde\lambda)^{-1}$; we can localize these quantizations in an arbitrarily small neighborhood of $\Delta_{\cop\semi}$. Concretely, in the interior $\ff^\circ\subset X^2_{\cop\semi}$, i.e.\ for $h$ bounded away from $0$, the large parameter expansion is the same (with parametric dependence on $h$) as that discussed in the proof of Theorem~\ref{ThmPC}; the quantization of the $j$-th term contributes an element of $\CI((0,1)_h;x^{-m w}\Psib^{m w-j}(X))$ to $(A_h)_w$.
  
  Next, to obtain a uniform description down to $h$, we first work near $\ff\cap\tface\cap\Delta_{\cop\semi}$. There, we rescale the local boundary defining function $x'$ of $\ff$ to $\hat x'=\frac{x'}{h}$ as in~\eqref{EqRCEx2}, and in the rescaled coordinates, $A_h-\tilde\lambda$ is a large parameter cone operator, non-degenerately down to $h=0$ (a local defining function of $\tface$). After this rescaling, the large parameter symbolic expansion for $(A_h-\tilde\lambda)^{-1}$ is thus again of the form discussed in the proof of Theorem~\ref{ThmPC}, and the quantization of the $j$-th term contributes an element of $\rho_\ff^{-m w}\Psi_{\cop\semi}^{m w-j}(X)$ (with infinite order of vanishing at $\sface\cup\dface$, as we are localizing near $\ff\cap\tface$).

  It remains to analyze $A_w$ near $\tface\cap\dface\cap\Delta_{\cop\semi}$, cf.\ \eqref{EqRCEx3}. But there, the expansion of $(A_h-\tilde\lambda)^{-1}$ is into large parameter ($\tilde\lambda$) semiclassical (in $\hat h=h/x'$) symbols, i.e.\ successive terms in symbolic expansions gain powers of $\hat h$ as well as reduce in symbolic order in the joint fiber variable $(\zeta,\tilde\lambda)\in(N^*\Delta_{\cop\semi}\times\tilde\Lambda)\setminus o$, where $\tilde\Lambda=\{\theta\tilde\omega\colon\theta\in[0,\infty),\ \tilde\omega\in\tilde\Omega\}$. Thus, the $j$-th term of this expansion contributes a term $\rho_\dface^j\Psi_{\cop\semi}^{m w-j}(X)$ (with Schwartz kernel vanishing near $\lb\cup\ff\cup\rb$). The proof is complete.
\end{proof}

\section{Application to semiclassical conic Laplacians}
\label{SL}

We now consider a connected $n$-dimensional compact conic manifold, $n\geq 3$. For notational simplicity, we assume that there is only one cone point.

Upon blowing up the cone point, we arrive at the following setup: $X$ is a compact, $n$-dimensional manifold whose boundary $\pa X\neq\emptyset$ is a connected embedded hypersurface, and $g$ is a smooth Riemannian metric over the interior $X^\circ$. We assume that there exists a boundary defining function $x\in\CI(X)$ such that in a collar neighborhood $[0,x_0)_x\times\pa X$ of $\pa X$, the metric $g$ takes the form
\begin{equation}
\label{EqLMet}
  g = \dd x^2 + x^2 k(x,y,\dd y),
\end{equation}
where $k$ is a smooth metric on $\pa X$ with smooth parametric dependence on $x\in[0,x_0)$. Thus, using local coordinates on $\pa X$,
\begin{equation}
\label{EqLOp}
  \Delta_g = D_x^2 - i\bigl(n-1+\half x\pa_x\log|\det k|\bigr)x^{-1} D_x + x^{-2}\Delta_k.
\end{equation}

In Theorem~\ref{ThmL}, we describe the structure of complex powers of $h^2\Delta_g+1$; in Theorem~\ref{ThmLF}, we show that these powers are equal to those defined by means of the functional calculus for $\Delta_g$ when acting between suitable weighted spaces. Theorem~\ref{ThmLProp} is an application to semiclassical propagation estimates at conic points.

\subsection{Complex powers and domains}

To begin, we define the index sets for the Schwartz kernel of the relevant resolvent: denote by $0=\lambda_0^2<\lambda_1^2\leq\cdots$ the eigenvalues of $\Delta_{k(0)}$, and put
\[
  \nu_{j\pm}=i\left(\frac{n-2}{2}\pm\sqrt{\left(\frac{n-2}{2}\right)^2+\lambda_j^2}\,\right).
\]
Then, by a simple calculation using~\eqref{EqLOp}, the boundary spectrum (see~\eqref{EqRRSpecb}) is given by
\begin{equation}
\label{EqLSpecb}
  \specbfull(x^2\Delta_g)=\bigcup_{j,\pm}\{(\nu_{j\pm},0)\}.
\end{equation}
We fix a weight\footnote{The degeneration of the spectral gap for $n=2$ is the reason why we assume $n\geq 3$ here. Indeed, part~\eqref{ItOpEll3} in Definition~\ref{DefOpEll} of full ellipticity cannot be satisfied for $n=2$.}
\begin{equation}
\label{EqLWeight}
  \alpha \in (-n+2,0);
\end{equation}
these are the only choices of weights for which $h^2\Delta_g+1$ is fully elliptic at weight $\alpha$, as we will see in the proof of Proposition~\ref{ThmL} below. The index sets $E_\lb(\alpha),E_\rb(\alpha)$ in~\eqref{EqRRElbrb} are independent of $\alpha$ in this range, and given by
\[
  E_\lb = \{ (i\nu_{j-},0) \colon j\in\N_0 \},\quad
  E_\rb = \{ (-i\nu_{j+},0) \colon j\in\N_0 \}.
\]
See Figure~\ref{FigLIndex}.

\begin{figure}[!ht]
\centering
\includegraphics{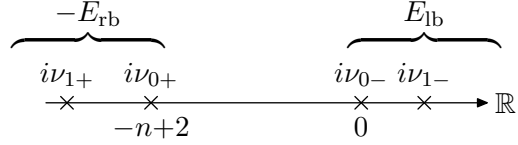}
\caption{The index sets $E_\lb$ and $-E_\rb$.}
\label{FigLIndex}
\end{figure}

Define
\begin{gather*}
  \hat E_\lb=\bigextcup_{j\in\N_0}(E_\lb+j),\quad
  \hat E_\rb=\bigextcup_{j\in\N_0}(E_\rb+j), \\
  \check E_\bullet=\hat E_\bullet\extcup\hat E_\bullet,\ \bullet=\lb,\rb,\quad
  \check E_\ff = \N_0 + ((\check E_\lb+\check E_\rb)\extcup\N), \\
  \cE_\bop = (\cE_\lb,\cE_\ff,\cE_\rb) = \bigl(\check E_\lb,\check E_\ff+2,\check E_\rb+2\bigr);
\end{gather*}
the summand $2$ here arises from the second order nature of $\Delta_g$, cf.\ Theorem~\ref{ThmR}. We remark that $\inf\Re\cE_\lb=0$ and $\inf\Re\cE_\rb=n$. Let
\[
  \cE'_\ff(w) := \cE_\ff \extcup{}(\N_0-2 w).
\]
We will make the index set of complex powers more precise at $\ff$ than in Theorem~\ref{ThmCR}; thus, let
\begin{align*}
  \cE_\ff(w) :=
    \begin{cases}
      \cE_\ff, & w=-1, \\
      \{ (z,k)\in\cE'_\ff(w) \colon \Re z\geq n \}, & \Re w\leq-\frac{n}{2}, \\
      \{ (z,k)\in\cE'_\ff(w) \colon \Re z\geq -2\Re w\ \text{(`$>$' when $k>0$)} \}, & \Re w>-\frac{n}{2},\ w\neq -1,
    \end{cases}
\end{align*}
and finally
\[
  \cE(w) = \bigl(\cE_\lb,\cE_\ff(w),\cE_\rb,\N_0).
\]

\begin{thm}
\label{ThmL}
  Let $(X,g)$ be as above, with $X$ of dimension $n\geq 3$. Let $\alpha\in(-n+2,0)$. Then $h^2\Delta_g+1$ is fully elliptic at weight $\alpha$. Moreover, for $w\in\C$, the complex powers defined in~\eqref{EqPh} satisfy
  \begin{equation}
  \label{EqL}
    (h^2\Delta_g+1)_w \in \bigl(\tfrac{x}{x+h}\bigr)^{-2 w}\Psi_{\cop\semi}^{2 w}(X) + \Psi_{\cop\semi}^{-\infty,\cE(w)}(X).
  \end{equation}
\end{thm}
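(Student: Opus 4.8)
The strategy is to apply Theorem~\ref{ThmPh} with $m=2$ and $A=h^2 x^{-2}(x^2\Delta_g)\in\Diff_{\cop,\semi}^2(X)$ (so that $A_\bop=x^2\Delta_g\in\Diffb^2(X)$ is $h$-independent), and then to refine the description of the $\ff$-index set using the model operators. First I would verify the two full ellipticity hypotheses of Theorem~\ref{ThmPh}. The semiclassical and b-ellipticity of $x^2\Delta_g$ away from $\pa X$ are immediate from~\eqref{EqLOp}. For condition~\eqref{ItCREllSymb} of Definition~\ref{DefCREll}, one notes that $\sigmab_2(A_{\bop,0})(z,\zeta)=|\zeta|_g^2\geq 0$, so $\sigmab_2(A_{\bop,0})-\tilde\omega$ and $\sigmab_2(A_{\bop,0})-\tilde\lambda$ (for $\tilde\lambda\in(-\infty,0]$, $\tilde\omega$ near $-1$) are elliptic with values avoiding a neighborhood of $(-\infty,0)$. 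The crucial point is condition~\eqref{ItOpEll3}/\eqref{ItCREllFull}: one must check $\alpha\notin-\Im\specbfull(x^2\Delta_g)$ and the invertibility of the model operator $N_\tface(\tilde h^2 x^{-2}A_\bop-\tilde\omega)=\Delta_{g_0}+1$ (for the $\tilde\omega=-1$ normalization) on the truncated cone $\ol{{}^+N\pa X}$, where $g_0=\dd\hat x^2+\hat x^2 k(0)$. The boundary spectrum computation~\eqref{EqLSpecb} shows $-\Im\specbfull(x^2\Delta_g)=\{\Re\nu_{j\pm}\}$; since $\Re\nu_{j-}\in(-\infty,0]$ with $\Re\nu_{0-}=0$ and $\Re\nu_{j+}\geq n-2$, the gap is exactly $(-n+2,0)$, giving~\eqref{EqLWeight}. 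The invertibility of $\Delta_{g_0}+1$ as a b-scattering operator at weight $\alpha$ follows because $\Delta_{g_0}\geq 0$ on the (Friedrichs, or equivalently here unique for the relevant weight range) domain, so $\Delta_{g_0}+1$ is injective with closed range and, by the same argument applied to the adjoint, surjective; one can invoke either standard b-scattering theory or the explicit separation-of-variables analysis on the exact cone.

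Given full ellipticity, Theorem~\ref{ThmPh} immediately yields $(h^2\Delta_g+1)_w\in(\tfrac{x}{x+h})^{-2w}\Psi_{\cop\semi}^{2w}(X)+\Psi_{\cop\semi}^{-\infty,\cE_0(w)}(X)$ with $\cE_0(w)=(\cE_\lb,\cE_\ff\extcup(\N_0-2w),\cE_\rb,\N_0)=(\cE_\lb,\cE'_\ff(w),\cE_\rb,\N_0)$, which is~\eqref{EqL} but with the coarser index set $\cE'_\ff(w)$ at $\ff$ in place of $\cE_\ff(w)$. So the remaining—and main—task is to sharpen the $\ff$-index set to $\cE_\ff(w)$, i.e.\ to show that the terms of $\cE'_\ff(w)$ with $\Re z$ below the stated threshold ($n$ when $\Re w\leq-\tfrac n2$, and $-2\Re w$ when $\Re w>-\tfrac n2$, $w\neq-1$; with the logarithmic refinement) actually vanish. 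The mechanism is that the $\ff$-normal operator $N_\ff$ is a homomorphism (and similarly $N_\tface$), and $N_\ff((h^2\Delta_g+1)_w)$ can be identified, via~\eqref{EqPh}, with a contour integral of the resolvent of $N_\ff(h^2\Delta_g+1)=h^2 x^{-2}(\Delta_{k(0)}+\text{const}) $—more precisely of the model $N_\ff(\rho_\ff^2 A)$, which in a trivialization is a conjugate of the exact-cone operator $\Delta_{g_0}$. Thus $N_\ff((h^2\Delta_g+1)_w)$ is itself a complex power of an exact conic Laplacian, whose structure is controlled by the homogeneity in $(\hat x,\hat x')$ on ${}^+N\pa X$: such a power is homogeneous, and its conormal singularity at the b-diagonal has leading behavior $x^{-2w}$ relative to the $\ff_\bop$-defining function, which forces the $\ff$-expansion to start at order $-2w$ (equivalently $\Re z\geq-2\Re w$), with strictness when there is a log. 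For $\Re w\leq-\tfrac n2$ one gets the stronger threshold $\Re z\geq n$ because then the quantization of the symbol expansion contributes only to the conormal-singularity part (the term $(\tfrac{x}{x+h})^{-2w}\Psi_{\cop\semi}^{2w}$), not to the polyhomogeneous remainder, which is then governed purely by $\cE_\ff=\check E_\ff+2$ with $\inf\Re\cE_\ff=n$ (recall $\inf\Re\cE_\rb=n$, so $\cE_\lb+\cE_\rb$ contributes at $\Re z\geq n$).

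Concretely, to extract the sharpened index set I would proceed as in the proof of Theorem~\ref{ThmPC} and~\ref{ThmPh}, tracking the $\ff$-behavior carefully: write $(h^2\Delta_g+1)_w=B(w)+C(w)$ where $B(w)\in(\tfrac{x}{x+h})^{-2w}\Psi_{\cop\semi}^{2w}(X)$ comes from quantizing the large-parameter symbol expansion near $\Delta_{\cop\semi}$ and $C(w)\in\Psi_{\cop\semi}^{-\infty,\cdot}(X)$ from the polyhomogeneous remainder $E_{h,\tilde h,\tilde\omega}$ of the resolvent. The contribution of $E$ is, by the pushforward computation in~\eqref{EqPCErr} and its dsc-analogue, an element whose $\ff$-index set is $\cE_\ff\extcup(\N_0-2w)$ from the factor $\tilde h^{-2w}$; but inspecting where the $\N_0-2w$ part actually lands, one sees it sits at $\tface$ in the resolvent and only migrates to $\ff$ upon pushforward because $\ff$ and $\tface$ both cover $\ff_\bop$—and the $\N_0$ (i.e.\ the $\cE_\tface=\N_0$) part of the resolvent at $\tface$, combined with the $\tilde h^{-2w}$ weight, produces exactly $\N_0-2w$ at $\ff$, which is $\cE'_\ff(w)$. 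To get $\cE_\ff(w)$ one uses that $N_\ff((h^2\Delta_g+1)_w)$—which by the homomorphism property is the $w$-th power of the model—is supported (as a conormal-plus-phg distribution on $(\ol{{}^+N\pa X})^2_{\bop,\scop}$) with phg part having index set $\cE_\ff$ at $\ff_{\tface,\bop}$ and, crucially, the contribution from $\tilde h^{-2w}$ appears only in the conormal part, which on $\ff_\bop$ has order $2w$, i.e.\ matches $B(w)$ and contributes to $\ff$ at $\Re z=-2\Re w$ (strictly, i.e.\ $>$, when a log is present since $\int\tilde\lambda^w\log$-type integrands produce non-log leading terms only at the endpoint). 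The main obstacle is precisely this bookkeeping: disentangling, in the pushforward of the dsc-resolvent along $X^2_{\dscop}\to X^2_{\cop\semi}$, which boundary terms at $\ff$ come from the honest $\ff$-expansion $\cE_\ff$ of the resolvent versus which come from the $\tilde h^{-2w}$-weighted $\tface$ and $\wt\tface$ contributions, and verifying that the latter collapse to exactly the stated thresholds with the correct log behavior. Everything else is a routine application of the machinery developed in~\S\S\ref{SR}--\ref{SP}.
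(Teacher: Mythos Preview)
Your verification of full ellipticity is essentially the paper's argument and is correct, with one omission: you should make explicit (as the paper does) that for each $\tilde\lambda\in(-\infty,0]$ the transition-face model is $N_\tface(A_h-\tilde\lambda)=\Delta_{g_0}+(1-\tilde\lambda)$, still a positively shifted exact-cone Laplacian, so the same integration-by-parts injectivity plus Fredholm/weight-reflection surjectivity argument applies (the paper phrases this as a rescaling $h'=h(1-\tilde\lambda)^{1/2}$).

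Where you diverge from the paper is in the sharpening of the $\ff$-index set from $\cE'_\ff(w)$ to $\cE_\ff(w)$, and your proposed route is much harder than necessary. The paper's argument is short and does not use the $N_\ff$-homomorphism at all:
\begin{itemize}
\item For $\Re w\geq -1$ there is \emph{nothing to sharpen}. Since $\inf\Re\cE_\ff=2$, when $\Re w>-1$ the set $\cE'_\ff(w)=\cE_\ff\extcup(\N_0-2w)$ already has its leading part coming purely from $\N_0-2w$, with no log at $\Re z=-2\Re w$, so $\cE'_\ff(w)=\cE_\ff(w)$. For $\Re w=-1$, $w\neq-1$, the sets $\cE_\ff$ and $\N_0-2w$ share no exponents (as $-2w\notin\R$), the extended union is a plain union, and again $\cE'_\ff(w)=\cE_\ff(w)$. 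For $w=-1$ one simply quotes Theorem~\ref{ThmR}.
\item For $\Re w<-1$ the paper uses the group law: pick $k\in\N$ with $\Re(w/k)\in(-1,0)$, write $(A_h)_w=((A_h)_{w/k})^k$, and iterate Proposition~\ref{PropRCComp}. The infima of the index sets of $(A_h)_{w/k}$ at $\lb,\ff,\rb,\tface$ are $0,-2\Re w/k,n,0$, and the composition rule gives the $\ff$-threshold $\min(-2\Re w,n)$ directly. The $\lb,\rb$ index sets are then taken from the coarse result, not from the composition.
\end{itemize}

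Your approach---identify $N_\ff((A_h)_w)$ with a complex power of the dilation-invariant model by commuting $N_\ff$ through the contour integral, then use homogeneity on the exact cone---is not wrong in spirit, but it only directly constrains the \emph{leading} $\ff$-term, it requires justifying the commutation for the polyhomogeneous remainder $E_{h,\tilde h,\tilde\omega}$, and it leaves the pushforward bookkeeping (which you correctly flag as the main obstacle) unresolved. The paper sidesteps all of this with the two-line group-law argument.
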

\begin{proof}
  The main task is the verification of full ellipticity. The symbolic conditions~\eqref{ItOpEll1} and \eqref{ItOpEll2} in Definition~\ref{DefOpEll} are immediate since $\sigmab_2(x^2\Delta_g)$ is a positive definite quadratic form on $\Tb^*X$, hence it remains to check~\eqref{ItOpEll3}: the invertibility of the $\tface$-normal operator.
  
  Let $A_h=h^2\Delta_g+1$ and $A_\bop=x^2\Delta_g$. Note that there is a spectral gap,
  \begin{equation}
  \label{EqLSpecGap}
    \bigl(-\Im\specb(A_\bop)\bigr) \cap (-n+2,0)=\emptyset,
  \end{equation}
  so $\alpha\notin-\Im\specb(A_\bop)$. Now, the model operator $N_\tface(A_h)$, see~\eqref{EqOpNtf}, is the shifted (by $1$) Laplace operator of an \emph{exact} conic metric on the model space ${}^+N\pa X\cong[0,\infty)_x\times\pa X$; indeed, we have
  \[
    N_\tface(A_h)=\Delta_{g_0}+1,\quad g_0 := \dd x^2 + x^2 k(y,\dd y),
  \]
  We need to prove its invertibility as a map
  \begin{equation}
  \label{EqLNtf}
    N_\tface(A_h) \colon H_{\bop,\scop}^{s,\alpha}(\ol{{}^+N\pa X}) \to H_{\bop,\scl}^{s-2,\alpha-2}(\ol{{}^+N\pa X}).
  \end{equation}
  Passing to metric densities for convenience, we need to show that
  \begin{equation}
  \label{EqLNtf2}
    N_\tface(A_h) \colon H_{\bop,\scop}^{s,\alpha+\mfrac{n}{2}}(\ol{{}^+N\pa X},|\dd g_0|) \to H_{\bop,\scop}^{s-2,\alpha+\mfrac{n}{2}-2}(\ol{{}^+N\pa X},|\dd g_0|)
  \end{equation}
  is invertible. By elliptic regularity in the b- and scattering calculi, any element $u\in\ker(N_\tface(A_h))$ automatically lies in $x^{\alpha+\mfrac{n}{2}}\Hb^\infty(|\dd g_0|)$ near $x=0$ and is Schwartz as $x\to\infty$. Near $x=0$, we in fact have $u\in x^{\mfrac{n}{2}-\eps}\Hb^\infty(|\dd g_0|)$ in view of~\eqref{EqLSpecGap}. This suffices to justify integrations by parts in
  \[
    0 = \la\la N_\tface(A_h)u,u\ra_{L^2(|\dd g_0|)} = \|\dd_{g_0}u\|^2+\|u\|^2,
  \]
  which implies that $u=0$. This proves the injectivity of~\eqref{EqLNtf2}.

  To prove surjectivity, we note that~\eqref{EqLNtf2} is a Fredholm operator, and symmetric on $L^2(\ol{{}^+N\pa X},|\dd g_0|)$; thus its cokernel is equal to the kernel of $N_\tface(A_h)$ on $H_{\bop,\scop}^{-s+2,-\alpha-\mfrac{n}{2}+2}$, which is the dual of the target space in~\eqref{EqLNtf2}. But $-\alpha-\tfrac{n}{2}+2=\tfrac{n}{2}+(-(n-2)-\alpha)$, and $-(n-2)-\alpha\in(-(n-2),0)$; thus, the cokernel is trivial by the same argument as for injectivity.

  We wish to apply Theorem~\ref{ThmPh}. First of all, for $\tilde\lambda\leq 0$, the operator $(h^2\Delta_g+1)-\tilde\lambda$ is fully elliptic at weight $\alpha$, as follows either by repeating the above arguments (using $1-\tilde\lambda>0$), or by noting that $(h^2\Delta_g+1)-\tilde\lambda=(1-\tilde\lambda)((h')^2\Delta_g+1)$ is a nonzero multiple of the operator just studied, with a rescaled semiclassical parameter $h'=h(1-\tilde\lambda)^{1/2}$.
  
  The second assumption of Theorem~\ref{ThmPh} to be verified is the full ellipticity of $\tilde h^2 A_h+1$; in the notation of Definition~\ref{DefCREll}, we have $A_\bop=x^2\Delta_g$ (consistent with the definition before~\eqref{EqLSpecGap}). But then the symbolic ellipticity condition~\eqref{ItCREllSymb} of Definition~\ref{DefCREll} follows from the positive definiteness of $\sigmab_2(A_\bop)$, while condition~\eqref{ItCREllFull} is the full ellipticity of $h^2 x^{-2}A_\bop+1=h^2\Delta_g+1$, which we just verified. By Theorem~\ref{ThmPh}, we now get
  \begin{equation}
  \label{EqLAw1}
    (A_h)_w \in \rho_\ff^{-2 w}\Psi_{\cop\semi}^{2 w}(X) + \Psi_{\cop\semi}^{-\infty,(\cE_\lb,\cE'_\ff(w),\cE_\rb,\N_0)}(X).
  \end{equation}
  For $w=-1$, $(A_h)_w=A_h^{-1}$ has index set $\cE_\ff$ at $\ff$ by Theorem~\ref{ThmR}. For $\Re w=-1$ with $w\neq -1$ (thus $\Im w\neq 0$), we have $\cE'_\ff(w)=\cE_\ff\cup(\N_0-2 w)$ simply, hence $\cE_\ff(w)=\cE'_\ff(w)$. For $\Re w>-1$, we have $\inf\Re\cE_\ff=2>-2\Re w$, hence $\cE_\ff(w)=\cE'_\ff(w)$ in this case as well.

  Consider now $w\in\C$ with $\Re w<-1$, in which case $\cE_\ff(w)$ is a proper subset of $\cE'_\ff(w)$. Let $k\in\N$ with $\Re(w/k)\in(-1,0)$; then
  \begin{equation}
  \label{EqLAwk}
    (A_h)_w = ((A_h)_{w'})^k,\quad w':=w/k.
  \end{equation}
  The index sets of $(A_h)_{w'}$ having lower bounds $0$, $-2\Re w'$, $n$, $0$ at $\lb,\ff,\rb,\tface$, we conclude using Proposition~\ref{PropRCComp} that for $-2\Re w<n$, the index set of $(A_h)_w$ at $\ff$ has leading coefficient $-2 w$ (without logarithms), while for any $w$ with $-2\Re w\geq n$, exponents $(z,k)$ with $\Re z=n$ may be present. To determine the index set of $(A_h)_w$ at $\lb$ and $\rb$, we do \emph{not} use formula~\eqref{EqLAwk}, but recall that these index sets are necessarily subsets of $\cE_\lb$ and $\cE_\rb$ by~\eqref{EqLAw1}. This proves that the index set of $(A_h)_w$ at $\ff$ is $\cE_\ff(w)$ as stated.
\end{proof}

Less precise mapping properties require only conormal bounds; hence we record:
\begin{cor}
\label{CorLCon}
  For $\Re w>-\tfrac{n}{2}$, we have
  \begin{subequations}
  \begin{equation}
  \label{EqLCon}
    (h^2\Delta_g+1)_w \in \bigl(\tfrac{x}{x+h}\bigr)^{-2 w}\Psi_{\cop\semi}^{2 w}(X) + \cA^{-\eps,-2\Re w,n-\eps,0}(X^2_{\cop\semi})
  \end{equation}
  for all $\eps>0$, where $\cA$ denotes the space of $L^\infty$-conormal functions vanishing rapidly at $\sface,\dface$, with the exponents denoting the decay rates at $\lb$, $\ff$, $\rb$, $\tface$. For $\Re w\leq-\tfrac{n}{2}$, we have
  \begin{equation}
  \label{EqLCon2}
    (h^2\Delta_g+1)_w \in \bigl(\tfrac{x}{x+h}\bigr)^{-2 w}\Psi_{\cop\semi}^{2 w}(X) + \cA^{-\eps,n-\eps,n-\eps,0}(X^2_{\cop\semi}).
  \end{equation}
  \end{subequations}
\end{cor}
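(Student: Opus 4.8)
The plan is to read off the conormal statement as an immediate weakening of Theorem~\usref{ThmL}, so that Corollary~\usref{CorLCon} requires no new analytic input—only the elementary fact that polyhomogeneous conormal distributions with a given index set are bounded by (a power of a logarithm times) the power with exponent equal to the real part of the leading exponent. First I would recall from Theorem~\usref{ThmL} that
\[
  (h^2\Delta_g+1)_w \in \bigl(\tfrac{x}{x+h}\bigr)^{-2w}\Psi_{\cop\semi}^{2w}(X) + \Psi_{\cop\semi}^{-\infty,\cE(w)}(X),\qquad \cE(w)=(\cE_\lb,\cE_\ff(w),\cE_\rb,\N_0),
\]
and that $\Psi_{\cop\semi}^{-\infty,\cE(w)}(X)=\cA_\phg^{\cE(w)}(X^2_{\cop\semi})$ consists of polyhomogeneous distributions which vanish to infinite order at $\sface$ and $\dface$. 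Any $\kappa\in\cA_\phg^{(\cF_\lb,\cF_\ff,\cF_\rb,\cF_\tface)}(X^2_{\cop\semi})$ obeys, for every $\eps>0$, the $L^\infty$-conormal bound $\kappa\in\cA^{\inf\Re\cF_\lb-\eps,\ \inf\Re\cF_\ff-\eps,\ \inf\Re\cF_\rb-\eps,\ \inf\Re\cF_\tface-\eps}(X^2_{\cop\semi})$, the $-\eps$ absorbing possible logarithmic factors in the leading terms of the expansions (this is a standard property of polyhomogeneous distributions, cf.\ Appendix~\usref{SB}); in fact when the leading term has no logarithm, e.g.\ at $\tface$ where the index set is $\N_0$, one may take the decay rate to be exactly the infimum, but we do not need this sharpening here and the uniform $-\eps$ is cleaner to state.

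Next I would compute the relevant infima from~\eqref{EqRRElbrb}, \eqref{EqRREff}, \eqref{EqRRcE} and the definition of $\cE_\ff(w)$ given just before Theorem~\usref{ThmL}. As recorded there, $\inf\Re\cE_\lb=0$ and $\inf\Re\cE_\rb=n$, while $\inf\Re\N_0=0$ at $\tface$. For the front face: by definition of $\cE'_\ff(w)=\cE_\ff\extcup(\N_0-2w)$ we have $\inf\Re\cE'_\ff(w)=\min(2,-2\Re w)$, since $\inf\Re\cE_\ff=2$. When $\Re w>-\tfrac n2$ one has, by the definition of $\cE_\ff(w)$, that $\inf\Re\cE_\ff(w)=-2\Re w$ (for $-1<\Re w\le 0$ one gets $-2\Re w\in[0,2)$; for $\Re w\le -1$, $w\neq-1$, the truncation leaves $-2\Re w$ as the leading real part as long as $-2\Re w<n$; and $w=-1$ gives $\inf\Re\cE_\ff=2=-2\Re w$). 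When $\Re w\le-\tfrac n2$ the definition of $\cE_\ff(w)$ keeps only exponents with $\Re z\ge n$, so $\inf\Re\cE_\ff(w)=n$. Substituting these infima into the conormal bound from the previous paragraph yields exactly $\cA^{-\eps,-2\Re w,n-\eps,0}(X^2_{\cop\semi})$ in the first case (where one could also write $-2\Re w-\eps$, but $-2\Re w$ is admissible since $\cE_\ff(w)$ has no logarithm at its leading real part by construction when $-2\Re w<n$—and when $-2\Re w=n$ on the boundary $\Re w=-\tfrac n2$ the two cases agree up to $\eps$) and $\cA^{-\eps,n-\eps,n-\eps,0}(X^2_{\cop\semi})$ in the second, which are precisely~\eqref{EqLCon} and~\eqref{EqLCon2}.

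This is entirely routine: the only point requiring any care is the bookkeeping of the front-face index set $\cE_\ff(w)$ across the three regimes $w=-1$, $\Re w\le-\tfrac n2$, $\Re w>-\tfrac n2$ (with the borderline $\Re w=-\tfrac n2$ handled by noting the two displayed conormal spaces coincide up to the $\eps$ loss), and I would simply quote the computation of $\inf\Re\cE_\ff(w)$ done in the proof of Theorem~\usref{ThmL}. There is no genuine obstacle here; the corollary is a packaging statement whose sole purpose is to provide the weaker, $L^\infty$-conormal form of the mapping properties needed in~\S\usref{SsLProp}. Accordingly I would keep the proof to a few lines: cite Theorem~\usref{ThmL}, invoke the boundedness of polyhomogeneous distributions by the leading power (with an $\eps$ loss for logarithms), and read off the exponents at $\lb,\ff,\rb,\tface$.

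\begin{proof}[Proof of Corollary~\usref{CorLCon}]
  By Theorem~\usref{ThmL}, $(h^2\Delta_g+1)_w-B(w)\in\Psi_{\cop\semi}^{-\infty,\cE(w)}(X)=\cA_\phg^{\cE(w)}(X^2_{\cop\semi})$, where $B(w)\in\bigl(\tfrac{x}{x+h}\bigr)^{-2w}\Psi_{\cop\semi}^{2w}(X)$ and $\cE(w)=(\cE_\lb,\cE_\ff(w),\cE_\rb,\N_0)$, with index set $\emptyset$ at $\sface,\dface$. A polyhomogeneous distribution with index set $\cF$ at a boundary hypersurface is bounded there by $\rho^{\inf\Re\cF}$ times an arbitrarily small negative power of $\rho$ (to absorb logarithmic factors), and when the leading term of $\cF$ carries no logarithm one may take the exponent to be $\inf\Re\cF$ exactly; moreover it vanishes to infinite order wherever the index set is $\emptyset$. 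By the proof of Theorem~\usref{ThmL} (see also~\eqref{EqRRElbrb}--\eqref{EqRREff}), $\inf\Re\cE_\lb=0$, $\inf\Re\cE_\rb=n$, and at $\tface$ the index set is $\N_0$, so $\inf=0$ with no logarithm. Finally, by the case distinction defining $\cE_\ff(w)$, one has $\inf\Re\cE_\ff(w)=-2\Re w$ with no logarithm at the leading real part when $\Re w>-\tfrac n2$ and $-2\Re w<n$, and $\inf\Re\cE_\ff(w)=n$ when $\Re w\le-\tfrac n2$; the borderline $\Re w=-\tfrac n2$ is covered by either formula up to the $\eps$ loss. Collecting these bounds at $\lb,\ff,\rb,\tface$ and using the infinite-order vanishing at $\sface,\dface$ gives~\eqref{EqLCon} for $\Re w>-\tfrac n2$ and~\eqref{EqLCon2} for $\Re w\le-\tfrac n2$.
\end{proof}
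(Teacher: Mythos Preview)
Your proposal is correct and matches the paper's treatment: the paper states Corollary~\ref{CorLCon} without proof, simply recording it as the conormal weakening of Theorem~\ref{ThmL} (``Less precise mapping properties require only conormal bounds; hence we record:\ldots''). Your argument---reading off $\inf\Re\cE_\lb=0$, $\inf\Re\cE_\rb=n$, $\inf\Re\N_0=0$, and $\inf\Re\cE_\ff(w)=-2\Re w$ (resp.\ $n$) from the definitions preceding Theorem~\ref{ThmL}, then invoking the standard fact that polyhomogeneity implies $L^\infty$-conormal bounds at the leading real part (with an $\eps$ loss for logarithms, which is not needed at $\ff$ and $\tface$ since the leading terms there carry no logarithmic factor by construction)---is exactly what is intended.
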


We now relate this to the complex powers defined via the functional calculus for self-adjoint operators:

\begin{thm}
\label{ThmLF}
  Let $(X,g)$ be a compact conic manifold of dimension $n\geq 3$.
  \begin{enumerate}
  \item\label{ItLF1} Denote by $\cD_h^1$ the quadratic form domain of $A_h=h^2\Delta_g+1$: this is the completion of $\CIc(X^\circ)$ with respect to the norm
  \[
    \|u\|_{\cD_h^1}^2 := \|u\|_{L^2(X)}^2 + \| h \dd u \|_{L^2(X)}^2,
  \]
  where $L^2(X)=L^2(X,|\dd g|)$ is the metric $L^2$ space. Then $\cD_h^1=H_{\cop,h}^{1,1-\mfrac{n}{2},-\mfrac{n}{2}}(X)$ with uniformly equivalent norms, i.e.\ there exists $C>1$ with $C^{-1}\|u\|_{H_{\cop,h}^{1,1-\mfrac{n}{2},-\mfrac{n}{2}}(X)}\leq\|u\|_{\cD_h^1}\leq C\|u\|_{H_{\cop,h}^{1,1-\mfrac{n}{2},-\mfrac{n}{2}}(X)}$.\footnote{The shift of $\tfrac{n}{2}$ in the weight is due to the different volume densities: the metric density for $L^2(X)$, and a b-density for the $H_{\cop,h}(X)$-spaces.}
  \item\label{ItLF2} Define $A_h^{w/2}$, $w\in\C$, via the functional calculus for the Friedrichs extension of $\Delta_g$. Denote its domain for $\Re w\geq 0$ by $\cD_h^w=\cD(A_h^{w/2})$ with norm $\|u\|_{\cD_h^w}=\|A_h^{w/2}u\|_{L^2(X)}$; for $\Re w<0$, set $\cD_h^w=(\cD_h^{-\bar w})^*$.\footnote{Thus, for $w<0$, $A_h^{w/2}$ acts on $\cD_h^w$ by duality: for $u\in\cD_h^w$, we define $A_h^{w/2}u\in L^2(X)$ by $\la A_h^{w/2}u,v\ra:=\la u,A_h^{\bar w/2}v\ra$ for all $v\in L^2(X)$; note that $A_h^{\bar w/2}v\in\cD_h^{-\bar w}$.} Then
    \begin{equation}
    \label{EqLF2}
      \cD_h^w = H_{\cop,h}^{\Re w,\Re w-\mfrac{n}{2},-\mfrac{n}{2}}(X), \quad \Re w\in(-\tfrac{n}{2},\tfrac{n}{2}),
    \end{equation}
    with uniformly equivalent norms. Moreover, for $\Re w\geq\tfrac{n}{2}$, the inclusion map
    \[
      \cD_h^w \hra \bigcap_{\eps>0} H_{\cop,h}^{\Re w,-\eps,-\mfrac{n}{2}}(X)
    \]
    is uniformly bounded as $h\to 0$.
  \item\label{ItLF3} Let $w,z\in\C$ be such that $\Re z,\Re(z-w)>-\tfrac{n}{2}$. Then $A_h^{w/2}\colon\cD_h^z\to\cD_h^{z-w}$ has a distributional Schwartz kernel which is a semiclassical cone ps.d.o.\ of the form~\eqref{EqL} and \eqref{EqLCon}--\eqref{EqLCon2}.
  \end{enumerate}
\end{thm}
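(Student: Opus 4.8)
The plan is to prove the three parts in order, with the bulk of the work going into part~\eqref{ItLF1}; parts~\eqref{ItLF2} and~\eqref{ItLF3} then follow by combining part~\eqref{ItLF1}, the results of~\S\ref{SsPh} (in particular Theorem~\ref{ThmL} and Corollary~\ref{CorLCon}), and abstract functional calculus. Throughout, $A_h=h^2\Delta_g+1$ and we pass freely between the metric density $|\dd g|$ and a fixed b-density, absorbing the resulting shift of $\tfrac n2$ in the weights into the notation.

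\textbf{Part~\eqref{ItLF1}.} The goal is the uniform-in-$h$ identification $\cD_h^1=H_{\cop,h}^{1,1-\mfrac n2,-\mfrac n2}(X)$. First I would pass to a b-density, so that we must show $\cD(h^2\Delta_g+1)^{1/2}$ (with the metric $L^2$ replaced by $L^2_\bop$) equals $\tfrac{x}{x+h}\cdot H_{\cop,h}^1(X)$, i.e.\ $x^{1}(x+h)^{-1}H_{\cop,h}^1(X)$. The quadratic form is $Q_h(u)=\|u\|_{L^2_\bop}^2+h^2\|\dd u\|_{L^2_\bop}^2$ (after conjugating the metric density away), and the content is the two-sided estimate $Q_h(u)\asymp \|u\|_{H_{\cop,h}^{1,1-\mfrac n2,-\mfrac n2}}^2$ uniformly in $h\in(0,h_0)$ and $u\in\CIc(X^\circ)$. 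In the region $x\geq x_0>0$ this is the standard semiclassical statement, so everything happens near $\pa X$, where $h^2\Delta_g$ is comparable (modulo lower-order and bounded-coefficient terms) to $(h D_x)^2+h^2 x^{-2}\Delta_{k(x)}$. I would work in the coordinate $\hat x=x/h$: $\tfrac{h}{x+h}x D_x=(1+\hat x)^{-1}\hat x D_{\hat x}$ and $\tfrac{h}{x+h}D_{y^j}=(1+\hat x)^{-1}D_{y^j}$ are exactly the generating vector fields~\eqref{EqIVf}. The upper bound $Q_h(u)\lesssim\|u\|^2$ is straightforward by expanding $\dd u$ in these vector fields. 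For the lower bound one needs a Hardy-type inequality: for the Friedrichs form on a cone of dimension $n\geq 3$ one has $\int x^{-2}|u|^2\,|\dd g|\lesssim \int(|\dd u|^2+|u|^2)\,|\dd g|$, with constant depending only on $n$ and the dimension gap $\bigl(\tfrac{n-2}{2}\bigr)^2$ being strictly positive; rescaling in $\hat x$ makes this uniform in $h$, and it precisely controls the `$\Delta_{k(x)}$-free' borderline directions. Combining the Hardy inequality with the $(hD_x)^2$ term and the $h^2x^{-2}\Delta_{k(x)}\asymp\bigl(\tfrac{h}{x+h}\bigr)^2\Delta_{k(x)}$ term recovers the full $H_{\cop,h}^1$-norm with weights $x^{1-\mfrac n2}$, $(x+h)^{-\mfrac n2}$ (the $\tfrac n2$'s being the density shift). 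I expect this Hardy/rescaling argument, together with keeping track of the weights under the density change, to be the \textbf{main obstacle}: one must verify the constants are genuinely $h$-independent and that the lower-order terms of~\eqref{EqLOp} (the $x\pa_x\log|\det k|$ term and the $x$-dependence of $k$) are harmless perturbations, e.g.\ by an absorption argument using the gap $(-n+2,0)\ni\alpha=1-\tfrac n2$.

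\textbf{Part~\eqref{ItLF2}.} Granting part~\eqref{ItLF1}, I would identify $\cD_h^w$ for $\Re w\in(0,\tfrac n2)$ first. By Theorem~\ref{ThmL}, $A_h^{w/2}=(A_h)_{w}$ (in the paper's notation the subscript is $w$ for the $w/2$-power, matching~\eqref{EqPh}) lies in $\bigl(\tfrac{x}{x+h}\bigr)^{-2\cdot(w/2)}\Psi_{\cop\semi}^{w}(X)+\Psi_{\cop\semi}^{-\infty,\cE(w)}(X)$—here I adjust the exponent bookkeeping to the half-power convention—and the remainder term has, by Corollary~\ref{CorLCon} and the constraint $\Re w<\tfrac n2$ i.e.\ $\Re(w/2)>-\tfrac n4 > -\tfrac n2$ suitably interpreted, conormal order of vanishing strictly compatible with the mapping property Proposition~\ref{PropRMSob}\eqref{ItRMSob3}: the crucial inequalities $\alpha+\inf\Re\cE_\rb>0$ and $\beta\le\alpha+\inf\Re\cE_\ff$ hold because $\inf\Re\cE_\rb=n$, $\inf\Re\cE_\ff=-2\Re(w/2)=-\Re w$ (for $\Re w>-\tfrac n2$ in the relevant range), and the source/target weights are $\alpha=\Re w-\tfrac n2$ resp.\ $-\tfrac n2$. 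This yields that $A_h^{w/2}$ maps $H_{\cop,h}^{\Re w,\Re w-\mfrac n2,-\mfrac n2}\to L^2_\bop=H_{\cop,h}^{0,-\mfrac n2,-\mfrac n2}$ boundedly and uniformly. For the reverse inclusion I would use that $A_h^{-w/2}=(A_h)_{-w}$ maps $L^2_\bop\to H_{\cop,h}^{\Re w,\Re w-\mfrac n2,-\mfrac n2}$ uniformly by the same proposition (now with $\cE(-w)$, whose relevant infima are $\inf\Re\cE_\lb=0$, $\inf\Re\cE_\ff=2\Re(w/2)$, $\inf\Re\cE_\rb=n$, all compatible since $\Re w<\tfrac n2$), together with $A_h^{-w/2}A_h^{w/2}=\Id$ on $\CIc(X^\circ)$ from the functional calculus, giving the two-sided norm equivalence and hence~\eqref{EqLF2} for $0\le\Re w<\tfrac n2$. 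The range $-\tfrac n2<\Re w<0$ then follows by the duality definition $\cD_h^w=(\cD_h^{-\bar w})^*$ and the fact that the $L^2_\bop$-dual of $H_{\cop,h}^{s,\alpha,\tau}$ is $H_{\cop,h}^{-s,-\alpha,-\tau}$ with uniform constants (this is built into Definition~\ref{DefRMSob} and Proposition~\ref{PropRMSob}). Finally, for $\Re w\ge\tfrac n2$ the clean equality fails—this is exactly where $\cE_\ff(w)$ stops being $-2w$ and acquires exponents with $\Re z=n$—and the best one gets from $A_h^{-w/2}\colon L^2_\bop\to\bigcap_{\eps>0}H_{\cop,h}^{\Re w,-\eps,-\mfrac n2}$ is the stated uniform inclusion $\cD_h^w\hookrightarrow\bigcap_{\eps>0}H_{\cop,h}^{\Re w,-\eps,-\mfrac n2}$, which follows from Corollary~\ref{CorLCon}\,\eqref{EqLCon2} and Proposition~\ref{PropRMSob}\eqref{ItRMSob2} exactly as before but now with $\beta<\inf\Re\cE_\lb=0$ replaced by $\beta=-\eps$.

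\textbf{Part~\eqref{ItLF3}.} This is now essentially bookkeeping. For $\Re z,\Re(z-w)>-\tfrac n2$, we want $A_h^{w/2}\colon\cD_h^z\to\cD_h^{z-w}$; by parts~\eqref{ItLF1}--\eqref{ItLF2} the domains are the explicit weighted spaces $H_{\cop,h}^{\Re z,\Re z-\mfrac n2,-\mfrac n2}$ and $H_{\cop,h}^{\Re(z-w),\Re(z-w)-\mfrac n2,-\mfrac n2}$ (or, for $\Re z\ge\tfrac n2$, the intersection spaces), and the operator $A_h^{w/2}=(A_h)_w$ has the Schwartz kernel described in~\eqref{EqL} and~\eqref{EqLCon}--\eqref{EqLCon2} by Theorem~\ref{ThmL} and Corollary~\ref{CorLCon}. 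That this kernel indeed defines a bounded map between these particular weighted spaces is then a direct application of Proposition~\ref{PropRMSob}: the differential-order-$w$ part $\bigl(\tfrac{x}{x+h}\bigr)^{-2 w}\Psi_{\cop\semi}^{2w}(X)$—again with the half-power exponent correction, so really $\bigl(\tfrac{x}{x+h}\bigr)^{-w}\Psi_{\cop\semi}^{w}(X)$—maps $H_{\cop,h}^{s,\alpha,\tau}\to H_{\cop,h}^{s-w,\alpha-w,\tau}$ by part~\eqref{ItRMSob1} together with the weight shift, and with $s=\Re z$, $\alpha=\Re z-\tfrac n2$, $\tau=-\tfrac n2$ this lands exactly in the target; the residual part maps into the target by parts~\eqref{ItRMSob2}--\eqref{ItRMSob3}, the hypotheses $\Re z>-\tfrac n2$ and $\Re(z-w)>-\tfrac n2$ being precisely what makes $\alpha+\inf\Re\cE_\rb>0$ and the $\ff$-exponent inequality $\beta\le\alpha+\inf\Re\cE_\ff$ hold (with the mild improvement of part~\eqref{ItRMSob3} used to reach the endpoint when $\cE_\ff(w)$ has no leading logarithm, which by construction of $\cE_\ff(w)$ it does not in the relevant cases). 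Composing with the abstract identity $A_h^{w/2}=A_h^{w/2}$ from the functional calculus on $\CIc(X^\circ)$ and density then completes the proof.
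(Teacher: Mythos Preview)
Your approach is correct, and for part~\eqref{ItLF1} it is essentially the paper's argument: the upper bound is immediate from $h\dd\in(\tfrac{x}{x+h})^{-1}\Psi_{\cop\semi}^1(X)$, and the lower bound comes from Hardy's inequality on the cone (requiring $n\geq 3$) together with a cutoff near $\pa X$.

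For part~\eqref{ItLF2} you take a genuinely different route. The paper proceeds by first establishing~\eqref{EqLF2} for $w\in[-1,1]$ via duality ($\cD_h^{-1}=(\cD_h^1)^*$) and complex interpolation, and then bootstraps to $w\in[1,\tfrac{n}{2})$ inductively using the characterization $\cD_h^w=\{u\in\cD_h^1:A_h u\in\cD_h^{w-2}\}$ together with the mapping properties of the \emph{resolvent} $A_h^{-1}$ alone (from Theorem~\ref{ThmR}). Your argument instead invokes the full structure of the complex powers $(A_h)_{\pm w/2}$ from Theorem~\ref{ThmL} and reads off the two-sided domain identification directly from Proposition~\ref{PropRMSob}\eqref{ItRMSob3}, checking that the index-set infima ($\inf\Re\cE_\lb=0$, $\inf\Re\cE_\rb=n$, $\inf\Re\cE_\ff(\pm w/2)=\mp\Re w$ with no leading logarithms) make the endpoint inequalities $\beta\leq\alpha+\inf\Re\cE_\ff$ and $\beta<\inf\Re\cE_\lb$ hold precisely on the range $|\Re w|<\tfrac{n}{2}$. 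This is more direct and makes the role of the restriction $\Re w<\tfrac{n}{2}$ (namely $\beta=\Re w-\tfrac{n}{2}<0=\inf\Re\cE_\lb$) more transparent; the paper's approach has the minor advantage of relying only on the resolvent rather than general powers, and of avoiding the (standard but unstated) identification of the contour-integral powers $(A_h)_{w/2}$ with the functional-calculus powers $A_h^{w/2}$, which you use implicitly. You should make that identification explicit: it follows from the holomorphic functional calculus for $\Re w<0$ and then by the extension rule $(A_h)_w=A_h^k(A_h)_{w-k}$.

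For part~\eqref{ItLF3}, both you and the paper treat it as an immediate consequence of part~\eqref{ItLF2}; your verification via Proposition~\ref{PropRMSob} is the natural way to unpack this.
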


\begin{rmk}
\label{RmkLFDomSet}
  The domains are $\cD_h^w=x^{\Re w-\mfrac{n}{2}}\Hb^{\Re w}(X)$ as sets, but with $h$-dependent norms; cf.\ Remark~\ref{RmkRMHbh}.
\end{rmk}

\begin{rmk}
\label{RmkLFDom}
  For $w>\tfrac{n}{2}$, and restricting to the non-semiclassical case $h=1$, the domain $\cD_1^w$ contains $x^{w-\mfrac{n}{2}}\Hb^w(X)$ as a subspace of elements with finite codimension, a complement of which consists of finite polyhomogeneous expansions at $\pa X$ (arising from the indicial roots~\eqref{EqLSpecb}); see \cite[Lemma~3.2]{MelroseWunschConic} for the case $w=\tfrac{n}{2}$. In particular, $\CIc(X^\circ)\subset\cD_1^w$ is not a dense subspace. By duality, for $w\leq-\tfrac{n}{2}$, the identification~\eqref{EqLF2} is not valid either.
\end{rmk}

\begin{proof}
  We shall write `$A\lesssim B$' to mean `$A\leq C B$ for an $h$-independent constant $C$', and `$A\sim B$' for $A\lesssim B$, $B\lesssim A$. We first note that
  \[
    \|u\|_{L^2(X,|\dd g|)} \sim \|x^{\mfrac{n}{2}}u\|_{L^2_\bop(X)} \sim \|u\|_{H_{\cop,h}^{0,-\mfrac{n}{2},-\mfrac{n}{2}}(X)}.
  \]
  Since $h\dd\in(\tfrac{x}{x+h})^{-1}\Psi_{\cop\semi}^1(X)$, we thus have
  \[
    \|u\|_{\cD_h^1}^2 = \|u\|_{L^2(X,|\dd g|)}^2 + \|h\dd u\|_{L^2(X,|\dd g|)}^2 \lesssim \|u\|_{H_{\cop,h}^{0,-\mfrac{n}{2},-\mfrac{n}{2}}(X)}^2 + \|u\|_{H_{\cop,h}^{1,1-\mfrac{n}{2},-\mfrac{n}{2}}(X)}^2.
  \]
  so $\|u\|_{\cD_h^1}\lesssim\|u\|_{H_{\cop,h}^{1,1-\mfrac{n}{2},-\mfrac{n}{2}}}$.
  
  For the converse, we split $u=\chi u+(1-\chi)u$, with $\chi\in\CI(X)$ supported in the collar neighborhood of $\pa X$. Then Hardy's inequality $\|x^{-1}\chi u\|_{L^2}\lesssim\|\dd(\chi u)\|_{L^2}$ (which uses that $n\geq 3$) gives
  \begin{align*}
    \bigl\|\bigl(\tfrac{x}{x+h}\bigr)^{-1}u\bigr\|_{L^2} &\leq \|u\|_{L^2} + h\|x^{-1}\chi u\|_{L^2} + h\|x^{-1}(1-\chi)u\|_{L^2} \\
      &\lesssim \|u\|_{L^2}+h\|\dd(\chi u)\|_{L^2}+h\|(1-\chi)u\|_{L^2} \\
      &\lesssim \|u\|_{L^2} + h\|\dd u\|_{L^2},
  \end{align*}
  using the bound $\|[\dd,\chi]u\|_{L^2}\lesssim\|u\|_{L^2}$. For $V\in\Vb(X)$, we further have the trivial estimate
  \[
    \bigl\|\bigl(\tfrac{x}{x+h}\bigr)^{-1}\tfrac{h}{h+x}V u\bigr\|_{L^2} \lesssim \|h\dd u\|_{L^2}.
  \]
  Together, this proves $\|u\|_{H_{\cop,h}^{1,1-\mfrac{n}{2},-\mfrac{n}{2}}}\lesssim\|u\|_{\cD_h^1}$, and thus completes the proof of part~\eqref{ItLF1}.

  Turning to part~\eqref{ItLF2}, we note that $\cD_h^{-1}=(\cD_h^1)^*=H_{\cop,h}^{-1,-1-\mfrac{n}{2},-\mfrac{n}{2}}(X)$, where we take adjoints with respect to the inner product of $L^2(X,|\dd g|)=H_{\cop,h}^{0,-\mfrac{n}{2},-\mfrac{n}{2}}(X)$. By interpolation, this proves~\eqref{EqLF2} for $w\in[-1,1]$. In particular, since $\CIc(X^\circ)\subset\cD_h^{\pm 1}$ is dense, we conclude that $A_h|_{\cD_h^1}$ is given by the distributional action of $h^2\Delta_g+1$.

  Next, we prove~\eqref{EqLF2} for $w\in[1,\tfrac{n}{2})$; the result for $w\in(-\tfrac{n}{2},-1]$ then follows by duality. Now, since $A_h$ is positive, $\spec A_h\subset[1,\infty)$, we have $\cD_h^w\subset\cD_h^{w'}$ for $w\geq w'$; hence we can identify, for $w\geq 1$,
  \[
    \cD_h^w = \{ u\in \cD_h^1 \colon (h^2\Delta_g+1)u\in\cD_h^{w-2} \},
  \]
  also in the sense of equivalence of norms, namely
  \[
    \|u\|_{\cD_h^w} := \|A_h^{w/2}u\|_{L^2} \sim \|u\|_{\cD_h^1} + \|(h^2\Delta_g+1)u\|_{\cD_h^{w-2}}.
  \]
  Assume inductively that~\eqref{EqLF2} holds for $w-2$ in place of $w$. Then $u\in\cD_h^w$ means $u\in H_{\cop,h}^{1,1-\mfrac{n}{2},-1}(X)$ and
  \[
    f := (h^2\Delta_g+1)u \in H_{\cop,h}^{w-2,w-2-\mfrac{n}{2},-\mfrac{n}{2}}(X).
  \]
  Since the weight of $u$ at $x=0$, i.e.\ $1-\tfrac{n}{2}$, lies in the spectral gap $(-n+2,0)$, we can compute $u$ by inverting the operator $h^2\Delta_g+1$ at weight $\alpha=1-\tfrac{n}{2}$ by means of Theorem~\ref{ThmL} or \ref{ThmR}. (Note here that $h^2\Delta_g+1$ is invertible on $\Hb^{s,\alpha}(X)$ for \emph{all} $h>0$.) Thus,
  \[
    (h^2\Delta_g+1)^{-1} \in \bigl(\tfrac{x}{x+h}\bigr)^2\Psi_{\cop\semi}^{-2}(X) + \Psi_{\cop\semi}^{-\infty,(\check E_\lb,\check E_\ff+2,\check E_\rb+2,\N_0)}(X),
  \]
  which maps $f$ into $H_{\cop,h}^{w,w-\mfrac{n}{2},-\mfrac{n}{2}}(X)$ by parts~\eqref{ItRMSob1} and \eqref{ItRMSob3} of Proposition~\ref{PropRMSob}. This establishes~\eqref{EqLF2} for the power $w$.

  For $w\in[\tfrac{n}{2},\tfrac{n}{2}+2)$, $(h^2\Delta_g+1)^{-1}$ maps $\cD_h^{w-2}$ into
  \[
    H_{\cop,h}^{w,w-\mfrac{n}{2},-\mfrac{n}{2}}(X)+H_{\cop,h}^{\infty,-\eps,-\mfrac{n}{2}}(X) \subset H_{\cop,h}^{w,-\eps,-\mfrac{n}{2}}(X)
  \]
  for any $\eps>0$ by Proposition~\ref{PropRMSob}\eqref{ItRMSob3}, as claimed. Assuming inductively that $\cD_h^{w-2}\subset H_{\cop,h}^{w-2,-\eps,-\mfrac{n}{2}}(X)$ with $w-2\geq\tfrac{n}{2}$, we obtain $\cD_h^w\subset H_{\cop,h}^{w,-\eps,-\mfrac{n}{2}}(X)$ by an analogous argument. Since $\cD_h^w=\cD_h^{\Re w}$ for any $w\in\C$, the proof of part~\eqref{ItLF2} is now complete.

  Part~\eqref{ItLF3} is an immediate consequence of part~\eqref{ItLF2}.
\end{proof}

\subsection{Semiclassical propagation through cone points}
\label{SsLProp}

In this section, we shall work locally near a conic point. Thus, with $\pa X$ denoting a compact $(n-1)$-dimensional manifold without boundary, $n\geq 3$, let
\[
  X' := [0,x_0)_x \times \pa X,\quad
  g = \dd x^2 + x^2 k(x,y,\dd y),
\]
where $k$ is a smooth Riemannian metric on $\pa X$ depending smoothly on $x\in[0,x_0]$. We are interested in the propagation of semiclassical singularities through the cone point $x_0=0$ for solutions of the equation
\begin{equation}
\label{EqLPropEq}
  (h^2\Delta_g - 1)u = f,
\end{equation}
where $u,f$ lie in a suitable semiclassical cone Sobolev spaces. We may isometrically embed $(X',g)$ into a compact Riemannian manifold $(X,g)$ with boundary $\pa X'=\pa X$; we then have semiclassical cone Sobolev spaces on $X$ at our disposal, and we define
\[
  H_{\cop,h,\loc}^{s,\alpha,\tau}(X'),\quad \text{resp.}\quad \cD^w_{h,\loc}(X')
\]
to consist of all $u$ so that $\phi u\in H_{\cop,h}^{s,\alpha,\tau}(X)$ (with respect to a smooth b-density on $X$ as in Definition~\ref{DefRMSob}), resp.\ $\phi u\in\cD^w_h(X)$ for all $\phi\in\CIc(X')$. Here, we set $\cD^w_h(X)=\cD((h^2\Delta_g+1)^{w/2})$ (using the functional calculus for the Friedrichs extension of $\Delta_g$) as in Theorem~\ref{ThmLF}\eqref{ItLF2}.

We need to assume the absence of semiclassical wave front set along null-bicharacteristics over the interior $((X')^\circ,g)$ which strike the cone point. Now, any \emph{outgoing} unit speed geodesic $\gamma(s)$ on $(X',g)$, meaning $\gamma$ is defined on $(0,\eps)$ for some $\eps>0$ and has the property that $x(\gamma(s))\to 0$ as $s\to 0$, is necessarily radial, i.e.\ of the form $\gamma(s)=(s,y_0)$ for some fixed $y_0\in\pa X$. Thus, writing covectors over a point in the interior $(X')^\circ$ as
\[
  \sigma\,\dd x + \eta,\quad \eta\in T^*\pa X,
\]
the set
\[
  \cU := \bigl\{ (z,\zeta) \in T^*(X')^\circ \colon |\zeta|_{g^{-1}}^2=1,\ \zeta\neq\sigma\,\dd x\ \text{with}\ \sigma>0 \bigr\}
\]
contains all covectors in the characteristic set of $h^2\Delta_g-1$ which do not correspond (via $g$) to the tangent vector of an outgoing unit speed geodesic.

We recall that a point $(z,\zeta)\in T^*(X')^\circ$ does \emph{not} lie in the semiclassical wave front set of order $0$, denoted here by $\WFh^0(u)$, of an $h$-tempered distribution $u$ on $(X')^\circ$ (i.e.\ $u\in h^{-N}H_{h,\loc}^{-N}((X')^\circ)$ for some $N$) if and only if there exists $A\in\Psih((X')^\circ)$ with compactly supported Schwartz kernel so that $A u\in L^2(X')$ is uniformly bounded in $h$. Equivalently, in local coordinates, there exist cutoff functions $\phi\in\CIc((X')^\circ)$ identically $1$ near $z$, and a cutoff $\psi\in\CIc(\R^n)$ identically $1$ near $\zeta$ so that $\psi\cF_h(\phi u)\in L^2(\R^n)$ uniformly as $h\to 0$, where $\cF_h u(\zeta)=\int_{\R^n} e^{-i z\cdot\zeta/h}u(z)\,\dd z$ is the semiclassical Fourier transform.

\begin{thm}
\label{ThmLProp}
  Let $l\in(-\tfrac{n-2}{2},\tfrac{n-2}{2})$, and suppose that $u\in h^{-N}\cD_{h,\loc}^{1+l}(X')$ solves equation~\eqref{EqLPropEq} with $f\in h\cD^{-1+l}_{h,\loc}(X')$. (That is, $\phi h^{-1}f\in\cD_{h,\loc}^{-1+l}(X')$ is uniformly bounded as $h\to 0$ for all $\phi\in\CIc(X')$.) If $\WFh^0(u|_{(X')^\circ})\cap\cU=\emptyset$, then $u\in\cD^{1+l}_{h,\loc}(X')$.
\end{thm}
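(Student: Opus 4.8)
The plan is to reduce everything to the case $l=0$, which is the semiclassical propagation theorem of Baskin--Marzuola \cite{BaskinMarzuolaCone}: there the equation $(h^2\Delta_g-1)v=\tilde f$ is analyzed on the quadratic form domain $\cD^1_{h,\loc}(X')$, which by Theorem~\ref{ThmLF}\eqref{ItLF1} is (up to the fixed density weight) an unweighted semiclassical cone Sobolev space. The reduction is effected by conjugating with the complex power $A_h^{l/2}$, $A_h:=h^2\Delta_g+1$. The whole point is that by Theorems~\ref{ThmL} and~\ref{ThmLF} this operator is, uniformly down to $h=0$, a semiclassical cone ps.d.o.\ of order $l$ with the structure~\eqref{EqL}--\eqref{EqLCon2}; it maps $\cD^w_h\to\cD^{w-l}_h$ isometrically for all real $w$ by the functional calculus; it commutes with $h^2\Delta_g-1=A_h-2$; and — crucially — over the interior $(X')^\circ$ it is an honest semiclassical ps.d.o.\ whose principal symbol equals $(|\zeta|_{g^{-1}}^2+1)^{l/2}$, hence the nonzero constant $2^{l/2}$ on the characteristic set $\{|\zeta|_{g^{-1}}^2=1\}\supseteq\cU$ of $h^2\Delta_g-1$.

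Concretely, I would set $v:=A_h^{l/2}u$ and verify the three hypotheses of the $l=0$ case. First, since $A_h^{l/2}$ commutes with $h^2\Delta_g-1$, the distribution $v$ solves $(h^2\Delta_g-1)v=A_h^{l/2}f=:\tilde f$. By the Schwartz kernel description~\eqref{EqL}, the part of $A_h^{l/2}$ supported away from the diagonal and away from the cone point (where both $x$ and $x'$ stay bounded below) is $\cO(h^\infty)$-smoothing by Remark~\ref{RmkRCOther}, so modulo such harmless errors $A_h^{l/2}$ is local near $\pa X$; combined with its isometry property on the scale $\cD^w_h$ and the fact that $\CIc(X')$-cutoffs are bounded multipliers on $H_{\cop,h}^{s,\alpha,\tau}(X)$, this yields $v\in h^{-N}\cD^1_{h,\loc}(X')$ and $\tilde f\in h\cD^{-1}_{h,\loc}(X')$. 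Second, one must check $\WFh^0(v|_{(X')^\circ})\cap\cU=\emptyset$: near any $(z,\zeta)\in\cU$, pick $\chi'\in\Psih^0((X')^\circ)$ elliptic at $(z,\zeta)$ with $\chi'u$ uniformly bounded in $L^2$, and $\chi\in\Psih^0$ elliptic at $(z,\zeta)$, microsupported in the elliptic set of $\chi'$ and in a region where $|\zeta|$ stays bounded (possible since $\cU$ lies in the compact set $\{|\zeta|_{g^{-1}}=1\}$). There $A_h^{l/2}$ is a bounded operator modulo smoothing, so $\chi A_h^{l/2}\chi'$ is uniformly bounded on $L^2$ while $\chi A_h^{l/2}(1-\chi')$ is $\cO(h^\infty)$-smoothing; since $u$ is $h$-tempered, $\chi v$ is uniformly bounded in $L^2$, i.e.\ $(z,\zeta)\notin\WFh^0(v)$.

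With these hypotheses in hand, the $l=0$ case gives $v\in\cD^1_{h,\loc}(X')$, and then $u=A_h^{-l/2}v\in\cD^{1+l}_{h,\loc}(X')$ by the same (quasi-)locality and isometry considerations. Here the restriction $l\in(-\tfrac{n-2}{2},\tfrac{n-2}{2})$ enters precisely because it forces $-1+l$ and $1+l$ to lie in $(-\tfrac n2,\tfrac n2)$, so that the domains $\cD^{-1+l}_h$, $\cD^{1+l}_h$ all carry the semiclassical cone Sobolev descriptions of Theorem~\ref{ThmLF}\eqref{ItLF2} needed for the mapping properties above (on a truncated cone, as in the more natural formulation mentioned after the statement, the same scheme applies with local/microlocal bookkeeping near the outgoing directions).

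The main obstacle is not the propagation itself — that is imported from \cite{BaskinMarzuolaCone} for $l=0$ — but the verification that conjugation by $A_h^{l/2}$ is legitimate uniformly as $h\to0$: that $A_h^{l/2}$ is a semiclassical cone ps.d.o.\ (Theorems~\ref{ThmL}/\ref{ThmLF}, the technical heart of the paper), that it preserves the weighted domains with $h$-uniform norms, that it is microlocal over $(X')^\circ$, and that it acts microlocally boundedly and invertibly on a neighborhood of $\cU$. This last property is exactly why the specific power $(h^2\Delta_g+1)^{l/2}$, rather than some other conjugator homogeneous of degree $l$ near the cone point, is the right choice: its principal symbol is the nonzero constant $2^{l/2}$ on the very characteristic set carrying the propagation, so the conjugation neither creates nor destroys semiclassical wave front set there.
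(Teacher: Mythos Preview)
Your proposal is correct and follows essentially the same route as the paper: conjugate by $(h^2\Delta_g+1)^{l/2}$ to reduce to the unweighted case $l=0$, use commutation with $h^2\Delta_g-1$, use that the power is a semiclassical ps.d.o.\ over $(X')^\circ$ (hence preserves $\WFh^0$), and then invoke the Baskin--Marzuola propagation/elliptic estimates. Your write-up is in fact more explicit than the paper's on two points: you spell out why microlocal $L^2$-boundedness near $\cU$ holds (compactness of $\cU\subset\{|\zeta|_{g^{-1}}=1\}$ so the order-$l$ nature is irrelevant there), and you articulate where the constraint $l\in(-\tfrac{n-2}{2},\tfrac{n-2}{2})$ enters via Theorem~\ref{ThmLF}\eqref{ItLF2}.
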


The crucial point of this theorem is the fact it is a lossless propagation estimate: $u$ loses precisely one power of $h$ relative to the right hand side $f$.

\begin{proof}[Proof of Theorem~\usref{ThmLProp}]
  Using Theorem~\ref{ThmLF}, we may replace $u,f$ by
  \[
    u'=(h^2\Delta_g+1)^{l/2}u \in h^{-N}\cD^1_{h,\loc}(X'), \quad
    f'=(h^2\Delta_g+1)^{l/2}f \in \cD^{-1}_{h,\loc}(X').
  \]
  We make the following two important observations:
  \begin{enumerate}
  \item $(h^2\Delta_g+1)^{l/2}$ and $h^2\Delta_g-1$ commute, hence $(h^2\Delta_g-1)u'=f'$.
  \item Since for any $\chi\in\CI(X^\circ)$, the operator $\chi\cdot(h^2\Delta_g+1)^{l/2}$ is a semiclassical b-pseudodifferential operator of order $l$ by~\eqref{EqRCOtherB}, we have $\WFh^0(u'|_{(X')^\circ})\cap\cU=\emptyset$, and thus $u'$ lies microlocally in $H_h^1$ on $\cU$.
  \end{enumerate}

  We can then apply the semiclassical propagation estimate \cite[Proposition~7.2]{BaskinMarzuolaCone}, combined with the elliptic estimate \cite[Proposition~7.1]{BaskinMarzuolaCone}; in the notation of the reference, we have $\cD_h=\cD_h^1$, $\cD_h'=\cD_h^{-1}$, and by using the elliptic estimate, we may take in \cite[Proposition~7.2]{BaskinMarzuolaCone} the operators $\cQ,G$ to be cutoff functions on the base $X'$, identically $1$ near $\pa X'$, while $\cQ_1$ is a semiclassical ps.d.o.\ on $(X')^\circ$ with wave front set contained in $\cU$, and elliptic on a sufficiently large subset of $\cU$, namely so that all backwards null-bicharacteristics starting over a point in $\supp\cQ\cap(X')^\circ$ either strike the cone point or enter the elliptic set of $\cQ_1$. The a priori assumption that $u'$ is $h$-tempered provides control of the $\cO(h^\infty)$ error term in the estimate in \cite[Proposition~7.2]{BaskinMarzuolaCone}.
\end{proof}

\begin{rmk}
  Theorem~\ref{ThmLF} and Proposition~\ref{PropRMHbh} give the inclusions
  \[
    h^{|w|}H_{\bop,h}^{w,w-\mfrac{n}{2}}(X) \subset \cD_h^w(X) \subset h^{-|w|}H_{\bop,h}^{w,w-\mfrac{n}{2}}(X)
  \]
  for $w\in(-\tfrac{n}{2},\tfrac{n}{2})$, which allows for a translation of Theorem~\ref{ThmLProp} into a propagation result on semiclassical b-Sobolev spaces allowing for a flexible choice of the weight at the conic point; this is however rather lossy in terms of powers of $h$. One can in fact deduce almost sharp propagation estimates (losing $1+\eps$ powers of $h$ for any $\eps>0$ for the entire range of weights allowed in Theorem~\ref{ThmLProp}) from real principal type and radial point estimates in a semiclassical cone calculus with variable orders, thus without relying on the delicate arguments of \cite{MelroseVasyWunschEdge,BaskinMarzuolaCone}. Details will be given in future work.
\end{rmk}

\appendix
\section{Review of b-analysis}
\label{SB}

We refer the reader to \cite{MelroseDiffOnMwc,MelrosePushFwd,EpsteinMelroseMendozaPseudoconvex,MelroseAPS,MazzeoEdge} for details on various aspects of b-analysis relevant to the present paper; here, we only briefly collect some of the key notions. Let $X$ denote an $n$-dimensional manifold with corners, which is locally modelled on
\[
  \R^n_k=[0,\infty)_x^k\times\R_y^{n-k}
\]
for various $0\leq k\leq n$. We assume that $\pa X$ is the union of embedded hypersurfaces $M_1(X)=\{H_1,\ldots,H_N\}$. Denote by $\rho_{H_i}\in\CI(X)$ a defining function, so $\rho_{H_i}\geq 0$, $H_i=\rho_{H_i}^{-1}(0)$, and $\dd\rho_{H_i}\neq 0$ on $H_i$. The space $\Vb(X)\subset\cV(X)=\CI(X;T X)$ of \emph{b-vector fields} consists of all smooth vector fields which are tangent to $\pa X$; it can be identified with the space of smooth sections of the \emph{b-cotangent bundle} $\Tb M$, with local frame $x_i\pa_{x_i}$, $\pa_{y_j}$, $1\leq i\leq k$, $1\leq j\leq n-k$. The dual bundle is the \emph{b-cotangent bundle} $\Tb^*X$.

For a \emph{b-differential operator} $A\in\Diffb^m(X)$, i.e.\ a locally finite sum of up to $m$-fold products of b-vector fields, we define its \emph{normal operator} $N_H(A)\in\Diffb^m({}^+N_H X)$ at $H\in M_1(X)$ by freezing coefficients (here ${}^+N_H X$ is the inward pointing normal bundle, defined to contain the zero section). In local coordinates as above, in which $H$ is given by $x_1=0$, and $x'=(x_2,\ldots,x_k)$, we can write $A=\sum a_{\alpha\beta}(x_1,x',y) (x D_x)^\alpha D_y^\beta$, where $a_{\alpha\beta}\in\CI(X)$, and $(x D_x)^\alpha:=\prod_{i=1}^k(x_i D_{x_i})^{\alpha_i}$; then $N_H(A)=\sum a_{\alpha\beta}(0,x',y)(x D_x)^\alpha D_y^\beta$. Fixing a defining function $\rho_H$ of $H$, and working in coordinates in which $x_1=\rho_H$, the \emph{Mellin transformed normal operator} $\wh{N_H(A)}(\sigma)$ is defined by replacing $x_1 D_{x_1}$ by $\sigma$ in the expression for $N_H(A)$. Furthermore, we say that $A$ is said to be \emph{elliptic} if the polynomial $\sum_{|\alpha|+|\beta|=m} a_{\alpha\beta}(x,y)\xi^\alpha\eta^\beta$ does not vanish for $(0,0)\neq(\xi,\eta)\in\R^k\times\R^{n-k}$.

For $\alpha\in\R^{M_1(X)}$, we define the \emph{space of distributions conormal to the boundary}, $\cA^\alpha(X)$, to consist of all distributions on $X^\circ$ which lie in $\rho^\alpha L^\infty(X)$ together with all their derivatives along any finite number of b-vector fields; here $\rho^\alpha:=\prod_{H\in M_1(X)}\rho_H^{\alpha_H}$. We furthermore define $L^2_\bop(X)$ as the $L^2$-space for a smooth b-density on $X$, which is locally a smooth positive multiple of $|\frac{\dd x_1}{x_1}\ldots\frac{\dd x_k}{x_k}\dd y_1\ldots \dd y_{n-k}|$; b-Sobolev spaces $\Hb^{s,\alpha}(X)=\rho^\alpha\Hb^s(X)$ are then defined for $s\in\N_0$ by membership of all b-derivatives up to order $s$ in $\rho^\alpha L^2_\bop(X)$, and for general $s\in\R$ by duality and interpolation. Semiclassical b-Sobolev spaces $\Hbh^{s,\alpha}(X)=\rho^\alpha\Hbh^s(X)$ are equal to $\Hb^{s,\alpha}(X)$ as sets, but with $h$-dependent norms: in local coordinates and for $s\in\N_0$,
\[
  \|u\|_{\Hbh^{s,\alpha}(X)}^2 = \sum_{|\beta|+|\gamma|\leq s} \|x^{-\alpha}(h x D_x)^\beta(h D_y)^\gamma u\|_{L^2(X)}^2.
\]

An \emph{index set} $\cE\subset\C\times\N_0$ satisfies, by definition, that $(\sigma,k)\in\cE$ implies $(\sigma+j,\ell)\in\cE$ for all $j\in\N_0$, $0\leq\ell\leq k$, and that $\cE$ contains only finitely many elements $(\sigma,k)$ for which $\Re\sigma$ is smaller than any fixed constant. \emph{Polyhomogeneity}, or \emph{$\cE$-smoothness}, of a conormal distribution $u$ on a manifold $X$ with boundary $H=\pa X$ then means that there exist $u_{\sigma,k}\in\CI(\pa X)$, $(\sigma,k)\in\cE$, such that $u-\sum_{\Re\sigma\leq N}u_{\sigma,k}\rho_H^\sigma|\log\rho_H|^k\in\cA^N(X)$ for all $N\in\R$. More generally, for a collection $\cE=\{\cE_H\colon H\in M_1(X)\}$ of index sets on a manifold with corners, we say that $u$ is $\cE$-smooth if at each $H\in M_1(X)$ it is $\cE_H$-smooth with coefficients which themselves are polyhomogeneous distributions on $H$, with index set at all non-empty boundary hypersurfaces $H\cap H'$, $H'\in M_1(X)$, of $H$ given by $\cE_{H'}$. If this holds, we write $u\in\cA_\phg^\cE(X)$. We also recall the definition
\begin{equation}
\label{EqBExtCup}
  \cE \extcup \cF := \cE \cup \cF \cup \{ (z,j+k+1) \colon (z,j)\in\cE,\ (z,k)\in\cF \}
\end{equation}
of the \emph{extended union} of two index sets $\cE,\cF$.

Let $S\subset X$ be a \emph{p-submanifold}, i.e.\ given in suitable local coordinates by the vanishing of $x'$ and $y'$ for some splittings $(x_1,\ldots,x_k)=(x',x'')$ and $(y_1,\ldots,y_{n-k})=(y',y'')$. If $x'=x$, we call $S$ a \emph{boundary p-submanifold}, otherwise it is a \emph{interior p-submanifold}. The blow-up $[X;S]$ is then defined as $(X\setminus S)\sqcup S N^+S$, where $S N^+S$ is the spherical (i.e.\ quotient by the fiber-wise dilation action on the) inward pointing normal bundle, with smooth structure defined by declaring polar coordinates around $S$ to be smooth down to the origin; one then calls $S N^+S$ the \emph{front face} of $[X;S]$. The natural map $\beta\colon[X;S]\to X$ (given on $S N^+S$ by projection to the base $S$) is called the \emph{blow-down map}. If $T\subset X$ is another p-submanifold which near $T\cap S$ can be expressed in the same local coordinates by the vanishing of another subset of these coordinates, we define the \emph{lift of $T$ to $[X;S]$} by $\beta^{-1}(T)$ if $T\subset S$, and by $\ol{\beta^{-1}(T\setminus S)}$ otherwise.

If $X,X'$ are two manifolds with corners, with boundary defining functions $\rho_H$ ($H\in M_1(X)$) and $\rho'_{H'}$ $(H'\in M_1(X')$), we say that a smooth map $F\colon X\to X'$ is a \emph{b-map} if for all $H'\in M_1(X')$, either $F^*\rho'_{H'}\equiv 0$, or
\[
  F^*\rho'_{H'} = a_{H'} \prod_{H\in M_1(X)} \rho_H^{e(H,H')}
\]
where $a_{H'}\in\CI(X)$ is non-vanishing, and $e(H,H')\in\N_0$. If the first possibility does not occur for all $H'$, $F$ is an \emph{interior b-map}; we henceforth only consider such $F$. Push-forward $F_*\colon T_x X^\circ\to T_{F(x)}(X')^\circ$ extends by continuity to the \emph{b-differential} ${}^\bop F_*\colon\Tb_x X\to\Tb_{F(x)}X'$, $x\in M$. If this map is surjective for all $x\in M$, we say that $F$ is a \emph{b-submersion}; if in addition the exponents $e(H,H')$ have the property that for all $H\in M_1(X)$ there exists at most one $H'\in M_1(X')$ such that $e(H,H')\neq 0$ (equivalently: $F$ does not map a boundary hypersurface of $X$ into a corner of $X'$ of codimension $2$ or higher), we call $F$ a \emph{b-fibration}.

If $S\subset X$ is a p-submanifold and $p\in S$, denote by $\Tb_p S\subset\Tb_p X$ the space of all b-tangent vectors $V|_p$ where $V\in\Vb(X)$ is tangent to $S$. We then say that an interior b-map $F$ is transversal to $S$ if at each $p\in S$, the b-nullspace ${}^\bop\rm{null}(F)=\ker({}^\bop F_*)$ and $\Tb_p S$ are transversal as subspaces of $\Tb_p X$. If for $q\in X$ we denote by $\rm{Fa}(q)$ the largest codimension boundary face of $X$ containing $q$ (so $q$ lies in the interior $\rm{Fa}(q)^\circ$), this condition is equivalent to $F|_{\rm{Fa}(q)^\circ}\colon\rm{Fa}(q)^\circ\to X'$ being transversal (in the standard sense) to $S\cap\rm{Fa}(q)$ for all $q\in S$.

\bibliographystyle{alpha}

\begin{thebibliography}{ALNV04}

\bibitem[AGR17]{AlbinGellRedmanDirac}
Pierre Albin and Jesse Gell-Redman.
\newblock {T}he index formula for families of {D}irac type operators on
  pseudomanifolds.
\newblock {\em Preprint, arXiv:1712.08513}, 2017.

\bibitem[ALNV04]{AmmannLauterNistorVasyPowers}
Bernd Ammann, Robert Lauter, Victor Nistor, and Andr\'{a}s Vasy.
\newblock Complex powers and non-compact manifolds.
\newblock {\em Comm. Partial Differential Equations}, 29(5-6):671--705, 2004.

\bibitem[BM19]{BaskinMarzuolaCone}
Dean Baskin and Jeremy Marzuola.
\newblock The radiation field on product cones.
\newblock {\em Preprint, arXiv:1906.04769}, 2019.

\bibitem[CSS03]{CoriascoSchroheSeilerCone}
Sandro Coriasco, Elmar Schrohe, and J{\"o}rg Seiler.
\newblock Bounded imaginary powers of differential operators on manifolds with
  conical singularities.
\newblock {\em Math. Z.}, 244(2):235--269, 2003.

\bibitem[CT82a]{CheegerTaylorConicalII}
Jeff Cheeger and Michael Taylor.
\newblock On the diffraction of waves by conical singularities. {I}.
\newblock {\em Comm. Pure Appl. Math.}, 35(3):275--331, 1982.

\bibitem[CT82b]{CheegerTaylorConicalI}
Jeff Cheeger and Michael Taylor.
\newblock On the diffraction of waves by conical singularities. {II}.
\newblock {\em Comm. Pure Appl. Math.}, 35(4):487--529, 1982.

\bibitem[EMM91]{EpsteinMelroseMendozaPseudoconvex}
C.~L. Epstein, R.~B. Melrose, and G.~A. Mendoza.
\newblock Resolvent of the {L}aplacian on strictly pseudoconvex domains.
\newblock {\em Acta Math.}, 167(1-2):1--106, 1991.

\bibitem[FHH18]{FordHassellHillairet2dCone}
G.~Austin Ford, Andrew Hassell, and Luc Hillairet.
\newblock {W}ave propagation on {E}uclidean surfaces with conical
  singularities. {I}: {G}eometric diffraction.
\newblock {\em J. Spectr. Theory}, 8(@):605--667, 2018.

\bibitem[GH08]{GuillarmouHassellResI}
Colin Guillarmou and Andrew Hassell.
\newblock Resolvent at low energy and {R}iesz transform for {S}chr\"{o}dinger
  operators on asymptotically conic manifolds. {I}.
\newblock {\em Math. Ann.}, 341(4):859--896, 2008.

\bibitem[Gil03]{GilConicHeat}
Juan~B. Gil.
\newblock Full asymptotic expansion of the heat trace for non-self-adjoint
  elliptic cone operators.
\newblock {\em Math. Nachr.}, 250:25--57, 2003.

\bibitem[GKM06]{GilKrainerMendozaResolvents}
Juan~B. Gil, Thomas Krainer, and Gerardo~A. Mendoza.
\newblock Resolvents of elliptic cone operators.
\newblock {\em J. Funct. Anal.}, 241(1):1--55, 2006.

\bibitem[Gru96]{GrubbFunctionalCalculus}
Gerd Grubb.
\newblock {\em Functional calculus of pseudodifferential boundary problems},
  volume~65 of {\em Progress in Mathematics}.
\newblock Birkh\"{a}user Boston, Inc., Boston, MA, second edition, 1996.

\bibitem[GW18]{GannotWunschPotential}
Oran Gannot and Jared Wunsch.
\newblock Semiclassical diffraction by conormal potential singularities.
\newblock {\em Preprint, arXiv:1806.01813}, 2018.

\bibitem[H{\"o}r71]{HormanderFIO1}
Lars H{\"o}rmander.
\newblock Fourier integral operators. {I}.
\newblock {\em Acta mathematica}, 127(1):79--183, 1971.

\bibitem[H{\"o}r07]{HormanderAnalysisPDE3}
Lars H{\"o}rmander.
\newblock {\em The analysis of linear partial differential operators. {III}}.
\newblock Classics in Mathematics. Springer, Berlin, 2007.

\bibitem[HV18]{HintzVasyKdSStability}
Peter Hintz and Andr{\'a}s Vasy.
\newblock {T}he global non-linear stability of the {K}err--de {S}itter family
  of black holes.
\newblock {\em Acta mathematica}, 220:1--206, 2018.

\bibitem[HW17]{HillairetWunschConic}
Luc Hillairet and Jared Wunsch.
\newblock On resonances generated by conic diffraction.
\newblock {\em Preprint, arXiv:1706.07869}, 2017.

\bibitem[KM82]{KalkaMenikoffCone}
Morris Kalka and Arthur Menikoff.
\newblock The wave equation on a cone.
\newblock {\em Comm. Partial Differential Equations}, 7(3):223--278, 1982.

\bibitem[Loy01a]{LoyaBResolvent}
Paul Loya.
\newblock The structure of the resolvent of elliptic pseudodifferential
  operators.
\newblock {\em J. Funct. Anal.}, 184(1):77--135, 2001.

\bibitem[Loy01b]{LoyaPowersB}
Paul Loya.
\newblock Tempered operators and the heat kernel and complex powers of elliptic
  pseudodifferential operators.
\newblock {\em Comm. Partial Differential Equations}, 26(7-8):1253--1321, 2001.

\bibitem[Loy02]{LoyaConicResolvent}
Paul Loya.
\newblock On the resolvent of differential operators on conic manifolds.
\newblock {\em Comm. Anal. Geom.}, 10(5):877--934, 2002.

\bibitem[Loy03a]{LoyaConicHeat}
Paul Loya.
\newblock Asymptotic properties of the heat kernel on conic manifolds.
\newblock {\em Israel J. Math.}, 136:285--306, 2003.

\bibitem[Loy03b]{LoyaConicPower}
Paul Loya.
\newblock Complex powers of differential operators on manifolds with conical
  singularities.
\newblock {\em J. Anal. Math.}, 89:31--56, 2003.

\bibitem[LS01]{LauterSeilerComparison}
Robert Lauter and J{\"o}rg Seiler.
\newblock {P}seudodifferential analysis on manifolds with boundary--a
  comparison of b-calculus and cone algebra.
\newblock In {\em Approaches to Singular Analysis}, pages 131--166. Springer,
  2001.

\bibitem[Maz91]{MazzeoEdge}
Rafe Mazzeo.
\newblock {E}lliptic theory of differential edge operators {I}.
\newblock {\em Communications in Partial Differential Equations},
  16(10):1615--1664, 1991.

\bibitem[Mel92]{MelrosePushFwd}
Richard~B. Melrose.
\newblock Calculus of conormal distributions on manifolds with corners.
\newblock {\em International Mathematics Research Notices}, 1992(3):51--61,
  1992.

\bibitem[Mel93]{MelroseAPS}
Richard~B. Melrose.
\newblock {\em The {A}tiyah--{P}atodi--{S}inger index theorem}, volume~4 of
  {\em Research Notes in Mathematics}.
\newblock A K Peters, Ltd., Wellesley, MA, 1993.

\bibitem[Mel94]{MelroseEuclideanSpectralTheory}
Richard~B. Melrose.
\newblock Spectral and scattering theory for the {L}aplacian on asymptotically
  {E}uclidian spaces.
\newblock In {\em Spectral and scattering theory ({S}anda, 1992)}, volume 161
  of {\em Lecture Notes in Pure and Appl. Math.}, pages 85--130. Dekker, New
  York, 1994.

\bibitem[Mel96]{MelroseDiffOnMwc}
Richard~B. Melrose.
\newblock Differential analysis on manifolds with corners.
\newblock {\em Book, in preparation, available online}, 1996.

\bibitem[MM87]{MazzeoMelroseHyp}
Rafe~R. Mazzeo and Richard~B. Melrose.
\newblock Meromorphic extension of the resolvent on complete spaces with
  asymptotically constant negative curvature.
\newblock {\em Journal of Functional Analysis}, 75(2):260--310, 1987.

\bibitem[MM99]{MazzeoMelroseFibred}
Rafe~R. Mazzeo and Richard~B. Melrose.
\newblock Pseudodifferential operators on manifolds with fibred boundaries.
\newblock {\em Asian J. Math.}, 2(4):833--866, 1999.

\bibitem[MV14]{MazzeoVertmanEdge}
Rafe Mazzeo and Boris Vertman.
\newblock {E}lliptic theory of differential edge operators, {II}: {B}oundary
  value problems.
\newblock {\em Indiana University Mathematics Journal}, pages 1911--1955, 2014.

\bibitem[MVW08]{MelroseVasyWunschEdge}
Richard Melrose, Andr{\'a}s Vasy, and Jared Wunsch.
\newblock Propagation of singularities for the wave equation on edge manifolds.
\newblock {\em Duke Mathematical Journal}, 144(1):109--193, 2008.

\bibitem[MW04]{MelroseWunschConic}
Richard Melrose and Jared Wunsch.
\newblock Propagation of singularities for the wave equation on conic
  manifolds.
\newblock {\em Inventiones mathematicae}, 156(2):235--299, 2004.

\bibitem[RS84]{RempelSchulzePowers}
Stephan Rempel and Bert-Wolfgang Schulze.
\newblock Complex powers for pseudodifferential boundary problems. {II}.
\newblock {\em Math. Nachr.}, 116:269--314, 1984.

\bibitem[Sch86]{SchrohePowers1986}
Elmar Schrohe.
\newblock Complex powers of elliptic pseudodifferential operators.
\newblock {\em Integral Equations Operator Theory}, 9(3):337--354, 1986.

\bibitem[Sch88]{SchrohePowersNoncompact}
Elmar Schrohe.
\newblock Complex powers on noncompact manifolds and manifolds with
  singularities.
\newblock {\em Math. Ann.}, 281(3):393--409, 1988.

\bibitem[Sch91]{SchulzePsdoSing}
Bert-Wolfgang Schulze.
\newblock {\em {P}seudo-differential operators on manifolds with
  singularities}, volume~24 of {\em Studies in Mathematics and its
  Applications}.
\newblock North-Holland Publishing Co., Amsterdam, 1991.

\bibitem[Sch94]{SchulzePsdoBVP94}
Bert-Wolfgang Schulze.
\newblock {\em Pseudo-differential boundary value problems, conical
  singularities, and asymptotics}, volume~4 of {\em Mathematical Topics}.
\newblock Akademie Verlag, Berlin, 1994.

\bibitem[Sch98]{SchulzeBVP98}
Bert-Wolfgang Schulze.
\newblock {\em Boundary value problems and singular pseudo-differential
  operators}.
\newblock Pure and Applied Mathematics (New York). John Wiley \& Sons, Ltd.,
  Chichester, 1998.

\bibitem[See67]{SeeleyPowers}
Robert~T. Seeley.
\newblock Complex powers of an elliptic operator.
\newblock In {\em Singular {I}ntegrals ({P}roc. {S}ympos. {P}ure {M}ath.,
  {C}hicago, {I}ll., 1966)}, pages 288--307. Amer. Math. Soc., Providence,
  R.I., 1967.

\bibitem[Shu87]{ShubinSpectralTheory}
Mikhail~A. Shubin.
\newblock {\em Pseudodifferential operators and spectral theory}, volume 200.
\newblock Springer, 1987.

\bibitem[Vas08]{VasyPropagationCorners}
Andr{\'a}s Vasy.
\newblock Propagation of singularities for the wave equation on manifolds with
  corners.
\newblock {\em Annals of Mathematics}, pages 749--812, 2008.

\bibitem[Vas13]{VasyMicroKerrdS}
Andr{\'a}s Vasy.
\newblock Microlocal analysis of asymptotically hyperbolic and {K}err--de
  {S}itter spaces (with an appendix by {S}emyon {D}yatlov).
\newblock {\em Invent. Math.}, 194(2):381--513, 2013.

\bibitem[Xi20]{XiConeParametrix}
Chen Xi.
\newblock The semiclassical resolvent on conic manifolds and application to
  {S}chr\"odinger equations.
\newblock {\em Preprint}, 2020.

\bibitem[Yan20]{YangDiffraction}
Mengxuan Yang.
\newblock Diffraction and scattering on product cones.
\newblock {\em Preprint, arXiv:2004.07030}, 2020.

\bibitem[Zwo12]{ZworskiSemiclassical}
Maciej Zworski.
\newblock {\em Semiclassical Analysis}.
\newblock Graduate studies in mathematics. American Mathematical Society, 2012.

\end{thebibliography}

\end{document}